\definecolor{halfgray}{gray}{0.55}%chapter numbers will be semi
\definecolor{webgreen}{rgb}{0,0.4,0}
\definecolor{webbrown}{rgb}{.8,0.1,0.1} 
\definecolor{red}{rgb}{1,0,0} 
\newcommand \R {{ \mathbb R}}
\def\C{{\mathbb C}}
\newcommand \Z {{ \mathbb Z}}
\newcommand \N {{ \mathbb N}}
\newcommand \T {{ \mathbb T}}
\newcommand \re {{ \operatorname{Re} }}
\newcommand \im {{ \operatorname{Im} }}
\newcommand{\SL}{\operatorname{SL}}
\renewcommand{\sl}{\operatorname{\mathfrak s\mathfrak l}}
\newcommand{\vol} {{\operatorname{vol} }}
\newtheorem{theorem}{Theorem}[section]
\newtheorem {lemma} [theorem]{Lemma}
\newtheorem {proposition}[theorem]{Proposition}
\newtheorem{corollary}[theorem]{Corollary}
\newtheorem{remark}[theorem]{Remark}
\newtheorem{definition}[theorem]{Definition}
\title[Effective equidistribution of twisted horocycle flows]%
{Effective equidistribution of twisted horocycle flows and horocycle
  maps} 
\author{Livio Flaminio} 
\thanks{L.~Flaminio was supported in
 part by the Labex~CEMPI}
\address[Livio Flaminio]{Unit\'e Mixte de Recherche CNRS 8524 \\
 Unit\'e de Formation et Recherche de Math\'ematiques\\
 Universit\'e de Lille 1\\
 F59655 Villeneuve d'Asq Cedex, FRANCE}
\email{livio.flaminio@math.univ-lille1.fr} 
\author{Giovanni Forni}
\thanks{G.~Forni was supported by the NSF grant DMS~1201534 and by a
 Simons Fellowship.}
\address[Giovanni Forni]{
 \noindent  Department of Mathematics\\
 University of Maryland\\ \\College Park\\ MD~20742, U.S.A 
}
\email{gforni@math.umd.edu} 
\author{James Tanis} 
\address[James Tanis]{Coll\`ege de
 France\\ 3 Rue d'Ulm\\ 75005 Paris,~France}
\email{james.tanis@college-de-france.fr} 
\thanks{J.~Tanis was
 partially supported by the ANR grant 'GeoDyM' (ANR-11-BS01-0004).}
\keywords {Twisted horocycle flows, horocycle maps, effective
  equidistribution, cohomological equations, invariant distributions.}
\subjclass[2010] {37A17, 37C10, 37D40}
\begin{document}
        
\begin{abstract} 
  \begin{sloppypar} We prove bounds for twisted ergodic averages for
    horocycle flows of hyperbolic surfaces, both in the compact and in
    the non-compact finite area case. From these bounds we derive
    effective equidistribution results for horocycle maps. As an
    application of our main theorems in the compact case we further
    improve on a result of A.~Venkatesh, recently already improved by
    J.~Tanis and P.~Vishe, on a sparse equidistribution problem for
    classical horocycle flows proposed by N.~Shah and G.~Margulis, and
    in the general non-compact, finite area case we prove bounds on
    Fourier coefficients of cups forms which are off the best known
    bounds of A.~Good only by a logarithmic term. Our approach is
    based on Sobolev estimates for solutions of the cohomological
    equation and on scaling of invariant distributions for twisted
    horocycle flows.
\end{sloppypar}
\end{abstract}

\maketitle

\tableofcontents

\section{Introduction}

In this paper we prove bounds for \emph{twisted ergodic averages} for
horocycle flows of hyperbolic surfaces, both in the compact and in the
non-compact finite area case. From these bounds we derive effective
equidistribution results for horocycle maps.  We also improve upon a
result of A.~Venkatesh \cite{V} on a question of N.~Shah~\cite{Sh} on
a sparse equidistribution problem for the horocycle flow. Finally,
from our estimates in the non-compact finite area case we derive
bounds for the Fourier coefficients of automorphic forms which
coincide with the best known bounds due to A.~Good \cite{Go} up to
logarithmic terms.

Our main results can be stated as follows.  Let $\{h_t\}$ denote the
stable horocycle flow on the unit tangent bundle $M$ of a finite area
hyperbolic surface $S$ of constant curvature $-1$.  Let $\{a_t\}$ be
the geodesic flow on $M$ and let $\text{dist}$ be the distance
function on $M \times M$ determined by a metric on $M$ for which the
orbits of $\{a_t\}$ are geodesics (the choice of this metric will
appear in Section~2).
  
Let $x_0$ be a fixed point in $M$.  For any $x \in M$, let $d_M(x) :=
\text{dist}(x, x_0)$.  For $A\in [0, 1]$ and $Q > 0$, let us define
subsets of ``Diophantine points''
\begin{equation}
  \label{eq:M_AQ}
  M_{A, Q} := \left\{x \in M :  d_M(a_t(x)) \leq A t  + Q \text{ for all } t > 0\right\}\,.
\end{equation}
When $M$ is compact, there is some $Q > 0$ such that $M_{0, Q} = M$.
More generally, $\cup_{Q > 0} M_{1, Q} = M$, and by the logarithm law
of geodesics, for any $A \in (0, 1]$, the set $ \cup_{Q > 0} M_{A, Q}
$ has full Haar measure, see \cite{Su}.  In fact, for almost all $x\in
M$ we have
\begin{equation}\label{equa:log-law-geodesics}
  \limsup_{t \to +\infty}  \frac{ d_M(a_t(x))}{\log |t|} = \frac{1}{2}\,.
\end{equation}

For all $s\in \R$, let $W^s(M)$ denote the  Sobolev space of
square-integrable functions on the unit tangent bundle $M$ with respect
to the normalized volume measure.

\begin{theorem}
  \label{thm:Main_Twisted}
  For every $s > 7$ and for every $(A, Q) \in [0, 1) \times \R^+$
  there is a constant $C_{s, A, Q} := C_{s, A, Q}(M) > 0$ such that
  the following bounds hold: for every $\lambda\in \R\setminus\{0\}$,
  for every $(x,T) \in M\times \R^+$ such that $x$ and $h_T(x) \in
  M_{A,Q}$ and $\vert \lambda T \vert \geq e$, for every zero-average
  function $f\in W^s(M)$, we have
  \begin{equation}
    \label{eq:Main_Twisted}
    \vert \int_0^T  e^{i\lambda t} f \circ h_t (x) dt\vert  \leq  C_{s, A, Q} \Vert f \Vert_s  
    (1+ \frac{ \vert \lambda\vert^{\frac{2 A}{1 - A} }}{ \vert \lambda\vert^{1/6}})
    T^{5/6 + \frac{2 A}{1 - A}} \log^{1/2}(\vert \lambda T\vert ) \,.
  \end{equation}
  By the logarithm law of geodesics, for all $\epsilon >0$ there
  exists a measurable function $C_{s,\epsilon}: M \to \R^+$, finite
  almost everywhere, such that, if $\vert \lambda T \vert \geq e$,
  \begin{equation}
    \label{eq:Main_LogLaw}
    \begin{aligned}
      \vert \int_0^T e^{i\lambda t} f \circ h_t (x) dt\vert &\leq
      C_{s,\epsilon} (x) C_{s,\epsilon} (h_T(x)) \Vert f \Vert_s \\ &
      \times (1+ \frac{1}{\vert \lambda\vert^{1/6} }) T^{5/6}
      \log^{3/2+\epsilon}(\vert \lambda T\vert ) \,.
    \end{aligned}
  \end{equation}
  Moreover, when $M$ is compact or when $M$ is non-compact but $x$
  belongs to a (closed) cuspidal horocycle of length $T\geq 1$ such
  that $ \lambda T \in 2\pi \Z$, there exists a constant
  $C_s:=C_s(M)>0$ such that, if $\vert \lambda T \vert \geq e$,
  \begin{equation}
    \label{eq:Main_Twisted_Compact}
    \vert \int_0^T  e^{i\lambda t} f \circ h_t (x) dt\vert  \leq  C_s \Vert f \Vert_s  
    (1+ \frac{1}{ \vert \lambda\vert^{1/6}})  T^{5/6 } \log^{1/2}(\vert \lambda T\vert ) \,.
  \end{equation}
\end{theorem}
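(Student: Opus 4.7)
My approach will be based on the two ingredients highlighted in the abstract: sharp Sobolev estimates for solutions of the twisted cohomological equation $(U + i\lambda) g = f$, and the scaling of invariant distributions for twisted horocycle flows under the geodesic flow. The geometric source of the scaling is the $\SL(2,\R)$ commutation relation $a_s h_t a_{-s} = h_{e^s t}$, which conjugates the twisted horocycle flow at frequency $\lambda$ to the twisted horocycle flow at frequency $e^s\lambda$; invariant distributions of the twisted flow should therefore transform under the geodesic flow by an explicit rule in $\lambda$, and I expect this rule to be the source of the $|\lambda|^{-1/6}$ gain appearing in the bounds.

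First I would decompose $L^2_0(M)$ into irreducible components under the $\SL(2,\R)$ regular representation, indexed by the Casimir eigenvalue, and inside each component construct an explicit basis of invariant distributions for the twisted horocycle flow, compute their transformation under $a_s$, and derive Sobolev estimates for the Green operator of $U+i\lambda$. Together these should yield a short-time (unit-length) bound for the twisted ergodic integral of the form $\Vert f\Vert_s\,(1+|\lambda|^{-1/6})\log^{1/2}|\lambda|$. I would then propagate this short-time bound to long time $T$ via iterated geodesic renormalization, optimizing in the amount of renormalization time used, to reach the central estimate $\Vert f\Vert_s\,(1+|\lambda|^{-1/6})\,T^{5/6}\log^{1/2}|\lambda T|$, modulated by an additional factor reflecting the Sobolev cost of pushing $f$ along the geodesic orbits through $x$ and $h_T(x)$.

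In the compact case those orbits stay in a fixed compact set and the extra Sobolev cost is $O(1)$, giving \eqref{eq:Main_Twisted_Compact}. In the non-compact finite-area case, the Diophantine hypothesis $x, h_T(x)\in M_{A,Q}$ bounds the cusp excursions linearly in time with slope $A$, and permits the renormalization to run up to $\tau\approx (1-A)^{-1}\log T$, which inflates the bound by the factor $T^{2A/(1-A)}$ present in \eqref{eq:Main_Twisted}. Bound \eqref{eq:Main_LogLaw} would then follow from \eqref{eq:Main_Twisted} by absorbing the Diophantine parameters into an almost-everywhere finite measurable function via the logarithm law \eqref{equa:log-law-geodesics}. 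The cuspidal-horocycle case of \eqref{eq:Main_Twisted_Compact} I would handle separately: when $x$ lies on a closed cuspidal horocycle of length $T$ and $\lambda T\in 2\pi\Z$, the integrand is $T$-periodic, so the integral collapses to a single Fourier coefficient of $f$ along that closed horocycle, which can be estimated directly from the Fourier--Whittaker expansion of $f$ at the cusp with constants independent of cusp depth.

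The hard part will be the uniform representation-theoretic analysis. The Sobolev estimates for the twisted cohomological equation must be tracked uniformly in $\lambda$ and uniformly across the Casimir spectrum --- including the complementary and continuous series and any accumulation of small Laplacian eigenvalues near zero --- and the $|\lambda|^{-1/6}$ decay of the twisted invariant distributions must be proved in a form robust enough to survive the geodesic renormalization and produce the clean exponents $5/6$ and $2A/(1-A)$ in the final bounds.
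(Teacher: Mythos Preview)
Your overall shape is right --- cohomological equation plus scaling of invariant distributions --- but the scaling mechanism you propose is not the one that works, and this is the crux of the argument. The geodesic flow does \emph{not} renormalize the twisted horocycle flow at a fixed frequency: the conjugation $a_s h_t a_{-s} = h_{e^{\pm s} t}$ sends the generator $U + i\lambda$ to a multiple of $U + i e^{\mp s}\lambda$, so it intertwines twisted flows at \emph{different} frequencies rather than rescaling a single one. The paper is explicit about this in the introduction: ``in the present case the scaling is not induced by a renormalization dynamics.'' What the paper does instead is introduce a one-parameter family of \emph{rescaled foliated Sobolev norms} built from $X_{\mathcal T} = \mathcal T^{-1/3}X$, $V_{\mathcal T} = \mathcal T^{-2/3}V$, together with a Fourier-side dilation operator $U_\tau$ centered at $\xi = -\lambda$ (the support of $\hat D^\lambda$). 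The invariant distribution scales with exponent $1/6$ in this family (Theorems~\ref{theo:scaling-principal} and~\ref{theo:Discrete_scale}), and the iterative argument of Section~\ref{sect:equi} runs over the discrete scales $\mathcal T_j = e^{(l-j)h}$ with $\lambda$ held fixed; that $1/6$ is the source of $T^{5/6}$. Your geodesic iteration would instead drive the frequency to $\lambda T$, and the $V$-Sobolev cost of transporting $f$ through $\log T$ units of geodesic time would swamp whatever gain you obtain. Geodesic scaling \emph{is} used in the paper, but only once (Corollary~\ref{cor:equidistribution}), to normalize $|\lambda|\le e$ to $|\lambda|\approx 1$.

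You also omit a second structural ingredient: a Sobolev trace theorem for horocycle arcs whose constant is the inverse \emph{average width} of a tube around the arc, together with a close-return counting argument bounding that width (Section~\ref{sect:avg-width}). This is where the $\log^{1/2}$ factor originates, and where the Diophantine condition actually enters --- through the function $c_\Gamma(x,T)$ of Definition~\ref{def:c(x,T)} and Lemma~\ref{lemma:c(x,T)}, not through a limitation on how long one may renormalize. Finally, your proposed handling of the cuspidal-horocycle case via Fourier--Whittaker expansion is circular: bounds on those Fourier coefficients are a \emph{consequence} of the theorem (Corollary~\ref{coro:Cusp-form}), not an input. The paper instead observes that the modulus of the twisted integral along a closed cuspidal horocycle is basepoint-independent (since $\lambda T\in 2\pi\Z$), partitions the horocycle into finitely many arcs beginning at points whose backward geodesic orbit is bounded, and applies the main estimate to each arc.
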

Bounds in the regime $\vert \lambda T\vert \leq e$ can immediately be
derived by integration by parts from bounds on ergodic integrals of
the horocycle flow, which are well-known (see for instance \cite{Bur},
\cite{FF1}, \cite{St2}). In fact,
\[
\int_0^T e^{i\lambda t} f \circ h_t (x) dt = e^{i\lambda T} \int_0^T f
\circ h_t (x) dt - i \lambda \int_0^T e^{i\lambda t} \int_0^t f \circ
h_\tau (x) d\tau \,,
\]
whenever $\vert \lambda T\vert \leq e$ we have the bound
\[
\vert \int_0^T e^{i\lambda t} f \circ h_t (x) dt\vert \leq \vert
\int_0^T f \circ h_t (x) dt \vert + \frac{e}{T} \int_0^T \vert
\int_0^t f\circ h_\tau(x) d\tau \vert dt \,.
\]
We remark that a different integration by parts, which this time
exploits the cancellations given by the (fast) oscillations of the
exponential function, implies that for $f\in C^1(M)$ in the regime
$\vert \lambda T\vert \geq T^{1+\beta}$ we have, for all $x\in M$ and
for all $T\geq 1$,
\[
\vert \int_0^T e^{i\lambda t} f \circ h_t (x) dt \vert \leq \Vert
f\Vert_{C^1(M)} \frac{2+T}{\lambda} \leq 3 \Vert f\Vert_{C^1(M)}
T^{1-\beta}.
\]
Theorem~1.5 of \cite{FF1} provides a precise asymptotics for large
$T>0$ of the ergodic integral $\int_0^{T} f \circ h_t(x) dt$ in terms
of invariant distributions for the horocycle flow. A~refinement of the
asymptotics, in terms of finitely additive measures on horocycles, is
developed in \cite{BuFo} with applications to limit probability
distributions for horocycle flows. In particular, in the compact case
the following result holds.  Let $\mu_0>0$ denote the smallest
non-negative eigenvalue of the Laplace-Beltrami operator $\Delta_S$ of
the hyperbolic surface $S$ and let
\[
\nu_0:= \begin{cases} \sqrt{1- \mu_0} \quad &\text{ if } \, \mu_0\leq
  1 \,; \\ 0 \quad &\text{ if } \, \mu_0> 1 \,.
\end{cases}
\]
For every $s>3$ there exists a constant $C_s>0$ such that for every
zero-average function $f\in W^s(M)$, for all $(x,T)\in M\times \R^+$,
we have (see also \cite{Bur})
\[
\vert \int_0^T f \circ h_t (x) dt\vert \leq C_s \Vert f \Vert_s (1+
T^{\frac{1+\nu_0}{2}} + T^{1/2} \log (e + T))\,.
\]
The equidistribution of horocycle flows on surfaces of constant
negative curvature was proved by H.~Furstenberg \cite{Fu} in the
compact case and by Dani \cite{Da} in the non-compact finite area
case. In the first case the horocycle flow is uniquely ergodic, while
in second case all orbits equidistribute except for finitely many
one-parameter families of closed (cuspidal) horocycles.

The effective equidistribution, that is, bounds on the speed of
convergence of ergodic averages, for horocycle flows of hyperbolic
surfaces (the case $\lambda=0$ in Theorem~\ref{thm:Main_Twisted}) has
been investigated thoroughly in the past decades, both for compact and
non-compact, finite area surfaces \cite{Za}, \cite{Sa}, \cite{Bur} (in
the general geometrically finite case), \cite{Hj}, \cite{FF1},
\cite{St1}, \cite{St2}, \cite{BuFo} (which proves results on limit
distributions of ergodic integrals in the compact case). All these
results indicate that in general the speed of convergence of ergodic
averages of sufficiently smooth functions depend on the \emph{spectral
  gap} of the Laplace-Beltrami operator of the surface. In fact, a
rather complete asymptotics for ergodic averages of smooth functions
was established in \cite{FF1} and later refined in \cite{BuFo}, where
results on limit distributions of probability distributions given by
ergodic integrals were derived.

\smallskip A striking feature of our effective equidistribution result
(Theorem~\ref{thm:Main_Twisted}) in the regime $\lambda T \geq e$ is
its independence from the spectral properties of the Laplace-Beltrami
operator (``spectral gap'').  To the best of our knowledge this
phenomenon was first conjectured by Venkatesh (the third author of
this paper learned of this conjecture directly from A.~Venkatesh in
Spring 2012, the second author from P.~Vishe in Spring 2014). Indeed,
the first effective bounds on twisted ergodic integrals of horocycle
flows proved by A.~Venkatesh~\cite{V} were not uniform with respect to
the spectral gap. Recently, in work developed in parallel with this
paper, the third author and P.~Vishe have refined Venkatesh method
thereby proving a bound independent of the spectral gap \cite{TV}. Our
method is completely different from Venkatesh's approach in \cite{V}
(refined by the third author and P.~Vishe in \cite{TV}), which is
based on effective equidistribution \cite{Bur}, \cite{FF1} and
estimates on decay of correlations (see for instance \cite{Ra}) for
horocycle flows, and our bounds are somewhat better (for instance for
$\lambda$ small the bound of the third author and P.~Vishe~\cite{TV}
for twisted integrals is of the form $\lambda^{-1/2} T^{8/9}$).

\smallskip Recent results on the effective equidistribution of
horocycle maps by Venkatesh \cite{V} in the compact case and by
P.~Sarnak and A.~Ubis~\cite{SU} for the modular surface have been
motivated by \emph{sparse equidistribution} problems for the horocycle
flow, that is, by the questions whether the horocycle flow still
equidistributes when sampled along a polynomial sequence of times (of
fractional degree larger than $1$) or along the prime numbers. The
first question, which appears in the work of N.~Shah \cite{Sh}, asks
whether the horocycle flow equidistributes along polynomial sequences
of any degree. We recall that by a general pointwise ergodic theorem
proved by J.~Bourgain~\cite{Bou} the answer is affirmative for almost
all points, along time sequences given by polynomials of any degree
with integer coefficients. The second question comes up in the work of
P.~Sarnak and A.~Ubis \cite{SU} on the independence of the M\"obius
functions with respect to all sequences generated by the horocycle
flows on the modular surface. Both questions have also been asked by
G.~Margulis for general unipotent flows~\cite{Ma}.

In Venkatesh's work \cite{V} the \emph{effective equidistribution of
  horocycle maps} is derived from the effective equidistribution by a
direct argument based on Fourier expansion of delta measures on the
line. We retain in this paper the approach of the third author's
thesis \cite{T} which consists in combining effective equidistribution
results for the twisted cohomological equation with the complete
description of invariant distributions and the solution to the
cohomological equation for horocycle maps.  Our main result can be
stated as follows.

\begin{theorem}
  \label{thm:Main_Maps}
  For every $s > 14$ and $\epsilon > 0$, and for every $(A, Q) \in [0,
  1) \times \R^+$, there is a constant $C_{s, \epsilon, A, Q} := C_{s,
    \epsilon, A, Q}(M) > 0$ such that the following holds.  For every
  $L >0$, for every $(x,N) \in M \times \N\setminus\{0\}$ such that
  $x$ and $h_{NL}(x) \in M_{A,Q}$ and for every $f\in W^s(M)$, we have
  \begin{equation}\label{equa:main_maps1}
    \begin{aligned}
      \vert \sum_{k=0}^{N-1} &f \circ h_{Lk}(x) -\frac{1}{L}
      \int_0^{NL} f\circ h_t (x) dt \vert
      \leq C_{s, \epsilon, A, Q}\Vert f\Vert_s \\
      & \times \left( (1+ L^{1/6+\epsilon}) (NL)^{5/6 +\frac{2A}{1 -
            A}} \log^{1/2} N + \frac{1+ L^{6+A+\epsilon}}{L}\right)
      \,.
    \end{aligned}
  \end{equation}
  By the logarithm law for geodesics, for all $\epsilon > 0$ there
  exists a measurable function $C_{s,\epsilon}: M \to \R^+$ that is
  finite almost everywhere and satisfies
  \begin{equation}\label{equa:main_maps2}
    \begin{aligned}
      \vert \sum_{k=0}^{N-1}& f \circ h_{Lk}(x) -\frac{1}{L}
      \int_0^{NL} f\circ h_t(x) dt \vert \leq
      C_{s, \epsilon}(x) C_{s, \epsilon}(h_{NL}(x)) \Vert f\Vert_s  \\
      & \times \left( (1+ L^{1/6+\epsilon}) (NL)^{5/6}
        \log^{3/2+\epsilon}N + \frac{1+ L^{6+\epsilon}}{L} \right).
    \end{aligned}
  \end{equation}
  In addition, there exists a constant $C_{s,\epsilon}>0$ such that
  for all $(x,N)\in M\times \N\setminus\{0\}$, whenever $f\in W^s(M)$
  is a coboundary for the time-$L$ horocycle map $h_L$ we have
  \begin{equation}\label{equa:main_maps3}
    \begin{aligned}
      \vert \sum_{k=0}^{N-1} f \circ h_{Lk}(x)\vert \leq
      C_{s,\epsilon} \Vert f\Vert_s \frac{1+ L^{2+\epsilon}}{L}
      (e^{d_M(h_{-\frac{L}{2}}(x))} +
      e^{d_M(h_{L(N-\frac{1}{2})}(x))}).
    \end{aligned}
  \end{equation}
  Finally, when $M$ is compact there exists a constant
  $C_{s,\epsilon}:=C_{s,\epsilon}(M)>0$ such that for every $(x,N) \in
  M \times \N\setminus\{0\}$ and for every $f\in W^s(M)$ we have
  \begin{equation}\label{equa:main_maps4}
    \begin{aligned}
      \vert \sum_{k=0}^{N-1} f \circ h_{Lk}(x) -&\frac{1}{L}
      \int_0^{NL} f\circ h_t (x) dt \vert \leq C_{s, \epsilon}\Vert
      f\Vert_s \\ & \times \left( (1+ L^{1/6+\epsilon}) (NL)^{5/6}
        \log^{1/2}N + \frac{1+ L^{6+\epsilon}}{L}\right) \,.
    \end{aligned}
  \end{equation}

\end{theorem}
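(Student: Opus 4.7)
The plan is to derive the bounds \eqref{equa:main_maps1}--\eqref{equa:main_maps4} from Theorem~\ref{thm:Main_Twisted} by writing the discrete sum minus its Riemann integral as a spectrally decomposed sum of twisted ergodic integrals, plus a transparent coboundary piece whose $C^0$-norm is controlled by the Sobolev theory of the cohomological equation for the time-$L$ horocycle map~$h_L$. The framework, following the third author's thesis \cite{T}, is that the obstructions to solving the coboundary equation $g - g\circ h_L = f$ are exhausted by $h_L$-invariant distributions, and these distributions are organized into a spectral family parametrized by $\{\lambda_n = 2\pi n/L : n\in\Z\}$ arising from the formal flow resolvent $e^{L\mathcal{L}_U}$, where $\mathcal{L}_U$ is the Lie derivative along the horocycle vector field.

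Concretely, I would decompose
\[
\sum_{k=0}^{N-1} f\circ h_{Lk}(x) - \frac{1}{L}\int_0^{NL} f\circ h_t(x)\,dt = D^{\text{tw}}_N(f,x) + \bigl(g(x) - g(h_{NL}(x))\bigr),
\]
where $D^{\text{tw}}_N(f,x)$ is an absolutely convergent series, over $n\neq 0$, of twisted integrals $\int_0^{NL} e^{i\lambda_n t} f\circ h_t(x)\,dt$ paired with Sobolev-regular Fourier-type coefficients of~$f$, and $g$ is an approximate solution to the untwisted ($\lambda=0$) cohomological equation. Applying Theorem~\ref{thm:Main_Twisted} to each twisted integral yields, via \eqref{eq:Main_Twisted}, a contribution of order $(1 + |\lambda_n|^{2A/(1-A)}|\lambda_n|^{-1/6})(NL)^{5/6+2A/(1-A)}\log^{1/2}(|\lambda_n|NL)$; the worst term is $n=\pm 1$, giving the factor $1+L^{1/6+\epsilon}$ in \eqref{equa:main_maps1}. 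Summability over~$n$ costs extra Sobolev derivatives on~$f$, which combined with the $s>7$ hypothesis of Theorem~\ref{thm:Main_Twisted} produces the threshold $s>14$. The transfer function $g$ is estimated via the $h_L$-cohomological theory: its Sobolev norm grows polynomially in $L$, with the exponent reflecting a scaling loss for invariant distributions under the geodesic renormalization $a_t h_s a_{-t} = h_{e^{t}s}$, and its pointwise evaluation picks up an $e^{d_M}$ cusp penalty, whence the restriction to $M_{A,Q}$.

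The four bounds assemble as follows. Estimate \eqref{equa:main_maps3}, the coboundary case, is purely the transfer piece: $f = g_0 - g_0\circ h_L$ telescopes to $g_0(x) - g_0(h_{LN}(x))$, and Sobolev-plus-cusp estimates on $g_0$ (which only need the map-level cohomological equation, not the twisted flow theorem) deliver the stated bound with the smaller exponent $2+\epsilon$. Estimate \eqref{equa:main_maps1} combines both pieces under the Diophantine hypothesis, which is needed both to apply \eqref{eq:Main_Twisted} and to control $g$ at the two endpoints. Estimate \eqref{equa:main_maps2} substitutes \eqref{eq:Main_LogLaw} for \eqref{eq:Main_Twisted}, inheriting the $\log^{3/2+\epsilon}$ factor from the logarithm law. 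Finally \eqref{equa:main_maps4} uses \eqref{eq:Main_Twisted_Compact}, in which the $2A/(1-A)$ deterioration disappears and $M_{A,Q}=M$, so the $e^{d_M}$ penalty is uniformly bounded.

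The main obstacle is the simultaneous control of the tail in~$n$ and of the sharp $\log^{1/2}$ factor from Theorem~\ref{thm:Main_Twisted}. Because the twisted bound behaves like $|\lambda|^{-1/6}$ for small~$|\lambda|$ but improves only polynomially for large~$|\lambda|$, the spectral decomposition must produce Fourier-type coefficients decaying at least like $|n|^{-2}$, which restricts how many derivatives are available to feed into the twisted estimate; balancing these costs dictates both the exponent $6+A+\epsilon$ in the boundary term of \eqref{equa:main_maps1} and the regularity threshold $s>14$. A secondary difficulty is keeping the cusp contribution to~$g$ of order $e^{d_M}$ rather than a larger power of the cusp height, which is precisely where the Diophantine set $M_{A,Q}$ enters, since \eqref{eq:Main_Twisted} itself requires both endpoints in this set.
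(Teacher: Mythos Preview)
Your overall architecture is right and matches the paper's: the discrepancy between the ergodic sum and the flow integral is resolved into a family of twisted horocycle integrals indexed by the characters $\lambda_n=2\pi n/L$, together with a coboundary/remainder piece governed by the cohomological equation for the time-$L$ map. The paper, however, does not run this argument by directly invoking Theorem~\ref{thm:Main_Twisted}. It instead passes through the more refined distributional statement Theorem~\ref{theo:maps} (and its geodesic rescaling, Corollary~\ref{cor:maps}), formulated in the anisotropic Sobolev spaces $W^{r,s,a}(M)$ that carry extra $U$-regularity; Theorem~\ref{thm:Main_Maps} is then read off by bounding the constants $C_\Gamma(x,L,N)$ and $D_\Gamma(x,L,N)$ under the Diophantine or log-law hypotheses, and the case $L\ge e$ is handled by conjugating with $a_{\log L}$ to reduce to $L=1$.

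Two technical points where your sketch diverges from the paper are worth flagging. First, summability over $n$: in the direct Poisson-type decomposition each twisted integral carries the \emph{same} function $f$, so there are no ``Fourier-type coefficients of $f$'' providing decay. The paper manufactures the needed $|n|^{-1-\epsilon}$ by one integration by parts in $t$ (gaining $1/|n|$ at the price of a $U$-derivative) together with the identity $\mathcal D_{\mu,L,k}=(1+4\pi^2k^2/L^2)^{-\epsilon/2}(I-U^2)^{\epsilon/2}\mathcal D_{\mu,L,k}$, which is exactly why the anisotropic index $a$ appears and why the threshold lands at $s>14$. Second, the sharp $L$-dependence $L^{1/6+\epsilon}$ in the main term of \eqref{equa:main_maps1} is not obtained by feeding $\lambda_1=2\pi/L$ into Theorem~\ref{thm:Main_Twisted} (the $L$-losses in the remainder and in the $h_L$-cohomological estimates of Theorems~\ref{theo:coeqn-map-principal}--\ref{theo:coeqn-map-discrete} would be worse); it comes from the geodesic scaling in Corollary~\ref{cor:maps}, which replaces $h_L$ by $h_1$ at the point $a_{\log L}(x)$ and then controls the rescaled Sobolev norms. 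Your route can be made to work, but you should expect to recover the same two devices---integration by parts against $(I-U^2)^{a/2}$ for summability, and geodesic conjugation for large $L$---rather than a genuinely simpler argument.
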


\smallskip In our paper we also derive from the bounds of Theorem
\ref{thm:Main_Maps} on the ergodic sums of horocycle maps the
following result on Shah's question.
 
 \begin{theorem}\label{thm:Main_Shah}
   Let $M$ be compact.  For all
   $0 \leq \delta < \delta_0=1/13$, for all $f \in C(M)$ and for all $x \in M$, we have
   \[
   \lim_{N \to \infty} \frac{1}{N} \sum_{n= 0}^{N - 1} f \circ h_{n^{1+ \delta}}(x) = \int_M f d\vol\,.
   \]
 \end{theorem}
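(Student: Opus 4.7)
The plan is to combine the effective horocycle--map bound \eqref{equa:main_maps4} of Theorem~\ref{thm:Main_Maps} with a standard block decomposition in which the sampling times $t_n:=n^{1+\delta}$ are well approximated by arithmetic progressions. The case $\delta=0$ follows from \eqref{equa:main_maps4} with $L=1$ together with the classical equidistribution of the horocycle flow, so assume $\delta>0$. By density of smooth functions in $C(M)$ together with the trivial bound $|\tfrac{1}{N}\sum_{n=0}^{N-1}(f-g)\circ h_{t_n}(x)|\leq \|f-g\|_\infty$, it suffices to prove the statement for $f\in W^s(M)$ with $s>14$; by Sobolev embedding such an $f$ lies in $C^1(M)$.

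Fix an exponent $\alpha\in(0,1)$ to be optimized, set $K=\lfloor N^{\alpha}\rfloor$ and $J=\lfloor N/K\rfloor$, and partition $\{0,1,\dots,N-1\}$ into blocks $B_j=\{jK,\dots,(j+1)K-1\}$, $j=0,\dots,J-1$, plus a remainder of size $<K$ whose contribution is trivially $O(K\|f\|_\infty)=o(N)$. For $j\geq 1$ put $n_j=jK$, $y_j=h_{t_{n_j}}(x)$ and define the linearized step $L_j:=(1+\delta)\,n_j^{\delta}$; a Taylor expansion gives $t_{n_j+k}-t_{n_j}=kL_j+O(k^2 n_j^{\delta-1})$ for $0\leq k<K$. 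Replacing $f\circ h_{t_{n_j+k}}(x)$ by $f\circ h_{kL_j}(y_j)$ in block $B_j$ introduces an error $O(\|f\|_s K^3 n_j^{\delta-1})$, and summing over $j$ gives a total \emph{linearization error} $O(\|f\|_s K^{2}N^{\delta})$.

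Next apply \eqref{equa:main_maps4} to each block with $L=L_j$ and $N$ there replaced by $K$, and sum. Using $L_j\leq c\,N^{\delta}$ and $J\leq N/K$, the aggregated right--hand side is bounded by
\[
C\|f\|_s\Bigl(N^{1+\delta(1+\epsilon)}K^{-1/6}\log^{1/2}N \;+\; N^{1+\delta(5+\epsilon)}K^{-1}\Bigr).
\]
Finally each $\tfrac{1}{L_j}\int_0^{KL_j}f\circ h_t(y_j)\,dt$ is compared with $K\int_M f\,d\vol$ using the classical compact--surface horocycle--equidistribution bound recalled just after Theorem~\ref{thm:Main_Twisted}: since the weights $1/L_j$ give $\sum_{j\geq 1}K\sim N$, this reconstructs the correct main term $N\int_M f\,d\vol$, while the accumulated error is controlled by a factor $\lesssim N\, K^{(\nu_0-1)/2}\log N$ with $\nu_0<1$ for a compact surface, hence is $o(N)$ whenever $K\to\infty$.

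Dividing by $N$, the three non--trivial errors all tend to $0$ provided
\[
K^2N^{\delta-1}\to 0,\qquad N^{\delta(1+\epsilon)}K^{-1/6}\log^{1/2}N\to 0,\qquad N^{\delta(5+\epsilon)}K^{-1}\to 0.
\]
Writing $K=N^{\alpha}$, these conditions reduce to $6\delta+O(\epsilon)<\alpha<(1-\delta)/2$, which has a non--empty set of solutions precisely when $12\delta<1-\delta$, i.e.\ when $\delta<1/13=\delta_0$; any $\alpha$ in this interval (say $\alpha=\tfrac12\bigl(6\delta+(1-\delta)/2\bigr)$) completes the proof. The main obstacle is exactly this final balance: the linearization error forces $K$ small (so blocks genuinely look arithmetic), while the horocycle--map discrepancy \eqref{equa:main_maps4} forces $K$ large (so the sharp $K^{-1/6}$ saving can be cashed in), and the threshold $\delta_0=1/13$ is the precise exponent at which the two admissible ranges of $K$ just fail to overlap.
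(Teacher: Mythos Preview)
Your proof is correct and follows essentially the same approach as the paper (Venkatesh's linearization method combined with Theorem~\ref{thm:Main_Maps}): linearize the times $n^{1+\delta}$ on blocks into arithmetic progressions, apply the horocycle--map bound to each block, and balance the linearization error against the $K^{-1/6}$ saving to obtain the threshold $\delta<1/13$. The only cosmetic difference is that you use a \emph{constant} block length $K=N^{\alpha}$, whereas the paper uses \emph{variable} blocks of length $N_j^{(1-\delta)/2-\epsilon}$ (chosen so that the per--term linearization error is uniformly $O(N_j^{-2\epsilon})$); the resulting balance of exponents is identical.
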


 The above result improves upon the corresponding results by Venkatesh
 in \cite{V} (who had an explicit $\delta_0\in (0, 1/48)$ but no
 uniform estimate with respect to the ``spectral gap'') and by the
 third author and Vishe \cite{TV} (who had $\delta_0 = 1/26$). Our
 argument essentially follows Venkatesh's, so our improvement in the exponent 
 derives from our better bounds for twisted ergodic integrals. The main idea is to
 approximate polynomial sequences with arithmetic progressions with a
 small error for sufficiently long times. By its very nature this
 approach cannot go beyond the threshold $\delta_0=1$, so in any event
 a complete answer to Shah's question is very far out of reach of
 current methods.

 \smallskip The work of P.~Sarnak and A. Ubis~\cite{SU} focuses on the
 case of horocyle maps on the modular surface, for which they prove an
 effective version of Dani's equidistribution theorem for unipotent
 maps. However, this is far from enough to control the distribution of
 the horocycle flow ``at prime times'', as the equidistribution of
 one-parameter unipotents on the product space (that is, of the
 joinings of the horocycle flow) plays a crucial role.  This work has
 provided a crucial motivation for us in developing our scaling
 approach to twisted ergodic integrals, an approach that we hope can
 be applied to more general problems of effective distributions for
 unipotent flows, including perhaps joinings of the horocycle flow.
 Their approach is based on approximation of orbits of horocycle flows
 by segments of orbits on cuspidal horocycles, whose equidistribution
 is derived from bounds on the Fourier coefficients of automorphic
 forms. This approach is therefore limited to non-compact surfaces of
 finite area and it seems to fall rather short of the optimal exponent
 in the effective Dani's theorem.

 \smallskip In our paper the case of non-compact, finite area surfaces
 can be handled by the same method as the compact case, by taking into
 account the speed of escape of geodesic orbits into the cups.  By
 writing Fourier coefficients of cusp forms as twisted ergodic
 integrals along cuspidal horocycles, we derive bounds which coincide
 up to a logarithmic factor with the best bounds available, due to
 A.~Good~\cite{Go}, for general, possibly non-arithmetic, non
 co-compact lattices of $\SL(2,\R)$. This coincidence seems to
 indicate that our results are presumably rather hard to improve upon,
 at least as far as the exponent of the polynomial bound is concerned.

 \begin{corollary}\label{coro:Cusp-form} Let $\{a_n\} \subset \C$ denote the sequence of the Fourier
   coefficients of a holomorphic cusp form $f$ of even integral
   weight-$k$ for any non co-compact lattice $\Gamma\subset SL(2,
   \R)$. There is a constant $C_f > 0$ such that for all $n\in
   \N\setminus\{0\}$ we have
   \[
   |a_n| \leq C_f n^{k/2 - 1/6} (1+\log n)^{1/2}\,.
   \]
 \end{corollary}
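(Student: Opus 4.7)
The plan is to identify the Fourier coefficients $a_n$ with twisted ergodic integrals along a closed cuspidal horocycle in $M=\Gamma\backslash\SL(2,\R)$, and then invoke the sharp cuspidal bound \eqref{eq:Main_Twisted_Compact} of Theorem~\ref{thm:Main_Twisted}. After a conjugation inside $\SL(2,\R)$ one may assume that $\infty$ is a cusp of $\Gamma$ with unit-translation stabiliser, so that $f$ admits a Fourier expansion $f(z)=\sum_{n\geq 1}a_n e^{2\pi i n z}$ and, for every $y>0$,
\[
a_n \;=\; e^{2\pi n y}\int_0^1 f(x+iy)\,e^{-2\pi i n x}\,dx\,.
\]
I would lift $f$ to $M$ via the standard realisation of holomorphic weight-$k$ cusp forms as highest-weight vectors in a discrete series representation of $\SL(2,\R)$: in Iwasawa coordinates $g=n_x a_y k_\theta$,
\[
\phi_f(g) \;=\; y^{k/2}\,f(x+iy)\,e^{ik\theta}\,.
\]
The holomorphy of $f$ in the interior together with its cuspidal decay at every cusp makes $\phi_f$ a zero-average $C^\infty$ function on $M$ with $\Vert\phi_f\Vert_s<\infty$ for every $s>0$.

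For any $0<y\leq 1$, consider the closed horocycle through $a_y$, whose hyperbolic length is $T=1/y\geq 1$. The arc-length parametrisation $x=yt$, $t\in[0,T]$ converts the Fourier integral into
\[
a_n \;=\; y^{1-k/2}\,e^{2\pi n y}\int_0^{T}\phi_f\circ h_t(x_0)\,e^{-i\lambda t}\,dt\,,\qquad \lambda:=2\pi n y\,,
\]
so that $\lambda T=2\pi n\in 2\pi\Z$ and, for every $n\geq 1$, $|\lambda T|\geq 2\pi>e$: this is exactly the hypothesis of the cuspidal-horocycle bound~\eqref{eq:Main_Twisted_Compact}. Applying that bound to $\phi_f$ yields
\[
\Bigl|\int_0^{T}\phi_f\circ h_t(x_0)\,e^{-i\lambda t}\,dt\Bigr|\;\leq\;C_s\,\Vert\phi_f\Vert_s\,\bigl(1+|\lambda|^{-1/6}\bigr)\,T^{5/6}\log^{1/2}(2\pi n)\,.
\]

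To conclude, optimise in the free parameter $y$. The choice $y=1/n$ makes $e^{2\pi n y}=e^{2\pi}$ and $1+|\lambda|^{-1/6}=1+(2\pi)^{-1/6}$ absolute constants, while the remaining powers combine as $y^{1-k/2}\cdot y^{-5/6}=n^{k/2-1/6}$, producing the announced estimate $|a_n|\leq C_f\,n^{k/2-1/6}(1+\log n)^{1/2}$ with $C_f$ absorbing $\Vert\phi_f\Vert_s$ and the numerical constants. I expect the only delicate step to be the preliminaries in the first paragraph, namely a careful verification that the representation-theoretic lift $\phi_f$ has finite Sobolev norm of order $s>7$ and that the identification of $a_n$ with a twisted horocycle integral is unaffected by the conjugation normalising the cusp; once this is in hand, the rest is a direct application of \eqref{eq:Main_Twisted_Compact} and a one-parameter optimisation in the height $y$.
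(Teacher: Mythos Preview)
Your proposal is correct and follows essentially the same route as the paper: lift the cusp form to a smooth vector on $M$, rewrite $a_n$ as a twisted horocycle integral along the closed cuspidal horocycle, and apply the bound~\eqref{eq:Main_Twisted_Compact}. The paper simply fixes the height $y=1/n$ from the outset (via the base point $x_n=\Gamma\,\mathrm{diag}(n^{-1/2},n^{1/2})$, giving $T=n$ and $\lambda=2\pi$), whereas you keep $y$ free and then observe that $y=1/n$ is optimal; this is the only cosmetic difference.
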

 We recall that for the modular lattice, and more generally for
 congruence lattices, the Ramanujan-Petersson conjecture, proved by
 Deligne, states that the sharp bound $|a_n| \leq C_{f, \epsilon}
 n^{k/2 - 1/2+\epsilon}$ holds. To the authors best knowledge it is an
 open question whether the optimal bound holds for general non
 co-compact lattices or even whether Good's bound can be improved.

 \smallskip We conclude the introduction with an outline of the
 methods of our paper. We recall that the main idea underlying all
 advances on the effective equidistribution of the horocycle flow is
 the fundamental fact that the orbit foliation of the horocycle flow
 is invariant under the geodesic flow, hence the horocycle flow is
 renormalized by the geodesic flow. In other terms, such results
 establish refined versions of the exponential decay of correlation
 for the geodesic flow.  In our paper, we follow the approach first
 developed by the first two authors to prove effective ergodicity
 results for higher step nilflows \cite{FF3}. We view the twisted
 ergodic integrals for the horocycle flow as special ergodic integrals
 for the product of the horocycle flow and of a linear flow on a
 circle. Our goal thus becomes to prove effective ergodicity results
 for the above product flow. Our proof of effective equidistribution
 is based on a scaling argument which is a generalization of the
 renormalization method developed in the work of the first two authors
 to prove effective equidistribution of horocycle flows~\cite{FF3}. It
 consists in an analysis, based on the theory of unitary
 representations for the group $\SL(2,\R)$ of a scaling operator on
 the space of \emph{invariant distributions} for the appropriate
 \emph{cohomological equation}. In the present case the scaling is not
 induced by a renormalization dynamics. The exponent in our effective
 equidistribution theorem is the optimal scaling exponent of invariant
 distributions for the twisted horocycle flow. The logarithmic factor
 arises from the control of the geometry of the rescaled metric
 structure or, equivalently, from estimates related to close return
 times of the horocycle flow. The relevant measure of the degeneration
 of the geometry is a notion of injectivity radius, called the
 \emph{average width} of a horocycle arc, already introduced in
 \cite{FF3} for nilflows.

 \section{Statement of results}\label{sect:results}

 Let $\Gamma < \SL(2, \R)$ be any lattice. The group $\SL(2, \R)$ acts
 by right multiplication on the quotient manifold $M:=\Gamma
 \backslash \SL(2, \R)$. The Haar measure on $\SL(2, \R)$ induces a
 right-invariant volume form $\operatorname{vol}$ on $M$ which we
 normalize so that $\operatorname{vol}(M) = 1$.  We recall that
 $(X,U,V)$ is the basis of $\sl_2(\R)$ given by
 \[
 X= \begin{pmatrix} {1}&0\\0& {-1}
 \end{pmatrix}, \quad U=\begin{pmatrix} 0&1\\0& 0
 \end{pmatrix}, \quad V=\begin{pmatrix} 0&0\\1& 0
 \end{pmatrix}.
 \]
 These matrices satisfy the commutation relations
 % \begin{equation}\label{equa:commutation}
 \[
 [X,U]= 2U , \quad [X,V]= -2V , \quad [U,V]= X \,;
 \]
 The center of the enveloping algebra of the Lie algebra $\sl_2(\R)$
 is one-dimensional and is generated by the \emph{Casimir operator}
 \begin{equation}
   \label{eq:Casimir}
   \Box = -X^2 -2 (UV + VU)\,.
 \end{equation}
 The Casimir operator commutes with the action of the full group
 $\SL(2,\R)$ on the enveloping algebra, hence the differential
 operator induced by the Casimir operator on any unitary
 representation is $\SL(2,\R)$-invariant.

 \smallskip

 The flows on $M$ defined, for all $x\in M$ and $t \in \R$, by the
 formulas
 \[
 a_t(x) = x \exp (tX/2),\qquad h_t(x) = x\exp (tU),\qquad \bar h_t(x)
 =x \exp (tV)\,,
 \]
 are, by definition, the geodesic, stable and unstable horocycle flow,
 respectively. By ``horocycle flow'' we shall mean the stable
 horocycle flow $\{h_t\}$ generated by the vector field $U$ on $M$.
 Our ``twisted horocycle flows'' will be product flows on $M\times \T$
 of the horocycle flow on $M$ with with a linear flow on the circle
 \[
 \T := \R / 2 \pi \mathbb Z.
 \]

 Let $L^2(M \times \T)$ be the space of complex-valued,
 square-integrable functions on $M \times \T$.  Let $K$ denote the
 vector field on $\T$ that is given by ordinary differentiation. The
 Laplace-Beltrami operator on $M \times \T$ is an elliptic second
 order operator.  It is a non-negative essentially self-adjoint
 operator on $L^2(M \times \T)$ and is denoted by
 \[
 \triangle := -K^2 - X^2-2(U^2+V^2)\,.
 \]

 For any $s > 0$, the operator $(I + \triangle)^{s/2}$ is defined by
 the spectral theorem.  The \textit{Sobolev space} $W^s(M \times \T)
 \subset L^2(M \times \T)$ is defined to be the maximal domain of $(I
 + \triangle)^{s/2}$ on $M \times \T$ and is endowed with the inner
 product
 \[%\begin{equation}\label{equa:inner-product_MT}
 \langle F, G\rangle_{W^s(M\times \T)} := \langle (I + \triangle)^s F, G \rangle_{L^2(M\times \T)}\,. 
 \]%\end{equation}
 We denote the corresponding norm by
 \[
 \| F \|_s := \langle F, F \rangle_{W^s(M\times \T)} ^{1/2}\,.
 \]

 The distributional dual space to $W^s(M \times \T)$ is denoted
 \[
 W^{-s}(M \times \T) := \left(W^s(M \times \T)\right)'
 \]
 with norm denoted $\|\cdot \|_{-s}$.  The space of smooth vectors in
 $L^2(M \times \T)$ and its distributional dual space are denoted
 \begin{equation}\label{equa:def-smooth}
   \begin{array}{lll}
     W^\infty(M \times \T) & := \cap_{s \geq 0} W^s(M \times \T), \text{ and }\\
     W^{-\infty}(M \times \T) & := \cup_{s \geq 0} W^{-s}(M \times \T)\,,
   \end{array}
 \end{equation}
 respectively.  We remark that when $M$ is compact, then $W^\infty(M
 \times \T) = C^\infty(M \times \T)$ and $W^{-\infty}(M \times \T) =
 \mathcal D'(M \times \T)$.

 Let $W^s(M)$ be the subspace of functions in $W^s(M \times \T)$ that
 are constant with respect to the natural circle action on
 $M\times\T$, which is endowed with the same inner product $\langle
 \cdot, \cdot \rangle_s$.  Let $C^\infty(M)$ and $\mathcal E'(M)$ be
 defined as in \eqref{equa:def-smooth}.

 Our results will often be proven using Sobolev norms involving only
 the $K$, $X$ and $V$ derivatives and the Casimir operator.  Such
 \emph{foliated Sobolev spaces} are defined as follows.  The Casimir
 operator $\Box$ is an essentially self-adjoint operator on $L^2(M)$,
 hence on $L^2(M\times \T)$.  Also notice that the foliated Laplacian
 \[
 \widehat \triangle := - K^2 - X^2 - V^2
 \]
 is a non-negative essentially self-adjoint differential operator on
 $L^2(M \times \T)$.  For any $r, s \geq 0$, let $\widehat
 W^{r,s}(M\times \T)$ be the Sobolev space that is the maximal domain
 of $ (I + \Box^2)^{r/2}(I + \Box^2 +\widehat \triangle^2)^{s/2}$ on
 $L^2(M \times \T)$ with inner product
 \[
 \langle F, G \rangle_{\widehat W^{r, s}(M\times \T)} : = \langle (I +
 \Box^2)^{r/2} (I + \Box^2 + \widehat \triangle^2)^{s/2} F,
 G\rangle_{L^2(M \times \T)}\,.
 \]
 Let us define the norm on $\widehat W^{r,s}(M\times \T)$ to be
 \[
 \vert F \vert_{r, s} := \langle F, F \rangle_{\widehat
   W^{r,s}(M\times \T)}^{1/2}\,.
 \]
 \vspace{.2cm} The dual space of $\widehat W^{r,s}(M\times \T)$ is
 denoted $\widehat W^{-r,-s}(M\times \T)$ with norm
 $\vert~\cdot~\vert_{-r, -s}$.

\subsection{Twisted horocycle flows}
For any $\lambda \in \R^*$, the \textit{twisted horocycle flow} is the
flow $\{\phi_t^\lambda\}_{t \in \R}$ on $M \times \T$ generated by the
vector field $\mathcal (U + \lambda K).$ The main theorems of this
paper concern the quantitative equidistribution of this flow, and its
applications.  As in \cite{FF1}, \cite{FF2}, \cite{FF3} and \cite{T},
our analysis is based on invariant distributions and on bounds for
solutions of a cohomological equation, and it is carried out in
irreducible, unitary representation spaces.

Because the rate equidistribution of the horocycle flow has been
completely understood in \cite{FF1}, we restrict our attention to $(U
+ \lambda K)$-invariant distributions that are not $U$-invariant.  We
denote these spaces of infinite-order distributions by
\[
\mathcal I_\lambda(\Gamma) := \left\{\mathcal D \in \mathcal E'(M
  \times \T) : (U + \lambda K) \mathcal D = 0, \text{ and } U \mathcal
  D \neq 0\right\}\,.\] The space $\mathcal I_\lambda(\Gamma)$ is
filtered by subspaces of invariant distributions of finite order. For
all $r, s \geq 0$ we denote
\[
\begin{array}{ll}
  \mathcal I_\lambda^s(\Gamma) & := \left\{\mathcal D \in W^{-s}(M \times \T) : 
    (U + \lambda K) \mathcal D = 0, \text{ and } U \mathcal D \neq 0\right\}\,; \\
  \mathcal I_\lambda^{r, s}(\Gamma) & := \left\{\mathcal D \in \widehat W^{-r,-s}(M \times \T)  : 
    (U + \lambda K) \mathcal D = 0, \text{ and } U \mathcal D \neq 0\right\}\,.
\end{array}
\]
If $\lambda \notin \mathbb Z$ and $\mathcal D \in \mathcal
I_\lambda(\Gamma)$, then $U D \neq 0$.

\begin{theorem}\label{theo:Invariant_dist}
  For all $\lambda \in \R^*$ the space $\mathcal I_\lambda(\Gamma)$ of
  invariant distribution for the twisted horocycle flow which are not
  horocycle flow invariant is an infinite dimensional subspace of the
  foliated Sobolev space $\widehat W^{0, -(1/2 +)}(M \times \T)$.
\end{theorem}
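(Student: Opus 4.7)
The plan is to pass to the joint spectral decomposition of $L^2(M\times\T)$ under the circle action on $\T$ and the right action of $\SL(2,\R)$ on $M$, and to construct the invariant distributions fibrewise. First, expand any $\mathcal D\in\mathcal E'(M\times\T)$ as a Fourier series $\mathcal D=\sum_{n\in\Z} \mathcal D_n\otimes e^{in\theta}$ in the $\T$-variable, with $\mathcal D_n\in\mathcal E'(M)$. Since $K$ acts on $e^{in\theta}$ as multiplication by $in$, the twisted invariance $(U+\lambda K)\mathcal D=0$ decouples into the family of eigenvalue equations $U\mathcal D_n=-i\lambda n\,\mathcal D_n$ on $M$, one for each $n\in\Z$. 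The condition $U\mathcal D\ne 0$ forces $\mathcal D_n\ne 0$ for some $n\ne 0$ (the $n=0$ Fourier mode is $U$-invariant, hence excluded), and conversely for $n\ne 0$ any non-trivial $\mathcal D_n$ automatically gives $U\mathcal D_n\ne 0$. The problem is thus reduced, mode by mode, to constructing eigendistributions of the horocycle generator $U$ with non-zero purely imaginary eigenvalue on $M$.

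Next, I would decompose $L^2(M)$ into irreducible unitary $\SL(2,\R)$-representations $\pi_\mu$ parametrized by the Casimir spectrum, which is infinite for any lattice $\Gamma$. In each irreducible component the equation $\pi_\mu(U)v=-i\lambda n\,v$ is a generalized eigenvalue problem for the horocycle generator, solved most concretely in a line model where $\pi_\mu(U)$ acts as the derivation $d/dx$ on a suitable weighted $L^2(\R)$. In such a model the distributional solutions are spanned by $e^{-i\lambda n x}\mathbf{1}_{x>0}$ and $e^{-i\lambda n x}\mathbf{1}_{x<0}$ (only one of the two surviving in the holomorphic/antiholomorphic discrete series), and pulling these back through the intertwiner yields a non-trivial twisted invariant distribution attached to every pair $(n,\pi_\mu)$ with $n\ne 0$. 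Varying $\mu$ over the infinite Casimir spectrum for a single fixed $n\ne 0$ already yields infinitely many linearly independent elements of $\mathcal I_\lambda(\Gamma)$, giving infinite-dimensionality.

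For the regularity statement, the key remark is that on a $U$-eigendistribution the $U$-derivative acts as multiplication by the scalar $-i\lambda n$, so the foliated Sobolev norm $\vert\cdot\vert_{0,s}$ built from $K$, $X$, $V$ and the Casimir is the natural scale against which to test $\mathcal D$. In the line model the eigendistribution is a plane wave cut off by the intertwining weight, and the classical Fourier-analytic fact that $e^{i\mu x}$ paired against a compactly supported test function defines an element of $H^{-s}_{\mathrm{loc}}(\R)$ exactly for $s>1/2$ (and not for $s=1/2$) transfers through the intertwiner to membership in $\widehat W^{0,-s}(M\times\T)$ for every $s>1/2$, with sharp exponent $1/2$.

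The hardest step, which I expect to be the main obstacle, is making the norm comparison of the previous paragraph uniform with respect to the representation parameters $\mu$ and $n$, so that the fibrewise estimate assembles into a global pairing $\vert\langle\mathcal D,F\rangle\vert\leq C_s\,\vert F\vert_{0,s}$ on $M\times\T$ rather than one only valid in each component. This requires writing out the action of $X$, $V$ and $\Box$ in each of the principal, complementary and discrete series line models and checking that the weights appearing in the intrinsic representation-theoretic Sobolev norm are dominated by factors absorbed by $(I+\Box^2+\widehat\triangle^2)^{s/2}$. Once uniformity is in place, summing the fibrewise estimates over the Casimir spectrum and the Fourier modes $n$ yields the theorem, in close parallel with the treatment of $U$-invariant distributions by the first two authors in \cite{FF1}, to which the present setting degenerates as $\lambda n\to 0$.
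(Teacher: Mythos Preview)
Your overall strategy---Fourier decompose in the $\T$-variable, pass to irreducible $\SL(2,\R)$-components, solve the eigenvalue equation for $U$ in a line model, and read off the regularity---is exactly the paper's approach. The paper constructs, in each irreducible component $H_{m,\mu}$, a single invariant distribution $D^\lambda_{m,\mu}$ given by pairing against the plane wave $e^{-i\lambda m x}$ (equivalently, the Dirac mass at $-\lambda m$ in Fourier transform), proves uniqueness in each component as a corollary of the cohomological-equation Theorems~\ref{theo:cohomology-principal} and~\ref{theo:cohomology-discrete}, and obtains the regularity from Lemmas~\ref{lemma:regularity-D_lambda-XV-principal} and~\ref{lemma:D^lambda-reg-discrete}.

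There is, however, a genuine error in your description of the solution space. The distributional equation $\mathcal D'=-i\lambda n\,\mathcal D$ on $\R$ has a \emph{one}-dimensional solution space, spanned by the full plane wave $e^{-i\lambda n x}$; the Heaviside-truncated functions $e^{-i\lambda n x}\mathbf 1_{x>0}$ and $e^{-i\lambda n x}\mathbf 1_{x<0}$ are \emph{not} solutions, since differentiating them produces an extra Dirac mass at the origin. You may be extrapolating from the untwisted case $\lambda n=0$ treated in \cite{FF1}, where the space of $U$-invariant distributions in a principal or complementary series component is indeed two-dimensional; that extra dimension disappears once the eigenvalue is nonzero, and the paper's cohomology theorems confirm this. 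Your conclusion survives the error (one distribution per component still gives an infinite-dimensional space), but the picture of the invariant distributions is wrong.

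Your regularity sketch also needs adjustment. The plane wave $e^{-i\lambda n x}$ is bounded, hence in $L^2_{\mathrm{loc}}(\R)$, so the $1/2$ threshold does \emph{not} come from its $x$-regularity. The foliated norm $\vert\cdot\vert_{0,s}$ measures regularity in the $K,X,V$ directions only, and the paper computes this by passing to Fourier transform in the line-model variable: there the distribution becomes the Dirac mass $\delta_{-\lambda m}$, and on an interval away from $\xi=0$ the operators $\hat X,\hat V$ are comparable to ordinary derivatives in $\xi$, so the usual Sobolev embedding gives $\delta_{-\lambda m}\in W^{-s}$ exactly for $s>1/2$. This is the source of the exponent.

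Finally, the uniformity in $\mu$ and $n$ that you flag as the ``hardest step'' is not needed for the statement of this theorem, which only asserts that the generators lie in $\widehat W^{0,-(1/2+)}$; uniform control is developed separately, and much more elaborately, in the scaling estimates of Section~\ref{sect:Scale}, where it is genuinely required for the effective equidistribution theorem.
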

For $f \in C^\infty(M \times \T)$, invariant distributions appear in
the study of solutions of the cohomological equation for the twisted
horocycle flow:
\begin{equation}\label{equa:coeqn12}
  (U + \lambda K) g = f\,.
\end{equation}
Let
\[
\text{Ann}_\lambda(\Gamma) := \left\{f \in C^\infty(M \times \T) :
  \mathcal D(f) = 0 \text{ for all } \mathcal D \in \mathcal
  I_\lambda(\Gamma)\right\}\,.
\]

\begin{theorem}\label{theo:cohomological_eqn}
  For every function $f \in \text{Ann}_\lambda(\Gamma)\subset
  C^\infty(M \times \T)$, there is a solution $g \in C^\infty(M \times
  \T)$ of the twisted cohomological equation
  \[
  (U + \lambda K) g = f\,,
  \]
  satisfying the following Sobolev estimates. For all $r$, $s \geq 0$,
  there is a constant $C_{r,s}:= C_{r,s}(\Gamma) > 0$ such that with respect to the
  foliated Sobolev norms
  \[
  \vert g \vert_{r,s} \leq \frac{C_{r,s}}{|\lambda|} (1+\vert \lambda
  \vert^{-s}) \vert f \vert_{r +3s, s +1}\,,
  \]
  hence, for all $s \geq 0$ there is a constant $C_{s}:= C_{s}(\Gamma)> 0$ such that
  with respect to the full Sobolev norms
  \[
  \| g \|_{s} \leq \frac{C_s}{|\lambda|} (1+\vert \lambda \vert^{-s})
  \| f \|_{4s +1}\,.
  \]
\end{theorem}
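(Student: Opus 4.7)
The plan is to reduce the problem to an explicit calculation inside each irreducible unitary $\SL(2,\R)$-representation. First, Fourier expand $f \in C^\infty(M\times\T)$ on the circle factor, $f(x,\theta)=\sum_{n\in\Z} f_n(x)\,e^{in\theta}$, so that the cohomological equation decouples into the family of shifted equations $(U + i\lambda n)g_n = f_n$ on $M$; then decompose $L^2(M)$ as a direct integral of irreducible unitary representations of $\SL(2,\R)$ parametrized by the Casimir spectrum. For $n=0$ one recovers the classical horocycle cohomological equation treated by the first two authors in \cite{FF1}; the genuinely new content lies in inverting the shifted vector field $U + i\lambda n$ inside each irreducible representation $\pi$, restricted to the annihilator of the invariant distributions supplied by Theorem~\ref{theo:Invariant_dist}.

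The key tool for the twisted modes is the scaling built into $\SL(2,\R)$: conjugation by the geodesic flow satisfies $\operatorname{Ad}(a_t)(U+\lambda K) = e^{-t}U + \lambda K = e^{-t}(U + \lambda e^{t}K)$, so, up to rescaling time by $e^{-t}$, the twist parameter is transformed from $\lambda$ to $\lambda e^{t}$. Choosing $t$ so that $|\lambda n|e^{t}\asymp 1$ reduces the problem in each Fourier sector to a canonical normalization in which $U + i\lambda n$ is a unit-size perturbation of $U$. In that canonical scale I would write down an explicit Green's function for the shifted equation inside each irreducible (an inverse-Fourier formula in the line model of principal/complementary series, or a weighted sum over $K$-types in the discrete series), and bound its $L^2$-norm by the norm of $f_n$ modulo $\mathcal I_\lambda(\Gamma)$. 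Tracking how the foliated Sobolev norms transform under the rescaling by $a_t$ produces the two characteristic factors of the estimate: the prefactor $|\lambda|^{-1}$ is the natural cost of inverting the shifted vector field in the canonical scale, while the extra $(1+|\lambda|^{-s})$ arises from converting $K$-derivatives, which are fixed by the scaling, back into foliated derivatives after rescaling. The pair of orders $(r+3s,\,s+1)$ in the foliated bound should come out of this bookkeeping directly, the $+1$ being the unit cost of one inversion and the $3s$ accounting for three extra Casimir factors per order of rescaling.

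The final step is to upgrade the foliated bound $|g|_{r,s}$ to the full Sobolev bound $\|g\|_s$. The foliated norms control only $K$, $X$, $V$ and Casimir derivatives, whereas the full norm also demands $U$-derivatives; but using the commutator $[U,V]=X$ and the Casimir identity $2(UV+VU) = -(X^2+\Box)$, each $U$-derivative can be expressed through $V$, $X$, $K$ and $\Box$ at a bounded cost of extra foliated orders. Iterating this replacement $s$ times produces the stated $\|f\|_{4s+1}$ on the right-hand side.

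The hardest step will be uniformity across the unitary spectrum in the Green's function construction of step two: for complementary series whose Casimir parameter approaches zero (which happens precisely when the Laplacian spectral gap shrinks), for discrete series of large weight, and for Fourier modes with $|\lambda n|$ of arbitrary size. In the critical regimes the shifted operator $U + i\lambda n$ develops a near-kernel and the explicit Green's function becomes singular; extracting only the announced polynomial loss $|\lambda|^{-1}(1+|\lambda|^{-s})$ without any dependence on the Laplacian spectral gap is exactly what the scaling analysis of Theorem~\ref{theo:Invariant_dist} is designed to secure, and that is where the bulk of the representation-theoretic work must be concentrated.
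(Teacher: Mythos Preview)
Your high-level plan (Fourier expand in $\T$, decompose into irreducibles, invert the shifted operator explicitly in each component) matches the paper, and the geodesic-scaling intuition is correct in spirit: the paper realizes it as the Fourier-side substitution $\hat f_\lambda(\xi)=\hat f(\lambda\xi)$ in the line model, which is exactly the Fourier shadow of conjugation by $a_{\log|\lambda|}$. Two details differ from the paper. The factor $(1+|\lambda|^{-s})$ comes not from $K$-derivatives but from $V$: on each mode $K$ is just multiplication by $im$ and is untouched by the scaling, whereas $\hat V^\beta\hat f_\lambda=\lambda^\beta(\hat V^\beta\hat f)_\lambda$, so undoing the normalization costs $|\lambda|^{-\beta}$ per $V$-derivative. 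And the paper treats the discrete series not via $K$-types but via a Fourier model on $\R^+$; uniformity in the weight $\nu$ (your ``hardest step'') is obtained by conjugating with the multiplication operator $\mathcal A:\hat f\mapsto\xi^{-\nu/2}\hat f$, which reduces the estimates to the $\re\nu=0$ case at the cost of extra Casimir order---this is precisely where $r+3s$ rather than $r+s$ appears.

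There is, however, a genuine gap in your final step. The element $U^2$ is \emph{not} expressible through $V$, $X$, $K$ and $\Box$: the Casimir identity gives only the symmetric combination $UV+VU$, and together with $[U,V]=X$ one recovers $UV$ and $VU$ separately, but $U^2$ remains an independent PBW monomial in the enveloping algebra, so your commutator substitution cannot produce the full Laplacian. The paper's mechanism is different and much simpler. Since $U$ commutes with $U+\lambda K$, one has $(U+\lambda K)(U^jg)=U^jf$ for every $j\in\N$, so the already-established foliated bound applies verbatim to $U^jg$ with data $U^jf$. Writing $\|g\|_s\le\sum_{j\le s}|U^jg|_{0,s-j}$, applying the foliated estimate termwise, and using $|U^jf|_{3(s-j),\,s-j+1}\le\|f\|_{4s+1}$ gives the full-Sobolev bound with the stated loss $4s+1$.
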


From Theorem~\ref{theo:Invariant_dist} and
Theorem~\ref{theo:cohomological_eqn} together with a trace theorem and
a quantitative analysis of the returns of the horocycle flow we will
derive effective equidistribution results for the twisted horocycle
flow $\{\phi_t^\lambda\}_{t\in \R}$ on $M \times \T$.

Let $\overline{x} \in M \times \T$ and $T \geq 1$.  Let us consider
the ergodic integral
\begin{equation}\label{equa:Twisted integral1}
  \frac{1}{T} \int_0^{T} (\phi_t^\lambda (\bar x))^* dt\,,
\end{equation}
as a distribution in $\widehat W^{0,-(1+)}(M \times \T)$, whose
regularity follows from a trace theorem discussed in
Subsection~\ref{subs:trace-transfer}.

On smooth functions which are constant with respect to the natural
circle action on $M\times \T$ (on the second factor), the ergodic
integral of the twisted horocycle flow restricts to the ergodic
integral for the horocycle flow on $M$.  The rate of equidistribution
of the horocycle flow has been completely understood in \cite{FF1}, so
we consider functions such that $\int_{\T} F = 0$, that is, functions
which are in the orthogonal complement of the subspace of functions
invariant under the above circle action.

We prove an effective equidistribution theorem for the twisted
horocycle flow on finite volume manifolds $M$ under some Diophantine
conditions.  In Section~\ref{sect:avg-width} we introduce a function
$C_\Gamma: M\times \R^+ \to \R^+\cup\{+\infty\}$ whose growth reflects
the Diophantine properties of points. In particular, for every $A\in
[0,1)$ and every $Q >0$ let $M_{A,Q} \subset M$ denote the set of
Diophantine points introduced in formula~\eqref{eq:M_AQ}. The function
$C_\Gamma$ will satisfy the following properties.  There exists a
constant $C_{\Gamma,A,Q}>0$ such that for all $x\in M_{A,Q}$ and all
$T\geq 1$,
\[
C_{\Gamma}(x,T) \leq C_{\Gamma,A,Q} T^{\frac{2A}{1-A}} \,.
\]

By the logarithm law of geodesics, for almost all $x\in M$, for all
$\epsilon >0$ there exists a constant $C_\epsilon(x)>0$ such that for
all $T\geq 1$,
\begin{equation}\label{equa:Wlog-law}
  C_{\Gamma}(x,T) \leq C_{\epsilon}(x)  (1+ \log^{1+\epsilon} T)\,.
\end{equation}
In addition, if $M$ is compact then there is a constant $C_{\Gamma}
>1$, depending only on $\Gamma$, such that then
\[
C_\Gamma(x,T) \leq C_\Gamma\,.
\]
\begin{theorem}\label{theo:equidistribution}
  For every $s > 2$ and $r\geq 5s-3$, there is a constant $C_{r,s}:=C_{r,s}(\Gamma) >
  0$ such that the following holds. For any $\bar x=(x,\theta) \in M
  \times \T$, and for any $T \geq e $, there are distributions
  $\mathcal D_{\bar x, \lambda, T}^{r,s}\in \mathcal
  I_\lambda^{r,s}(\Gamma)$ and $\mathcal R_{\bar x, \lambda, T}^{r,s}
  \in \widehat W^{-r,-s}(M)^\bot \subset \widehat W^{-r,-s}(M \times
  \T)$, orthogonal to $\mathcal I_\lambda^{r, s}(\Gamma)$ in $\widehat
  W^{-r,-s}(M)^\bot$, such that for any $F \in \widehat W^{r,s}(M
  \times \T)$ satisfying $ \int_{\T} F = 0, $ we have
  \begin{equation}\label{equa:ergodic integral decomposition2}
    \int_0^{T} F \circ \phi_t^\lambda (\bar x) dt = 
    \mathcal D_{\bar x, \lambda, T}^{r,s}(F) T^{5/6}  + 
    \mathcal R_{\bar x, \lambda, T}^{r,s}(F) \,,
  \end{equation}
  where the following bounds hold:
  \begin{equation}\label{equa:Remainder-inv-dist-twist_flow}
    \begin{aligned}
      \vert R_{\bar x, \lambda, T}^{r,s}\vert_{-r,-s}^2 & \leq C_{r,s}
      \frac{1 + |\lambda|^{-2(s - 1)}}{|\lambda|^2} [C_\Gamma(x,T) +
      C_\Gamma(h_T(x),T)]^2 \,; % \log^{1/2} T
      \\
      \vert \mathcal D^{r,s}_{\bar x, \lambda, T} \vert_{-r,-s}^2 &
      \leq C_{r,s} (1 + |\lambda|^{-8s}) [C_\Gamma(x,T) +
      C_\Gamma(h_T(x),T)]^2 \log T \,.
    \end{aligned}
  \end{equation}
\end{theorem}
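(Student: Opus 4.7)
The plan is to realize the ergodic-average functional as an element of a foliated distributional Sobolev space and then orthogonally decompose it along the closed subspace of twisted-invariant distributions. First I would invoke the trace/transfer theorem developed in Subsection~\ref{subs:trace-transfer} to view
\[
\mathcal E_{\bar x,\lambda,T}:F\mapsto\int_0^T F\circ\phi_t^\lambda(\bar x)\,dt
\]
as an element of $\widehat W^{-r,-s}(M\times\T)$, with a preliminary norm controlled by the average-width function $C_\Gamma$ at the endpoints $x$ and $h_T(x)$. Restricting test functions to those with $\int_\T F=0$ lets me regard $\mathcal E_{\bar x,\lambda,T}$ as an element of $\widehat W^{-r,-s}(M)^\perp$, inside which I split orthogonally with respect to the closed subspace $\mathcal I_\lambda^{r,s}(\Gamma)$ to obtain
\[
\mathcal E_{\bar x,\lambda,T}=T^{5/6}\,\mathcal D^{r,s}_{\bar x,\lambda,T}+\mathcal R^{r,s}_{\bar x,\lambda,T}.
\]
The factor $T^{5/6}$ is pulled out deliberately so that $\mathcal D^{r,s}_{\bar x,\lambda,T}$ is bounded up to logarithmic corrections; the exponent $5/6$ is the natural scaling exponent for twisted-invariant distributions that will emerge below.

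For the remainder $\mathcal R^{r,s}_{\bar x,\lambda,T}$, I would test against $F\in\widehat W^{r,s}(M\times\T)$ lying in the annihilator of $\mathcal I_\lambda^{r,s}(\Gamma)$. Theorem~\ref{theo:cohomological_eqn} then produces a primitive $G$ with $(U+\lambda K)G=F$ satisfying
\[
|G|_{r_1,s_1}\leq\frac{C_{r_1,s_1}}{|\lambda|}(1+|\lambda|^{-s_1})\,|F|_{r_1+3s_1,\,s_1+1},
\]
with $r_1,s_1$ chosen so that $r_1+3s_1\leq r$ and $s_1+1\leq s$ — the hypothesis $r\geq 5s-3$ together with $s>2$ leaves just enough Sobolev room. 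Because $U+\lambda K$ generates $\phi_t^\lambda$, the fundamental theorem of calculus yields the telescoping identity
\[
\mathcal E_{\bar x,\lambda,T}(F)=G(\phi_T^\lambda(\bar x))-G(\bar x),
\]
and the trace theorem bounds each point evaluation by $|G|_{r_1,s_1}$ with prefactors of order $C_\Gamma(x,T)^{1/2}$ and $C_\Gamma(h_T(x),T)^{1/2}$. Combining these two inequalities produces the announced bound on $|\mathcal R^{r,s}_{\bar x,\lambda,T}|_{-r,-s}$, with the weight $(1+|\lambda|^{-2(s-1)})/|\lambda|^2$ arising from squaring the cohomological-equation constant.

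The estimate on $\mathcal D^{r,s}_{\bar x,\lambda,T}$ is the heart of the proof and rests on a scaling argument. The commutation $a_\sigma\circ h_t=h_{te^{-\sigma}}\circ a_\sigma$ in $\SL(2,\R)$, together with the map $A_\sigma(x,\theta):=(a_\sigma x,\theta)$, gives $A_\sigma\circ\phi_t^\lambda=\phi_{te^{-\sigma}}^{\lambda e^\sigma}\circ A_\sigma$, and with $\sigma=\log T$ the substitution $\tau=te^{-\sigma}$ rewrites
\[
\int_0^T F\circ\phi_t^\lambda(\bar x)\,dt=T\int_0^1(F\circ A_{-\log T})\circ\phi_\tau^{\lambda T}(A_{\log T}(\bar x))\,d\tau.
\]
Decomposing $L^2(M\times\T)$ into irreducible $\SL(2,\R)\times S^1$-components and using the classification of $(U+\lambda K)$-invariant distributions in each such component, I would show that pullback by $A_{\log T}$ acts on these invariant distributions by a scaling of order $T^{5/6}$ rather than the naive $T^1$; this exponent, which is the optimal one extractable from the invariant-distribution analysis, is uniform across irreducibles and in particular independent of the Laplace spectral gap. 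The logarithmic factor will be extracted from the growth of $C_\Gamma$ along the rescaled geodesic orbit and from a close-returns analysis for the horocycle flow on the rescaled fundamental domain. The main obstacle is precisely this last step: identifying the sharp exponent $5/6$ uniformly in the representation parameter, absorbing the derivative losses from the $A_{\log T}$-pullback into the ambient Sobolev order $r\geq 5s-3$, and extracting the $\log T$ from the average-width geometry.
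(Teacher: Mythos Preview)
Your treatment of the remainder $\mathcal R^{r,s}_{\bar x,\lambda,T}$ is essentially the paper's argument (Lemma~\ref{lemma:Remainder} at the terminal step $j=l$, $\mathcal T_l=1$): project the test function onto the annihilator, solve the cohomological equation, telescope, and apply the trace theorem with the average-width bound. That part is fine.

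The gap is in your scaling argument for $\mathcal D^{r,s}_{\bar x,\lambda,T}$. You correctly write $A_\sigma\circ\phi_t^\lambda=\phi_{te^{-\sigma}}^{\lambda e^\sigma}\circ A_\sigma$, but then the conclusion you draw from it cannot work: the geodesic push-forward carries $(U+\lambda K)$-invariant distributions to $(U+\lambda e^\sigma K)$-invariant distributions, i.e.\ to a \emph{different} invariant subspace. There is therefore no well-defined ``action of $A_{\log T}$ on $\mathcal I_\lambda^{r,s}(\Gamma)$'' whose operator norm you could compute and find equal to $T^{5/6}$. This is exactly the point the introduction flags when it says that ``in the present case the scaling is not induced by a renormalization dynamics''; the geodesic renormalization that drives the untwisted theory in \cite{FF1} is unavailable here. (The paper does use a genuine geodesic rescaling once, in Corollary~\ref{cor:equidistribution}, but only to trade $|\lambda|\le e$ for $|\lambda|\approx 1$ \emph{after} Theorem~\ref{theo:equidistribution} is already in hand.)

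What replaces geodesic renormalization is the one-parameter family of \emph{rescaled foliated Sobolev norms} $|\cdot|_{r,s;\mathcal T}$ built from $X_{\mathcal T}=\mathcal T^{-1/3}X$ and $V_{\mathcal T}=\mathcal T^{-2/3}V$ (Section~\ref{sect:inv-dist}). These weights do not come from any group element; they are chosen so that the dilation operator $\hat U_\tau$ of formula~\eqref{equa:U_T-def}, which is centered at the Fourier support $\xi=-\lambda m$ of $D^\lambda_{m,\mu}$ and hence \emph{fixes} that distribution up to a scalar $\tau^{1/6}$, interacts well with $\hat X_{\mathcal T},\hat V_{\mathcal T}$ (Lemma~\ref{lemma:local_bound}). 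The outcome is the scaling estimate of Theorems~\ref{theo:scaling-principal} and~\ref{theo:Discrete_scale}:
\[
|D^\lambda_{m,\mu}|_{-(r+2s),-s;\mathcal T'}\le C_{r,s}\Bigl(\tfrac{\mathcal T'}{\mathcal T}\Bigr)^{1/6}(1+|\lambda m|^{-3s})\,|D^\lambda_{m,\mu}|_{-r,-s;\mathcal T},
\]
which compares the \emph{same} distribution in two different norms. The exponent $5/6=1-1/6$ then emerges from an \emph{iterative} scheme (Lemma~\ref{lemma:iterate}): write $T=e^{lh}$, set $\mathcal T_j=e^{(l-j)h}$, decompose $\gamma_{\bar x,\lambda}^T=\mathcal D^j\oplus^\perp\mathcal R^j$ in $\widehat H^{-r,-s}_{\mathcal T_j}$ for each $j$, observe $\mathcal D^j=\mathcal D^{j-1}+\mathbf I_j(\mathcal R^{j-1})$, and telescope from $j=0$ (where $\mathcal T_0=T$ and the trace theorem bounds $|\mathcal D^0|_{-r,-s;T}$ directly) to $j=l$ (where $\mathcal T_l=1$). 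Each increment $\mathbf I_j(\mathcal R^{j-1})$ is controlled by the rescaled cohomological-equation bound (Theorems~\ref{theo:cohomology-principal}, \ref{theo:cohomology-discrete}), and the scaling theorem converts the $\mathcal T_j$-norm to the unscaled norm at the cost of $\mathcal T_j^{-1/6}$. The sum $\sum_j T_j^{-5/6}$ converges, so no logarithm appears here; the $\log T$ in the final statement is the square of the $\log^{1/2}\mathcal T_j$ coming from the average-width estimate (Theorem~\ref{theo:average_width}) inside each remainder bound. The condition $r\ge 5s-3$ is precisely $r-2s\ge 3(s-1)$: a loss of $2s$ from the scaling theorems and a further $3(s-1)$ from the rescaled cohomological equation.

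In short, your outline correctly isolates the orthogonal decomposition and the remainder mechanism, but the core step---extracting $T^{5/6}$ for the invariant part---requires the non-geodesic rescaled norms of Section~\ref{sect:inv-dist}, the scaling theorems of Section~\ref{sect:Scale}, and the discrete iteration of Section~\ref{sect:equi}, none of which a single geodesic conjugation can supply.
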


\begin{remark}
  The above estimates are independent of the spectral gap of the
  Laplace-Beltrami operator on the underlying hyperbolic surface.
\end{remark}

In the regime $\vert \lambda T\vert \geq e$ and for $\vert
\lambda\vert \leq e$ the above result can be improved by a scaling
argument based on the action of the geodesic flow. In this case, for
all $x\in M$ let us defined the constant $C_{\Gamma,\lambda}(x,T)>0$
by the formula
\begin{equation}
  \label{eq:W_lambda}
  C_{\Gamma,\lambda}(x,T):= C_\Gamma(a^{-1}_{\log \vert\lambda\vert} (x), \vert \lambda T\vert) \,.
\end{equation}

\begin{corollary}
  \label{cor:equidistribution}
  For every $s > 7$, there is a constant $C_s := C_s(\Gamma) > 0$ such
  that the following holds.  For every $\lambda\in \R\setminus\{0\}$,
  the following bounds on the twisted ergodic integrals along the
  horocycle flow holds: for every $(x,T) \in M \times \R^+$ and for
  every zero-average function $f\in W^s(M)$, we have, if $\vert
  \lambda T \vert \geq e \geq \vert \lambda\vert$,
  \begin{equation}
    \begin{aligned}
      \vert \int_0^T e^{i \lambda t} & f \circ h_t (x) dt \vert \leq
      \frac{C_{s}}{\vert \lambda\vert } \Vert f \Vert_{s} \\ &\times
      [C_{\Gamma,\lambda}(x,T)+ C_{\Gamma,\lambda}(h_T(x),T)] \vert
      \lambda T\vert^{5/6} \log^{1/2} (\vert \lambda T\vert) \,.
    \end{aligned}
  \end{equation}
\end{corollary}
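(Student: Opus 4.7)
The plan is to reduce to Theorem \ref{theo:equidistribution} by a geodesic-flow rescaling that replaces the twist parameter $\lambda$ by $1$ and the time horizon $T$ by $\lambda T$. First I would lift the twisted horocycle integral on $M$ to an ergodic integral for the twisted horocycle flow on $M \times \T$: set $F(z,\theta) := f(z)\, e^{i\theta}$ and $\bar x := (x, 0)$, so that $F \circ \phi^\lambda_t(\bar x) = e^{i\lambda t} f \circ h_t(x)$ and $\int_\T F\, d\theta = 0$. A direct application of Theorem \ref{theo:equidistribution} at parameter $\lambda$ would yield the constant $C_\Gamma(x, T)$ together with spectral-gap-like factors $|\lambda|^{-8s}$; the point of rescaling is to trade these for the smaller constant $C_{\Gamma,\lambda}(x,T)$ introduced in \eqref{eq:W_lambda} and an absolute constant.

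Next I would carry out the rescaling. The intertwining relation $\mathrm{Ad}(\exp(\tfrac{1}{2}\log|\lambda|\, X))\, U = |\lambda|^{-1}\, U$ gives the conjugation identity $a_{\log|\lambda|} \circ h_u \circ a_{-\log|\lambda|} = h_{u/|\lambda|}$, so that with $y := a_{-\log|\lambda|}(x)$, $\tilde f := f \circ a_{\log|\lambda|}$ and $\tilde F(z,\theta) := \tilde f(z)\, e^{i\theta}$, the change of variables $u = \lambda t$ yields (for $\lambda > 0$, the other case being analogous)
\[
\int_0^T e^{i\lambda t}\, f \circ h_t(x)\, dt \;=\; \frac{1}{\lambda} \int_0^{\lambda T} \tilde F \circ \phi_u^1(y, 0)\, du\,.
\]
I would then apply Theorem \ref{theo:equidistribution} to the right-hand integral with twist parameter $\lambda' = 1$ and time $T' = \lambda T \geq e$. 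The identity $h_{\lambda T}(y) = a_{-\log \lambda}(h_T(x))$ and the definition \eqref{eq:W_lambda} give $C_\Gamma(y,\lambda T) = C_{\Gamma,\lambda}(x,T)$ and $C_\Gamma(h_{\lambda T}(y), \lambda T) = C_{\Gamma,\lambda}(h_T(x),T)$, while the factors $(1+|\lambda'|^{-2(s-1)})$ and $(1+|\lambda'|^{-8s})$ in \eqref{equa:Remainder-inv-dist-twist_flow} collapse to constants. Since $|\lambda T| \geq e$, the remainder term $|\mathcal R|_{-r,-s}$ is absorbed into the main term $|\mathcal D|_{-r,-s}\,(\lambda T)^{5/6}$, and multiplication by $1/\lambda$ produces the factor $\lambda^{-1}|\lambda T|^{5/6}\log^{1/2}|\lambda T|$ in the statement.

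The hard part will be controlling the foliated Sobolev norm $|\tilde F|_{r,s}$ by the full Sobolev norm $\|f\|_s$ uniformly in $\lambda \in (0, e]$. Pulling back by $a_{\log \lambda}$ rescales left-invariant vector fields by $V \mapsto \lambda V$ and $U \mapsto \lambda^{-1} U$, while leaving $X$ and the Casimir $\Box$ fixed. Since the foliated Laplacian $\widehat \triangle = -K^2 - X^2 - V^2$ involves only $V$ (not $U$), and $|\lambda| \leq e$, the rescaling of $V$ is harmless and $|\tilde F|_{r,s}$ stays comparable to $|F|_{r,s}$. The remaining task is the passage from the foliated norm of $F$ to $\|f\|_s$: because $K^n F = i^n F$, this reduces to comparing foliated and full Sobolev norms on $M$, where one pays for the missing $U$-derivatives through the Casimir relation $\Box = -X^2 - 2X - 4UV$; inserting the minimal admissible values from Theorem \ref{theo:equidistribution} ($s > 2$ and $r \geq 5s-3$) then produces the stated threshold $s > 7$ for the full Sobolev exponent.
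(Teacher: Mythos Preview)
Your proposal is correct and follows essentially the same route as the paper: a geodesic rescaling that reduces the twist parameter from $\lambda$ to $\pm 1$ and the time horizon from $T$ to $|\lambda T|$, followed by an application of Theorem~\ref{theo:equidistribution} at the rescaled point $a_{-\log|\lambda|}(x)$, with the key observation that only $V$ (not $U$) enters the foliated norm, so that the composition with $a_{\log|\lambda|}$ costs at most a factor $\max\{|\lambda|^j:0\le j\le s\}\le e^s$ when $|\lambda|\le e$. One small slip: your Adjoint formula should read $\mathrm{Ad}(\exp(\tfrac12\log|\lambda|\,X))\,U=|\lambda|\,U$ (not $|\lambda|^{-1}U$), but your flow conjugation identity $a_{\log|\lambda|}\circ h_u\circ a_{-\log|\lambda|}=h_{u/|\lambda|}$ is correct and is what you actually use.
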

\begin{proof} For every $\lambda \in \R^*$, let
  $\{h^{U/\vert\lambda\vert}_t\}$ denote the horocycle flow with
  generator $U/\vert\lambda\vert$, which is a linear time-change of
  the stable horocycle flow $\{h_t\}= \{h^{U}_t\}$.  By the change of
  variable formula, assuming that $\vert \lambda\vert \leq e$,
  \[
  \int_0^T e^{i\lambda t} f \circ h_t (x) dt = \int_0^{\vert \lambda
    T\vert } e^{i\lambda t/\vert \lambda\vert } f \circ h^{U/\vert
    \lambda\vert }_t (x) \frac{dt}{\vert \lambda\vert}\,.
  \]
  Let $a_{\log \vert \lambda\vert}$ be the geodesic map such that
  $h^{U/\vert \lambda\vert}_t = a_{\log \vert \lambda\vert} \circ
  h^{U}_t \circ a^{-1}_{\log \vert \lambda\vert}$. It follows from
  this formula that
  \[
  \frac{d}{dt} f \circ h^{U/\vert \lambda\vert }_t \circ a_{\log
    \vert\lambda\vert}= \frac{d}{dt} f\circ a_{\log \vert\lambda\vert}
  \circ h^U_t \,.
  \]
  Hence, we get the formula
  \[
  \vert \lambda\vert ^{-1} Uf \circ a_{\log \vert\lambda\vert} = U
  (f\circ a_{\log \vert\lambda\vert}),
  \]
  which implies that
  \[
  \vert \lambda\vert Vf \circ a_{\log \vert\lambda\vert} = V (f\circ
  a_{\log \vert\lambda\vert}).
  \]
  Since our bounds are in terms of the foliated Sobolev norms, by the
  above choice of the geodesic map we have that
  \begin{equation}\label{equa:normbounds-geodesic-scaling}
    \vert  f \circ a_{\log \vert\lambda\vert}\vert_{r, s} \leq  
    \max\{ \vert \lambda\vert^j \vert 0\leq j \leq s\}  \vert f \vert_{r,s}\,.
  \end{equation}
  Now, since $\vert \lambda\vert \leq e$, the above theorem yields a
  bound
  \begin{equation}\label{equa:twist-scale1}
    \begin{aligned}
      \vert &\int_0^{\vert \lambda T\vert } e^{i \lambda t /\vert
        \lambda\vert} f \circ a_{\log \vert\lambda\vert} \circ h^U_t
      \circ a^{-1}_{\log \vert\lambda\vert} (x) \frac{dt}{\vert
        \lambda\vert } \vert \leq \frac{C_{r,s}}{\vert \lambda\vert }
      \vert f \vert_{r,s} \\ &\times [C_{\Gamma,\lambda}(x,T)+
      C_{\Gamma,\lambda}(h_T(x),T)] (\lambda T)^{5/6} \log^{1/2}
      (\vert \lambda T\vert) \,.
    \end{aligned}
  \end{equation}
  The argument is therefore complete.
\end{proof}

\begin{proof}[Proof of Theorem \ref{thm:Main_Twisted}] 
  Let $A\in [0,1)$ and $Q>0$.  By definition of $M_{A, Q}$, we have
  for all $x \in M_{A, Q}$, for all $\lambda \in \R^*$ and for all $t
  \geq 0$,
  \[
  d(a_t(a_{\log \vert \lambda\vert}^{-1}(x)) \leq A (t - \log
  |\lambda|) + Q\,.
  \]
  Then it follows from Remark~\ref{rmrk:C-Gamma} and
  Lemma~\ref{lemma:c(x,T)} that there exists a constant
  $C_{\Gamma,A,Q}>0$ such that whenever $x$, $h_T(x) \in M_{A,Q}$, if
  $\vert\lambda\vert \leq e$ we have
  \[
  C_{\Gamma,\lambda}(h_T(x),T)+C_{\Gamma,\lambda}(x,T) \leq
  2C_{\Gamma,A,Q} \vert \lambda T \vert^{\frac{2A}{1-A}} \,;
  \]
  by the logarithmic law of geodesics, for almost all $x\in M$ and for
  all $\epsilon>0$, there exists a constant $C_\epsilon(x)>0$ such
  that
  \[
  C_{\Gamma,\lambda}(x,T) \leq C_\epsilon(x) [1+ \log^{1+\epsilon}
  (\vert \lambda T\vert)]\,.
  \]
  By the above bounds on the constants $C_{\Gamma,\lambda}(x,T)$ under
  the relevant Diophantine conditions, the statement of
  Theorem~\ref{thm:Main_Twisted} is an immediate consequence of
  Theorem~\ref{theo:equidistribution} for $\vert \lambda\vert \geq e$
  and of Corollary~\ref{cor:equidistribution} for $\vert \lambda\vert
  \leq e$.

  Now assume that $x$ lies on the cuspidal horocycle $\gamma_T$ of
  length $T>0$. We remark that, under the assumption that $\lambda T
  \in 2\pi \Z$, for all $s>0$ and for all continuous functions $f$ on
  $M$ we have
  \[
  \begin{aligned}
    \vert \int_0^T e^{i\lambda t} &f\circ h_t(h_s(x)) dt \vert = \vert
    \int_s^{T+s} e^{i\lambda (t-s)} f\circ h_t(x) dt \vert \\ &= \vert
    \int_s^{T+s} e^{i\lambda t} f\circ h_t(x) dt \vert =\vert \int_0^T
    e^{i\lambda t} f\circ h_t(x) dt \vert \,.
  \end{aligned}
  \]
  In other terms in this case the modulus of a twisted horocycle
  integral along a cuspidal horocycle does not depend on the initial
  point.

  Let $K_\Gamma>0$ be a constant such that all cuspidal horocycles of
  unit length on $M$ are contained in the compact set $\{x\in M:
  d_M(x) \leq K_\Gamma\}$.  Let $C_\Gamma>0$ denote the constant
  introduced below in Lemma~\ref{lemma:fund_box}.

  By hyperbolic geometry for any given cusp and for any $x'\in M$,
  which does not belong to an unstable cuspidal horocycle for that
  cusp, there exists a (unique) point on every stable cuspidal
  horocycle such that the backward geodesic orbits of $x$ and $x'$ are
  asymptotic.  Because there exists a dense set of points in $M$ with
  relatively compact backward orbit, it follows that on every stable
  cuspidal horocycle there is a dense set of points with relatively
  compact \emph{backward} geodesic orbit.

  Thus, there exists a positive integer $n_\Gamma>0$ such that any
  stable cuspidal horocycle $\gamma_T$ of length $T>0$ has a partition
  $\{x_{1}, \dots, x_{n_\Gamma}\}$ such that for all $k\in \{1, \dots,
  n_\Gamma\}$ (modulo $n_\Gamma$), there exists
  \[T_k \in (0, \frac{C_\Gamma T} {10 K_\Gamma} )\] such that $x_{k+1}
  = h_{T_k} (x_k)$ and $x_k$ belongs to the relatively compact
  backward orbit of a point on the cuspidal horocycle of unit
  length. There exists therefore a constant $K'_\Gamma>0$ such that
  the following bound holds:
  \[
  \max_{0\leq y \leq \log T} d_M (a_y(x_k)) \leq K'_\Gamma \,.
  \]
  Since for all $T\geq 1$ the loop $a_{\log T}(\gamma_T)$ is a
  cuspidal horocycle of unit length, we have by the definition of
  $K_\Gamma$ that for all $x\in \gamma_T$,
  \[
  d_M(a_{\log T}(x)) \leq K_\Gamma\,.
  \]
  hence, for each $k$, it follows from Lemma~\ref{lemma:c(x,T)} and
  the condition $T_k < \frac{C_\Gamma T}{10 K_\Gamma}$ that
  \begin{equation}
    \label{eq:cusp_bound}
    C_\Gamma(x_k,T_k) = \max_{0\leq t\leq T_k} c_\Gamma(x_k,t) \leq 
    \left( \frac{10}{C_\Gamma}\right)^2 e^{2K'_\Gamma}\,.% \quad \text{ if } \,\, 
    % t \leq T_k < \frac{C_\Gamma}{10} \frac{T}{K_\Gamma} \,.
  \end{equation}
  Then we conclude from Theorem~\ref{theo:equidistribution} and
  Corollary~\ref{cor:equidistribution} that there exists a constant
  $C_s:=C_s(\Gamma)>0$ such that if $\vert \lambda T \vert \geq e$,
  \[
  \vert \int_0^{T_k} e^{i\lambda t} f\circ h_t(x_k) dt \vert \leq
  C_{s} \Vert f \Vert_{s} (1+ \frac{1}{\vert\lambda\vert^{1/6}})
  T^{5/6} \log^{1/2} (\vert \lambda T\vert) \,.
  \]
  Finally, the statement follows from finiteness of the partition.

\end{proof}

\subsection{Fourier coefficients of cusp forms}
\begin{proof}[Proof of Theorem~\ref{coro:Cusp-form}]
  Let $\Gamma \subset \SL(2, \R)$ be an arbitrary lattice containing
  the uni\-potent element $\left(\begin{smallmatrix}
      1 & 1 \\
      0 & 1
    \end{smallmatrix}
  \right)$.  Such lattices are not co-compact.  Let $k \in \N$ be
  even, let $f$ be a holomorphic cusp form of weight-$k$ for $\Gamma$.
  Let $f$ have the Fourier expansion
  \[
  f = \sum_{n > 0} a_n e^{2\pi i n z}\,,
  \]
  so the coefficients $\{a_n\}_{n > 0} \subset \C$ are given by
  \[
  a_n = e^{2\pi} \int_{\R / \Z} f(t + \frac{i}{n}) e^{-2\pi i n t}
  dx\,.
  \]
 
  Following Section~1.3.4 of \cite{V}, let $\tilde f$ be the lift of
  $f$ to $\Gamma \backslash \SL(2, \R)$ given by
  \[
  \tilde f : \Gamma \left(\begin{array}{cc} a & b \\ c &
      d \end{array}\right) \to f \left(\frac{a i + b}{ci + d}\right)
  (c i + d)^{-k}\,.
  \]
  Then
  \begin{equation}\label{equa:lift-smooth}
    \tilde f \in C^\infty(\Gamma \backslash \SL(2, \R))\,.
  \end{equation}  
  Let $ x_n = \Gamma \left(
    \begin{array}{cc}
      n^{-1/2} & 0 \\
      0 & n^{1/2}
    \end{array}
  \right) \in M\,.  $ Then
  \[
  \tilde f \circ h_t(x_n) = n^{-k/2} f(\frac{i + t}{n})\,.
  \]

  Consequently, we have as in formula (1.7) of \cite{V} that
  \begin{align}\label{equa:cusp-coeff-twist}
    a_n & = e^{2\pi} \int_{\R / \Z} f(t + \frac{i}{n}) e^{-2\pi i n t} dt \notag \\
    & = e^{2\pi} n^{-1} \int_{\R / n \Z} f(\frac{i +t}{n}) e^{-2\pi i t} dt \notag \\
    & = e^{2\pi} n^{k/2 - 1} \int_{0}^n \tilde{f} \circ h_t (x_n)
    e^{-2\pi i t} dt \,.
  \end{align}

  The twisted integral (\ref{equa:cusp-coeff-twist}) is over a closed
  horocycle of length $n$.
  % (see comment immediately below formula (1.7) of \cite{V}).
  Because $\tilde f$ is smooth, we may take any $s > 7$, and
  Theorem~\ref{thm:Main_Twisted} gives a constant $C_{r, s, f} > 0$
  such that
  \[
  \vert e^{2\pi} n^{k/2 - 1} \int_{0}^n \tilde{f} \circ h_t (x_n)
  e^{-2\pi i t} dt\vert \leq C_{r, s, f} n^{k/2 - 1/6} \log^{1/2}(e+n)
  \,.
  \]
  Theorem~\ref{coro:Cusp-form} is now immediate from
  \eqref{equa:cusp-coeff-twist}.
\end{proof}

\subsection{Horocycle maps}
A precise description of the space of invariant distributions and the
statement of our effective equidistribution theorem for horocycle maps
require that we recall of the theory of unitary representations of the
group $\SL(2, \R)$.

There are four classes of irreducible, unitary representations $H_\mu$
of $\SL(2, \R)$.  They are parameterized by the Casimir operator
$\Box$ and termed the principal series, the complementary series, the
discrete series and the mock discrete series.
We have the following cases: 
\begin{itemize}
\item  When $\mu \in (0, 1)$, then $H_\mu$
is in the complementary series.
\item  When $\mu > 1$,
then $H_\mu$ is in the principal series.
\item When $\mu = 1$, then $H_\mu$ is in
the mock discrete series or the principal series.
\item  When $\mu \leq 0$, then $H_\mu$ is in the discrete series.
\end{itemize}
The standard line and upper half-plane models for irreducible
representations of $\SL(2, \R)$ are discussed in
Appendix~\ref{appe:A}.

The irreducible, unitary representations of $\SL(2, \R) \times \T$ are
parameterized by tuples $(m, \mu) \in \Z \times \text{spec}(\Box)$,
and they are denoted
\[
H_{m, \mu} := H_\mu \otimes e_m\,,
\]
where $e_m \in L^2(\T)$ is given by
\[
e_m(t) := e^{i m t}\,.
\]
We will refer to a representation $H_{m, \mu}$ as a principal series
representation (resp.  complementary series, discrete series, or mock
discrete series) if $H_\mu$ is.

The regular representation $L^2(M \times \T)$ of $\SL(2, \R)\times \T$
decomposes as a direct sum or integral of irreducible, unitary
representation spaces $H_{m, \mu}$, which occur with at most finite
multiplicity.  By irreducibility, vector fields are decomposable in
the sense that, for any $s \in \R$, $W^s(M \times \T)$ decomposes as a
direct sum or integral of irreducible, unitary, Sobolev subspaces
$H_{m, \mu}^s$, where $H_{m, \mu}^s$ is the restriction of $W^s(M
\times \T)$ to $H_{m, \mu}$ and has inner product $\langle \cdot ,
\cdot \rangle_{W^s(M\times \T)}$ .

For any irreducible component $H_{m, \mu}$, the distributional dual
space to $ H_{m, \mu}^s$ is denoted $H_{m, \mu}^{-s} := \left(H_{m,
    \mu}^s\right)'$.  The subspace of smooth vectors in $H_{m, \mu}$
is denoted $H_{m, \mu}^\infty := \bigcap_{s \geq 0} H_{m, \mu}^s$, and
its distributional dual space is denoted
\[
H_{m, \mu}^{-\infty} := \left(H_{m, \mu}^\infty \right)' = \bigcup_{s
  \geq 0} H^{-s}\,.
\]

In completely analogous fashion, for any $r, s \geq 0$, the foliated
Sobolev space $W^{r, s}(M \times \T)$ decomposes into a direct sum or
integral of irreducible, unitary representation spaces denoted $H_{m,
  \mu}^{r, s}$.  The distributional dual space of $H_{m, \mu}^{r, s}$
is denoted $H_{m, \mu}^{-r, -s}$.

Note that $\triangle$ restricts on $L^2(M)$ to the essentially
self-adjoint elliptic operator
\[
-X^2 -2(U^2 + V^2)\,.
\]
For any $s \in \R$, $W^s(M)$ is defined to be $W^s(M\times \T) \cap
L^2(M)$, and it is endowed with an inner product that is obtained from
$W^s(M\times \T)$.  In completely analogous fashion, the Sobolev space
$W^s(M)$ decomposes into a direct sum or integral of irreducible,
unitary subspaces $H_\mu^s:= H_{0, \mu}^s$.

The space of smooth functions on $M$ is defined to be $C^\infty(M) =
C^\infty(M\times \T) \cap L^2(M)$, and its distributional dual space
is $\mathcal E'(M)$.  We let $H_\mu^\infty := H_{0, \mu}^\infty$ and
$H_{\mu}^{-\infty} := H_{0, \mu}^{-\infty}$.

Similarly, the foliated space $\widehat W^{r,s}(M)$ decomposes into a
direct sum or integral of irreducible, unitary subspaces $\widehat
H_\mu^{r, s} := \widehat H_{0, \mu}^{r,s}$.

For any $L > 0$ and $N \in \Z^+$, let
\[
\begin{aligned}
  \mathcal I^{s, L}(\Gamma) & := \left\{\mathcal D \in W^{-s}(M) :  h_L \mathcal D = \mathcal D\right\} \,, \\
  \mathcal I^{L}(\Gamma) & := \left\{\mathcal D \in \mathcal E'(M) :
    h_L \mathcal D = \mathcal D\right\}
\end{aligned}
\]
be the space of $h_L$-invariant distributions of order $s\geq 0$ and
infinity in $W^{-s}(M)$ and $\mathcal E'(M)$, respectively.  Let
\[
\mathcal I^{0}(\Gamma) := \left\{\mathcal D \in \mathcal E'(M) : U
  \mathcal D = \mathcal D\right\}
\]
be the space of invariant distributions for the horocycle flow in
$\mathcal E'(M)$, and let
\[
\begin{aligned}
  \text{Ann}^{s, L}(\Gamma) :&= \left\{f \in W^s(M) : \mathcal D(f) =
    0 \text{ for all }
    \mathcal D \in \mathcal I^{s, L}(\Gamma)\right\}\,, \\
  \text{Ann}^L(\Gamma) & := \left\{f \in C^\infty(M) : \mathcal D(f) =
    0 \text{ for all } \mathcal D \in \mathcal
    I^{L}(\Gamma)\right\}\,.
\end{aligned}
\]
We know from Theorem 1.2 of \cite{T} that these are the spaces of
coboundaries of Sobolev regularity $s\geq 0$ and $\infty$ for the
horocycle map $h_L$.
 
Let
\[
\mathcal I^{0}(\Gamma) := \left\{\mathcal D \in \mathcal E'(M) : U
  \mathcal D = \mathcal D\right\}
\]
and, by Theorem~1.2 of \cite{FF1}, the space of smooth coboundaries
for the horocycle flow is
\[
\text{Ann}^0(\Gamma) := \left\{f \in C^\infty(M) : \mathcal D(f) = 0
  \text{ for all } \mathcal D \in \mathcal I^{0}(\Gamma)\right\}\,.
\]

By Theorem 1.1 of \cite{T}, Theorem 1.1 of \cite{FF1} and
Theorem~\ref{theo:Invariant_dist} above, the space $\mathcal
I^{\infty, L}(M)$ is described as follows.
\begin{theorem}\label{theo:invariant-maps}
  Let $\sigma_{\text{pp}}$ be the spectrum of the Laplace-Beltrami
  operator $\triangle$ on $L^2(M)$.  Then in any Sobolev structure
  $W^{-s}(M)$, for $s > 0$, there is a splitting
  \[
  \mathcal I^{L}(\Gamma) = \mathcal I^{0}(M) \oplus \mathcal I^{L,
    \text{twist}}(\Gamma)\,,
  \]
  where we have $\mathcal I^{L, \text{twist}}(\Gamma) \subset
  W^{-(1/2+)}(M)$ and for each irreducible, unitary space $H$, the
  space $\mathcal I^{L, \text{twist}}(\Gamma) \cap H^{-(1/2 +)}$ has
  infinite, countable dimension.

  The space $\mathcal I^{0}(\Gamma)$ is described in Theorem 1.1 of
  \cite{FF1} as follows: It has infinite, countable dimension.  It is
  a direct sum of the trivial representation $\mathcal I_{\vol}$ and
  irreducible, unitary representations $\mathcal I_{\mu}$ belonging to
  the principal series, the complementary series, the discrete series
  and the mock discrete series.

  Specifically,
  \begin{itemize}
  \item The space $\mathcal I_{\vol}$ is spanned by the $\SL(2,
    \R)$-invariant volume;
  \item For $0 < \mu < 1$, there is a splitting $\mathcal I_\mu =
    \mathcal I_\mu^+ \oplus \mathcal I_\mu^-$, where $\mathcal
    I_\mu^{\pm} \subset W^{-s}(M)$ if and only if $s > \frac{1 \pm
      \sqrt{1 - \mu}}{2}$, and each subspace has dimension equal to
    the multiplicity of $\mu \in \text{spec}(\Box)$;
  \item If $\mu \geq 1$, then $\mathcal I_\mu \subset W^{-s}(M)$ if
    and only if $s > 1/2$, and it has dimension equal to twice the
    multiplicity of $\mu \in \text{spec}(\Box)$;
  \item If $\mu = -n^2 + 2n$ for $n \in \Z^+$, then $\mathcal I_\mu
    \subset W^{-s}(M)$ if and only if $s > n/2$ and it has dimension
    equal to twice the rank of the space of holomorphic sections of
    the $n_{\text{th}}$ power of the canonical line bundle over $M$.
  \end{itemize}
\end{theorem}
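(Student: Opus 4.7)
The plan is to reduce the classification of $h_L$-invariant distributions on $M$ to the already-established classifications by Fourier decomposition along the horocycle flow. If $\mathcal D \in W^{-s}(M)$ satisfies $h_L \mathcal D = \mathcal D$, then the $W^{-s}(M)$-valued curve $t \mapsto (h_t)_\ast \mathcal D$ is $L$-periodic, so (with a mild loss of regularity in $t$) it admits a Fourier expansion
\[
\mathcal D = \sum_{k \in \Z} \mathcal D_k, \qquad \mathcal D_k := \frac{1}{L}\int_0^L e^{-2\pi i k t/L}(h_t)_\ast \mathcal D\, dt,
\]
each coefficient satisfying $U \mathcal D_k = (2\pi i k/L)\mathcal D_k$. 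The coefficient $\mathcal D_0$ is $U$-invariant, hence contributes to $\mathcal I^0(\Gamma)$, while the coefficients with $k\neq 0$ furnish the twist part. Conversely, any $U$-eigendistribution with eigenvalue $2\pi i k/L$ is automatically $h_L$-invariant, and distinct $U$-eigenspaces are linearly independent, so this yields the direct-sum splitting $\mathcal I^L(\Gamma) = \mathcal I^0(\Gamma) \oplus \mathcal I^{L,\text{twist}}(\Gamma)$.

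For $k\neq 0$, set $\lambda_k := -2\pi k/L$ and form the tensor $\widetilde{\mathcal D}_k := \mathcal D_k \otimes e^{i\theta}$ on $M\times\T$. A direct computation gives
\[
(U + \lambda_k K)\widetilde{\mathcal D}_k = \bigl(U \mathcal D_k + i\lambda_k \mathcal D_k\bigr)\otimes e^{i\theta} = 0,
\]
so $\widetilde{\mathcal D}_k \in \mathcal I_{\lambda_k}(\Gamma)$. By Theorem~\ref{theo:Invariant_dist} then $\widetilde{\mathcal D}_k \in \widehat W^{0,-(1/2+)}(M\times\T)$. Since $e^{i\theta}$ is a $K$-eigenfunction and $U$ acts as the scalar $2\pi i k/L$ on $\mathcal D_k$, the full and foliated Sobolev norms of $\mathcal D_k$ differ only by a constant depending polynomially on $k$; this transfers the regularity back to $\mathcal D_k \in W^{-(1/2+)}(M)$, as required.

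For the infinite-dimensionality of $\mathcal I^{L,\text{twist}}(\Gamma)\cap H^{-(1/2+)}$ in each irreducible subrepresentation $H = H_\mu \subset L^2(M)$, I would apply Theorem~\ref{theo:Invariant_dist} inside the irreducible component $H_{1,\mu} \subset L^2(M\times\T)$: for each $\lambda_k = -2\pi k/L$ with $k\neq 0$, it furnishes a nontrivial element of $\mathcal I_{\lambda_k}(\Gamma)$ supported in $H_{1,\mu}$, whose $M$-factor is a nontrivial element of $H_\mu^{-(1/2+)}\cap \mathcal I^{L,\text{twist}}(\Gamma)$ satisfying the prescribed $U$-eigenvalue equation. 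Since distinct $k$ yield distinct $U$-eigenvalues, the resulting elements are linearly independent, producing a countable infinity as claimed. The detailed structural description of $\mathcal I^0(\Gamma)$ is then quoted verbatim from Theorem~1.1 of \cite{FF1}.

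The main obstacle I anticipate is the distributional bookkeeping of the Fourier decomposition: one must verify that the integrals defining $\mathcal D_k$ are well-posed in the relevant Sobolev class, that the series converges back to $\mathcal D$ in an appropriate weaker topology, and, crucially, that the Sobolev transfer from $\widehat W^{0,-(1/2+)}(M\times\T)$ down to $W^{-(1/2+)}(M)$ on each Fourier mode is controlled up to a polynomial factor in $k$. A secondary technical point is to ensure that the twisted invariants produced by Theorem~\ref{theo:Invariant_dist} can be arranged inside the chosen component $H_{1,\mu}$; this follows from the $\SL(2,\R)$-equivariance of the tensor product decomposition $L^2(M\times\T) = L^2(M)\otimes L^2(\T)$, which allows one to project into any prescribed irreducible summand.
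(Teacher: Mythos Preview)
Your approach is correct and is precisely the mechanism the paper relies on. The paper itself does not supply a detailed proof of this theorem: it is stated as a consequence of Theorem~1.1 of \cite{T}, Theorem~1.1 of \cite{FF1}, and Theorem~\ref{theo:Invariant_dist}, with the explicit link between $h_L$-invariant distributions on $M$ and twisted invariant distributions on $M\times\T$ appearing later in formula~\eqref{equa:dist-invar-map-twist}, namely $\mathcal D_{\mu,L,k}(f) = D^{2\pi/L}_{k,\mu}(e_k\otimes f)$; this is the same identification as your $\widetilde{\mathcal D}_k = \mathcal D_k\otimes e^{i\theta}$, up to the bookkeeping choice of putting the integer in the $\T$-frequency slot $m=k$ with fixed $\lambda=2\pi/L$ rather than in $\lambda_k=-2\pi k/L$ with fixed $m=1$ (these agree via $\bar D^\lambda_{m,\mu}=\bar D^{\lambda m}_{1,\mu}$).

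Two small remarks on the obstacles you flagged. First, the Sobolev transfer from $\widehat W^{0,-(1/2+)}$ to $W^{-(1/2+)}$ is immediate here because on each $\mathcal D_k$ the operator $U$ acts as the scalar $2\pi i k/L$, so adding $U$-regularity costs only a polynomial factor in $k$, exactly as you say; no further work is needed. Second, for the infinite-dimensionality claim inside a discrete-series component you should note (cf.\ Lemma~\ref{lemma:D^lambda-reg-discrete}) that $D^{\lambda}_{m,\mu}=0$ when $\lambda m<0$, so only one sign of $k$ contributes; this still leaves countably many linearly independent $U$-eigendistributions, so your argument goes through unchanged.
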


\begin{sloppypar}
  The description of $\mathcal I^{s, L}(\Gamma)$ is given by $\mathcal
I^{s, L}(\Gamma) := \mathcal I^{L}(\Gamma) \cap W^{-s}(M)$, where
$\mathcal I^{s, 0}(\Gamma) := \mathcal I^0(\Gamma) \cap W^{-s}(M)$ and
$\mathcal I^{s, L, \text{twist}}(\Gamma) := \mathcal I^{L,
  \text{twist}}(\Gamma) \cap W^{-s}(M)$.
\end{sloppypar}

By Theorem 1.4 of \cite{FF1}, the space $\mathcal I_\mu$ has a basis
of generalized eigendistributions for the geodesic flow.
Consequently, the projection of the ergodic sum for $h_L$ to $\mathcal
I^{s, 0}(M)$ is controlled by estimating the decay of invariant
distributions under the action of the geodesic flow, see Section 5 of
\cite{FF1} and Proposition 7.3 of \cite{T}.

In contrast, there is no subspace of $\mathcal I^{s, L,
  \text{twist}}(\Gamma)$ that is invariant under the geodesic flow.
The projection of the ergodic sum to $\mathcal I^{s, L,
  \text{twist}}(\Gamma)$ will be controlled by the ergodic average of
the twisted horocycle flow.
 
Let $\mathcal I_{\mu}^{s, 0} := \mathcal I_\mu \cap W^{-s}(M)$ be the
space of horocycle flow-invariant distributions in $\mathcal I_\mu$ of
order $s$.  For any $\mu \in \text{spec}(\Box)$, set
\[
\mathcal S_{\mu}^{\pm} := \left\{
  \begin{array}{ll}
    \frac{1 \pm \re\,\sqrt{1 - \mu}}{2} & \text{ if } \mu > 0\,; \\
    n/2 & \text{ if }  \mu = - n^2 + 2n \text{ and } n \in \Z^+ \,.
  \end{array}
\right.
\]

Theorem~\ref{theo:maps} on the rate of equidistribution of horocycle
maps is stated in terms of anisotropic Sobolev norms, which we
presently describe.  The operator $-U^2$ is non-negative and
essentially self-adjoint.  Then for any $a \geq 0$, $(I - U^2)^{a/2}$
is defined by the spectral theorem.  Then for any $a \geq 0$ and for
any $r, s \geq 0$, we define $W^{r, s, a}(M)$ to be the Sobolev
subspace that is the maximal domain of the operator
\[
((I + \Box^2)^{r/2} (I + \Box^2 + \widehat \triangle^2)^{s/2} (I -
U^2)^{a/2}
\]
on $L^2(M)$, which is endowed with the inner product
\[
\langle f, g \rangle_{W^{r, s, a}(M)} := \langle (I - U^2)^{a/4} f, (I
- U^2)^{a/4} g \rangle_{r, s} \,.
\]
We denote the corresponding norm by
\[
\Vert f \Vert_{r, s, a} := \langle f, f \rangle_{W^{r, s, a}(M)}^{1/2}
\,.
\]
The dual space of $W^{r, s, a}(M)$ is denoted $W^{-r, -s, -a}(M)$ and
has the corresponding dual norm denoted $\Vert \cdot \Vert_{-r, -s,
  -a}$\,.  Observe that for any $r, s \geq 0$ and $a > 0$, we have the
continuous embeddings
\begin{equation}\label{equa:sobolev-inclusions_map}
  W^{0, r+s+a, r+s+a}(M) \subset W^{r+s+a}(M) \subset W^{r, s, a}(M) \subset \widehat W^{r, s}(M)  \,.
\end{equation}
For all $(x,L, N) \in M\times \R^+\times \N$ we define the constants
\[
\begin{aligned}
  C_\Gamma(x,L,N)&:= C_\Gamma(h_{-L/2}(x), NL) + C_\Gamma(h_{L(N-1/2)}(x), NL) \,; \\
  D_\Gamma(x,L,N) &:= e^{d_M(h_{-L/2}(x))} +e^{ d_M (h_{L(N -
      1/2)}(x))}\,.
\end{aligned}
\]
\begin{theorem}\label{theo:maps}
  For any $s > 2$, $a>2$ and, $r \geq 5s - 3$, and for any $\epsilon >
  0$, there are constants $C_{r,s, a, \epsilon} := C_{r,s, a,
    \epsilon}(\Gamma) > 0$ and $C_{r,s, \epsilon} := C_{r,s,
    \epsilon}(\Gamma) > 0$ such that the following holds.  For any
  $(x, L, N)\in M\times \R^+ \times \Z^+ $, there is a decomposition
  of the ergodic sum of the time-$L$ horocycle map $h_L$ as a
  distribution in $W^{-r, -s, -a}(M)$ as follows.  We have
  \[
  \begin{aligned}
    \sum_{n = 0}^{N - 1} (h_{Ln} (x))^* = \mathcal D^0_{x, N,L, r, s,
      a} + \mathcal D^{\text{twist}}_{x, N,L, r, s, a} +
    \mathcal{R}_{x, N, L, r, s, a} \,.
  \end{aligned}
  \]
  The distribution $\mathcal D^0_{x, N,L, r, s, a}$ is invariant under
  the horocycle flow, the distribution $\mathcal D^{\text{twist}}_{x,
    N,L, r, s, a}$ is invariant under the time-$L$ horocycle map (but
  not under the horocyce flow), and the distribution $\mathcal{R}_{x,
    N, L, r, s, a}$ belongs to the orthogonal subspace $\mathcal I^{s,
    L}(\Gamma)^{\bot}$ of the space $\mathcal I^{s, L}(\Gamma)$ of
  invariant distributions for the time-$L$ horocycle map.  For all $f
  \in W^{2s + a + 1 + \epsilon}(M)$ and for all $(x,L,N) \in M \times
  \R^+\times \mathbb{Z}^+$ such that $NL \geq e$ the following
  estimates hold:
  \[
  \begin{aligned}
    \vert &\mathcal D^0_{x, N,L, r, s, a}(f) - \frac{1}{L} \int_0^{N L} f \circ h_t(x) dt \vert  \\
    &
    \,\,\,\,\,\,\,\,\,\,\,\,\,\,\,\,\,\,\,\,\,\,\,\,\,\,\,\,\,\,\,\,\,\,\,\,\,\,\,\,\,\,\,\,\,\,\,\,\,\,\,\,\,\,\,
    \leq C_{r,s, a, \epsilon} D_\Gamma(x,L,N) \frac{1+ L^{2s+2+\epsilon}}{L}  \Vert f \Vert_{r,s,s+a} \,; \\
    \vert &\mathcal D^{\text{twist}}_{x, N,L, r, s, a}(f) \vert \leq
    C_{r,s, \epsilon}(1+L^{8s+\epsilon})
    C_\Gamma(x,L,N) (NL)^{\frac{5}{6}}  \log^{\frac{1}{2}}(NL) \Vert f\Vert_{r,s,1 + \epsilon}  \\
    &\,\,\,\,\,\,\,\,\,\,\,\,\,\,\,\,\,\,\,\,\,\,\,\,\,\,\,\,\,\,\,\,\,\,\,\,\,\,\,\,\,\,\,\,\,\,\,\,\,\,\,\,\,\,\,\,\,
    +  C_{r,s, a, \epsilon} D_\Gamma(x,L,N) (1+ L^{2s+2+\epsilon}) \Vert f\Vert_{r,s,s+a+1+\epsilon}  \,;\\
    \vert &\mathcal{R}_{x, N, L, r, s, a}(f)\vert \leq C_{r,s,
      a,\epsilon} D_\Gamma(x,L,N) \frac{1+ L^{2+\epsilon}}{L} \Vert f
    \Vert_{r,s,a}\,.
  \end{aligned}
  \]
\end{theorem}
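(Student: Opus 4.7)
My plan is to express the $h_L$-ergodic sum as the base horocycle-flow integral plus a Fourier series of twisted horocycle integrals, then to apply Theorem~\ref{theo:equidistribution} mode by mode. Starting from the elementary identity
\begin{equation*}
  \sum_{n=0}^{N-1} f\circ h_{Ln}(x)-\tfrac{1}{L}\int_0^{NL} f\circ h_t(x)\,dt = -\tfrac{1}{L}\int_0^{NL}\psi_L(t)\,(Uf)\circ h_t(x)\,dt,
\end{equation*}
where $\psi_L(t):=L-(t\bmod L)$ is the sawtooth of period $L$, I Fourier-expand $\psi_L(t)=L/2+\sum_{k\neq 0}\tfrac{L}{2\pi ik}e^{i\lambda_k t}$ with $\lambda_k:=2\pi k/L$. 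Using $\int_0^{NL}(Uf)\circ h_t(x)\,dt=f(h_{NL}x)-f(x)$, the right-hand side becomes the boundary term $-\tfrac12\bigl(f(h_{NL}x)-f(x)\bigr)$ plus the series $-\tfrac{1}{2\pi i}\sum_{k\neq 0}k^{-1}\int_0^{NL}e^{i\lambda_k t}(Uf)\circ h_t(x)\,dt$. Each series term equals $\int_0^{NL}F\circ\phi_t^{\lambda_k}(x,0)\,dt$ with $F(y,\theta):=Uf(y)e^{i\theta}$, so Theorem~\ref{theo:equidistribution} (and Corollary~\ref{cor:equidistribution} for $|\lambda_k|\le e$) decomposes each into a principal invariant-distribution piece of size $(NL)^{5/6}\log^{1/2}(NL)$ and a remainder.

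To assemble the splitting in the statement I use the decomposition $\mathcal I^L(\Gamma)=\mathcal I^0(\Gamma)\oplus\mathcal I^{L,\mathrm{twist}}(\Gamma)$ from Theorem~\ref{theo:invariant-maps}. Because $\lambda_k L\in 2\pi\Z$, the pullback along $\iota\colon x\mapsto(x,0)$ sends every $(U+\lambda_k K)$-invariant distribution on $M\times\T$ to an $h_L$-invariant, non-$U$-invariant distribution on $M$, hence into $\mathcal I^{L,\mathrm{twist}}(\Gamma)$. I therefore set
\begin{equation*}
  \mathcal D^{\mathrm{twist}}_{x,N,L,r,s,a}:=-\frac{(NL)^{5/6}}{2\pi i}\sum_{k\neq 0}\frac{1}{k}\,\iota^*\mathcal D^{r,s}_{(x,0),\lambda_k,NL},
\end{equation*}
take $\mathcal D^{0}_{x,N,L,r,s,a}$ to be the flow-invariant functional $f\mapsto\tfrac{1}{L}\int_0^{NL}f\circ h_t(x)\,dt$ corrected by the $\mathcal I^0(\Gamma)$-projection of the boundary term, and let $\mathcal R_{x,N,L,r,s,a}$ collect the remainders $\iota^*\mathcal R^{r,s}_{(x,0),\lambda_k,NL}$ together with the projection of the boundary term onto $(\mathcal I^L(\Gamma))^\bot$. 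By Theorem~1.2 of \cite{T} the $h_L$-cohomological equation is solvable on $\mathrm{Ann}^{s,L}(\Gamma)$ with Sobolev loss polynomial in $L$, which forces $\mathcal R_{x,N,L,r,s,a}\in(\mathcal I^{s,L}(\Gamma))^\bot$ by construction.

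The three estimates then follow by substituting \eqref{equa:Remainder-inv-dist-twist_flow} and summing over $k$. In the twist term the summand is of order $|k|^{-1}(1+|\lambda_k|^{-4s})C_\Gamma(x,NL)^{1/2}(NL)^{5/6}\log^{1/2}(NL)$; splitting at $|k|\sim L$ gives $L^{4s}\sum_{|k|\le L}|k|^{-1-4s}+\sum_{|k|>L}|k|^{-1}\sim L^{4s}+\log L$, and a further polynomial factor from the Sobolev tail of $F=Uf\cdot e^{i\theta}$ and from the trace argument converting $\|Uf\|_{\widehat W^{r,s+1}}$ into $\|f\|_{W^{r,s,1+\epsilon}}$ produces the factor $1+L^{8s+\epsilon}$. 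The remainder sum converges absolutely via $\sum k^{-2}$ because the remainder side of \eqref{equa:Remainder-inv-dist-twist_flow} carries an extra $|\lambda_k|^{-1}\sim L/|k|$, producing the factor $D_\Gamma(x,L,N)(1+L^{2+\epsilon})/L$ against $\|f\|_{r,s,a}$ once the trace theorem realizes the boundary evaluations at $h_{-L/2}(x)$ and $h_{L(N-1/2)}(x)$. Finally the $\mathcal D^0$-discrepancy is bounded using the same trace evaluation together with the endpoint estimates for the $h_L$-coboundary equation from Theorem~1.2 of \cite{T}, yielding $(1+L^{2s+2+\epsilon})/L\cdot D_\Gamma(x,L,N)\|f\|_{r,s,s+a}$.

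The main obstacle is the delicate bookkeeping in the Fourier summation: the bounds \eqref{equa:Remainder-inv-dist-twist_flow} diverge polynomially as $|\lambda_k|\to 0$ while the sawtooth coefficients contribute only $|k|^{-1}$, so the split at $|k|\sim L$ and the balance between the twisted scaling $(NL)^{5/6}\log^{1/2}(NL)$, the polynomial growth $1+L^{8s+\epsilon}$, and the Sobolev regularity $W^{r,s,a+1+\epsilon}$ must be calibrated simultaneously. A secondary but essential point is verifying that $\iota^*\mathcal D^{r,s}_{(x,0),\lambda_k,NL}$ lands strictly in the twisted summand $\mathcal I^{L,\mathrm{twist}}(\Gamma)$ with no leakage into $\mathcal I^0(\Gamma)$, so that the three-term splitting is canonical and the orthogonality of $\mathcal R_{x,N,L,r,s,a}$ to $\mathcal I^{s,L}(\Gamma)$ is automatic.
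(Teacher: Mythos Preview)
Your strategy—Fourier-decompose the discrepancy between the ergodic sum and the flow integral, then apply Theorem~\ref{theo:equidistribution} mode by mode—is close in spirit to the paper's, but two genuine gaps remain.

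\textbf{Orthogonality of $\mathcal R$.} You define $\mathcal R$ explicitly as a sum of $\iota^*\mathcal R^{r,s}_{(x,0),\lambda_k,NL}$ plus boundary projections and then assert it lies in $(\mathcal I^{s,L}(\Gamma))^\perp$, invoking solvability of the map cohomological equation. Solvability says nothing about where a \emph{given} distribution sits. The distributions $\mathcal R^{r,s}_{(x,0),\lambda_k,NL}$ are orthogonal to $\mathcal I^{r,s}_{\lambda_k}$ in $\widehat W^{-r,-s}(M\times\T)$, but $\iota^*$ does not carry this orthogonality into $W^{-r,-s,-a}(M)$. Separately, the functional $f\mapsto\frac{1}{L}\int_0^{NL}f\circ h_t(x)\,dt$ is a measure, not a flow-invariant distribution, so your $\mathcal D^0$ as written is not in $\mathcal I^0(\Gamma)$. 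The paper avoids all of this by \emph{defining} the three pieces as the orthogonal projections of the ergodic-sum measure in $W^{-r,-s,-a}(M)$ onto $\mathcal I^0$, $\mathcal I^{L,\mathrm{twist}}$, and their common complement (formula~\eqref{equa:asymptotic-formula}); orthogonality of $\mathcal R$ is then tautological. The bound on $\mathcal R$ comes from evaluating on $f\in\mathrm{Ann}^{s,L}$, where $\mathcal R(f)$ equals the full sum, telescoping via the transfer function $g$ of $h_L$, and bounding $g$ pointwise using the paper's new map-cohomological estimates (Theorems~\ref{theo:coeqn-map-principal}--\ref{theo:coeqn-map-discrete}) together with the trace theorem. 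This is Lemma~\ref{lemma:remainder-map}.

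\textbf{Summability in $k$.} For $|k|\gg L$ your twist summand is $\asymp|k|^{-1}$, and $\sum_{|k|>L}|k|^{-1}$ diverges; the claim that it is $\sim\log L$ is simply wrong. Your phrase ``Sobolev tail of $F$'' cannot help because $F=Uf\cdot e^{i\theta}$ is independent of $k$. The paper closes this gap with the spectral identity $\mathcal D_{\mu,L,k}=(1+\lambda_k^2)^{-\epsilon/2}(I-U^2)^{\epsilon/2}\mathcal D_{\mu,L,k}$ (valid since $\mathcal D_{\mu,L,k}$ is the Dirac at $\xi=-\lambda_k$ in Fourier), which trades regularity in the anisotropic $U$-index for an extra factor $(1+4\pi^2k^2/L^2)^{-\epsilon/2}$ and makes the series $\sum_k k^{-1-\epsilon}$ convergent. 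This is precisely where the norms $\|f\|_{r,s,1+\epsilon}$ and $\|f\|_{r,s,s+a+1+\epsilon}$, and the factor $L^\epsilon$, enter the statement.

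The paper also isolates each $\mathcal D_{L,k}$ differently: rather than a sawtooth expansion, it applies the twisted averaging operator $\frac{1}{L}\int_0^L e^{i\lambda_k t}h_{-t}\,dt$ to the orthogonal identity, obtaining $\mathcal D_{L,k}=\frac{1}{L}\int_0^{NL}e^{i\lambda_k t}(h_t x)^*\,dt-\frac{1}{L}\int_0^L e^{i\lambda_k t}h_{-t}\mathcal R\,dt$ (Lemma~\ref{lemma:isolate-dist}), then integrates by parts once for the factor $1/k$. The first integral is bounded by Theorem~\ref{theo:equidistribution} and produces the $(NL)^{5/6}\log^{1/2}(NL)$ term; the second is bounded via Lemma~\ref{lemm:integral-remainder} (itself resting on Lemma~\ref{lemma:remainder-map}) and produces the $D_\Gamma(x,L,N)(1+L^{2s+2+\epsilon})$ term in the $\mathcal D^{\mathrm{twist}}$ estimate.
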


As in Corollary~\ref{cor:equidistribution}, the above theorem can be
improved for $L\geq 1$ by a geodesic scaling argument.  For all $(x,L,
N) \in M\times \R^+\times \N$ we define the constants
\[
\begin{aligned}
  \widetilde C_\Gamma(x,L,N)&:= C_\Gamma(a_{\log L}(x), 1, N)  \,; \\
  \widetilde D_\Gamma(x,L,N) &:= D_{\Gamma}(a_{\log L}(x), 1, N) \,.
\end{aligned}
\]
\begin{corollary}
  \label{cor:maps}
  For every $s > 14$ and every $\epsilon>0$, there is a constant
  $C'_{s,\epsilon}:= C'_{s,\epsilon}(\Gamma) > 0$ such that the
  following holds.  For every $L \geq 1$, the following bounds on the
  ergodic sums for horocycle maps holds: for every $(x,N) \in M \times
  \N$ and for every function $f\in W^s(M)$, we have,
  \begin{equation}
    \begin{aligned}
      \vert &\sum_{n = 0}^{N - 1} f \circ h_{Ln} (x) - \frac{1}{L}
      \int_0^{NL} f \circ h_t (x) dt \vert \leq C'_{s,\epsilon} \Vert
      f \Vert_{s} \\ &\times \left( \widetilde C_\Gamma(x,L,N) L^{1/6
          +\epsilon} (NL)^{\frac{5}{6}} \log^{\frac{1}{2}}N +
        \widetilde D_\Gamma(x,L,N) (1+L^{5+\epsilon})\right) \,.
    \end{aligned}
  \end{equation}
\end{corollary}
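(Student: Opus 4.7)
The plan is to imitate the geodesic-scaling argument used in the proof of Corollary \ref{cor:equidistribution}, but now applied to the discrete ergodic sum of the horocycle map rather than the twisted integral. The starting observation is that, since $[X,U] = 2U$, one has the conjugation identity
\[
a_{-\log L} \circ h_n \circ a_{\log L} = h_{nL}\,,\qquad L \geq 1,\; n \in \N,
\]
so that, setting $\tilde x := a_{\log L}(x)$ and $\tilde f := f \circ a_{-\log L}$, the quantity to be estimated rewrites as a \emph{time-one} horocycle-map discrepancy:
\[
\sum_{n=0}^{N-1} f\circ h_{Ln}(x) - \frac{1}{L}\int_0^{NL} f\circ h_t(x)\,dt \;=\; \sum_{n=0}^{N-1} \tilde f\circ h_n(\tilde x) - \int_0^N \tilde f\circ h_\tau(\tilde x)\,d\tau\,.
\]
The change of variables in the integral is straightforward, and the geometric quantities associated to $\tilde x$ are, by definition, $C_\Gamma(\tilde x,1,N)= \widetilde C_\Gamma(x,L,N)$ and $D_\Gamma(\tilde x,1,N)= \widetilde D_\Gamma(x,L,N)$.

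I would then apply Theorem \ref{theo:maps} to $\tilde f$ at the point $\tilde x$ with parameters $L=1$, with $s'>2$, $a'>2$ and $r' \geq 5s'-3$ to be chosen close to the minima allowed. The factors $(1+L^{2s+2+\epsilon})/L$ and $(1+L^{2+\epsilon})/L$ in that theorem become harmless constants, and one is left with controlling the scaled Sobolev norms $\vert\tilde f\vert_{r',s',1+\epsilon}$, $\vert\tilde f\vert_{r',s',s'+a'}$ and $\vert\tilde f\vert_{r',s',s'+a'+1+\epsilon}$ in terms of $\Vert f\Vert_\sigma$ for $\sigma$ close to $14$. The key computation, exactly as in the proof of Corollary \ref{cor:equidistribution}, is the scaling of the Lie-algebra derivatives under the conjugation by $a_{-\log L}$: since $\exp(-(-\log L)X/2)\,U\,\exp((-\log L)X/2) = L\cdot U$, one obtains
\[
U^k\tilde f = L^{k}\,(U^kf)\circ a_{-\log L}\,,\qquad V^k\tilde f = L^{-k}\,(V^kf)\circ a_{-\log L}\,,
\]
while $X$-derivatives and the Casimir are unaffected. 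Because $L \geq 1$ the $V$-scaling is harmless, and only the $U$-scaling contributes, giving for each anisotropic norm a factor of at most $L^{a'}$, $L^{1+\epsilon}$, or $L^{s'+a'+1+\epsilon}$, respectively. Combining with the Sobolev embedding \eqref{equa:sobolev-inclusions_map} bounds each $\vert\tilde f\vert_{r',s',\cdot}$ by the corresponding power of $L$ times $\Vert f\Vert_{r'+s'+\cdot}$.

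Putting the three bounds together and taking $s'$, $a'$ arbitrarily close to $2$ (which forces $r'+2s'+a'+1+\epsilon$ arbitrarily close to $14$, hence the hypothesis $s>14$ in the corollary), the twist term produces the contribution
\[
\widetilde C_\Gamma(x,L,N)\,L^{1+\epsilon}\,N^{5/6}\log^{1/2}N = \widetilde C_\Gamma(x,L,N)\,L^{1/6+\epsilon}\,(NL)^{5/6}\log^{1/2}N\,,
\]
while the $\mathcal D^0$ and remainder pieces yield $\widetilde D_\Gamma(x,L,N)(1+L^{s'+a'+1+\epsilon}) = \widetilde D_\Gamma(x,L,N)(1+L^{5+\epsilon})$. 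Adding these gives exactly the bound stated in the corollary.

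The only genuinely technical point is the last step of the norm translation: the anisotropic norm $\Vert\cdot\Vert_{r',s',a'}$ is defined through the spectral functional calculus of the essentially self-adjoint operator $(I-U^2)^{a'/2}$, not as a naive combination of integer-order derivatives, so the scaling $U^k\tilde f = L^k (U^kf)\circ a_{-\log L}$ has to be upgraded to fractional order. This is a standard interpolation/commutator argument (intertwining $(I-U^2)^{a'/2}$ with the rescaled operator $(I-L^2U^2)^{a'/2}$ and comparing spectral projections for $L\geq 1$), but it is the one point where some care is needed; everything else is bookkeeping of exponents of $L$ and $N$.
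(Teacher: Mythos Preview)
Your proof is correct and follows exactly the same approach as the paper's own proof: rewrite the time-$L$ discrepancy as a time-$1$ discrepancy via the conjugation $a_{\log L}^{-1}\circ h_t\circ a_{\log L}=h_{Lt}$, apply Theorem~\ref{theo:maps} with $L=1$ at $\tilde x=a_{\log L}(x)$ to $\tilde f=f\circ a_{\log L}^{-1}$, and then control $\Vert\tilde f\Vert_{r',s',a'}$ by the geodesic scaling of $U$ and $V$. The paper simply asserts the scaling bound $\Vert f\circ a_{\log L}^{-1}\Vert_{r,s,a}\leq(1+L^{-s})(1+L^{a})\Vert f\Vert_{r,s,a}$ without further comment, whereas you note correctly that the fractional $(I-U^2)^{a/2}$ case follows from the unitary intertwining $A_L^{-1}UA_L=LU$ and spectral calculus; this is indeed routine and your exponent bookkeeping ($r'+2s'+a'+1+\epsilon\to 14$, $L^{1+\epsilon}N^{5/6}=L^{1/6+\epsilon}(NL)^{5/6}$, $s'+a'+1+\epsilon\to 5+\epsilon$) matches the paper's.
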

\begin{proof}
  For any $\epsilon >0$, let $s, a \in (2, 2+\epsilon/4)$, and $r \in
  (7, 7+\epsilon/4)$.  Then for all $f \in W^{14 + 2 \epsilon}(M)$,
  \begin{equation}\label{equa:Sob-exponents-main-maps}
    \Vert f\Vert_{r,s,a} \leq \Vert f \Vert_{r,s,s+a} 
    \leq  \Vert f\Vert_{r,s,s+a+1+\epsilon}  \leq 
    \Vert  f \Vert_{14+2\epsilon}\,.
  \end{equation}

  For any $t \in \R$, we have $a_{\log L}^{-1} \circ h_t \circ a_{\log
    L} = h_{L t}$. Then
  \[
  \begin{aligned}
    \sum_{n = 0}^{N - 1} f \circ h_{Ln} (x) & = \sum_{n = 0}^{N - 1} (f \circ a_{\log L}^{-1}) \circ h_{n} (a_{\log L}(x)) \,; \\
    \frac{1}{L} \int_0^{NL} f \circ h_t (x) dt & = \int_0^{N} f \circ
    a_{\log L}^{-1} \circ h_t(a_{\log L}(x)) dt \,.
  \end{aligned}
  \]
  Now by Theorem~\ref{theo:maps} in the case $L=1$ we have a
  decomposition
  \[
  \begin{aligned}
    \sum_{n = 0}^{N - 1} &(f \circ a_{\log L}^{-1}) \circ h_{n} (a_{\log L}(x))  = \mathcal D^0_{a_{\log L}(x), N, 1, r, s, a}(f \circ a_{\log L}^{-1}) \\
    &+ \mathcal D^{\text{twist}}_{a_{\log L}(x), N, 1, r, s, a}(f
    \circ a_{\log L}^{-1}) + \mathcal{R}_{a_{\log L}(x), N, 1, r, s,
      a}(f \circ a_{\log L}^{-1}) \,,
  \end{aligned}
  \]
  such that the bounds stated in Theorem~\ref{theo:maps} hold: for all
  $(\tilde x, N) \in M\times \Z^+$ and for all $\tilde f \in
  W^{r,s,a}(M)$ we have
  \[
  \begin{aligned}
    \vert & \mathcal D^0_{\tilde x, N,1, r, s, a}(\tilde f ) -
    \int_0^{N} \tilde f \circ h_t(x) dt
    \vert \leq C_{r,s, a, \epsilon} D_\Gamma(\tilde x,1,N) \Vert \tilde f \Vert_{r,s,s+a} ;  \\
    \vert &\mathcal D^{\text{twist}}_{\tilde x , N,1, r, s, a}(\tilde
    f) \vert \leq C_{r,s, \epsilon}
    C_\Gamma(\tilde x,1,N) N^{\frac{5}{6}}  \log^{\frac{1}{2}}(e+N) \Vert \tilde f\Vert_{r,s,1 + \epsilon};  \\
    &\qquad\qquad\qquad\qquad\qquad\qquad
    + C_{r,s, a, \epsilon} D_\Gamma(\tilde x,1,N) \Vert \tilde f\Vert_{r,s,s+a+1+\epsilon}; \\
    \vert &\mathcal{R}_{\tilde x, N, 1, r, s, a}(\tilde f )\vert \leq
    C_{r,s, a, \epsilon} D_\Gamma(\tilde x,1,N) \Vert \tilde f
    \Vert_{r,s,a}\,.
  \end{aligned}
  \]
  By the above bounds for $\tilde x = a_{\log L}(x)$ and $\tilde f= f
  \circ a_{\log L}^{-1}$ we derive the estimate
  \[
  \begin{aligned}
    \vert \sum_{n = 0}^{N - 1} & f \circ h_{Ln} (x)  - \frac{1}{L} \int_0^{NL} f \circ h_t (x) dt  \vert \\
    &
    \leq 3 C_{r,s, a, \epsilon} D_\Gamma(a_{\log L}(x),1,N) \Vert f \circ a_{\log L}^{-1}\Vert_{r,s,s+a+1+\epsilon}  \\
    &+C_{r,s, \epsilon} C_\Gamma(a_{\log L}(x),1,N) N^{\frac{5}{6}}
    \log^{\frac{1}{2}}(e+N) \Vert f \circ a_{\log L}^{-1} \Vert_{r,s,1
      + \epsilon} \,.
  \end{aligned}
  \]
  The proof of the main estimate in Theorem~\ref{thm:Main_Maps} then
  follows from straightforward scaling estimates for the Sobolev norms
  under the action of the geodesic flow: for all $(r,s,a)$, for all
  functions $f\in W^{r,s,a}(M)$ and for all $L>0$, we have
  \[
  \Vert f \circ a_{\log L}^{-1}\Vert_{r,s,a} \leq (1+ L^{-s}) (1+ L^a)
  \Vert f \Vert_{r,s,a} \,.
  \]
  Since we have chosen $s$, $a \in (2, 2+\epsilon/4)$ it follows that
  \[
  s+a+1 +\epsilon\leq 5+\epsilon
  \]
  so that, for all functions $f\in W^{r,s,a}(M)$ and for all $L\geq 1$
  we have
  \[
  \Vert f \circ a_{\log L}^{-1} \Vert_{r,s,1 + \epsilon} \leq 4\, L^{1
    +\epsilon} \quad \text{and} \quad \Vert f \circ a_{\log
    L}^{-1}\Vert_{r,s,s+a+1+\epsilon} \leq 4\, L^{5+\epsilon}\,.
  \]
  The argument is therefore completed.
\end{proof}

\begin{proof}[Proof of Theorem~\ref{thm:Main_Maps}]  
  For $NL \leq e$ the statement is immediate since for continuous
  functions Riemann sums approximate the integral. We can therefore
  assume that $NL\geq e$

  We first prove the bound in formula~\eqref{equa:main_maps1} when $L
  \leq e$.  The triangle inequality gives
  \[
  d_M(h_{-L/2}(x)) - d_M(x) \leq e\,.
  \]
  Hence, there is a constant $C > 0$ such that
  \begin{equation}\label{equa:D_Gamma-estimate}
    D_\Gamma(x, L, N) \leq C (e^{d_M(x)} + e^{d_M(h_{NL}(x))})\,.
  \end{equation}
  If $x, h_{NL}(x) \in M_{A, Q}$, we immediately have that
  $D_\Gamma(x, L, N)\leq 2 C e^Q$ and, by the definition of
  $C_\Gamma(x, L, N)$, there is a constant $C_{\Gamma, A, Q}' > 0$
  such that
  \[
  C_\Gamma(x, L, N) \leq C_{\Gamma,A,Q}' (NL)^{\frac{2A}{1-A}}\,.
  \]
  Then \eqref{equa:main_maps1} for $L\leq e$ follows from
  Theorem~\ref{theo:maps}.

  By the logarithm law for geodesics, there is a measurable, finite
  almost everywhere function $C_\epsilon:M \to \R^+$ such that
  \[
  C_{\Gamma}(x,N, L) \leq [C_{\epsilon}(x)+C_\epsilon(h_{NL}(x))] (1+
  \log^{1+\epsilon} (NL))\,.
  \]
  Now for any $s' > 2, a > 2$, $r \geq 5s' - 3$ and $\epsilon' > 0$
  such that $r + s' + a + \epsilon' < s$, there is a constant $C_s :=
  C_s(\Gamma) > 0$ such that for all constants $C_{r, s', a,
    \epsilon}(\Gamma) > 0$ given in Theorem~\ref{theo:maps}, we have
  \[
  C_{r, s', a, \epsilon}(\Gamma) \leq C_{s}.
  \]
  Then there is a measurable, finite almost everywhere function $C_{s,
    \epsilon}': M \to \R^+$ given by
  \[
  C_{s, \epsilon}'(x) \geq C_{s} + C_{\epsilon'}(x) + e^{d_M(x)}\,.
  \]
  The statement \eqref{equa:main_maps2} for $L\leq e$ follows from
  this.

  When $L \geq e$, the statements follow by similar arguments with
  Corollary~\ref{cor:maps} in place of Theorem~\ref{theo:maps}.  In
  fact we have
  \[
  \begin{aligned}
    \widetilde C_\Gamma (x,L,N) &= C_\Gamma (a_{\log L} (x),1,N)  \\
    & = C_\Gamma( h_{-1/2} \circ a_{\log L} (x), N)+ C_\Gamma( h_{N-1/2} \circ a_{\log L} (x), N) \; \\
    \widetilde D_\Gamma (x,L,N) &= D_\Gamma (a_{\log L} (x),1,N) \\
    &= e^{ d_M( h_{-1/2} \circ a_{\log L} (x))} + e^{d_M( h_{N-1/2}
      \circ a_{\log L} (x))} \,.
  \end{aligned}
  \]
  Let us assume that $x$ and $h_{NL}(x)\in M_{A,Q}$ then we have
  \[
  \begin{aligned}
    d_M ( a_t \circ h_{-1/2} \circ a_{\log L} (x)) &= d_M( h_{-e^{-t}/2} \circ a_{t + \log L} (x)) \\
    &\leq A t + A \log L + Q + 1/2\; \\
    d_M ( a_t \circ h_{N-1/2} \circ a_{\log L} (x)) &= d_M\left(
      h_{-e^{-t}/2} \circ a_{t + \log L} ( h_{NL}(x)) \right) \\ &\leq
    A t+ A \log L + Q + 1/2 \,.
  \end{aligned}
  \]
  It follows by Lemma~\ref{lemma:c(x,T)} that there exist constants
  $C_{\Gamma,Q}$, $D_Q >0$ such that
  \[
  \begin{aligned}
    \widetilde C_\Gamma (x,L,N) &\leq  C_{\Gamma,Q}  (N L)^{\frac{2A}{1-A}} \,; \\
    \widetilde D_\Gamma (x,L,N) & \leq D_Q L^A \,.
  \end{aligned}
  \]
  The bound in formula~\eqref{equa:main_maps1} for $L\geq e$ then
  follows immediately from Corollary~\ref{cor:maps} and from the above
  inequalities.

  Let us then assume that $x$ and $h_{NL}(x)\in \widetilde M_{A,Q}$,
  that is, that there exist constants $A>1/2$ and $Q>0$ such that
  $d_M(a_y(x))$, $d_M(a_y(h_{NL}(x))) \leq A \log y + Q$ for all
  $y\geq 1$. For all $L\geq e$ and all $t\geq 0$ we have
  \[
  \begin{aligned}
    d_M ( a_t \circ h_{-1/2} \circ a_{\log L} (x)) &= d_M( h_{-e^{-t}/2} \circ a_{t + \log L} (x)) \\
    &\leq A \log ( t +\log L)  + Q + 1/2\; \\
    d_M ( a_t \circ h_{N-1/2} \circ a_{\log L} (x)) &= d_M\left(
      h_{-e^{-t}/2} \circ a_{t + \log L} ( h_{NL}(x)) \right) \\ &\leq
    A \log (t +\log L) + Q + 1/2 \,,
  \end{aligned}
  \]
  thus by Lemma~\ref{lemma:c(x,T)} it follows that there exists a
  constant $C_{\Gamma,A}>0$ such that
  \[
  \begin{aligned}
    \widetilde C_\Gamma (x,L,N) &\leq C_{\Gamma,A} e^{2Q}
    \left(1+ Q +  \log(NL) \right)^{2A} \,; \\
    \widetilde D_\Gamma (x,L,N) & \leq 2A \log \log L + 2Q + 1 \,.
  \end{aligned}
  \]
  The bound in formula~\eqref{equa:main_maps2} for $L\geq e$ then
  follows immediately from Corollary~\ref{cor:maps} and from the above
  inequalities.

  Next, for the estimate \eqref{equa:main_maps3}, again $s' > 2, a >
  2$ and $r \geq 5s' - 3$ be %and $\epsilon' > 0$
  such that $r + s' + a < s$.  Let $f \in W^s(M)$ be a coboundary for
  $h_L$ of zero average.  Because $\mathcal D^{0}_{x, N,L, r, s', a}$
  and $\mathcal D^{\text{twist}}_{x, N,L, r, s', a}$ are invariant
  under $h_L$, we have
  \[
  \mathcal D^{0}_{x, N,L, r, s', a}(f) = \mathcal D^{\text{twist}}_{x,
    N,L, r, s', a}(f) = 0
  \]
  Then by the decomposition in Theorem~\ref{theo:maps}, it follows
  that
  \[
  \begin{aligned}
    |\sum_{k = 0}^{N - 1} f \circ h_{L k}(x)| & = |\mathcal R_{x, N,L, r, s', a}(f)| \\
    & \leq C_{s, \epsilon} (e^{d_M(h_{-L/2}(x))} + e^{d_M(h_{L(N-1/2)}(x))}) \frac{1 + L^{2+\epsilon}}{L} \Vert f\Vert_{r, s', a} \\
    & \leq C_{s, \epsilon} (e^{d_M(h_{-L/2}(x))} +
    e^{d_M(h_{L(N-1/2)}(x))}) \frac{1 + L^{2+\epsilon}}{L} \Vert
    f\Vert_{s} \,.
  \end{aligned}
  \]

  Finally, the statement for $M$ compact is immediate from
  Theorem~\ref{theo:maps}.
\end{proof}

\section{Twisted horocycle flows: cohomological equations}
\label{sect:inv-dist}

In this section we prove Theorem~\ref{theo:Invariant_dist} on
invariant distributions and Theorem~\ref{theo:cohomological_eqn} on
solutions of the cohomological equation for the twisted horocycle
flow.  The argument is carried out in irreducible unitary
representations of $SL(2,\R) \times \T$ of every unitary type.  Our
bounds are proved with respect to rescaled foliated Sobolev norms
introduced to prove the effective equidistribution theorem, Theorem
\ref{theo:equidistribution}, with the optimal exponent within reach of
our method.

It will often be convenient to use the representation parameter $\nu =
\sqrt{1 - \mu}$ in place of the spectral Casimir parameter $\mu\in
\R$.

\subsection{Rescaled Sobolev norms}

For each $\mathcal T \geq 1$, let $X_{\mathcal T}$ and $V_{\mathcal
  T}$ be the rescaled vector fields, defined as follows:
\begin{equation}
  \label{eq:XVscaling}
  X_{\mathcal T} = {\mathcal T}^{-1/3} X \quad \text{ and } \quad 
  V_{\mathcal T} = {\mathcal T}^{-2/3} V\,.
\end{equation}
Let $(\mathcal T, M \times \T)$ be the Riemannian manifold $M \times
\T$ endowed with the metric that makes the ordered basis of vector
fields $(K, \mathcal T(U + K), X_{\mathcal T}, V_{\mathcal T})$ of the
Lie algebra $\R \times \sl(2, \R)$ orthonormal.  Let
$\{\phi_t^{\lambda, \mathcal T}\}_{t\in \R}$ on $M \times \T$ be the
flow that is generated by $\mathcal T(U + \lambda K)$.  The rescaled
foliated Laplacian is the essentially self-adjoint non-negative
operator
\[
\widehat \triangle_{\mathcal T} := - K^2 - X_{\mathcal T} ^2 -
V_{\mathcal T} ^2\,.
\]

The \emph{rescaled foliated Sobolev spaces} $\widehat W_\mathcal
T^{r,s}(M \times \T)$ are defined as follows. Let $\Box_{\mathcal T}$
denote the rescaled Casimir operator
\[
\Box_{\mathcal T} := \mathcal T ^{-2/3} \Box\,.
\]
For $r\geq 0$, $s\geq 0$, let $\widehat W_\mathcal T^{r,s}(M \times
\T)$ be the the maximal domain of the operator $(I+ \Box^2)^{r/2}(
I+\Box_{\mathcal T}^2 +\widehat \triangle_{\mathcal T}^2)^{s/2}$ on
$L^2(M\times \T)$ with inner product
\[
\langle F, G \rangle_{\widehat W_\mathcal T^{r,s}(M \times \T)} :=
\langle (I+ \Box^2)^{r/2}( I+\Box_{\mathcal T}^2 +\widehat
\triangle_{\mathcal T}^2)^{s/2} F, G\rangle_{L^2(M \times \T)}\,.
\]
The norm on the space $\widehat W_\mathcal T^{r,s}(M \times \T)$ is
defined as
\[
\vert F\vert_{r,s; \mathcal T} := \langle F, F \rangle_{\widehat
  W_\mathcal T^{r,s}(M \times \T)}
\]
Let $\widehat W_\mathcal T^{-r,-s}(M \times \T) = \left(\widehat
  W_{\mathcal T}^{r,s}(M \times \T)\right)'$ be its distributional
dual.  Notice that by definition, for any $r, s \geq 0$, the Sobolev
space we have the continuous embeddings
\[
\begin{aligned}
  W^{r +s}(M \times \T) &\subset \widehat W_{\mathcal T}^{r,s}(M \times \T) \,, \\
  \widehat W_{\mathcal T}^{-r,-s}(M \times \T) & \subset W^{-(r +s)}(M
  \times \T) \,.
\end{aligned}
\]

In completely analogous fashion to the decomposition of $W^s(M \times
\T)$, we have that the space $\widehat W_\mathcal T^{r,s}(M \times
\T)$ decomposes into a direct sum or a direct integral of irreducible,
unitary, Sobolev subspaces $\widehat H_{m, \mu, \mathcal T}^{r,s}$,
where $\widehat H_{m, \mu, \mathcal T}^{r,s}$ is the intersection of
$\widehat W_\mathcal T^{r,s}(M \times \T)$ to $H_{m, \mu}$ and is
endowed with the inner product $\langle \cdot , \cdot \rangle_{r,s;
  \mathcal T}$.  By definition, since the Casimir operator acts as a
constant multiple of the identity on each irreducible subspace $H_{m,
  \mu}$, it follows that for all $r$, $r'$, $s\geq 0$,  
  the spaces $\widehat H_{m, \mu, \mathcal T}^{r, s}$ and $\widehat H^{r', s}_{m,
  \mu, \mathcal T}$ coincide as vector spaces, but are endowed with
different inner products and norms.

We simplify our notation.  For $H := H_{m, \mu}$ and for $r$, $s > 0$,
the distributional dual space to $\widehat H^{r, s} :=
\widehat H_{m, \mu,\mathcal T}^{r, s}$ is denoted $\widehat
H^{-r, -s} := \left(\widehat H^{r,
    s}\right)'$.  The space of smooth vectors in $H$ in this foliated
sense is denoted $\widehat H^\infty:= \bigcap_{s \geq 0}
\widehat H^{0, s}_{\mathcal T}$ , and its distributional dual space is
denotes $\widehat H^{-\infty} := \left(\widehat
  H^\infty\right)' = \bigcup_{s \geq 0} \widehat
H^{0, -s}_{\mathcal T}$. Our convention is justified since all the above spaces
do not depend, as topological vector spaces, on the scaling parameter 
$\mathcal T >0$ (but of course the rescaled foliated Sobolev norms do
depend on the scaling parameter).

\subsection{Principal and complementary
  series}\label{subs:principal-coeqn}

Let $H_\mu$ be the line model of an irreducible unitary representation
of the principal or complementary series (see Appendix \ref{appe:A}).
 By Plancherel's theorem, a computation left for Appendix \ref{appe:B}
shows
\begin{lemma}\label{lemma:Fourier:comp}
  Then there is a constant $C > 0$ such that, for all $f\in H_{\mu}$,
  \[
  \|f\|_{0}^2 = C \int_\R |\hat f(\xi)|^2 |\xi|^{-\re\,\nu} d\xi\,.
  \]
\end{lemma}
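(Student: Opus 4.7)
The plan is to treat the two series separately and observe that the statement unifies because $\re\nu = 0$ in the principal series while $\nu \in (0,1)$ is real in the complementary series.

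Recall from Appendix~\ref{appe:A} that on the line model $H_\mu$ of a principal series representation ($\nu$ purely imaginary) the inner product is simply the standard one on $L^2(\R,dx)$, while for the complementary series ($\nu \in (0,1)$) the inner product has the non-local form
\[
\langle f,g\rangle_{0} \;=\; c_\nu \int_\R\!\int_\R \frac{f(x)\,\overline{g(y)}}{|x-y|^{1-\nu}}\,dx\,dy
\]
for some normalization constant $c_\nu>0$. The principal case is then an immediate application of the classical Plancherel identity, and since $\re\nu=0$ in that case the weight $|\xi|^{-\re\nu}$ is identically~$1$, so the stated formula holds.

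For the complementary case I would rewrite the double integral as a convolution pairing $\langle f,K\ast\bar g\rangle_{L^2}$ with kernel $K(z)=|z|^{\nu-1}$, and then use the distributional Fourier transform of the Riesz kernel,
\[
\widehat{|z|^{\nu-1}}(\xi) \;=\; c'_\nu\,|\xi|^{-\nu}, \qquad 0<\nu<1,
\]
together with Parseval's identity, to conclude
\[
\langle f,g\rangle_0 \;=\; c_\nu c'_\nu \int_\R \hat f(\xi)\,\overline{\hat g(\xi)}\,|\xi|^{-\nu}\,d\xi.
\]
Setting $g=f$ and noting $\re\nu = \nu$ in this range gives the statement with $C=c_\nu c'_\nu$.

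The only technical issue I foresee is the justification of the Fourier identity above when $f$ is an arbitrary element of $H_\mu$ rather than a Schwartz function, since both the defining double integral and the Riesz-kernel Fourier transform are a priori distributional. I would handle this by first doing the computation for $f,g \in C^\infty_c(\R)$, where the double integral converges absolutely, $\hat f,\hat g$ are Schwartz, and the identity is a standard application of Fubini after regularizing $|z|^{\nu-1}$ as the analytic continuation of $|z|^{s-1}/\Gamma(s)$; I would then extend to all of $H_\mu$ by continuity, using density of $C^\infty_c(\R)$ in $H_\mu$ (which follows from the definition of the complementary-series inner product as the completion of such a space). Everything else is a direct computation, so the only substantive step is verifying this Riesz-kernel Fourier transform with the right sign conventions, which will fix the constant $C$ in the statement.
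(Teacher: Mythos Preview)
Your proposal is correct and follows essentially the same approach as the paper: in the principal case invoke ordinary Plancherel (with $\re\nu=0$), and in the complementary case rewrite the norm as a convolution pairing with the Riesz kernel $|z|^{\nu-1}$, identify its Fourier transform as $c|\xi|^{-\nu}$, and apply Plancherel. The paper's proof is in fact terser than yours, simply noting that $\hat K(\xi)=|\xi|^{-\nu}\hat K(1)$ by homogeneity and that $\hat K(1)\neq 0$ since $\hat K$ is not identically zero; your density-and-regularization discussion is a welcome addition of rigor but not a different method.
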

Notice that the Fourier transform of the $X$ and $V$ operators are
\begin{equation}\label{equa:vect_coeqn}
  \hat{X} := (1 - \nu) + 2 \xi \frac{\partial}{\partial \xi}, \qquad 
  \hat{V} := -i \left((1 - \nu) \frac{\partial}{\partial \xi} + \xi \frac{\partial^2}{\partial \xi^2}\right)\,.
\end{equation}
In fact, the above formulas hold by the standard property of the
Fourier transform on the Schwartz space $\mathscr{S}(\R)$ of rapidly
decaying smooth functions, hence they hold by duality on the space
$\mathscr{S}'(\R)$ of tempered distributions.

By Lemma~\ref{lemma:Fourier:comp}, it follows that there is also a
formula for the rescaled foliated Sobolev norms of functions in
Fourier transform.

Let us adopt the following notation.  For $\mu \in \text{spec}(\Box)$,
define $ \mu_\mathcal T := \mathcal T^{-2/3} \mu\, $ so that
\[
\text{spec} (\Box_{\mathcal T}) = \{\mu_{\mathcal T} \vert \mu \in
\text{spec}(\Box)\}\,.
\]
\begin{lemma}\label{lemma:principal_Sobolev_Fourier}
  Let $H := H_{m, \mu}$ and let $r$,$s \geq 0$.  For all $\mathcal T\geq 1$ and for all $f\in
  \widehat H^\infty$, we have
  \[
  % \begin{aligned}
  %   \vert f \vert_{0, s; \mathcal T}^2 &= C \int_{\R} |((1+m^2) I -
  %   \hat{X}_{\mathcal T}^2 -
  %   \hat{V}_{\mathcal T}^2)^{\frac{s}{2}} \hat f(\xi)|^2
  %   \frac{d\xi}{\vert \xi\vert^{\re \, \nu}}\,; \\
  \vert f \vert_{r,s; \mathcal T}^2 = C (1+\mu^2)^{\frac{r}{2}}
  \int_{\R} |[ I+ \mu_{\mathcal T}^2 + (m^2 I - \hat{X}_{\mathcal T}^2
  - \hat{V}_{\mathcal T}^2)^2] ^{\frac{s}{4}}\hat f(\xi)|^2
  \frac{d\xi}{\vert \xi\vert^{\re \, \nu}}\,.
  % \end{aligned}
  \]
\end{lemma}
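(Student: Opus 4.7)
The plan is to exploit the fact that both the Casimir operator $\Box$ and the circle generator $K$ act as scalars on the irreducible component $H = H_{m,\mu}$, so that the rescaled foliated Sobolev norm collapses to a functional calculus in the foliated Laplacian $\widehat\triangle_{\mathcal T}$ alone. Plancherel's theorem (Lemma~\ref{lemma:Fourier:comp}) then transports the resulting expression to the Fourier side, where \eqref{equa:vect_coeqn} gives the explicit form of $X$ and $V$.

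First, I would observe that on $H_{m,\mu}$ one has $\Box = \mu I$, hence $\Box_{\mathcal T} = \mu_{\mathcal T} I$, and $-K^2 = m^2 I$. Therefore on $\widehat H^\infty$ the operator $(I + \Box^2)^{r/2}$ reduces to the scalar $(1 + \mu^2)^{r/2}$, producing the prefactor in the claimed formula, while
\[
I + \Box_{\mathcal T}^2 + \widehat\triangle_{\mathcal T}^2 = (1 + \mu_{\mathcal T}^2) I + (m^2 I - X_{\mathcal T}^2 - V_{\mathcal T}^2)^2 =: A_{\mathcal T}
\]
is positive self-adjoint and satisfies $A_{\mathcal T} \geq I$. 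The spectral theorem then gives
\[
|f|_{r,s;\mathcal T}^2 = (1 + \mu^2)^{r/2} \langle A_{\mathcal T}^{s/2} f, f \rangle_0 = (1 + \mu^2)^{r/2}\, \| A_{\mathcal T}^{s/4} f \|_0^2.
\]

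Next I would apply Lemma~\ref{lemma:Fourier:comp} to the vector $A_{\mathcal T}^{s/4} f \in H_\mu$, and invoke the fact that the Fourier transform of the line model intertwines $X$ and $V$ with the differential operators $\hat X$ and $\hat V$ of \eqref{equa:vect_coeqn}. Consequently the rescaled operators $X_{\mathcal T}$ and $V_{\mathcal T}$ are intertwined with $\hat X_{\mathcal T} = \mathcal T^{-1/3} \hat X$ and $\hat V_{\mathcal T} = \mathcal T^{-2/3} \hat V$, so that $A_{\mathcal T}$ is intertwined with $(1 + \mu_{\mathcal T}^2) I + (m^2 I - \hat X_{\mathcal T}^2 - \hat V_{\mathcal T}^2)^2$. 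This produces exactly the integral formula in the statement.

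The only delicate point is to justify that the Borel functional calculus of $A_{\mathcal T}$ commutes with the Fourier transform, i.e.\ that $\widehat{A_{\mathcal T}^{s/4} f}(\xi) = \bigl[(1 + \mu_{\mathcal T}^2) I + (m^2 I - \hat X_{\mathcal T}^2 - \hat V_{\mathcal T}^2)^2\bigr]^{s/4} \hat f(\xi)$. This is immediate because Lemma~\ref{lemma:Fourier:comp} exhibits the Fourier transform as a unitary equivalence between $H_\mu$ in its line model and the weighted $L^2$ space with density $|\xi|^{-\re\,\nu}\, d\xi$, and the functional calculus of essentially self-adjoint operators is preserved under unitary equivalence.
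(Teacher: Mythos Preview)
Your proof is correct and follows exactly the approach the paper intends: the paper merely states that the formula follows from Lemma~\ref{lemma:Fourier:comp} once one notes the formulas~\eqref{equa:vect_coeqn} for $\hat X$ and $\hat V$, and you have spelled out precisely this argument (reducing $\Box$ and $K$ to scalars on $H_{m,\mu}$, then invoking unitarity of the Fourier transform to transport the functional calculus). There is nothing to add.
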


\subsubsection{Invariant distributions}\label{subs:invdist-principal}
Let $m \in \mathbb Z$ and $\mu>0$.  Let $H_\mu$ be the line model of
an irreducible, unitary representation in the principal or
complementary series of $\SL(2, \R)$, and let $H_{m, \mu}$ the
corresponding model for an irreducible unitary representation of
$\SL(2, \R) \times \T$ of parameters $(m,\mu)$.  For any function
$f_m\in H_{m, \mu}$ there exists a function $f\in H_\mu$ such that
\[
f_m = f \otimes e_m\,.
\]
We formally define the functional $D^\lambda_{m,\mu}$ on $H_{m, \mu}$
by the formula
\begin{equation}
  \label{eq:D_lambda_m}
  D^\lambda_{m,\mu} (f_m) := \int_\R f (t) e^{-i t \lambda m} dt \,.
\end{equation}
Note that the functional $D^\lambda_{m,\mu}$ on $H_{m, \mu}$ induces a
functional $\bar D^\lambda_{m,\mu}$ on $H_\mu$ so that by definition
we formally have that
\[
\bar D^\lambda_{m,\mu} = \bar D^{\lambda m}_{1, \mu} \,.
\]
By the above identity we can reduce all statements about functionals
$D^\lambda_{m,\mu}$ on $H_{m,\mu}$ to statements about functionals
$\bar D^{\lambda m}_{1,\mu}$ on $H_\mu$.

We now show that $D^\lambda_{m,\mu} $ is densely defined on
$H_{m,\mu}$ .  We will use the horocycle flow invariant distribution
$D_\mu^+$ that is is (sharply) defined on $H_\mu^{(1 + \re\,\nu)/2 +}$
by
\[
D_\mu^+(f) := \lim_{x \to \infty} \frac{f(x)}{(1 + x^2)^{(1 + \nu)/2}}
\]
(see Section 3.2 of \cite{FF1}).  It is immediate that any $f \in
\text{Ann}(D_\mu^+)$ is also in $L^1(\R)$, and hence,
$D^\lambda_{m,\mu}(f \otimes e_m)$ is defined by the above formula.

By a standard construction of an orthogonal basis in $H_\mu$, the
element $u_0 \in \text{Ann}(U - V)$ given by
\[
u_0(x) := (1 + x^2)^{-(1 + \nu)/2}
\]
is in $H_\mu^\infty$.  Integration by parts shows that
$D^\lambda_{m,\mu}(u_0\otimes e_m) \in \C$.  Then for any $f_m= f
\otimes e_m \in H_{m, \mu}^\infty$, the distribution
$D^\lambda_{m,\mu}$ is defined by
\[
D^\lambda_{m,\mu} (f_m) := D_{\lambda}( f_m - D_\mu^+(f) (u_0\otimes
e_m)) + D_\mu^+(f) D^\lambda_{m,\mu}(u_0\otimes e_m)\,.
\]

It follows from Lemma 3.2 of \cite{T} that $D^\lambda_{m,\mu} \in
H_{m, \mu}^{-((1 + \re\,\nu)/2 +)}$.

In fact, we have the following stronger result.
\begin{lemma}\label{lemma:regularity-D_lambda-XV-principal}
  Let $H := H_{m, \mu}$, where $\mu > 0$ and $\lambda m \neq 0$.  Then
  \[
  D^\lambda_{m,\mu} \in \widehat H^{0, -(1/2+)}\,.
  \]
\end{lemma}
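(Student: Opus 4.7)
The plan is to pass to the Fourier side of the line model and recognize $D^\lambda_{m,\mu}$ as the functional that evaluates the Fourier transform at the frequency $\xi_0 := \lambda m \neq 0$. Lemma~\ref{lemma:principal_Sobolev_Fourier} then turns the claimed regularity into a weighted Sobolev trace estimate for the Fourier realization of the foliated Laplacian away from $\xi = 0$.

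More precisely, since $D^\lambda_{m,\mu}(f \otimes e_m) = \int_\R f(t) e^{-i \lambda m t}\, dt$ is (up to normalization) the Fourier transform $\hat f$ evaluated at $\xi_0$, it suffices to show that for every $s > 1/2$ there is a constant $C_s > 0$ such that
\[
|\hat f(\xi_0)|^2 \leq C_s \int_\R \bigl|(I + \mu^2 + L^2)^{s/4} \hat f(\xi)\bigr|^2 \frac{d\xi}{|\xi|^{\re\,\nu}}
\]
for every $f \in H_\mu^\infty$, where $L := m^2 I - \hat X^2 - \hat V^2$ is the Fourier realization of $\widehat\triangle$ on $H_{m,\mu}$. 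I would work with $\mathcal T = 1$; the conclusion for general $\mathcal T \geq 1$ follows because the topology of $\widehat H^{0,-s}$ is independent of $\mathcal T$.

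The key observation is that, since $\hat V = -i((1-\nu)\partial_\xi + \xi \partial_\xi^2)$ is a second-order operator with principal symbol $-i\,\xi\,\eta^2$ non-vanishing on any compact subset of $\R \setminus \{0\}$ and $\hat X = (1-\nu) + 2\xi\partial_\xi$ is first order, the operator $L$ is elliptic of order four in $\partial_\xi$ on any such subset. Consequently, if $\chi \in C_c^\infty(\R \setminus \{0\})$ is a cutoff equal to $1$ on a neighborhood $\Omega$ of $\xi_0$, a parametrix / standard elliptic regularity argument for $(I + L^2)^{s/4}$ (a pseudo-differential operator of order $2s$ on $\Omega$) yields
\[
\|\chi \hat f\|_{H^{2s}(\R)} \leq C_\Omega \bigl\|(I + \mu^2 + L^2)^{s/4} \hat f\bigr\|_{L^2(\Omega')}
\]
for a slightly larger neighborhood $\Omega' \supset \Omega$ also contained in $\R \setminus \{0\}$ (lower-order remainders are absorbed since $1 \leq (I + \mu^2 + L^2)^{s/4}$). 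On $\Omega'$ the weight $|\xi|^{-\re\,\nu}$ is bounded above and below, so the right-hand side is controlled by $\vert f \otimes e_m \vert_{0,s}$.

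Finally, for $s > 1/2$ we have $2s > 1$, so the one-dimensional Sobolev embedding $H^{2s}(\R) \hookrightarrow C(\R)$ gives $|\hat f(\xi_0)| \leq C\,\|\chi \hat f\|_{H^{2s}(\R)}$, and combining the last two inequalities proves the lemma. The principal obstacle is the elliptic estimate for $L$: although its coefficients $\xi$ and $\xi^2$ degenerate at the origin, the hypothesis $\lambda m \neq 0$ together with the localization by $\chi$ removes the degeneracy, after which the pseudo-differential calculus argument is standard. All constants can be chosen depending only on $\mu$ and $\xi_0$, so the argument yields the claimed regularity uniformly in the representation parameter $\nu$ (unlike the weaker bound $D^\lambda_{m,\mu} \in H_{m,\mu}^{-((1 + \re\,\nu)/2 +)}$ quoted from \cite{T}), which reflects the fact that the foliated norm, having no $U$-derivative component, still provides transverse control that is insensitive to $\nu$.
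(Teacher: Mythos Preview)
Your proposal is correct and follows essentially the same approach as the paper: identify $\hat D^\lambda_{m,\mu}$ with the Dirac mass at the nonzero frequency, localize on an interval $I_\lambda \not\ni 0$, use ellipticity of the Fourier-side operators there to control the ordinary Sobolev norm $W^s(I_\lambda)$ by $|\,\cdot\,|_{0,s}$, and conclude via the one-dimensional Sobolev embedding for $s>1/2$. The only difference is that the paper carries this out more elementarily using $\hat X$ alone (since $\partial_\xi = (2\xi)^{-1}(\hat X-(1-\nu))$ on $I_\lambda$ already gives $\|\partial_\xi^k \hat f\|_{L^2(I_\lambda)} \lesssim \sum_{i\le k}\|\hat X^i \hat f\|_{L^2(I_\lambda)}$), avoiding the fourth-order pseudodifferential parametrix for $L$ that you invoke.
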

\begin{proof} The Fourier transform $\hat D^\lambda_{m,\mu}$ of the
  distribution $D^\lambda_{m,\mu}$ on $H^\infty_{m, \mu}$ is the Dirac
  mass at $-\lambda m$, that is,
  \[
  \hat D^\lambda_{m,\mu} (\hat f \otimes e_m) = \hat f (-\lambda m)
  \,, \quad \text{ for all } f \in \mathscr{S}(R)\,.
  \]
  The general case can be reduced to the case when $m=1$, therefore we
  prove the result in that case.  For simplicity of notation, let
  $D^\lambda:= D^\lambda_{1,\mu}$.  Let $I_\lambda$ be any open
  interval such that $-\lambda\in I_\lambda$ and $0\not\in I_\lambda$.
  By the Sobolev embedding theorem, it follows that the distribution
  $\hat D^\lambda \in W^{-s} (I_\lambda)$ for all $s>1/2$.  In fact,
  the distribution $\hat D^\lambda$ is a probability measure at
  $-\lambda$ and by Sobolev embedding theorem $W^s(I_\lambda) \subset
  C^0(I_\lambda)$ for all $s>1/2$.  By a direct calculation we can
  prove that for every $k \in \N$, there exist constants $C_{k, \nu,
    \lambda}, C_{k, \nu, \lambda}'>0$ such that, for any function
  $\hat f \in C^\infty(I_\lambda)$, we have
  \[%\begin{equation}\label{equa:Sobolev-invdist-principal}
  \Vert \frac{d^k \hat f}{d\xi^k} \Vert_{L^2(I_\lambda)} \leq  C_{k, \nu, \lambda} \sum_{i=0}^k 
  \Vert \hat X^i \hat f \Vert  _{L^2(I_\lambda)} \leq C_{k, \nu, \lambda}' \vert f\vert_{0, k}\,.
  \]%\end{equation}
  By interpolation it follows that for every $s\geq 0$, there exists a
  constant $C_s >0$ such that for all $f \in \widehat H^{0, s}$, we
  have
  \[
  \Vert \hat f \Vert_{W^s(I_\lambda)} \leq C_s \vert f \vert_{0, s}\,,
  \]
  hence $\hat f \in W^s(I_\lambda)$ whenever $\hat f \in \widehat H^s$
  for any $s>1/2$.  The statement then follows from the Sobolev
  embedding theorem, as explained above.
\end{proof}

\subsubsection{Twisted cohomological equations}\label{sect:coeqn}
 
Let $\mathcal T \geq 1$ and $\lambda \in \R^*$.  For $s \geq 0$, we
study the twisted cohomological equation
\begin{equation}\label{equa:coeqn}
  \mathcal T (U + \lambda K) g = f
\end{equation}  
in every irreducible subspace of the Sobolev space $\widehat
W_\mathcal T^s(M \times \T)$ of the principal and complementary
series.
 
The following a priori bounds for solutions of the cohomological
equation hold.

\begin{theorem}
  \label{theo:cohomology-principal}
  Let $r$, $s \geq 0$.  There is a constant $C_{r,s} > 0$ such that
  for any $H = H_{m, \mu}$ with $\mu > 0$ and $m \in \Z / \{0\}$, and
  for any function $f_m \in \widehat H^\infty \cap
  \text{Ann}(D^\lambda_{m,\mu})$, there is a unique solution $g_m \in
  H$ satisfying~\eqref{equa:coeqn}, and moreover, for all $\mathcal T
  \geq 1$,
  \[
  \vert g_m \vert_{r,s; \mathcal T} \leq \frac{ C_{r,s} }{{\mathcal
      T}^{1/3} } \frac{1 + \vert\lambda m\vert^{-s}}{\vert \lambda
    m\vert} \, \vert f_m \vert_{r+s, s+1; \mathcal T} \,.
  \]
\end{theorem}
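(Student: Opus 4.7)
The plan is to work in the line model of $H_{m,\mu}$ described in Appendix~\ref{appe:A}, where $U$ acts as $\partial_x$ on $H_\mu$ and $K$ acts on the factor $e_m$ as multiplication by $im$. Taking Fourier transform in $x$, the generator $\mathcal{T}(U+\lambda K)$ becomes multiplication by the symbol $i\mathcal{T}(\xi+\lambda m)$, so the unique formal solution to \eqref{equa:coeqn} is
\[
\hat g(\xi) \;=\; \frac{\hat f(\xi)}{i\,\mathcal{T}\,(\xi+\lambda m)}\,.
\]
As observed in the proof of Lemma~\ref{lemma:regularity-D_lambda-XV-principal}, the Fourier transform of $D^\lambda_{m,\mu}$ is the Dirac mass at $-\lambda m$; therefore the hypothesis $f_m\in\text{Ann}(D^\lambda_{m,\mu})$ is precisely the vanishing condition $\hat f(-\lambda m)=0$, which removes the pole and ensures $g_m$ is a well-defined distribution.

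To obtain the quantitative estimate I would apply the Plancherel-type formula of Lemma~\ref{lemma:principal_Sobolev_Fourier}, expressing $|g_m|_{r,s;\mathcal{T}}^2$ as an integral against the weight $|\xi|^{-\re\nu}$ of $\hat g$ acted on by polynomial combinations of $\hat X_\mathcal{T}$ and $\hat V_\mathcal{T}$ (see \eqref{equa:vect_coeqn}). I would split $\R$ into a near region $I:=\{\xi:|\xi+\lambda m|\le |\lambda m|/2\}$ and its complement $I^c$. On $I^c$ one has $|\xi+\lambda m|^{-1}\le 2|\lambda m|^{-1}$, so each $\xi$-derivative that falls on the denominator costs only a factor $|\lambda m|^{-1}$ while each derivative landing on $\hat f$ is absorbed into a foliated Sobolev norm of $f_m$ of one higher order. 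On $I$, I would use the Taylor identity
\[
\hat f(\xi)\;=\;(\xi+\lambda m)\int_0^1 \hat f\,'\!\bigl(-\lambda m+t(\xi+\lambda m)\bigr)\,dt,
\]
so that after cancellation the integrand on $I$ only involves derivatives of $\hat f$ of order up to $s+1$, controlled via $\hat X, \hat V$ and a one-dimensional Sobolev embedding on the bounded interval $I$. The rescaling bookkeeping is what produces the prefactor $\mathcal{T}^{-1/3}$: a factor $\mathcal{T}^{-1}$ is supplied by the denominator, while up to $s$ rescaled $\hat V_\mathcal{T}$-derivatives (each carrying $\mathcal{T}^{-2/3}$) are ``undone'' when passing to the unrescaled derivatives needed for the Taylor remainder, yielding the net gain $\mathcal{T}^{-1+2s/3}$ relative to unrescaled norms of order $s+1$, which reassembles as $\mathcal{T}^{-1/3}|f_m|_{r+s,s+1;\mathcal{T}}$.

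The main technical obstacle is the tight interplay in the near region $I$ between the weight $|\xi|^{-\re\nu}$ (which for small $|\lambda m|$ is itself singular inside $I$, forcing a further splitting near the origin or a separate treatment of the subinterval $|\xi|\le|\lambda m|/2$), the scale factors coming from $\hat X_\mathcal{T}$ and $\hat V_\mathcal{T}$, and the order of the Taylor cancellation. Balancing these three ingredients is what yields both the sharp $\mathcal{T}^{-1/3}$ exponent and the corrective $|\lambda m|^{-s}$ factor in the small-$|\lambda m|$ regime; any cruder bookkeeping would either lose the optimal $\mathcal{T}$-exponent or require more than $s+1$ derivatives of $f_m$. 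Once the quantitative estimate is in place, uniqueness of $g_m$ follows from the uniqueness of $\hat g$ as a distribution solving $i\mathcal{T}(\xi+\lambda m)\hat g=\hat f$ modulo multiples of the Dirac mass at $-\lambda m$, the latter being ruled out by requiring $g_m\in H_{m,\mu}$.
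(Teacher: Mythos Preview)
Your approach is essentially the paper's: Fourier-transform the equation, split into a near region around the pole $\xi=-\lambda m$ and its complement, and handle the near region via the first-order Taylor identity (the paper's Lemmas~\ref{lemma:transfer-def}--\ref{lemma:XVderivatives_scaled} carry this out using Minkowski's integral inequality rather than a Sobolev embedding, and then the rescaled bound for general~$\mathcal T$ is derived from the $\mathcal T=1$ case rather than tracked directly). One simplification you overlook: the paper first substitutes $\hat f_\lambda(\xi)=\hat f(\lambda\xi)$ to move the pole to $\xi=-1$ (see~\eqref{eq:twist_eqn-lambda=1} and~\eqref{equa:VX-lambda}), so the weight $|\xi|^{-\re\nu}$ is manifestly bounded on $I=[-3/2,-1/2]$; your stated worry about a singularity of the weight inside your $I$ is in any case unfounded, since $|\xi+\lambda m|\le|\lambda m|/2$ forces $|\xi|\ge|\lambda m|/2$.
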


By proceeding formally, we note that $f_m$ and $g_m$ are simple
tensors, so we write $f_m = f \otimes e_m$ and we consider a solution
$g\otimes e_m \in \widehat H^\infty$ of the cohomological equation
\begin{equation}\label{equa:coeqn_simple-tensor} {\mathcal T} (U +
  \lambda K) g \otimes e_m = f
  \otimes e_m\,.
\end{equation}
In a line model $H_{m,\mu}$ of an irreducible unitary representation
of the principal or complementary series, the cohomological
equation~\eqref{equa:coeqn_simple-tensor} takes the form
\[ {\mathcal T} (\frac{d}{dx} + i (\lambda m)) g(x) \otimes e_m(t) =
f(x) \otimes e_m(t) \,.
\]
Then it is enough to prove Sobolev a priori estimates for the solution
to the equation
\begin{equation}\label{equa:coeqn_M-P} {\mathcal T} (\frac{d}{dx} +
  i(\lambda m)) g = f \,.
\end{equation}
By taking the Fourier transform of both sides of
\eqref{equa:coeqn_M-P}, we get that for all $\xi \in \R$,
\begin{equation}\label{equa:hat g-principal}
  \hat g(\xi) = -i \frac{\hat f(\xi)}{{\mathcal T}(\xi + \lambda m)}.
\end{equation}
We observe that $D^\lambda_{m,\mu}(e_m \otimes f) = 0$ if and only if
$\hat f(-\lambda m) = 0$.  In what follows, we will simplify notation,
and let
\[
m := 1.
\]
The estimate for $m\not =1$ will be derived from this case for the
parameter equal to $\lambda m$.  In addition, the general estimate for
rescaled equation with respect to the rescaled Sobolev norms will be
derived from the non-rescaled case. We will therefore let
\[
\mathcal T:=1\,,
\]
and we are left to consider the formula for the solution:
\begin{equation}\label{eq:coeqn-twist-gen_lambda}
  \hat g(\xi) = -i \frac{\hat f(\xi)}{(\xi + \lambda)}\,.
\end{equation}

Now, for any $\hat f \in H_\mu$ and any $\xi \in \R$, let $ \hat
f_\lambda(\xi) = \hat f(\lambda \xi)\,$. Sobolev estimates for the
solution to \eqref{eq:coeqn-twist-gen_lambda} will be obtained in
Lemma~\ref{lemma:XVderivatives} from such estimates for the function
$\hat g_\lambda$ given by the equivalent equation
\begin{equation}\label{eq:twist_eqn-lambda=1}
  \hat g_\lambda(\xi) = -i \frac{\hat f_\lambda(\xi)}{\lambda(\xi + 1)}\,.
\end{equation}  

We will now prove estimates for the above equation, and to simplify
notation, we drop the subscript $\lambda$ from $\hat f_\lambda$ and
$\hat g_\lambda$.

\medskip To further simplify notation, set $D:= D^{1}_{1,\mu}$.  As a
first step, we have the following identity
\begin{lemma}\label{lemma:transfer-def}
  Under the condition that $D (f) =0$, that is, $\hat f(-1)= 0$, we
  have
  \[
  \hat g(\xi) = -\frac{i}{\lambda} \int_0^1 \hat f'( -1 + t( \xi + 1))
  dt \,.
  \]
\end{lemma}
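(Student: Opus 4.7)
The plan is to recognize that this identity is just the integral form of the Taylor remainder at $-1$. Starting from the formula
\[
\hat g(\xi) = -\frac{i}{\lambda}\,\frac{\hat f(\xi)}{\xi+1}
\]
established in \eqref{eq:twist_eqn-lambda=1}, and using the hypothesis $\hat f(-1)=0$ (which encodes the condition $D(f)=0$), I would invoke the fundamental theorem of calculus to write
\[
\hat f(\xi) \;=\; \hat f(\xi)-\hat f(-1) \;=\; \int_{-1}^{\xi}\hat f'(u)\,du.
\]

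Next I would perform the affine change of variables $u = -1 + t(\xi+1)$, which sends $[0,1]$ bijectively onto the segment from $-1$ to $\xi$, with $du = (\xi+1)\,dt$. This yields
\[
\hat f(\xi) \;=\; (\xi+1)\int_0^1 \hat f'\bigl(-1 + t(\xi+1)\bigr)\,dt,
\]
so the factor $(\xi+1)$ in the denominator of $\hat g$ cancels cleanly, giving the claimed identity
\[
\hat g(\xi) \;=\; -\frac{i}{\lambda}\int_0^1 \hat f'\bigl(-1 + t(\xi+1)\bigr)\,dt.
\]

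The only subtlety is that this manipulation is purely classical when $\hat f$ is smooth, but the relevant class of functions is $H_{\mu}^\infty$, whose Fourier transforms may only be tempered distributions. The smooth-vector theory for the line model (together with the fact that $\hat f$ vanishes at $-1$ to first order under the hypothesis $D(f)=0$) ensures that $\hat f$ is a continuous function in a neighborhood of $-1$ with a well-defined derivative there in the distributional sense; one can justify the identity by density, approximating $\hat f$ by Schwartz functions vanishing at $-1$ and passing to the limit in both sides. This is the only delicate point, but it is a routine regularization argument rather than a genuine obstacle. The identity will then be the key tool in subsequent lemmas for obtaining the Sobolev bounds on $\hat g$ through estimates on $\hat f'$, since differentiation in $\xi$ of the integrand introduces powers of $t$ that are harmless on $[0,1]$.
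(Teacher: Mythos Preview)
Your proof is correct and is essentially the same as the paper's: the paper defines $F(t)=\hat f(-1+t(\xi+1))$ and applies the fundamental theorem of calculus directly to $F$ (so the factor $(\xi+1)$ appears via the chain rule), which is exactly your change-of-variables argument in slightly different packaging. The paper does not address the regularity caveat you raise; it simply treats the identity at the level of smooth vectors, as you ultimately do.
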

\begin{proof}
  For $t\in \R$, let $F (t) = \hat f( -1 + t( \xi + 1))$.  By the
  fundamental theorem of calculus
  \[
  \hat f(\xi) = F(1)= F(0) + \int_0^1 \frac{dF}{dt} (t) dt = (\xi+1)
  \int_0^1 \hat f'( -1 + t( \xi + 1)) dt\,.
  \]
  The formula for the solution then follows immediately.
\end{proof}
We will split our estimates into different regions. Let
\begin{equation}\label{equa:def_I-princ}
  I = [-\frac{3}{2}, -\frac{1}{2}] \,.
\end{equation}
For every $\nu \in (0, 1) \cup i \R$ and for every subinterval
$J\subset \R$ we will adopt the following notation
\begin{equation}\label{equa:L^2_nu-def}
  L^2_\nu (J)=  L^2 ( J,   \frac{d\xi}{\vert \xi \vert^{\re \nu}} ) \,.
\end{equation}
Then the following holds.
\begin{lemma}\label{lemma:Xder-g-P}
  For every $\alpha \in \N$, there exists a constant $C'_\alpha>0$
  such that
  \[
  \| \hat X^\alpha \hat g \|_{L^2_\nu(\R\setminus I)} \leq C'_\alpha
  \vert\lambda\vert ^{-1} \sum_{k=0}^\alpha \| \hat X ^k \hat f \|_0
  \,.
  \]
\end{lemma}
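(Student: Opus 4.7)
The key observation is that on $\R \setminus I$ the denominator $\xi+1$ in the explicit formula $\hat g(\xi) = -i \hat f(\xi)/[\lambda(\xi+1)]$ from \eqref{eq:twist_eqn-lambda=1} satisfies $|\xi+1| \geq 1/2$, so the singularity is avoided entirely and the vanishing condition $\hat f(-1) = 0$ plays no role in the estimate. Thus the proof reduces to a Leibniz-type expansion of $\hat X^\alpha$ acting on a product, together with an $L^\infty$ control on $\R \setminus I$ of the resulting rational coefficients.

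First I would derive the Leibniz rule for $\hat X = (1-\nu) + 2\xi\,\partial_\xi$: since the zeroth-order term commutes with multiplication operators, one has $\hat X(\phi u) = \phi\,\hat X u + 2\xi\, \phi'(\xi)\, u$ for any smooth multiplier $\phi$. Iterating and organizing the expression by the order of the $\hat X$-derivative falling on $u$ gives, by induction on $\alpha$,
\[
\hat X^\alpha (\phi u) = \sum_{k=0}^\alpha c_{\alpha,k}(\xi)\, \hat X^k u,
\]
with coefficients determined by $c_{0,0} = \phi$ and the recursion
\[
c_{\alpha+1, k}(\xi) = c_{\alpha, k-1}(\xi) + 2\xi\, c_{\alpha, k}'(\xi),
\]
where out-of-range coefficients are taken to vanish. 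Specialising to $\phi(\xi) = -i/[\lambda(\xi+1)]$ and $u = \hat f$, a straightforward induction shows that each function $\lambda\, c_{\alpha, k}(\xi)$ is a finite sum of rational terms of the form $\xi^{j}/(\xi+1)^{n}$ with $0 \leq j < n$, since the operator $2\xi\,\partial_\xi$ preserves this class (producing further terms with slightly larger $n$).

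Finally, the structural restriction $j < n$ ensures that each $\lambda\, c_{\alpha, k}$ is bounded on $\R \setminus I$: the lower bound $|\xi+1| \geq 1/2$ controls the poles, while the degree condition on numerator versus denominator controls the behaviour at infinity. Setting $C'_\alpha := \sum_{k=0}^\alpha \|\lambda\, c_{\alpha, k}\|_{L^\infty(\R\setminus I)}$ and applying the triangle inequality in $L^2_\nu(\R\setminus I)$ yields
\[
\|\hat X^\alpha \hat g\|_{L^2_\nu(\R\setminus I)} \leq \frac{C'_\alpha}{|\lambda|} \sum_{k=0}^\alpha \|\hat X^k \hat f\|_{L^2_\nu(\R\setminus I)} \leq \frac{C'_\alpha}{|\lambda|} \sum_{k=0}^\alpha \|\hat X^k \hat f\|_0,
\]
which is the desired bound. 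The only step requiring real care is the inductive bookkeeping establishing that the $c_{\alpha, k}$ belong to the stated class of rational functions; this is elementary but essential for extracting a constant that is uniform in $\lambda \neq 0$ and independent of the representation parameter $\nu$.
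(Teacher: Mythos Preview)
Your proof is correct and follows essentially the same approach as the paper: a Leibniz-type expansion of $\hat X^\alpha$ applied to the product $\phi\hat f$ with $\phi=-i/[\lambda(\xi+1)]$, followed by an $L^\infty(\R\setminus I)$ bound on the resulting rational coefficients. The paper states the Leibniz identity in the closed form $\hat X^\alpha(f_1f_2)=\sum_\ell a^{(\alpha)}_\ell\,\hat X^\ell f_1\,(2\xi\partial_\xi)^{\alpha-\ell}f_2$ and then bounds $(2\xi\partial_\xi)^{\alpha-\ell}(1/(\xi+1))$ directly, whereas you derive the same coefficients recursively and track the rational structure $\xi^j/(\xi+1)^n$ with $j<n$; the two organizations are equivalent and yield the same $\nu$-independent constant.
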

\begin{proof} It is clear that $1/(\xi +1) \in L^\infty(\R\setminus
  I)$ and there exists a constant $C_1>0$ such that
  \[
  \Vert \frac{1}{\xi +1} \Vert_{L^\infty(\R\setminus I)}\leq C_1\,.
  \]
  It follows immediately from the formula for the solution
  \eqref{eq:twist_eqn-lambda=1} that
  \[
  \Vert \hat g \Vert_{L^2_\nu(\R\setminus I)} \leq C_1 \vert \lambda
  \vert^{-1} \Vert \hat f \Vert_{L^2_\nu(\R\setminus I)}\,.
  \]
  Let us now consider derivatives. We have
  \[
  \hat X \hat g(\xi)= -i \frac{\hat X \hat f(\xi)}{\lambda (\xi + 1 )}
  + i \frac{2 \xi \hat f(\xi)}{\lambda (\xi + 1 )^2} \,.
  \]
  Since the function $\xi/(\xi + 1 )^2 \in L^\infty(\R\setminus I)$,
  there exists a constant $C_2 > 0$ such that
  \[
  \Vert \frac{\xi}{(\xi + 1)^2} \Vert_{ L^\infty(\R\setminus I)} \leq
  C_2 \,.
  \]
  It follows that
  \[
  \| \hat X \hat g \|_{L^2_\nu(\R\setminus I}) \leq (C_1 + 2 C_2)
  \vert \lambda\vert^{-1} ( \| \hat X \hat f \|_0 + \| \hat f \| _0)
  \,.
  \]
  For higher order derivatives, by induction we prove the a
  Leibniz-type formula.  There exists universal constants
  $(a^{(\alpha)}_\ell)$ such that for all functions $f_1$, $f_2 \in
  C^\infty(\R)$, the following identity holds on $\R$:
  \begin{equation}
    \label{eq:X_Leibniz}
    \hat X^\alpha (f_1 f_2) (\xi) =  \sum_{\ell=0}^{\alpha} a^{(\alpha)}_\ell \hat X^\ell f_1 (\xi) 
    (\hat X -(1 - \nu))^{\alpha-\ell} f_2(\xi) )\,.
  \end{equation}
  In particular, for $f_1(\xi)= \hat f(\xi)$ and $f_2(\xi)= 1/(\xi+1)$
  on $I$, we have
  \[
  \hat X ^\alpha \hat g(\xi) = -\frac{i}{\lambda}
  \sum_{\ell=0}^{\alpha} a^{(\alpha)}_\ell \hat X ^\ell \hat f(\xi) (2
  \xi \frac{d}{d\xi})^{\alpha-\ell} (\frac{1}{\xi+ 1} )\,.
  \]
  Another induction argument leads to bounds of the following form.
  There exists a constant $C_{\alpha,\ell}>0$ such that
  \[
  \| (2 \xi \frac{d}{d\xi})^{\alpha-\ell} (\frac{1}{\xi+ 1} )
  \|_{L^\infty(\R\setminus I}) \leq C_{\alpha,\ell}\,.
  \]
  The stated bound therefore follows.
\end{proof}
For higher order derivatives of the form $X^\alpha V^\beta$, or
equivalently $V^\beta X^\alpha$, on the set $\R\setminus I$ we begin
by computing the following Leibniz-type formula.
\begin{lemma}
  \label{lemma:V_Leibniz}
  For any $\beta \in \N$ there exist universal coefficients
  $(b^{(\beta)}_{ijkm})$ such that for any pair of functions $f_1$,
  $f_2$ we have the formula
  \begin{equation}
    \label{eq:ind_1}
    \hat V^\beta(f_1 f_2)= \sum_{
      \substack
      {
        i+j+ m\leq \beta \\
        k \leq m
      }}
    b^{(\beta)}_{ijkm} [(\frac{d}{d\xi})^m \hat V^i f_1]
    [(\hat X- (1-\nu))^k \hat V^j f_2]\,.
  \end{equation}
\end{lemma}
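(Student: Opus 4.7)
The identity \eqref{eq:ind_1} is a higher-order Leibniz rule for the second-order differential operator $\hat V = -i[(1-\nu)\partial_\xi + \xi\,\partial_\xi^2]$, and I would prove it by induction on $\beta$, with base case $\beta = 0$ given by $b^{(0)}_{0000} = 1$.

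The key starting ingredient is the case $\beta = 1$: applying the ordinary product rule separately to each of the two terms of $\hat V$ gives the identity
\begin{equation*}
\hat V(FG) \;=\; (\hat V F)\,G + F\,(\hat V G) \;-\; 2i\xi\,(\partial_\xi F)(\partial_\xi G)\,.
\end{equation*}
The cross term is the source of the asymmetric form of~\eqref{eq:ind_1}. Using $\hat X - (1-\nu) = 2\xi\,\partial_\xi$ from~\eqref{equa:vect_coeqn}, I would rewrite it as
\begin{equation*}
-2i\xi\,(\partial_\xi F)(\partial_\xi G) \;=\; (\partial_\xi F)\cdot\bigl(-i(\hat X-(1-\nu))G\bigr)\,,
\end{equation*}
which places a plain derivative $\partial_\xi$ on the left factor and an $\hat X$-operator on the right factor — exactly the only way the two families of operators can mix in the two slots on the right of~\eqref{eq:ind_1}.

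For the inductive step, I would apply $\hat V$ to a typical summand $[(\partial_\xi)^m \hat V^i f_1]\cdot[(\hat X-(1-\nu))^k \hat V^j f_2]$ of the $\beta$-level expansion and verify, via the identity above, that each of the three resulting pieces is again a linear combination of terms of the required form. This uses two elementary commutator computations to push $\hat V$ past the existing operators on each factor: on the $f_1$-side, a direct computation yields $[\hat V, \partial_\xi] = i\,\partial_\xi^2$, and hence $\hat V\,\partial_\xi^m = \partial_\xi^m\,\hat V + m i\,\partial_\xi^{m+1}$; on the $f_2$-side, the Lie algebra relation $[V,X] = -2V$ gives $\hat V(\hat X-(1-\nu)) = ((\hat X-(1-\nu))+2)\,\hat V$, which iterates to $\hat V\,(\hat X-(1-\nu))^k = ((\hat X-(1-\nu))+2)^k\,\hat V$, a polynomial in $\hat X-(1-\nu)$ of degree $k$ composed with $\hat V$ on the right. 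All coefficients produced by these operations are $\nu$-independent, which matches the universality claim for the $b^{(\beta)}_{ijkm}$.

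The only real work lies in the index bookkeeping: checking that each new term produced has indices $(i',j',k',m')$ satisfying both $i'+j'+m' \le \beta+1$ and $k' \le m'$. The crucial observation is that the cross term sends $(i,j,k,m)\mapsto(i,j,k+1,m+1)$, increasing $k$ and $m$ by exactly one apiece, which is precisely what preserves $k\le m$; the two ``diagonal'' pieces either leave $(k,m)$ fixed while bumping $i$ or $j$, or increase only $m$ (from the $[\hat V,\partial_\xi^m]$ commutator), or can only \emph{decrease} $k$ (from expanding $((\hat X-(1-\nu))+2)^k$). The main obstacle, insofar as there is one, is keeping this accounting straight; beyond the two basic commutator identities everything is a combinatorial verification, and only the existence of the universal coefficients $b^{(\beta)}_{ijkm}$, not their explicit values, needs to be extracted.
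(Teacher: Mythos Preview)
Your proof is correct and takes essentially the same approach as the paper: induction on $\beta$ using the base-case identity $\hat V(f_1 f_2) = (\hat V f_1)\,f_2 + f_1\,(\hat V f_2) - i\,(\partial_\xi f_1)\,((\hat X-(1-\nu))f_2)$ together with the two commutator identities for $[\hat V,\partial_\xi^m]$ and $[\hat V,(\hat X-(1-\nu))^k]$ to reduce each of the three resulting term types to the required form. (One harmless slip: the Lie bracket is $[V,X]=2V$, not $-2V$, but your subsequent intertwining formula $\hat V(\hat X-(1-\nu))^k = ((\hat X-(1-\nu))+2)^k\hat V$ is correct.)
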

\begin{proof}
  The proof is by induction.  For $\beta=1$ we have by a direct
  computation
  \[
  \hat V (f_1 f_2) = \hat V (f_1) f_2 + f_1 \hat V (f_2) - i [
  \frac{d}{d\xi} f_1] [ (\hat X-(1-\nu)) f_2]\,.
  \]
  The statement is therefore verified in this case.  The proof on the
  induction step is based on the above formula and on the following
  formulas for commutators:
  \[ [\hat V, \frac{d}{d\xi}] = i \frac{d^2}{d\xi^2} \quad \text{ and
  } \quad [\hat V, (\hat X-(1-\nu))]= [\hat V, \hat X]=2 \hat V\,.
  \]
  By the induction hypothesis and by formula~\eqref{eq:ind_1}, it
  follows that in the formula for $\hat V^{\beta +1}(f_1 f_2)$ we have
  terms of the following three types
  \begin{equation}
    \label{eq:3terms}
    \begin{aligned}
      & [\hat V (\frac{d}{d\xi})^m \hat V^i f_1]
      [(\hat X- (1-\nu))^k \hat V^j f_2] \,, \\
      & [(\frac{d}{d\xi})^m \hat V^i f_1] [\hat V
      (\hat X - (1-\nu))^k \hat V ^j f_2] \,, \\
      & [(\frac{d}{d\xi})^{m+1} \hat V^i f_1] [(\hat X-(1-\nu))^{k+1}
      \hat V^j f_2]\,.
    \end{aligned}
  \end{equation}
  In fact, by the first of the above commutation relation, by an
  induction argument for every $k\in \N\setminus\{0\}$we have
  \begin{equation}
    \label{eq:commutator_1}
    [\hat V, (\frac{d}{d\xi})^k] = -k (\frac{d}{d\xi})^{k+1}\,,
  \end{equation}
  hence the first term in the above formula~\eqref{eq:3terms} is of
  the required form.

  By the second of the above commutation relation, we derive by
  induction that for every $k\in \N\setminus\{0\}$ and every $i\in
  \{0, \dots, k-1\}$ there exists universal constants $C_{k,i}>0$ such
  that
  \begin{equation}
    \label{eq:commutator_2}
    [\hat V, (\hat X-(1-\nu))^k]   =  \sum_{i=0}^{k-1}  C_{k,i} (\hat X-(1-\nu))^i \hat V\,,
  \end{equation}
  hence the second term in formula~\eqref{eq:3terms} is of the
  required form.

  Finally the third term in formula~\eqref{eq:3terms} is already in
  the required form. Thus the induction step is proved and the
  argument is complete.
\end{proof}

\begin{lemma}\label{lemma:3.10}
  For every $\alpha$, $\beta \in \N$, there exists a constant
  $C'_{\alpha, \beta}>0$ such that
  \[
  \| \hat X ^\alpha \hat V ^\beta \hat g \|_{L^2_\nu(\R\setminus I)}
  \leq \frac{C'_{\alpha, \beta}}{\vert\lambda\vert} \sum_{i+j+k \leq
    \alpha +\beta} \vert 1- \nu\vert^i \Vert \hat X ^j \hat V ^k \hat
  f\Vert_0 \,.
  \]
\end{lemma}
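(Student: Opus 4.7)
The plan is to expand $\hat X^\alpha \hat V^\beta \hat g$ using the two Leibniz-type formulas already available, starting from the explicit solution $\hat g = -(i/\lambda)\,\hat f \cdot (\xi+1)^{-1}$ from equation~\eqref{eq:twist_eqn-lambda=1}, and then estimate the resulting ``geometric'' and ``analytic'' factors separately.

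First, I would apply Lemma~\ref{lemma:V_Leibniz} with $f_1 = \hat f$ and $f_2 = (\xi+1)^{-1}$ to expand $\hat V^\beta \hat g$ as a sum of terms of the shape $[(d/d\xi)^m \hat V^i \hat f] \cdot [(\hat X-(1-\nu))^k \hat V^j (\xi+1)^{-1}]$ with $i+j+m \leq \beta$ and $k\leq m$, and then apply the formula~\eqref{eq:X_Leibniz} to each term in order to apply $\hat X^\alpha$. The net result is a finite linear combination, with coefficients independent of $\lambda$ and of $\hat f$, of products $[A\hat f]\cdot[B(\xi+1)^{-1}]$ where $A$ and $B$ are polynomial differential operators in $\xi$ and $d/d\xi$, with coefficients polynomial in $(1-\nu)$, whose total orders sum to at most $\alpha+\beta$.

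Second, I would bound each factor $B(\xi+1)^{-1}$ in $L^\infty(\R\setminus I)$. Since $|\xi+1|\geq 1/2$ off $I$, iteration of the explicit expressions for $\hat X$ and $\hat V$ on $(\xi+1)^{-1}$, extending the computation at the end of the proof of Lemma~\ref{lemma:Xder-g-P}, yields a finite sum of bounded rational functions $\xi^a(\xi+1)^{-b}$ with $a\leq b-1$; the corresponding $L^\infty(\R\setminus I)$ bound is controlled by a polynomial in $|1-\nu|$, of degree at most the number of $\hat V$'s applied. Third, I would bound each factor $A\hat f$ in $L^2_\nu(\R)$ by the right-hand side of the claimed inequality. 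Using the identities $2\xi\,d/d\xi = \hat X - (1-\nu)$ and $\xi\,d^2/d\xi^2 = i\hat V - (1-\nu)\,d/d\xi$, together with the commutation relation $[\hat X,\hat V]=-2\hat V$, I would rewrite $A$ inductively as a polynomial in $\hat X$ and $\hat V$ of total order at most $\alpha+\beta$, with coefficients polynomial in $(1-\nu)$. The degree in $(1-\nu)$ of each monomial $\hat X^j \hat V^k$ supplies the exponent $i$, with $i+j+k\leq\alpha+\beta$, matching the summation in the claim.

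The main obstacle is this third step: the naive inversion $d/d\xi = (\hat X-(1-\nu))/(2\xi)$ forces a pointwise division by $\xi$, which is singular on $\R$ and therefore incompatible with $L^2_\nu(\R)$ bounds. The remedy is to proceed by induction on the total order of $A$, always substituting \emph{forward}, that is, replacing occurrences of $\xi\,d/d\xi$ by $(\hat X-(1-\nu))/2$ and of $\xi\,d^2/d\xi^2$ by $i\hat V - (1-\nu)\,d/d\xi$, so that every identity used is a polynomial identity on $\R$. Combining the $L^\infty(\R\setminus I)$ bound of Step~2 with the $L^2_\nu(\R)$ bound of Step~3 and factoring out the prefactor $1/|\lambda|$ from the formula for $\hat g$ yields the stated inequality.
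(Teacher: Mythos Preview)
Your overall strategy---apply the two Leibniz formulas to the product $\hat g=-i\lambda^{-1}\hat f\cdot(\xi+1)^{-1}$, bound the geometric factor in $L^\infty(\R\setminus I)$ and the analytic factor in $L^2_\nu$---is exactly the paper's. The gap is in your choice of roles in Lemma~\ref{lemma:V_Leibniz}. That formula is \emph{asymmetric}: $f_1$ receives the bare derivatives $(d/d\xi)^m\hat V^i$, while $f_2$ receives only $(\hat X-(1-\nu))^k\hat V^j$. By taking $f_1=\hat f$ you land $(d/d\xi)^m$ on $\hat f$, and those bare derivatives cannot be bounded in $L^2_\nu(\R)$ by $\|\hat X^j\hat V^k\hat f\|_0$ with constants uniform in~$\nu$: near $\xi=0$ (which lies in $\R\setminus I$) the operator $d/d\xi$ is not controlled by $\hat X=(1-\nu)+2\xi\,d/d\xi$, and your ``forward substitution'' remedy does not apply, since the expressions $\hat X^\ell(d/d\xi)^m\hat V^i$ you obtain contain no $\xi$-prefactor to absorb---there is simply no occurrence of $\xi\,d/d\xi$ or $\xi\,d^2/d\xi^2$ to replace. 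Commuting $\hat X^\ell$ past $(d/d\xi)^m$ only gives $(d/d\xi)^m(\hat X-2m)^\ell$, so the bare derivative persists.

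The fix is a one-line change: swap the roles, taking $f_1=(\xi+1)^{-1}$ and $f_2=\hat f$ (and, as the paper does, first apply~\eqref{eq:X_Leibniz} and then Lemma~\ref{lemma:V_Leibniz}, i.e.\ estimate $\hat V^\beta\hat X^\alpha\hat g$ and pass to $\hat X^\alpha\hat V^\beta$ via $[\hat X,\hat V]=-2\hat V$). Then the analytic factor on $\hat f$ is exactly $(\hat X-(1-\nu))^k\hat V^j\hat X^\ell\hat f$, which expands directly into the right-hand side of the claim, while the bare derivatives $(d/d\xi)^m\hat V^i$ hit the explicit function $(2\xi\,d/d\xi)^{\alpha-\ell}(\xi+1)^{-1}$, whose $L^\infty(\R\setminus I)$ norm is bounded by $K_{\alpha,\beta}(1+|1-\nu|^i)$ by direct computation. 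No substitution gymnastics are needed.
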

\begin{proof}
  For $0\leq \ell \leq \alpha$, let
  \[
  \phi^{(\ell)}_{i,m}(\xi):= (\frac{d}{d\xi})^m \hat V^i (2 \xi
  \frac{d}{d\xi})^{\alpha-\ell} (\frac{1}{\xi+ 1})\,.
  \]
  By Lemma~\ref{lemma:V_Leibniz} and by
  formulas~\eqref{eq:twist_eqn-lambda=1} and \eqref{eq:X_Leibniz}, we
  derive the following
  \begin{equation}\label{equa:VXproduct}
    \begin{aligned}
      (\hat V^\beta \hat X^\alpha) \hat g (\xi) = \frac{- i}{\lambda}
      \sum_{\ell\leq \alpha} \, \sum_{ \substack {i+j+ m\leq \beta \\
          k \leq m }}
      &a^{(\alpha)}_\ell b^{(\beta)}_{ijkm} \phi^{(\ell)}_{i,m}(\xi) \\
      & \times [(\hat X-(1-\nu))^k \hat V^j \hat X^\ell \hat f](\xi)
      \,.
    \end{aligned}
  \end{equation}
  By an induction argument we can prove that for all $\alpha$,
  $\beta\in \N$ there exists a constant $K_{\alpha, \beta}>0$ such
  that, for all $0\leq \ell \leq \alpha$, all $i+m \leq \beta$, we
  have
  \[
  \Vert \phi^{(\ell)}_{i,m} \Vert_{L^\infty(\R\setminus I)} \leq
  K_{\alpha, \beta} (1+ \vert 1- \nu\vert^i)\,.
  \]
  By taking into account the commutation relation $[\hat X, \hat V]=-2
  \hat V$, it follows that for all $\alpha$, $\beta \in \N$ there
  exists a constant $K'_{\alpha, \beta}>0$ such that
  \[
  \Vert \hat V ^\beta \hat X ^\alpha \hat g \Vert_{L^2_\nu(\R\setminus
    I)} \leq \frac{K'_{\alpha, \beta}}{\vert \lambda\vert}
  \sum_{i+j+k\leq \alpha +\beta} \vert 1- \nu\vert^i \Vert \hat X ^j
  \hat V ^k \hat f\Vert_0 \,.
  \]
  The statement then follows, again by the above commutation relation.
\end{proof}

We then prove bounds on the interval $I$.  The estimates will be based
on the integral formula for the solution.
\begin{lemma}\label{lemma:X-g-estimate-P}
  For every $\alpha \in \N$, there exists a constant $C''_\alpha>0$
  such that
  \[
  \| \hat X^\alpha \hat g \|_{L^2_\nu(I)} \leq \frac{C''_\alpha}{\vert
    \lambda\vert} \sum_{k=0}^{\alpha+1} \vert 1- \nu\vert^{\alpha-k}
  \| \hat X^k \hat f \|_0 \,.
  \]
\end{lemma}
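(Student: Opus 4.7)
\medskip

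\noindent\emph{Proof proposal for Lemma~\ref{lemma:X-g-estimate-P}.}
The plan is to start from the integral representation of the solution given by Lemma~\ref{lemma:transfer-def}, exploiting the fact that on the compact interval $I = [-3/2, -1/2]$, which is bounded away from the origin, the weight $|\xi|^{-\re\nu}$ is uniformly bounded above and below, so $L^2_\nu(I)$ and $L^2(I)$ are equivalent norms. Writing $\hat g(\xi) = -\frac{i}{\lambda}\int_0^1 \hat f'(-1 + t(\xi+1))\,dt$, we observe that for $\xi \in I$ and $t \in [0,1]$ the evaluation point $-1+t(\xi+1)$ still lies inside $I$, so all the relevant estimates stay within a compact region.

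Next I would express $\hat X^\alpha$ as a linear combination of operators $\partial_\xi^j$, using the decomposition $\hat X = (1-\nu) + 2\xi\partial_\xi$ and a binomial-type expansion to obtain
\[
\hat X^\alpha = \sum_{j=0}^\alpha P_{\alpha,j}(\xi,\nu)\,\partial_\xi^j,
\]
where each $P_{\alpha,j}$ is a polynomial in $\xi$ and $(1-\nu)$ with $|P_{\alpha,j}(\xi,\nu)| \leq C_\alpha\,(1+|1-\nu|)^{\alpha-j}$ on $I$. Since $\partial_\xi^j \hat g(\xi) = -\frac{i}{\lambda}\int_0^1 t^j\,\hat f^{(j+1)}(-1+t(\xi+1))\,dt$, Minkowski's integral inequality together with the change of variables $u = -1 + t(\xi+1)$ (whose Jacobian is $1/t$) yields the pointwise-in-$t$ bound
\[
\bigl\|\hat f^{(j+1)}(-1 + t(\,\cdot\,+1))\bigr\|_{L^2(I)} \leq C\,t^{-1/2}\,\|\hat f^{(j+1)}\|_{L^2(I)},
\]
so that $\int_0^1 t^{j-1/2}\,dt < \infty$ gives $\|\partial_\xi^j \hat g\|_{L^2_\nu(I)} \leq \frac{C_j}{|\lambda|}\|\hat f^{(j+1)}\|_{L^2_\nu(I)}$.

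The final step is to invert the change of variables $\hat X \leftrightarrow \partial_\xi$ on the target side: since $|\xi| \geq 1/2$ on $I$, the identity $\partial_\xi = (2\xi)^{-1}(\hat X - (1-\nu))$ allows me to prove by induction that
\[
\partial_\xi^{j+1} = \sum_{k=0}^{j+1} Q_{j,k}(\xi,\nu)\,\hat X^k,
\]
with $|Q_{j,k}(\xi,\nu)| \leq C_j\,(1+|1-\nu|)^{j+1-k}$ on $I$. Combining the three steps produces the desired bound with the correct powers of $|1-\nu|$ and summation range $k=0,\dots,\alpha+1$. The main obstacle I anticipate is the careful bookkeeping of the $(1-\nu)$ exponents through the two inductions (forward expansion of $\hat X^\alpha$ in terms of $\partial_\xi^j$, and backward inversion of $\partial_\xi^{j+1}$ in terms of $\hat X^k$), which must match up to give precisely $|1-\nu|^{\alpha-k}$ in the final estimate; all other steps are essentially routine integral inequalities.
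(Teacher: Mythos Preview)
Your proposal is correct and shares all the core ingredients with the paper's proof: the integral representation from Lemma~\ref{lemma:transfer-def}, Minkowski's inequality, the change of variables producing the $t^{-1/2}$ factor, the equivalence of $L^2_\nu(I)$ and $L^2(I)$ on the interval bounded away from the origin, and the conversion $\partial_\xi = (2\xi)^{-1}(\hat X-(1-\nu))$.

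The execution differs in one respect. The paper applies $\hat X^\alpha$ directly to the integral via the chain rule, obtaining
\[
\hat X^\alpha \hat g(\xi)= -\tfrac{i}{\lambda}\int_0^1 \bigl[(\hat X + 2(1-t)\partial_\xi)^\alpha \hat f'\bigr](-1+t(\xi+1))\,dt,
\]
and then uses the operator identity~\eqref{eq:X_id} (together with the eigenfunction relations~\eqref{eq:Xder_xi} for $1/\xi$) to rewrite the integrand purely in terms of $\hat X$ and $\hat X-(1-\nu)$ acting on $\hat f$. You instead first expand $\hat X^\alpha$ as $\sum_j P_{\alpha,j}\partial_\xi^j$, apply $\partial_\xi^j$ to the integral (picking up only a harmless factor $t^j$), and then convert $\partial_\xi^{j+1}\hat f$ back to $\hat X^k\hat f$ on $I$. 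Your two-step conversion is more elementary and sidesteps identity~\eqref{eq:X_id}; the paper's single-step route is a bit more streamlined once that identity is available, and it generalizes more directly to the $\hat V^\beta\hat X^\alpha$ case treated in Lemma~\ref{lemma:V-X-g-f}. Either way the $(1-\nu)$-bookkeeping you flag as the main obstacle comes out the same.
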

\begin{proof} By Lemma \ref{lemma:transfer-def} and the Minkowski
  integral inequality
  \[
  \| \hat g \|_{L^2_\nu( I)} \leq \frac{1}{\vert\lambda\vert} \int_0^1
  \Vert \hat f '( -1 + t( \cdot+ 1)) \Vert_{L^2_\nu( I)} \,dt \,.
  \]
  For all $t\in [0,1]$, let $I_{t}\subset \R$ denote the interval
  \[
  I_{t} = [ -1 -t/2,-1 + t/2]\,.
  \]
  Since for all $t\in [0,1]$ and all $\xi \in I_{t}$ we have
  \begin{equation}
    \label{eq:lambda_equiv}
    1/2 \leq \vert \xi \vert \leq 2 \,,
  \end{equation}
  by change of variable we have
  \begin{equation}\label{equa:0-norm-coordinates}
    \Vert \hat f '( -1 + t(\cdot+ 1)) \Vert_{L^2_\nu( I)} 
    \leq 2^{\re \nu}\, t^{-1/2} \left(\int_{I_{t}} |\hat f'(\xi)|^2 \frac{d\xi}{\vert \xi\vert^{\re \nu}} \right)^{1/2}\,.
  \end{equation}
  We recall that for the principal series $\nu \in i\R$, and for the
  complementary series $\nu \in (0,1)$.  It follows that there exists
  a constant $C_3>0$ such that
  \[
  \begin{aligned}
    \Vert \hat f '( -1 + t( \cdot+ 1)) \Vert_{L^2_\nu( I)} &\leq C_3
    t^{-1/2} \Vert \xi \hat f '( \xi) \Vert_0\\ &\leq C_3 t^{-1/2}
    (\Vert \hat X \hat f - (1-\nu) \hat f \Vert_0 \,.
  \end{aligned}
  \]
  Hence, we get by integration over $t\in [0,1]$ that
  \[
  \| \hat g \|_{L^2_\nu( I)} \leq \frac{2C_3}{3} \vert \lambda
  \vert^{-1} \Vert \hat X \hat f - (1-\nu) \hat f \Vert_0 \,.
  \]
  For higher order derivatives we compute as follows:
  \begin{equation}
    \label{eq:X_der}
    \begin{aligned}
      \hat X^\alpha\hat g(\xi) &= -\frac{i}{\lambda} \int_0^1 (2 \xi
      \frac{d}{d\xi} +(1-\nu))^\alpha
      [ \hat f'  ( -1 + t( \xi + 1)) ] dt \\
      &= -\frac{i}{\lambda}\int_0^1 [(\hat X + 2(1-t)
      \frac{d}{d\xi})^\alpha\hat f'] ( -1 + t( \xi + 1)) ] dt
    \end{aligned}
  \end{equation}
  By applying as above the Minkowski integral inequality followed by a
  change of coordinates and \eqref{equa:0-norm-coordinates}, we get
  \begin{equation}
    \label{eq:Xint_bound}
    \begin{aligned}
      \Vert \int_0^1 &[(\hat X + 2(1-t) \frac{d}{d\xi})^\alpha\hat f']
      ( -1 + t(\xi+1) dt \Vert_{L^2_\nu(I)} \\ &\leq \int_0^1 \Vert
      [(\hat X + 2(1-t) \frac{d}{d\xi})^\alpha\hat f']
      ( -1 + t(\cdot +1)  \Vert_{L^2_\nu(I)} dt \\
      &\leq 2^{\re\,\nu} \int_0^1 t^{-1/2} \Vert (\hat X + 2(1-t)
      \frac{d}{d\xi})^\alpha\hat f' \Vert_{L^2_\nu(I_{t})} dt\,.
    \end{aligned}
  \end{equation}
  We then observe that for all $\alpha\in \N$ and for $\xi \not =0$
  the following identity holds:
  \begin{equation}
    \label{eq:X_id}
    (\hat X + 2(1-t) \frac{d}{d\xi})^\alpha \frac{d}{d\xi} 
    =[\hat X + (1-t)\frac{1}{\xi}(\hat X -(1-\nu))]^\alpha  
    \frac{1}{2\xi}  (\hat X -(1-\nu)) \,.
  \end{equation}
  By induction for all $k\in \N$ and $j \in \Z^+$, we have
  \begin{equation}
    \label{eq:Xder_xi}
    \hat X^k (\frac{1}{\xi^j}) = (1 - 2 j - \nu)^k (\frac{1}{\xi^j}) \quad \text{ and } \quad
    [\hat X -(1-\nu)]^k (\frac{1}{\xi})= 2^k (\frac{1}{\xi})\,,
  \end{equation}
  hence by the upper bound in formula~\eqref{eq:lambda_equiv} it
  follows immediately that
  \[
  \Vert (\hat X -(1-\nu))^k (1/\xi) \Vert_{L^\infty(I_{t})} = 2^k
  \Vert 1/\xi \Vert_{L^\infty(I_{t})} \leq 2^{k+1} \,.
  \]
  Thus by the identity in formula~\eqref{eq:X_id} and by the
  Leibniz-type formula~\eqref{eq:X_Leibniz} it follows that there
  exists a constant $C_4(\alpha) >0$ such that
  \[
  \Vert (\hat X + 2(1-t) \frac{d}{d\xi})^\alpha\hat f']
  \Vert_{L^2_\nu(I_{t})} \leq C_4(\alpha) \sum_{i+j\leq \alpha} \Vert
  (\hat X -(1-\nu))^{i+1} \hat X^j \hat f \Vert_0\,.
  \]
  The statement then follows from the integral bound in
  formula~\eqref{eq:Xint_bound}.
\end{proof}

For higher order derivatives of the form $X^\alpha V^\beta$, or
equivalently $V^\beta X^\alpha$, on the interval $I$ we proceed as
above.

\begin{lemma}\label{lemma:V-X-g-f}
  For every $\alpha$, $\beta \in \N$, there exists a constant
  $C''_{\alpha, \beta}>0$ such that
  \[
  \begin{aligned}
    \| \hat V^\beta \hat X^\alpha g \|_{L^2_\nu(I)} &\leq
    \frac{C''_{\alpha, \beta}}{\vert \lambda\vert} (1+\vert \nu
    \vert)^{\beta} \sum_{ \substack{
        i+j+k\leq \alpha +\beta +1 \\
        i \leq \beta }} \| \hat V^i \hat X^j (\hat X-(1-\nu))^k \hat f
    \|_0 \,.
  \end{aligned}
  \]
\end{lemma}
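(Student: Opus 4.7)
The statement is the natural $I$-analogue of Lemma~\ref{lemma:3.10} (which handled $\R\setminus I$), and the plan is to combine the integral-representation approach of Lemma~\ref{lemma:X-g-estimate-P} with the non-commutative expansion technique of Lemma~\ref{lemma:3.10}. The starting point will be the formula
\[
\hat g(\xi) = -\frac{i}{\lambda}\int_0^1 \hat f'\bigl(-1 + t(\xi+1)\bigr)\,dt
\]
from Lemma~\ref{lemma:transfer-def}, which is available precisely because $\hat f(-1)=0$; we then differentiate under the integral sign.

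The key computation, extending the one in the proof of Lemma~\ref{lemma:X-g-estimate-P}, is that on a function $F$ of the rescaled variable $\eta = -1 + t(\xi+1)$ (so that $2\xi t = 2(\eta+1-t)$), the chain rule yields
\[
\hat X_\xi F(\eta) = \bigl[\hat X_\eta + 2(1-t)\partial_\eta\bigr]F(\eta), \qquad \hat V_\xi F(\eta) = \bigl[t\,\hat V_\eta - i\,t(1-t)\,\partial_\eta^2\bigr]F(\eta).
\]
Iterating, one obtains
\[
\hat V^\beta \hat X^\alpha \hat g(\xi) = -\frac{i}{\lambda}\int_0^1 \bigl(t\hat V_\eta - it(1-t)\partial_\eta^2\bigr)^\beta \bigl(\hat X_\eta + 2(1-t)\partial_\eta\bigr)^\alpha \hat f'(\eta)\,dt,
\]
evaluated at $\eta = -1 + t(\xi+1)$. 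The non-commutative expansion of the operator factor is a finite sum of monomials in $\hat V_\eta, \hat X_\eta, \partial_\eta$, each with at most $\beta$ factors of $\hat V_\eta$, total $\eta$-derivative order at most $2\beta + \alpha$, $t$-weight of the form $t^\beta (1-t)^q$ with $0 \leq q \leq \beta$, and $(1-\nu)$-polynomial coefficients of degree at most $\beta$ (arising from the $(1-\nu)\partial_\eta$ inside $\hat V_\eta$).

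From here, the argument of the proof of Lemma~\ref{lemma:X-g-estimate-P} applies essentially verbatim: Minkowski's integral inequality together with the change of variable $\eta = -1 + t(\xi+1)$ yields a Jacobian factor $t^{-1/2}$ when passing from $L^2_\nu(I)$ to $L^2_\nu(I_t)$, where $I_t := [-1-t/2, -1+t/2]\subset I$, and on $I_t$ the weight $|\eta|^{-\re\,\nu}$ is bounded between two positive constants. The surviving $t$-power $t^{\beta - 1/2}$ is integrable on $[0,1]$ since $\beta \geq 0$. We will then convert each residual $\partial_\eta$ into $(2\eta)^{-1}(\hat X - (1-\nu))$ via $\hat X = (1-\nu) + 2\eta\partial_\eta$ (using that $|\eta|^{-1}$ is bounded on $I_t$), commute the resulting operators into the canonical order $\hat V^i \hat X^j (\hat X - (1-\nu))^k$ using $[\hat X,\hat V] = -2\hat V$ and $[\hat V,\partial_\eta] = -i\partial_\eta^2$, and finally rewrite $\hat f'(\eta) = (2\eta)^{-1}(\hat X - (1-\nu))\hat f(\eta)$; the latter contributes precisely the $+1$ in the constraint $i+j+k \leq \alpha + \beta + 1$, while the constraint $i \leq \beta$ is preserved throughout.

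The main obstacle will be the combinatorial bookkeeping: one must verify that, after the non-commutative expansion and all subsequent commutations needed to reach canonical form, the accumulated coefficient is a polynomial in $(1-\nu)$ of degree at most $\beta$ (yielding the $(1+|\nu|)^\beta$ prefactor), no monomial contains more than $\beta$ copies of $\hat V$, and each $t$-power produced by the chain rule dominates the $t^{-1/2}$ Jacobian. This is precisely the analogue, in the $I$-regime, of the bookkeeping carried out for Lemma~\ref{lemma:3.10} on $\R\setminus I$, but it is somewhat more delicate here because the conversions $\partial_\eta \mapsto (2\eta)^{-1}(\hat X - (1-\nu))$ take place inside the $t$-integral and must be combined with the change of variable before the Sobolev norms of $\hat f$ appear.
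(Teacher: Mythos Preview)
Your overall strategy---integral formula, differentiate under the integral, chain rule for $\hat X$ and $\hat V$, Minkowski plus change of variable, then convert residual $\partial_\eta$'s using $1/\eta$ factors on $I_t$---is exactly the paper's approach. There is, however, one genuine gap in the bookkeeping.

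You propose to ``convert each residual $\partial_\eta$ into $(2\eta)^{-1}(\hat X-(1-\nu))$''. Done this way, each $\partial_\eta^2$ coming from the $\beta$-bracket $[t\hat V_\eta - it(1-t)\partial_\eta^2]$ turns into \emph{two} copies of $(\hat X-(1-\nu))$, so a monomial in which $p$ of the $\beta$ factors choose $\hat V$ and $\beta-p$ choose $\partial^2$ contributes up to $p + 2(\beta-p) + \alpha + 1$ operators on $\hat f$. This yields only $i+j+k \le \alpha+2\beta+1$, not the claimed $i+j+k \le \alpha+\beta+1$. For instance, with $\alpha=0$, $\beta=1$, your scheme produces a nonzero $(8\eta^3)^{-1}(\hat X-(1-\nu))^3\hat f$ term, which has $k=3>2$.

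The fix, and what the paper does, is to convert $\partial_\eta^2$ \emph{as a unit} via
\[
\partial_\eta^2 \;=\; \frac{1}{\eta}\Bigl[\,i\hat V_\eta - (1-\nu)\,\partial_\eta\,\Bigr] \;=\; \frac{i}{\eta}\hat V_\eta - \frac{(1-\nu)}{2\eta^2}\bigl(\hat X-(1-\nu)\bigr),
\]
so that each factor in the $\beta$-bracket contributes exactly one of $\hat V$ or $(\hat X-(1-\nu))$ (the latter with a $(1-\nu)$ coefficient). Applied at the operator level this is precisely identity~\eqref{eq:V_id} in the paper. After that, one Leibniz-expands the accumulated $1/\eta$ factors using \eqref{eq:X_Leibniz} and Lemma~\ref{lemma:V_Leibniz}; since at most $\beta$ copies of $\hat V$ remain, their action on the $1/\eta$ factors via \eqref{eq:Vder_xi} produces the $(1+|\nu|)^\beta$. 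So the $(1+|\nu|)^\beta$ does not come from ``$(1-\nu)\partial_\eta$ inside $\hat V_\eta$'' as you suggest, but from the explicit $(1-\nu)$ in the $\partial_\eta^2$ conversion together with the $\hat V$-derivatives of $1/\eta$. With this one correction, your plan goes through and gives exactly the stated constraints $i+j+k\le \alpha+\beta+1$, $i\le\beta$.
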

\begin{proof}
  By formula~\eqref{eq:X_der} we have
  \[
  \hat X^\alpha\hat g(\xi) = -\frac{i}{\lambda} \int_0^1 [(\hat X +
  2(1-t) \frac{d}{d\xi})^\alpha\hat f'] ( -1 + t( \xi + 1)) ] dt \,.
  \]
  It follows by a short calculation that, for all $\alpha$, $\beta\in
  \N$, the derivatives $\hat V^\beta \hat X^\alpha\hat g$ of the
  solution $\hat g $ of the twisted cohomological equation are given
  by the formula
  \begin{equation}
    \label{eq:VXder}
    \frac{(-i)^{\beta+1}}{\lambda}\int_0^1 t^\beta [ \hat V +(1-t)  \frac{d^2}{d\xi^2}]^\beta
    [(\hat X + 2(1-t) \frac{d}{d\xi})^\alpha\hat f']  ( -1 + t( \xi + 1)) ] dt \,.
  \end{equation}
  By the above formula, by Minkowski integral inequality and by change
  of variables, the norm $\Vert \hat V^\beta \hat X^\alpha\hat g
  \Vert_{L^2_\nu(I_\lambda)}$ is bounded by the expression
  \begin{equation}
    \label{eq:VXder_bound}
    \frac{2^{\re \nu}}{\vert\lambda\vert}  \int_0^1 t^{\beta-1/2} \Vert ( \hat V +(1-t)  \frac{d^2}{d\xi^2})^\beta
    (\hat X + 2(1-t) \frac{d}{d\xi})^\alpha\hat f'  \Vert_{L^2_\nu(I_{t})}dt 
  \end{equation}
  We observe that for all $\alpha\in \N$ and for $\xi \not =0$ the
  following identity holds:
  \begin{equation}
    \label{eq:V_id}
    \begin{aligned}
      & [\hat V +(1-t) \frac{d^2}{d\xi^2}]^\beta
      [\hat X + 2(1-t) \frac{d}{d\xi}]^\alpha \frac{d}{d\xi} \\
      &= \{ \hat V +(1-t) \frac{1}{\xi}
      [i \hat V - (1-\nu) \frac{d}{d\xi})]\}^\beta \\
      & \times \{\hat X + (1-t)\frac{1}{\xi} [\hat X- (1-\nu)]
      \}^\alpha
      \frac{1}{2\xi}  [\hat X- (1-\nu)] \\
      &= \{ \hat V +(1-t) \frac{1}{\xi} [i \hat V -
      \frac{(1-\nu)}{2\xi}(\hat X-(1-\nu)]\}^\beta \\
      & \times \{\hat X + (1-t)\frac{1}{\xi} (\hat X- (1-\nu))
      \}^\alpha \frac{1}{2\xi} [\hat X- (1-\nu)]
    \end{aligned}
  \end{equation}
  By induction the following identity holds for all $k\in \N$ :
  \begin{equation}
    \label{eq:Vder_xi}
    \begin{aligned}
      \hat V^k (\frac{1}{\xi}) = (-i)^k k! \left( \prod_{j=1}^k
        (j+\nu) \right) \, \frac{1}{\xi^{k+1}} \,.
    \end{aligned}
  \end{equation}
  By the Leibniz-type formula \eqref{eq:X_Leibniz} and by that of
  Lemma~\ref{lemma:V_Leibniz}, from the identity~\eqref{eq:V_id}, from
  formulas~\eqref{eq:Xder_xi}, \eqref{eq:Vder_xi}, by the upper bound
  in formula~\eqref{eq:lambda_equiv}, it follows that there exists a
  constant $K_{\alpha, \beta}>0$ such that for all $t\in [0, 1]$ we
  have
  \begin{align}\label{equa:I_lambdat-I_lambda}
    \Vert &( \hat V +(1-t) \frac{d^2}{d\xi^2})^\beta
    (\hat X + 2(1-t) \frac{d}{d\xi})^\alpha\hat f'  \Vert_{L^2_\nu(I_{t})} \notag \\
    &\leq K_{\alpha, \beta} (1+ \vert \nu\vert)^{\beta} \sum_{
      \substack{
        i+j+k\leq \alpha +\beta +1 \\
        i \leq \beta }}
    \Vert \hat V^i \hat X^j (\hat X -(1-\nu))^k \hat f \Vert_{L_\nu^2(I)} \\
    &\leq K_{\alpha, \beta} (1+ \vert \nu\vert)^{\beta} \sum_{
      \substack{
        i+j+k\leq \alpha +\beta +1 \\
        i \leq \beta }} \Vert \hat V^i \hat X^j (\hat X -(1-\nu))^k
    \hat f \Vert_{0} \notag \,.
  \end{align}
  The statement follows from the bound given in
  formula~\eqref{eq:VXder_bound}.
\end{proof}

From the above lemmas we derive the following result.

\begin{lemma}
  \label{lemma:XVderivatives}
  For every $\alpha$, $\beta \geq 0$ there exists a constant
  $C^{(3)}_{\alpha, \beta}>0$ such that for all $\lambda \not =0$, the
  unique solution $g\in L^2_\nu(\R)$ of the equation $(U+i\lambda) g=f
  \in L^2_\nu(\R)$ satisfies the estimate
  \[
  \begin{aligned}
    \| V^\beta X^\alpha g \|_0 & \leq \frac{C^{(3)}_{\alpha,
        \beta}}{\vert \lambda\vert}
    (1+\vert \lambda\vert^{-\beta}) (1+\vert \nu \vert)^{\beta}  \\
    & \times \sum_{ \substack{ i+j+k \leq \alpha+\beta+1 \\ j \leq
        \beta} } \vert 1- \nu\vert^i \| V^j X^k f \|_0 \,.
  \end{aligned}
  \]
\end{lemma}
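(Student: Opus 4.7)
The plan is to combine the two regional estimates of Lemma \ref{lemma:3.10} (on $\R \setminus I$) and Lemma \ref{lemma:V-X-g-f} (on $I$), both proved for the Fourier transform $\hat g_\lambda(\xi) = \hat g(\lambda\xi)$ of the rescaled solution, and then transfer the resulting bound back to $\hat g$ itself by the change of variables $\eta = \lambda\xi$.

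First I would split
\[
\|\hat V^\beta \hat X^\alpha \hat g_\lambda\|_{L^2_\nu(\R)}^2 = \|\hat V^\beta \hat X^\alpha \hat g_\lambda\|_{L^2_\nu(I)}^2 + \|\hat V^\beta \hat X^\alpha \hat g_\lambda\|_{L^2_\nu(\R \setminus I)}^2
\]
and insert the bounds of Lemmas \ref{lemma:3.10} and \ref{lemma:V-X-g-f} (using the commutation $[\hat X, \hat V] = -2\hat V$ to reconcile the operator orderings $\hat X^\alpha \hat V^\beta$ and $\hat V^\beta \hat X^\alpha$). In the inner region, expanding $(\hat X - (1-\nu))^k$ by the binomial theorem and collecting the resulting factors of $(1-\nu)$ puts the estimate in the canonical form $\sum |1-\nu|^i \|\hat V^j \hat X^k \hat f\|_0$. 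In the outer region, although the stated conclusion of Lemma \ref{lemma:3.10} is phrased with $V$-powers up to $\alpha+\beta$, the formula \eqref{equa:VXproduct} in its proof shows that the constraint $i+j+m\leq \beta$ forces the $V$-power on $\hat f$ to be at most $\beta$. Both pieces therefore fit into a single sum of the shape
\[
\sum_{\substack{i+j+k \leq \alpha+\beta+1 \\ j \leq \beta}} |1-\nu|^i \|\hat V^j \hat X^k \hat f\|_0\,,
\]
with prefactor $C(1+|\nu|)^\beta / |\lambda|$.

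To transfer the bound from $\hat g_\lambda$ to $\hat g$, a chain-rule computation from \eqref{equa:vect_coeqn} yields
\[
(\hat X \hat g_\lambda)(\xi) = (\hat X \hat g)(\lambda\xi), \qquad (\hat V \hat g_\lambda)(\xi) = \lambda\,(\hat V \hat g)(\lambda\xi)\,,
\]
so that $\hat V^\beta \hat X^\alpha \hat g_\lambda(\xi) = \lambda^\beta (\hat V^\beta \hat X^\alpha \hat g)(\lambda\xi)$, and an analogous identity holds for $\hat f_\lambda$. A change of variable in the weighted integral then gives
\[
\|\hat V^\beta \hat X^\alpha \hat g_\lambda\|_{L^2_\nu(\R)} = |\lambda|^{\beta + (\re\nu - 1)/2}\|\hat V^\beta \hat X^\alpha \hat g\|_{L^2_\nu(\R)}\,,
\]
with the same identity for the $\hat f$-norms (with $\beta$ replaced by $j$). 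Substituting into the combined bound causes the weight factors $|\lambda|^{(\re\nu-1)/2}$ to cancel between the two sides, leaving a clean factor $|\lambda|^{j-\beta}$ inside each summand. Under the constraint $j \leq \beta$, this is bounded by $1+|\lambda|^{-\beta}$, producing the prefactor $(1+|\lambda|^{-\beta})/|\lambda|$ in the statement. A final invocation of Lemma \ref{lemma:Fourier:comp} converts the Fourier-side $L^2_\nu$-norms back to the representation norms $\|\cdot\|_0$.

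The main obstacle is purely the bookkeeping of $|\lambda|$-powers: the weight $|\xi|^{-\re\nu}d\xi$ transforms by $|\lambda|^{\re\nu - 1}$ under the dilation $\xi \mapsto \lambda\xi$, and the clean factor $(1+|\lambda|^{-\beta})/|\lambda|$ materialises only when the $V$-power constraint $j \leq \beta$ is in force on both sides, which requires looking into the proof of Lemma \ref{lemma:3.10} beyond its stated conclusion. Once that observation is made, the remainder is algebraic.
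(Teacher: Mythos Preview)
Your proposal is correct and follows essentially the same approach as the paper: combine Lemmas~\ref{lemma:3.10} and~\ref{lemma:V-X-g-f} on the rescaled pair $(\hat f_\lambda,\hat g_\lambda)$, then transfer back to $(\hat f,\hat g)$ via the dilation identity \eqref{equa:VX-lambda}. The only cosmetic difference is that you explicitly compute the Jacobian factor $|\lambda|^{(\re\nu-1)/2}$ from the weighted change of variable and note its cancellation, whereas the paper packages the same cancellation abstractly by writing $\hat g=(\hat g_\lambda)_{1/\lambda}$ and applying \eqref{equa:VX-lambda} twice; both routes land on the factor $|\lambda|^{j-\beta}$ inside the sum, bounded by $1+|\lambda|^{-\beta}$ under $j\le\beta$. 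Your remark that the constraint $j\le\beta$ on the $V$-power in the outer-region estimate must be read off from formula~\eqref{equa:VXproduct} rather than from the stated conclusion of Lemma~\ref{lemma:3.10} is also exactly what the paper is doing implicitly.
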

\begin{proof}
  For any $\hat f \in L^2_\nu(\R)$, define $\hat f_\lambda(\xi) :=
  \hat f(\lambda \xi).$ Notice that for any $\alpha, \beta \in \N$ and
  for any $\lambda \in \R^*$, we have the following identity
  \begin{equation}\label{equa:VX-lambda}
    \hat V^\beta \hat X^\alpha \hat f_\lambda = \lambda^\beta (\hat V^\beta \hat X^\alpha \hat f)_\lambda \,,
  \end{equation}
  whenever $\hat V^\beta \hat X^\alpha f$ is defined.
 
  By formula~\eqref{eq:twist_eqn-lambda=1} the solution to the
  cohomological equation $(U + i \lambda) \hat g = \hat f$ can be
  rewritten as
  \[
  \hat g_\lambda(\xi) = -i \frac{\hat f_\lambda(\xi)}{\lambda(\xi +
    1)}\,.
  \]
  Since by definition for any $\hat f \in L^2_\nu(\R)$ we have $\hat
  f=(\hat f_\lambda)_{1/\lambda}$ , from \eqref{equa:VX-lambda} and
  from Lemmas~\ref{lemma:3.10} and~\ref{lemma:V-X-g-f} it follows that

  \begin{equation}\label{equa:1->lambda}
    \begin{aligned}
      \Vert \hat V^\beta &\hat X^\alpha \hat g\Vert_0 = \Vert \hat V^\beta \hat X^\alpha (\hat g_\lambda)_{1/\lambda}\Vert_0 \leq \vert \lambda \vert^{-\beta}\Vert(\hat V^\beta \hat X^\alpha \hat g_\lambda)_{1/\lambda}\Vert_0 \\
      & \leq \frac{C^{(3)}_{\alpha, \beta}}{|\lambda|} \vert \lambda
      \vert^{-\beta} (1+\vert \nu \vert)^{\beta} \sum_{ \substack{
          i+j+k \leq \alpha+\beta+1 \\ j \leq \beta} }
      \vert 1- \nu\vert^i  \| (\hat V^j \hat X^k \hat f_\lambda)_{1/\lambda} \|_0 \\
      & \leq \frac{C^{(3)}_{\alpha, \beta}}{\vert\lambda\vert} \vert
      \lambda \vert^{-\beta} (1+\vert \nu \vert)^{\beta} \sum_{
        \substack{ i+j+k \leq \alpha+\beta+1 \\ j \leq \beta} } \vert
      \lambda\vert^j \vert 1- \nu\vert^i \| \hat V^j \hat X^k \hat f
      \|_0\,.
    \end{aligned}
  \end{equation}
\end{proof}

For rescaled Sobolev norms we have a similar statement which can be
immediately derived from Lemma~\ref{lemma:XVderivatives}.

\begin{lemma}
  \label{lemma:XVderivatives_scaled}
  For every $\alpha$, $\beta \geq 0$ there exists a constant
  $C^{(4)}_{\alpha, \beta}>0$ such that for all $\lambda \not =0$ and
  for all $\mathcal T\geq 1$, the solution $g\in L^2_\nu(\R)$ of the
  rescaled equation $\mathcal T(U+i\lambda) g=f \in L^2_\nu(\R) $
  satisfies the following estimate with respect to the rescaled
  Sobolev norms:
  \[
  \begin{aligned}
    \| V_{\mathcal T}^\beta X_{\mathcal T}^\alpha g \|_0 & \leq \frac{
      C^{(4)}_{\alpha, \beta} }{ \vert \lambda\vert {\mathcal T}^{1/3}
    } (1+\vert \nu \vert)^\beta (1+\vert \lambda\vert^{-\beta}) \\ &
    \times \sum_{i+j+k \leq \alpha+\beta+1 } [\mathcal T^{-1/3} \vert
    1- \nu\vert]^i \| V_{\mathcal T}^j X_{\mathcal T}^k f \|_0 \,.
  \end{aligned}
  \]
\end{lemma}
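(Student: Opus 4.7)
The plan is to reduce to the un-rescaled statement of Lemma~\ref{lemma:XVderivatives}, then translate back into the rescaled norms by exploiting the simple scaling relations of $X_{\mathcal T}$ and $V_{\mathcal T}$. First, I would rewrite the rescaled cohomological equation $\mathcal T(U + i\lambda)g = f$ as $(U + i\lambda)g = \mathcal T^{-1} f$ and apply Lemma~\ref{lemma:XVderivatives} with right-hand side $\mathcal T^{-1} f$. This yields
\[
\|V^\beta X^\alpha g\|_0 \leq \frac{C^{(3)}_{\alpha,\beta}}{\mathcal T\,|\lambda|}(1+|\lambda|^{-\beta})(1+|\nu|)^\beta \sum_{\substack{i+j+k \leq \alpha+\beta+1 \\ j \leq \beta}} |1-\nu|^i \,\|V^j X^k f\|_0 \,.
\]

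Next, I would convert both sides into rescaled norms using the identity $\|V^p X^q h\|_0 = \mathcal T^{(2p+q)/3}\|V_{\mathcal T}^p X_{\mathcal T}^q h\|_0$, which follows directly from $X_{\mathcal T} = \mathcal T^{-1/3} X$ and $V_{\mathcal T} = \mathcal T^{-2/3} V$. Applying this on the left with $(p,q) = (\beta,\alpha)$ and on the right with $(p,q) = (j,k)$, and dividing through by $\mathcal T^{(2\beta+\alpha)/3}$, gives
\[
\|V_{\mathcal T}^\beta X_{\mathcal T}^\alpha g\|_0 \leq \frac{C^{(3)}_{\alpha,\beta}}{|\lambda|}(1+|\lambda|^{-\beta})(1+|\nu|)^\beta \sum \mathcal T^{-1 + \frac{2(j-\beta) + (k-\alpha)}{3}} |1-\nu|^i \,\|V_{\mathcal T}^j X_{\mathcal T}^k f\|_0 \,.
\]

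Finally, to match the target, I would show termwise that
\[
\mathcal T^{-1 + \frac{2(j-\beta) + (k-\alpha)}{3}} \leq \mathcal T^{-1/3 - i/3}
\]
for every admissible index triple. Since $\mathcal T \geq 1$, this is equivalent to the numerical inequality $2(j-\beta) + (k-\alpha) + i \leq 2$, which reduces, via $2(j-\beta)+(k-\alpha)+i = j + (i+j+k) - 2\beta - \alpha$, to $j - \beta + 1 \leq 1$ using the constraints $i+j+k \leq \alpha+\beta+1$ and $j \leq \beta$. Absorbing the extra factors $\mathcal T^{-i/3}$ into the $|1-\nu|^i$ terms produces the stated bound, with $C^{(4)}_{\alpha,\beta}$ depending only on $C^{(3)}_{\alpha,\beta}$ and the number of terms in the sum.

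This argument is essentially bookkeeping of exponents of $\mathcal T$; there is no genuine analytic obstacle once Lemma~\ref{lemma:XVderivatives} is in hand. The only point that requires care is verifying that the constraint $j\leq \beta$ in the sum of Lemma~\ref{lemma:XVderivatives} is exactly what is needed to make the exponent comparison $-1 + \frac{2(j-\beta)+(k-\alpha)}{3} \leq -\frac{1}{3} - \frac{i}{3}$ hold, so that no further loss in $\mathcal T$ occurs and the factor $\mathcal T^{-1/3}$ in front of $1/|\lambda|$ comes out sharply.
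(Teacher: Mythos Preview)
Your argument is correct and is precisely the approach the paper intends: the paper states only that the lemma ``can be immediately derived from Lemma~\ref{lemma:XVderivatives}'' without writing out the exponent bookkeeping, and your proposal supplies exactly that computation. The one point worth noting is that the sum in the target statement runs over all $i+j+k\le\alpha+\beta+1$ without the constraint $j\le\beta$, whereas your derived bound carries that constraint; since all terms are non-negative this only makes the stated inequality weaker, so no issue arises.
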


\begin{proof} [Proof of Theorem~\ref{theo:cohomology-principal}]
  For $r$, $s \in \N$ even integers, since the Casimir operator $\Box$
  takes the value $\mu= 1 - \nu^2$ on any irreducible, unitary
  representation $H_{m,\mu}$, by expanding the operator $(I
  +\Box^2)^{r/2} (I+ \Box_\mathcal T^2+ \widehat \triangle_{\mathcal
    T}^2)^{s/2}$ into a polynomial expression in $X_{\mathcal T}$ and
  $V_\mathcal T$, and by the commutation relations we derive that
  there exists a constant $C^{(0)}_{r,s}>0$ such that
  \begin{equation}\label{equa:coeqn-fullnorm0}
    \vert  g \vert_{r,s; \mathcal T}  \leq  C^{(0)}_{r,s}(1+ \mu^2)^{r/4} \sum_{k+m + \alpha +\beta \leq s} 
    [\mathcal T^{-1/3} \vert 1- \nu\vert]^k \| K^m  V_{\mathcal T}^\beta X_{\mathcal T}^\alpha g\|_0 \,.
  \end{equation}
  We conclude from Lemma~\ref{lemma:XVderivatives_scaled} that there
  exist constants $C^{(1)}_{r,s}>0$ and $C^{(2)}_{r,s}>0$ such that,
  if the functions $f$ and $g$ belong to a single irreducible
  component, then
  \[
  \begin{aligned}
    \vert g \vert_{r,s; \mathcal T} & \leq C^{(1)}_{r,s} \frac{1 +
      \vert\lambda\vert^{-s}} {\vert\lambda\vert \mathcal T^{1/3}} (1
    + |\nu|)^{r+s} \sum_{k = 0}^{s + 1} [\mathcal T^{-1/3}\vert 1 -
    \nu\vert]^{k}
    \vert f  \vert_{0, s + 1 - k; \mathcal T} \\
    & \leq C^{(2)}_{r,s} \frac{1 +
      \vert\lambda\vert^{-s}}{\vert\lambda\vert \mathcal T^{1/3}}
    \vert f \vert_{r+s, s + 1; \mathcal T}\,.
  \end{aligned}
  \]
  From the above estimate, the conclusion for $r$, $s$ even integers
  and $m=1$ follows.
 
  The statement for $r$, $s \geq 0$ follows by interpolation.  The
  estimate for $m \neq 1$ follows by setting $\lambda = \lambda m$.

  The uniqueness of the solution holds, because if $g$, $h \in H$ are
  solutions of the equation~\eqref{equa:coeqn_M-P}, then in Fourier
  transform the following identity holds in $L^2_\nu(\R)$, hence
  almost everywhere,
  \[
  -i(\xi + \lambda) (\hat g - \hat h)(\xi) =0
  \]
  Since $\xi + \lambda\not =0$ almost everywhere, it follows that
  $\hat g(\xi)= \hat h(\xi)$ almost everywhere, hence $g=h \in H$.
\end{proof}

\subsection{Discrete series}
\subsubsection{Invariant distributions}
Let $H_\mu$ be an irreducible unitary representation of the discrete
series with $\mu = 1 - \nu^2$ for $\nu \in \N$.  Let $\mathbb H$ be
the upper half-plane.

From Appendix \ref{appe:A},
\[
\| f \|_{H_\mu}^2 := \left\{
  \begin{array}{ll}
    \int_0^\infty \int_{-\infty}^\infty |f(x + i y)|^2 y^{\nu - 1} dx dy, & \nu \geq 1\\
    \sup_{y > 0} \int_{\R} |f(x + i y)|^2 dx, & \nu = 0\,. 
  \end{array}
\right.
\]
For $f_m = e_m \otimes f \in H_{m, \mu}$, define
\[
\begin{aligned}
  D_{m, \mu}^\lambda(f_m) := e^\lambda \int_{\R} f(t + i) e^{- i t
    \lambda m} dt \,,
\end{aligned}
\]
and observe that $D_{m, \mu}^{\lambda}$ induces a functional $\bar
D_{m, \mu}^\lambda$ on $H_\mu$ satisfying
\[
\bar D_{m, \mu}^\lambda = D_{1, \mu}^{\lambda m}\,.
\]

When $\nu = 0$, the lowest weight vector for the $H_\mu$ is $u_1(z) :=
(z + i)^{-1}$, and integration by parts shows $\bar D_{m,
  \mu}^\lambda(u_0) \in \C$.  Consider the horocycle flow invariant
functional $D_\mu^+$ defined by
\[
D_\mu^+(f) := \lim_{z \to \infty} f(z) (z + i) \,.
\]
The formulas from Section 2.4 of \cite{FF1} show the basis obtained
from $u_1$ by repeatedly applying the operator $1/2[X - i (U + V)]$ is
orthonormal.  Then formula (43) of \cite{FF1} shows $D_\mu^+$ is
(sharply) in $H_\mu^{-1/2}$.
% (see also the unit disc model described in Claim 2.2 of \cite{T}).
So for $f_m = f \otimes e_m$,
\begin{equation}\label{equa:D_lambda-defined}
  D_{m, \mu}^\lambda (f_m) = \bar D_{m, \mu}^\lambda 
  ( f - D_\mu^+(f) u_1) + D_\mu^+(f) \bar D_{m, \mu}^\lambda(u_1) 
\end{equation}
is defined via the above formula.  Moreover, it follows as in the
third case of Lemma~A.3 of \cite{T} that $D_{m, \mu}^\lambda \in H_{m,
  \mu}^{-1/2}$.

For $\nu \geq 1$, an elementary computation from Lemma~A.3 of \cite{T}
gives
\begin{lemma}\label{lemma:f_decay1}
  Let $\nu \geq 1$, and $f \in H_\mu^\infty$.  Then there is a
  constant $C > 0$ such that for all $z \in \mathbb H_\mu$,
  \[
  |f(z)| \leq C (1 + \im(z)^{\min\{2 - (\nu + 1)/2, -1/2\}}) \| f
  \|_{3} (1 + |z|)^{-2}\,.
  \]
\end{lemma}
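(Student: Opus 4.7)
The plan is to exploit holomorphy of $f$ on the upper half-plane together with the weighted Bergman--type structure of $\|\cdot\|_{H_\mu}$, converting the desired pointwise estimate into a weighted $L^2$ bound at the cost of a bounded number of $\SL(2,\R)$--derivatives, which are controlled by $\|f\|_3$.

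As a first step, for $z=x+iy\in\mathbb H$ I would apply the mean value property of holomorphic functions on the disc $D(z,y/2)\subset\mathbb H$, on which $\operatorname{Im}(w)\asymp y$. This yields
\[
|f(z)|^2\le\frac{C}{y^2}\int_{D(z,y/2)}|f(w)|^2\,dA(w)\le\frac{C}{y^{\nu+1}}\|f\|_0^2,
\]
i.e.\ the naive bound $|f(z)|\le C y^{-(\nu+1)/2}\|f\|_0$, which is the reproducing kernel estimate for the weighted Bergman space $L^2(\mathbb H,\,y^{\nu-1}\,dx\,dy)$ and is already of the stated form when $\operatorname{Im}(z)\gtrsim1$.

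To extract the decay factor $(1+|z|)^{-2}$, I would apply the previous bound to the auxiliary function $(1+|z|)^2 f$ rather than to $f$ itself. In the upper half-plane model of $H_\mu$ the vector fields $U$, $V$, $X$ act by holomorphic first-order differential operators
\[
Uf=f',\qquad Vf=-z^2f'-(\nu+1)zf,\qquad Xf=2zf'+(\nu+1)f,
\]
so that multiplication by $z$ and by $z^2$ lies in the algebra generated by $U$, $V$, $X$ with coefficients polynomial in $\nu$. Consequently $\|(1+|z|)^2 f\|_0\le C\|f\|_2$, and substituting $(1+|z|)^2 f$ for $f$ in the first step produces the factor $(1+|z|)^{-2}$.

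To obtain the improved exponent $\min\{2-(\nu+1)/2,\,-1/2\}$ near the real axis, I would refine the mean value argument through a holomorphic Taylor expansion of $f$ around $z$,
\[
f(w)=f(z)+(w-z)Uf(z)+\int_0^1(1-s)\,U^2f\bigl(z+s(w-z)\bigr)(w-z)^2\,ds,
\]
integrated on $D(z,y/2)$ against a test kernel chosen to annihilate the constant and linear Taylor coefficients. For a holomorphic $f$ this produces an additional factor $y^2$ in the pointwise bound, at the cost of controlling a second-order derivative of $f$ in $L^2$. Combined with the previous step the full budget is bounded by $\|f\|_3$; the cap at $-1/2$ in the minimum reflects the Sobolev--embedding threshold below which no further gain in $y$ is available within $W^3$. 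The main obstacle and the place requiring the most care is the simultaneous bookkeeping of the $\SL(2,\R)$--derivatives spent on spatial decay and on the $y$--improvement, together with the polynomial dependence on $\nu$, so that the resulting bound remains within $\|f\|_3$ with a constant depending only on $\nu\ge1$.
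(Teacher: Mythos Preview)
Your Step~1 (the mean value/Bergman estimate giving $|f(z)|\le C y^{-(\nu+1)/2}\|f\|_0$) is correct and standard. The problem is in Step~2.

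First, $(1+|z|)^2 f$ is not holomorphic, so you cannot feed it back into the mean value argument; at best you mean $(z+i)^2 f$. More seriously, the key inequality $\|(z+i)^2 f\|_0\le C\|f\|_2$ is \emph{false}. Take the lowest weight vector $u(z)=(z+i)^{-(\nu+1)}\in H_\mu^\infty$. Then $(z+i)^2u=(z+i)^{-(\nu-1)}$, and a direct computation of $\int_{\mathbb H}|z+i|^{-2(\nu-1)}y^{\nu-1}\,dx\,dy$ shows it diverges for $\nu=1,2,3$ (at infinity in $y$). So multiplication by $(z+i)^2$ is not bounded from $H_\mu^\infty$ to $H_\mu$, let alone controlled by $\|f\|_2$. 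Your justification is also incorrect as stated: from the formulas for $U,X,V$ one obtains $z\partial_z$ and $z^2\partial_z$ in the enveloping algebra (e.g.\ $2z\partial_z=X-(1+\nu)$), but the \emph{multiplication} operators by $z$ and $z^2$ do not lie in that algebra. One cannot solve for $zf$ in terms of $Uf,Xf,Vf$ alone.

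Since Step~2 is how you obtain the factor $(1+|z|)^{-2}$, and Step~3 builds on it, the argument does not go through. The paper does not prove this lemma directly but cites Lemma~A.3 of \cite{T}; the argument there proceeds via the $SO(2)$--weight ($K$--type) expansion $f=\sum_{k\ge n}c_k u_k$, where the $u_k$ are explicit rational functions of $z$ with computable pointwise bounds $|u_k(z)|$, and the Sobolev norm $\|f\|_s$ is equivalent to $(\sum_k(1+k^2)^s|c_k|^2)^{1/2}$. Summing the pointwise estimates against the decaying coefficients produces both the $(1+|z|)^{-2}$ decay and the improved $y$--exponent simultaneously. If you want to stay with your Bergman/mean-value framework, you would need a different mechanism for spatial decay---for instance, transporting $z$ to $i$ by an element of $\SL(2,\R)$ and tracking the automorphy factor, or working in Fourier transform---rather than multiplication by a polynomial in $z$.
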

Hence, for all $\mu$ such that $\nu \geq 0$, $D_{m, \mu}^\lambda \in
H_{m, \mu}^{-3}$\,.  The following stronger result holds.
\begin{lemma}\label{lemma:D^lambda-reg-discrete}
  Let $m \in \Z$ and $\nu \in \N$.  If $\lambda m \neq 0$, then
  \[
  D^\lambda_{m,\mu} \in \widehat H_{m, \mu}^{0, -(1/2+)}\,.
  \]
  Moreover, if $\lambda m < 0$, then $D^\lambda_{m,\mu} = 0$.  For
  $\lambda m = 0$, we have two cases:
  \[
  \left\{\begin{array}{ll}
      D_{m, \mu}^\lambda \text{ is undefined},& \text{ if } \nu = 0 \\
      D_{m, \mu}^\lambda = 0, & \text{ if } \nu > 0\,.
    \end{array}\right.
  \]
\end{lemma}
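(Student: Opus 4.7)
The plan is to follow the pattern of Lemma~\ref{lemma:regularity-D_lambda-XV-principal}, modified to reflect the discrete series structure. By the identity $\bar D^\lambda_{m,\mu} = \bar D^{\lambda m}_{1,\mu}$ noted after the definition of $D^\lambda_{m,\mu}$, every statement reduces to the case $m = 1$ with $\lambda$ replaced by $\lambda m$, so I write $D^\lambda := \bar D^\lambda_{1,\mu}$ and denote $F(t) := f(t+i)$ for $f \in H_\mu$.

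The first step is to exploit holomorphy. By the Paley-Wiener theorem, every $f \in H_\mu^\infty$ admits a representation $f(x+iy) = \int_0^\infty a(\xi) e^{i\xi x - \xi y} d\xi$. A direct computation with the Fourier convention $\hat F(\eta) := \int F(t) e^{-it\eta} dt$ gives $\hat F(\eta) = 2\pi a(\eta) e^{-\eta}$ for $\eta > 0$ and $\hat F(\eta) = 0$ for $\eta < 0$; since $D^\lambda(f) = e^\lambda \hat F(\lambda)$, the vanishing for $\lambda < 0$ is immediate. For $\lambda = 0$ with $\nu \geq 1$, the decay $|f(z)| \leq C(1+|z|)^{-2}$ from Lemma~\ref{lemma:f_decay1}, combined with Cauchy's theorem on the rectangle $[-R,R]+i[1,Y]$ and the vanishing $\int_{\R+iY}|f|\,dx \to 0$ as $Y \to \infty$ (a consequence of the $\im(z)^{\min\{2-(\nu+1)/2,-1/2\}}$ factor in Lemma~\ref{lemma:f_decay1}), gives $\int_\R F(t)\,dt = 0$, hence $D^0(f) = 0$. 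For $\nu = 0$ the norm only controls $F$ in $L^2(\R)$, and the lowest weight vector $u_1(z) = (z+i)^{-1}$, for which $|u_1(t+i)| \sim 1/|t|$ at infinity, provides an element of $H_\mu^\infty$ on which $\int_\R u_1(t+i)\,dt$ does not converge absolutely, so $D^0$ is genuinely undefined.

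For the main regularity claim with $\lambda \neq 0$, by the previous step only $\lambda > 0$ requires attention. Plancherel in the upper half-plane model gives $\|f\|_0^2 = C_\nu \int_0^\infty |a(\xi)|^2 \xi^{-\nu} d\xi$, so $\hat F \in L^2(d\xi/\xi^\nu)$ on $(0,\infty)$. Fix an open interval $J$ with $\lambda \in J$ and $\overline J \subset (0,\infty)$; on $J$ the weight $\xi^{-\nu}$ is bounded above and below, so $\hat F|_J \in L^2(J)$ with norm controlled by $|f|_{0,0}$. The plan is then to transcribe the Fourier-side argument of Lemma~\ref{lemma:regularity-D_lambda-XV-principal}: in the upper half-plane Plancherel picture the Fourier-space actions of $X$ and $V$ allow one to express $\partial_\xi^k \hat F$ on $J$ as a combination of terms $\hat X^i \hat V^j \hat F$ up to smooth multipliers bounded on $J$, yielding $\|\partial_\xi^k \hat F\|_{L^2(J)} \leq C_k |f|_{0,k}$ and, by interpolation, $\|\hat F\|_{W^s(J)} \leq C_s |f|_{0,s}$ for every $s \geq 0$. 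For $s > 1/2$, the Sobolev embedding $W^s(J) \hookrightarrow C^0(J)$ controls $|\hat F(\lambda)| = (2\pi)^{-1} e^\lambda |D^\lambda(f)|$ by $|f|_{0,s}$, which is the claimed regularity.

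The main obstacle is this last step, where one must verify that the Fourier-side identities underlying the proof of Lemma~\ref{lemma:regularity-D_lambda-XV-principal} transfer cleanly to the upper half-plane Plancherel picture, with integer parameter $\nu \in \N$ and weight $\xi^{-\nu}$ on the half line $(0,\infty)$ in place of $|\xi|^{-\re\,\nu}$ on all of $\R$. Once these analogues are in hand, the remainder of the argument is a direct transcription of the principal/complementary series case.
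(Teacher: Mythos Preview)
Your approach is correct and essentially coincides with the paper's. The paper's proof is terse: it invokes Lemma~\ref{lemma:Fourier_Cauchy} (proved via Cauchy's theorem, exactly your contour-shifting argument) for the vanishing when $\lambda m<0$ and when $\lambda m=0$, $\nu\geq 1$; it cites the non-integrability of $u_1(t+i)$ for the undefined case $\nu=0$; and for the regularity it simply says ``follows as in Lemma~\ref{lemma:regularity-D_lambda-XV-principal}.'' Your ``main obstacle'' is in fact already handled: Lemma~\ref{lemma:discrete_Sobolev_Fourier} (stated just before the proof in the paper) verifies that the Fourier-side formulas $\hat X = (1-\nu)+2\xi\partial_\xi$ and $\hat V = -i((1-\nu)\partial_\xi+\xi\partial_\xi^2)$ hold verbatim in the discrete-series Plancherel picture on $\R^+$, so the transcription of the principal-series argument is immediate (and only $\hat X$ is needed to recover $\partial_\xi^k$ on an interval bounded away from~$0$).
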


The proof will be as in
Lemma~\ref{lemma:regularity-D_lambda-XV-principal}, once we have a
description in Fourier transform of the upper half-plane model.  For
each $x + i y := z \in \mathbb H$, define
\[
\hat{f}^y(\xi) := \int_\R f(z) e^{-i \xi z } dx\,.
\]
Notice that $\bar D_{m, \mu}^\lambda(f) = \hat f^1(m\lambda)$.  By
Lemma~\ref{lemma:f_decay1} and a computation as in
\eqref{equa:D_lambda-defined}, the function $\hat{f}^y(\xi)$ is
defined for $\xi\in \R^*$.
% since it is the Fourier transform of an $L^1(\R)$ function along the
% line $\{\im(z) = y\}$.
By Cauchy's theorem, we get
% we have, as in Lemma~3.4 of \cite{T} that

\begin{lemma}\label{lemma:Fourier_Cauchy}
  Let $\xi \in \R$ and $y_1, y_2 > 0$.  Let $f \in H_\mu^\infty$.
  Then $\hat{f}^{y_1}(\xi) = \hat{f}^{y_2}(\xi)\,.$ If $\xi < 0$, then
  $\hat{f}^{y_1}(\xi) = 0$, and if $\nu \geq 1$, then $\hat f^{y_1}(0)
  = 0$.
\end{lemma}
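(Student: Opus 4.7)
The plan is to apply Cauchy's theorem to the holomorphic function $z \mapsto f(z) e^{-i\xi z}$ on a tall rectangle with horizontal sides at heights $y_1$ and $y_2$, and to obtain the vanishing statements by pushing the top side to infinity.

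First I would prove the $y$-independence. Fix $\xi \in \R$ and $0 < y_1 < y_2$, and consider the rectangular contour $\gamma_R$ with vertices $-R + i y_1$, $R + i y_1$, $R + i y_2$, $-R + i y_2$. Since $f$ is holomorphic on $\mathbb H$, so is $f(z) e^{-i\xi z}$, hence $\oint_{\gamma_R} f(z) e^{-i\xi z} dz = 0$. The two horizontal sides contribute, after orientation, exactly $\hat f^{y_1}(\xi) - \hat f^{y_2}(\xi)$ in the limit $R \to \infty$, so the lemma reduces to showing that the two vertical sides contribute $o(1)$ as $R\to \infty$. On each vertical side the length is $y_2 - y_1$ and the factor $|e^{-i\xi z}|=e^{\xi \im z}$ is bounded on the strip $\im z \in [y_1,y_2]$, so it suffices to show that $\sup_{y\in[y_1,y_2]} |f(\pm R + iy)| \to 0$ as $R \to \infty$. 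For $\nu \geq 1$, Lemma~\ref{lemma:f_decay1} gives $|f(z)| \leq C(1+|z|)^{-2}$ (up to the $y$-factor which is uniformly bounded on $[y_1,y_2]$), so the vertical contribution is $O(R^{-2}) \to 0$. For $\nu = 0$ (mock discrete series), the analogous pointwise bound for smooth vectors follows from applying the same $L^\infty$-via-Sobolev argument to horizontal shifts of $f$; since $f \in H^\infty_\mu$, all iterated $X, U, V$-derivatives are in $H_\mu$, which together with holomorphicity yields rapid decay in $x$ uniformly for $y$ in any compact subinterval of $(0,\infty)$.

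Next I would prove that $\hat f^{y_1}(\xi) = 0$ for $\xi < 0$. By the first part $\hat f^{y_1}(\xi) = \hat f^{y}(\xi)$ for every $y > 0$, so it is enough to show $\hat f^{y}(\xi) \to 0$ as $y \to +\infty$. Since $|e^{-i\xi z}| = e^{\xi y}$, and for $\nu \geq 1$ Lemma~\ref{lemma:f_decay1} yields $|f(x+iy)| \leq C (1+|x+iy|)^{-2}$ (the $y$-factor being $\leq 2$ for $y \geq 1$), I get
\[
|\hat f^y(\xi)| \leq C e^{\xi y} \int_\R (1+|x+iy|)^{-2}\,dx = O\bigl(e^{\xi y} y^{-1}\bigr) \to 0\,,
\]
because $\xi < 0$. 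For $\nu = 0$ the same conclusion follows from $\|f\|_{H_\mu}^2 = \sup_y \int_\R |f(x+iy)|^2 dx < \infty$: by Cauchy–Schwarz $|\hat f^y(\xi)| \leq e^{\xi y} \|f\|_{H_\mu} \,(\mathrm{eff.~width})^{1/2}$, and one combines this with a cut-off at $|x|\geq e^{|\xi| y/4}$ plus the rapid $x$-decay of smooth vectors to close the bound.

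Finally, for the vanishing $\hat f^{y}(0) = 0$ when $\nu \geq 1$, I use $y$-independence and let $y\to \infty$ with $\xi = 0$: applying Lemma~\ref{lemma:f_decay1} directly,
\[
|\hat f^y(0)| \leq C (1+ y^{\min\{2-(\nu+1)/2,-1/2\}}) \int_\R (1+|x+iy|)^{-2} dx = O(y^{-1}) \to 0\,.
\]
The main technical point is the $\nu = 0$ mock discrete case in the first part, where Lemma~\ref{lemma:f_decay1} does not directly apply; I expect this to be handled by exploiting smoothness of $f$ in the representation (so that $U^k f \in H_\mu$ for all $k$) together with a standard Sobolev-embedding style argument along each horizontal line, to obtain the pointwise decay needed for the vertical sides of $\gamma_R$.
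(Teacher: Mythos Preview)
Your proposal is correct and is exactly the argument the paper has in mind: the paper's own ``proof'' is the single phrase ``By Cauchy's theorem, we get'' preceding the lemma, and you have supplied the standard rectangle-contour details (horizontal sides give $\hat f^{y_1}(\xi)-\hat f^{y_2}(\xi)$, vertical sides vanish by the decay from Lemma~\ref{lemma:f_decay1}, and letting $y\to\infty$ handles $\xi<0$ and, for $\nu\ge 1$, $\xi=0$).

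One minor remark: you need not labor over the mock discrete case $\nu=0$. The result there is just the classical Paley--Wiener theorem for the Hardy space $H^2(\mathbb H)$, and the hypothesis $f\in H_\mu^\infty$ makes it even more direct: since $U=-\partial/\partial z$, smoothness gives $f^{(k)}\in H_\mu$ for all $k$, hence $\sup_{y>0}\int_\R |f^{(k)}(x+iy)|^2\,dx<\infty$, and one-dimensional Sobolev embedding on each horizontal line yields $|f(x+iy)|\le C_k(1+|x|)^{-k}$ uniformly for $y$ in compact subsets of $(0,\infty)$, which is more than enough for both the vertical-side and $y\to\infty$ estimates.
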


With this in mind, we define the Fourier transform of $f$ to be
\[
\hat f := \hat f^1.
\]
\begin{lemma}\label{lemma:inversion}
  Let $\nu \in \Z^+$ and $f \in H_\mu^\infty$.  Then for all $z \in
  \mathbb H$,
  \[
  f(z) = \frac{1}{2\pi} \int_{\R^+} \hat f(\xi) e^{i \xi z } d\xi\,.
  \]
  % Moreover, when $n = 1$, we have
  % \[
  % \| f \|_0^2 = \frac{1}{2\pi} \int_{\R^+} |\hat f(\xi)|^2 d\xi.
  % \]
  Setting $(-1)! := 1,$ we get for any $\nu \in \N$,
  \[
  \| f\|_{0}^2 = \frac{(\nu - 1)!}{\pi 2^{\nu + 1}} \int_{\R^+} |\hat
  f(\xi)|^2 \frac{d\xi}{\xi^{\nu}}\,.
  \]
\end{lemma}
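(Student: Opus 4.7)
The plan is to derive both conclusions from the classical one-dimensional Fourier inversion and Parseval theorems applied on horizontal slices $\{\im z = y\}$, using Lemma~\ref{lemma:Fourier_Cauchy} to identify the slicewise transform with the unified $\hat f$. The key observation is that by the definition $\hat f^y(\xi) := \int_\R f(x+iy) e^{-i\xi(x+iy)}\, dx$, the ordinary 1D Fourier transform of $x \mapsto f(x+iy)$ equals $\hat f^y(\xi)\, e^{-\xi y} = \hat f(\xi)\, e^{-\xi y}$, where the last equality uses the $y$-independence from Lemma~\ref{lemma:Fourier_Cauchy}.

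For the inversion formula, I would fix $y>0$ and note that Lemma~\ref{lemma:f_decay1} gives $x\mapsto f(x+iy)$ a decay of order $(1+|x|)^{-2}$, hence it lies in $L^1\cap L^2(\R)$, so the standard 1D Fourier inversion applies and yields
\[
f(x+iy) = \frac{1}{2\pi}\int_\R \hat f(\xi)\, e^{-\xi y}\, e^{i\xi x}\, d\xi = \frac{1}{2\pi}\int_\R \hat f(\xi)\, e^{i\xi z}\, d\xi.
\]
Lemma~\ref{lemma:Fourier_Cauchy} asserts $\hat f(\xi)=0$ for $\xi<0$ (and also at $\xi=0$ when $\nu\geq 1$), so the integral can be restricted to $\R^+$.

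For the norm identity in the case $\nu\geq 1$, I would apply Parseval on each horizontal slice:
\[
\int_\R |f(x+iy)|^2\, dx = \frac{1}{2\pi}\int_0^\infty |\hat f(\xi)|^2\, e^{-2\xi y}\, d\xi.
\]
Multiplying by $y^{\nu-1}$, exchanging the order of integration by Tonelli (legitimate since the integrand is non-negative), and evaluating the Gamma-type inner integral
\[
\int_0^\infty y^{\nu-1}\, e^{-2\xi y}\, dy = \frac{(\nu-1)!}{(2\xi)^\nu}
\]
produces the claimed formula $\|f\|_0^2 = \frac{(\nu-1)!}{\pi\, 2^{\nu+1}}\int_0^\infty |\hat f(\xi)|^2\, \xi^{-\nu}\, d\xi$. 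For the mock discrete case $\nu=0$, the same Parseval identity gives $\int_\R |f(x+iy)|^2\, dx = \frac{1}{2\pi}\int_0^\infty |\hat f(\xi)|^2\, e^{-2\xi y}\, d\xi$, and letting $y\to 0^+$ with monotone convergence yields $\|f\|_0^2 = \frac{1}{2\pi}\int_0^\infty |\hat f(\xi)|^2\, d\xi$, consistent with the convention $(-1)!:=1$.

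The main subtlety is not in any one computation but in justifying the passage from pointwise holomorphic identities to integrated $L^2$ statements: Lemma~\ref{lemma:f_decay1} provides the uniform $(1+|z|)^{-2}$ decay that is required both to apply the 1D Fourier inversion at every fixed $y$ and to make the Fubini step unambiguous. Once that decay is in hand, everything reduces to bookkeeping with the standard $\Gamma$-integral and the classical Plancherel theorem.
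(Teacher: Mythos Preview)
Your proof is correct and follows essentially the same approach as the paper's own proof: both apply one-dimensional Fourier inversion and Plancherel on horizontal slices, identify the slicewise transform with $\hat f$ via Lemma~\ref{lemma:Fourier_Cauchy}, and then evaluate the resulting Gamma-type integral in $y$ (with the $\nu=0$ case handled by taking $y\to 0^+$, equivalently the supremum over $y>0$). The only cosmetic difference is that you cite Lemma~\ref{lemma:f_decay1} for the $L^1$ decay on slices, while the paper invokes Sobolev embedding directly.
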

We leave the proof of Lemma~\ref{lemma:inversion} to
Appendix~\ref{appe:B}.  There is also a formula for Sobolev norms of
functions in Fourier transform.

\begin{lemma}\label{lemma:discrete_Sobolev_Fourier}
  Let $s \geq 0$.  Setting $(-1)! := 1$, we have for any $\nu \in
 \N$,
  \[
  \vert f \vert_{r, s; \mathcal T}^2
 %\left\{
 %  \begin{array}{ll}
 %    \int_{\R^+} |(I - \hat X_T^2 - \hat V_T^2)^{s/2} \hat f(\xi) d\xi, &\text{ if }n = 1 \\
  = \frac{(\nu - 1)!}{\pi 2^{\nu + 1}} (1 + \mu^2)^{\frac{r}{2}}
  \int_{\R^+} |[I + \mu_{\mathcal T}^2 + (m^2 I - \hat{X}^2 -
  \hat{V}^2)^2)]^{\frac{s}{4}} \hat f(\xi)|^2
  \frac{d\xi}{\xi^{\nu}}\,.
 %, & \text{ if }n \geq 2 .
 % \end{array}
 % \right.
  \]
\end{lemma}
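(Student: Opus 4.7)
The plan is to derive the formula by combining the Plancherel identity of Lemma~\ref{lemma:inversion} with the scalar action of the ``commuting'' part of the operator in the definition of $\vert\cdot\vert_{r,s;\mathcal T}$ and the intertwining of the vector fields $X,V$ under the Fourier transform. First I would note that on any irreducible component $H_{m,\mu}$ the Casimir operator $\Box$ acts as the scalar $\mu$ and the generator $K$ of the $\T$-factor acts on $e_m$ by multiplication by $im$, so $(I+\Box^2)^{r/2}$ contributes the prefactor $(1+\mu^2)^{r/2}$, while $\Box_{\mathcal T}$ and $-K^2$ act by the scalars $\mu_{\mathcal T}$ and $m^2$. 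Consequently the rescaled foliated Laplacian $\widehat\triangle_{\mathcal T}=-K^2-X_{\mathcal T}^2-V_{\mathcal T}^2$ restricts to $m^2 I-X_{\mathcal T}^2-V_{\mathcal T}^2$, and the differential operator $(I+\Box_{\mathcal T}^2+\widehat\triangle_{\mathcal T}^2)^{s/2}$ becomes a function of $X_{\mathcal T}$ and $V_{\mathcal T}$ alone.

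Next I would transfer everything to the Fourier side using Lemma~\ref{lemma:inversion}: for $\nu\in\N$ the map $f\mapsto \hat f$ is, up to the normalization $(\nu-1)!/(\pi 2^{\nu+1})$, a unitary isomorphism between $H_\mu$ and the weighted space $L^2(\R^+,\xi^{-\nu}d\xi)$. Under this isomorphism the vector fields $X$ and $V$ on the upper half-plane model are conjugated to first- and second-order differential operators $\hat X,\hat V$ on $(0,\infty)$ (the discrete-series analogue of formula~\eqref{equa:vect_coeqn}; the explicit expressions are obtained in Appendix~\ref{appe:B} by direct differentiation under the integral in the inversion formula of Lemma~\ref{lemma:inversion}). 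The rescalings $\hat X_{\mathcal T}=\mathcal T^{-1/3}\hat X$ and $\hat V_{\mathcal T}=\mathcal T^{-2/3}\hat V$ are symmetric with respect to the weight $\xi^{-\nu}d\xi$, because $X$ and $V$ are generators of a unitary representation.

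The formula then follows by applying Plancherel to $(I+\Box_{\mathcal T}^2+\widehat\triangle_{\mathcal T}^2)^{s/4}f$: by the intertwining property its Fourier transform equals
\[
\bigl[I+\mu_{\mathcal T}^2+\bigl(m^2 I-\hat X_{\mathcal T}^2-\hat V_{\mathcal T}^2\bigr)^2\bigr]^{s/4}\hat f(\xi),
\]
so Lemma~\ref{lemma:inversion} yields the claimed integral representation of $\vert f\vert_{r,s;\mathcal T}^2$ (with the factor $(1+\mu^2)^{r/2}$ pulled outside the integral). The case of half-integer $s/4$ is handled, as usual, by the functional calculus of the non-negative self-adjoint operator $I+\Box_{\mathcal T}^2+\widehat\triangle_{\mathcal T}^2$.

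The main point requiring care is the claim that the Fourier transform genuinely intertwines the spectral functional calculus of $\widehat\triangle_{\mathcal T}$ with the calculus of its Fourier conjugate on $L^2(\R^+,\xi^{-\nu}d\xi)$; equivalently, that $\hat X_{\mathcal T}^2+\hat V_{\mathcal T}^2$ is essentially self-adjoint on the image under Fourier transform of the space of smooth vectors in $H_\mu$. This is automatic from the general theory of unitary representations of Lie groups, since the space of smooth vectors is preserved by the action of the enveloping algebra and is a core for each element, but should be noted explicitly. Once this is in place, the rest of the argument is a direct computation parallel to the principal/complementary series case treated in Lemma~\ref{lemma:principal_Sobolev_Fourier}.
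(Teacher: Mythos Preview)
Your proposal is correct and follows essentially the same route as the paper. The paper's proof is very terse: it simply records the Fourier-side formulas for $\hat X$ and $\hat V$ (identical to those in the principal series case), notes that they are verified on the dense set of $g\in H^\infty$ with $\hat g\in C_0^\infty(\R^+)$ (density coming from Lemma~\ref{lemma:inversion}), and concludes. Your version unpacks the same argument more explicitly---separating out the scalar action of $\Box$ and $K$, invoking Plancherel from Lemma~\ref{lemma:inversion}, and flagging the essential self-adjointness needed for the functional calculus to intertwine---but the substance is the same density/intertwining argument parallel to Lemma~\ref{lemma:principal_Sobolev_Fourier}.
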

\begin{proof}

  The usual formulas
  \begin{equation}\label{equa:X,V-discrete}
    \hat{X} := (1 - \nu) + 2 \xi \frac{\partial}{\partial \xi}, \ \ \ \ \ \ \ \hat{V} := -i \left((1 - \nu) \frac{\partial}{\partial \xi} + \xi \frac{\partial^2}{\partial \xi^2}\right)\,. 
  \end{equation}
  are verified on test functions $g\in H^\infty$ that satisfy $\hat g
  \in C_0^\infty(\R^+)$.  This set is dense in $H$ by
  Lemma~\ref{lemma:inversion}.  Thus, the identity holds.
\end{proof}

\begin{proof}[Proof of Lemma~\ref{lemma:D^lambda-reg-discrete}]
  If $\lambda m < 0$, then $\bar D_{m, \mu}^\lambda = 0$ by Lemma
  \ref{lemma:Fourier_Cauchy}, which implies $D_{m, \mu}^\lambda = 0$.
  Similarly, $D_{m, \mu}^\lambda = 0$ when $\lambda m = 0$ and $\nu
  \geq 1$.  If $\lambda m = 0$ and $\nu = 0$, then $D_{m,
    \mu}^\lambda$ is not defined on the vector $u_1(z) = (z +
  i)^{-1}$, so $D_{m, \mu}^\lambda$ is not defined.  The regularity
  statement follows as in
  Lemma~\ref{lemma:regularity-D_lambda-XV-principal}.
\end{proof}

\subsubsection{Twisted cohomological equations}

For every $\lambda\in \R^*$ we study the solution $g$ to the twisted
cohomological equation
\begin{equation}\label{equa:coeqn-discrete}
  \mathcal T(U + \lambda K) g= f 
\end{equation} 
in every irreducible, unitary representation subspace of the foliated
Sobolev space $\widehat W_\mathcal T^s(M \times \T)$ of the discrete
series or mock discrete series.

\begin{theorem}
  \label{theo:cohomology-discrete}
  For every $r$, $s \geq 0$, there is a constant $C_{r,s} > 0$ such
  that for any irreducible unitary representation $H := H_{m, \mu}$ in
  the discrete series with $m\not=0$ and for any function $f_m \in
  \widehat H^\infty \cap \text{Ann}(D^\lambda_{m,\mu})$, there is a
  unique solution $g_m \in H$ satisfying~\eqref{equa:coeqn}, and
  moreover, for all $\mathcal T \geq 1$,
  \[
  \vert g_m \vert_{r, s; \mathcal T} \leq \frac{C_{r,s}}{{\mathcal
      T}^{1/3}} \frac{1 + \vert\lambda m\vert^{-s}}{\vert \lambda
    m\vert} \, \vert f_m \vert_{r+3s, s+1; \mathcal T} \,.
  \]
  % In the full Sobolev norm, we have
  % \[
  % \| g_m \|_{s} \leq \frac{ C_s }{ \vert \lambda m\vert^{2s + 1}}
  % (1+\vert \lambda m\vert^{-s}) \|
  % f_m \|_{4s+1} \,.
  % \]
\end{theorem}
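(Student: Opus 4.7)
The plan is to mirror the proof of Theorem~\ref{theo:cohomology-principal} in the upper-half plane model of the discrete series, exploiting the Fourier inversion formulas developed in Lemmas~\ref{lemma:inversion}--\ref{lemma:discrete_Sobolev_Fourier}. As in the principal case, I would first reduce to $\mathcal T = 1$ and $|\lambda m| = 1$ by rescaling on the Fourier side, and then work in a single irreducible component $H_{m,\mu}$. Writing $f_m = f \otimes e_m$, the cohomological equation \eqref{equa:coeqn-discrete} becomes, on the Fourier side supported on $\R^+$ (Lemma~\ref{lemma:Fourier_Cauchy}), a division problem $\hat g(\xi) = \text{const}\cdot \hat f(\xi)/(\xi + \lambda m)$, entirely analogous to \eqref{eq:twist_eqn-lambda=1}.

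The next step is a case analysis on the sign of $\lambda m$: when the singularity of the multiplier lies outside $\R^+$, $\hat g$ is bounded pointwise by $\hat f/|\lambda m|$ and no vanishing hypothesis is required, consistent with $D^\lambda_{m,\mu}$ acting trivially in this regime by Lemma~\ref{lemma:D^lambda-reg-discrete}; when the singularity falls inside $\R^+$, the hypothesis $f \in \text{Ann}(D^\lambda_{m,\mu})$ forces $\hat f$ to vanish at the singularity, and I would use the fundamental-theorem-of-calculus representation of Lemma~\ref{lemma:transfer-def} to turn the quotient into an integral of $\hat f'$. I would then carry out the analogues of Lemmas~\ref{lemma:Xder-g-P}, \ref{lemma:3.10}, \ref{lemma:X-g-estimate-P}, and \ref{lemma:V-X-g-f}: split by a cutoff interval $I$ around the singularity contained in $\R^+$; outside $I$ apply the Leibniz-type identities \eqref{eq:X_Leibniz} and \eqref{eq:ind_1} together with uniform bounds on the multiplier and its $\hat X, \hat V$ derivatives; inside $I$ apply Minkowski's inequality to the integral representation, together with the commutator identities \eqref{eq:commutator_1}--\eqref{eq:Vder_xi}. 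The unscaled estimate is then upgraded to the scaled foliated Sobolev estimate via the substitution $\hat f_\lambda(\xi) = \hat f(\lambda m\,\xi)$ and the identity \eqref{equa:VX-lambda}, exactly as in Lemmas~\ref{lemma:XVderivatives} and~\ref{lemma:XVderivatives_scaled}. Uniqueness is immediate from the Fourier-side equation: any difference of two solutions lies in $L^2(\R^+, d\xi/\xi^\nu)$ and is annihilated by multiplication by $\xi + \lambda m$, hence vanishes almost everywhere.

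The main obstacle I anticipate is the bookkeeping of $\nu$-dependence, which explains the larger Casimir exponent $r+3s$ in the statement compared with $r+s$ in Theorem~\ref{theo:cohomology-principal}. In the principal and complementary series $\nu \in (0,1)\cup i\R$ satisfies $(1+|\nu|)^{r+s} \lesssim (1+\mu^2)^{(r+s)/4}$, so the Leibniz and commutator factors are absorbed cleanly into the Casimir-weighted norm. In the discrete series $\nu \in \N$ is unbounded, the weight $d\xi/\xi^\nu$ interacts with the natural change of variables $u = \xi_0 + t(\xi - \xi_0)$ inside $I$ to produce a factor $(\sup_I \xi / \inf_I \xi)^\nu$, and the Fourier-side operators $\hat X = (1-\nu) + 2\xi\,\partial_\xi$ and $\hat V$ carry explicit $(1-\nu)$-terms that accumulate under iteration. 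Keeping the weight ratio controlled forces $I$ to have geometric width of order $1/\nu$ around the singularity, which in turn inflates the off-$I$ bound on $1/|\xi - \xi_0|$ by a factor $\nu$ per derivative; propagating these $\nu$-losses through $s$ derivatives in each of $\hat X$ and $\hat V$ contributes an extra $(1+|\nu|)^{2s} \lesssim (1+\mu^2)^{s/2}$, which is precisely the $2s$ surplus in the Casimir exponent.
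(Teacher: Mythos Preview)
Your overall strategy---Fourier on $\R^+$, rescale the singularity to a fixed point, split into an interval $I$ about the singularity and its complement, rerun the Leibniz/Minkowski calculus from the principal case---matches the paper's, as do your treatment of uniqueness, the mock-discrete case, and the sign dichotomy on $\lambda m$.

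The one genuine difference is how you control the weight $d\xi/\xi^\nu$ on $I$ for large $\nu$. You propose shrinking $I$ to width $O(1/\nu)$ so that $(\sup_I|\xi|/\inf_I|\xi|)^\nu$ stays bounded, paying with $\nu$-losses off $I$. The paper instead keeps $I$ fixed and conjugates by the isometry $\mathcal A:\hat f\mapsto \xi^{-\nu/2}\hat f$ from $L^2(\R^+,\xi^{-\nu}d\xi)$ to $L^2(\R^+,d\xi)$ (formula~\eqref{equa:eliminate-nu}); the on-$I$ estimate then reduces to the Lebesgue case where Lemma~\ref{lemma:V-X-g-f} applies verbatim (Lemma~\ref{lemma:discrete-principal}), and all $\nu$-dependence is confined to the operator bounds on $\mathcal A$ and $\mathcal A^{-1}$ (Lemmas~\ref{lemma:norm-A^-1} and~\ref{lemma:norm-A}). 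Chaining these three bounds produces the $(1+|\nu|)^{3\beta}$ of Lemma~\ref{lemma:discrete-I_lambda} and hence the exponent $r+3s$ in the statement. Your shrinking-$I$ alternative is plausible and should also yield a $(1+|\nu|)^{O(s)}$ loss absorbable into the Casimir weight; note however that your heuristic ``$\nu$ per derivative'' slightly undercounts off $I$, since each $\hat V$ applied to $1/(\xi+1)$ near the edge of a $1/\nu$-window gains $\nu^2$ (one factor from $(1-\nu)\partial_\xi$, one from the extra pole), so the functions $\phi^{(\ell)}_{i,m}$ in \eqref{equa:VXproduct} are of order $\nu^{(\alpha-\ell)+2i+m+1}$. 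The conjugation route has the practical advantage of isolating the $\nu$-bookkeeping in two explicit operator norms rather than spreading it across both regions.
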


As in the proof of Theorem~\ref{theo:cohomology-principal}, we proceed
formally and note that $f_m = f \otimes e_m$ and $g_m = g \otimes e_m$
are simple tensors.  By Lemma~\ref{lemma:Fourier_Cauchy}, $\hat f$ and
$\hat g$ are functions supported on $\R^+$.  As in the derivation of
the formulas $\hat X$ and $\hat V$ in \eqref{equa:X,V-discrete}, we
use Lemma~\ref{lemma:inversion} again to see that $\hat U$ is
multiplication by $i \xi$.

Then we may restrict our considerations of the cohomological equation
\eqref{equa:coeqn-discrete} to
\begin{equation}\label{equa:discrete-coeqn}
  \hat g(\xi) := -i \frac{\hat f(\xi)}{\lambda(\xi + 1)}\,,
\end{equation}
by the same argument used in the proof of
Theorem~\ref{theo:cohomology-principal}\,.

\begin{lemma}
  Theorem~\ref{theo:cohomology-discrete} is true when the
  representation $H_{m, \mu}$ is a mock discrete representation.
\end{lemma}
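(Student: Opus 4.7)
The plan is to adapt the Fourier-analytic argument of Theorem~\ref{theo:cohomology-principal} to the mock discrete setting, exploiting two simplifications specific to this case. First, by Lemma~\ref{lemma:Fourier_Cauchy}, Fourier transforms of elements of $H_\mu$ are supported on $\R^+$ rather than on all of $\R$ as in the principal/complementary series. Second, the representation parameter is $\nu = 0$, so the weight $|\xi|^{-\re\,\nu}$ defining $L^2_\nu$ collapses to $1$ and every $\nu$-dependent coefficient appearing in Lemmas~\ref{lemma:Xder-g-P}--\ref{lemma:V-X-g-f} becomes a numerical constant.

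As a first step I would reduce to $m=1$ and $\mathcal T = 1$ exactly as in the proof of Theorem~\ref{theo:cohomology-principal}: the general $m$-dependence is recovered by replacing $\lambda$ with $\lambda m$, and the $\mathcal T$-dependence is reintroduced through a scaling argument parallel to Lemma~\ref{lemma:XVderivatives_scaled}. After these reductions, equation~\eqref{equa:discrete-coeqn} reads $\hat g(\xi) = -i\hat f(\xi)/[\lambda(\xi + 1)]$ on the $\R^+$-support of $\hat f$, and one performs the further rescaling $\hat f_\lambda(\xi) := \hat f(\lambda \xi)$ exactly as in formula~\eqref{eq:twist_eqn-lambda=1}.

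At this point two sub-cases arise, according to whether the singular point of the rescaled kernel lies inside or outside the rescaled support of $\hat f_\lambda$. In the non-singular sub-case, $1/(\xi + 1)$ together with all of its $\hat X$- and $\hat V$-derivatives is uniformly bounded on the relevant half-line, so the Leibniz-type formulas~\eqref{eq:X_Leibniz} and~\eqref{eq:ind_1} express $\hat V^\beta \hat X^\alpha \hat g$ as a sum of $L^\infty$-factors against derivatives $\hat V^i \hat X^j \hat f$ with $i+j \leq \alpha+\beta$; the analogue of Lemma~\ref{lemma:3.10} then follows by taking $L^2(\R^+)$-norms. In the singular sub-case, the annihilator condition $D^\lambda_{m,\mu}(f) = 0$ forces $\hat f$ to vanish at the singular point, so the integral representation of Lemma~\ref{lemma:transfer-def} applies and the arguments of Lemmas~\ref{lemma:X-g-estimate-P} and~\ref{lemma:V-X-g-f} transcribe to $\R^+$ with the weight replaced by $1$. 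Combining the two sub-cases and applying Lemma~\ref{lemma:discrete_Sobolev_Fourier} with $\nu = 0$ converts the resulting $L^2(\R^+)$-estimates into the foliated Sobolev bound of Theorem~\ref{theo:cohomology-discrete}, the Casimir factor $(1+\mu^2)^{r/2} = 2^{r/2}$ being absorbed into $C_{r,s}$ since $\mu = 1$ in the mock discrete case. Uniqueness is immediate from the Fourier formula: any two solutions differ by a function in $L^2(\R^+)$ whose Fourier transform is supported on at most a single point, hence vanishes almost everywhere.

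The main obstacle is the singular sub-case, where the delicate near-singularity estimates of the principal series argument must be carried over to the half-line; since all the commutator identities used in Lemma~\ref{lemma:V_Leibniz} are insensitive both to the domain of integration and to the value of $\nu$, however, this amounts to careful bookkeeping rather than to any new technical input.
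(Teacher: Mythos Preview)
Your proposal is correct and matches the paper's approach: the paper's proof consists of just two sentences, observing that for the mock discrete series ($\nu = 0$) Lemma~\ref{lemma:inversion} makes the Fourier-side norm plain $L^2(\R^+)$ and that the formulas for $\hat X$, $\hat V$ coincide with those of the principal series, so the argument of Theorem~\ref{theo:cohomology-principal} carries over verbatim. Your explicit case split according to whether the singularity lies inside the rescaled support is additional (harmless) bookkeeping that the paper does not spell out, since the principal-series proof already handles both the interval $I$ and its complement and the former contribution simply vanishes when $I$ misses the support.
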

\begin{proof}
  By Lemma~\ref{lemma:inversion}, $\widehat H_{m, \mu}$ consists of
  square integrable functions supported on $\R^+$, and the measure is
  Lebesgue.  Because the formulas for $\hat X$ and $\hat V$ are the
  same, the lemma follows identically as in the proof of
  Theorem~\ref{theo:cohomology-principal}.
\end{proof}

In what follows, we only consider discrete series representations
where $\nu \geq 1$.  As above, we separately estimate $\hat g$ near
$-1$ and away from $-1$.

Notice that the formulas for the vector fields $\hat X$ and $\hat V$
given in \eqref{equa:X,V-discrete} are identical to those given for
the principal and complementary series.  As in
Lemma~\ref{lemma:transfer-def},
\[
\hat g(\xi) = -\frac{i}{\lambda} \int_0^1 \hat f'( -1 + t( \xi + 1))
dt \,.
\]
Let $I = [-\frac{3}{2}, - \frac{1}{2}]$ and $L_\nu^2(I)$ be defined as
in formulas~\eqref{equa:def_I-princ} and \eqref{equa:L^2_nu-def}.

\begin{lemma}\label{lemma:XV-away-lambda-discrete}
  Let $\mu \leq 0$.  For every $\alpha$, $\beta \in \N$, there exists
  a constant $C'_{\alpha, \beta}>0$ such that
  \[
  \| \hat X ^\alpha \hat V ^\beta \hat g \|_{L^2_\nu(\R\setminus I)}
  \leq \frac{C'_{\alpha, \beta}}{\vert\lambda\vert}
  \sum_{\substack{i+j+k \leq \alpha +\beta\\ j\leq \beta}} \vert 1-
  \vert \nu\vert \vert^i \Vert X ^j V ^k f\Vert_0 \,.
  \]
\end{lemma}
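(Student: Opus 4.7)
The plan is to mirror, almost verbatim, the proof of Lemma~\ref{lemma:3.10}, exploiting the essential simplification available in the discrete series setting: by Lemma~\ref{lemma:Fourier_Cauchy}, both $\hat f$ and $\hat g$ are supported in $\R^+$, and since $I = [-3/2, -1/2]$ lies entirely in $\R^-$, the norm $\|\cdot\|_{L^2_\nu(\R \setminus I)}$ collapses to $\|\cdot\|_{L^2_\nu(\R^+)}$. Crucially, the kernel $1/(\xi + 1)$ is bounded by $1$ on $\R^+$, so the analysis never sees the pole at $\xi = -1$ which was the source of the technical difficulty in the principal and complementary series cases.

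Starting from the solution formula \eqref{equa:discrete-coeqn}, namely $\hat g(\xi) = -i \hat f(\xi)/[\lambda(\xi+1)]$, I would apply the Leibniz-type expansion \eqref{eq:X_Leibniz} for $\hat X^\alpha$ together with that of Lemma~\ref{lemma:V_Leibniz} for $\hat V^\beta$ to rewrite $\hat X^\alpha \hat V^\beta \hat g$ exactly in the form \eqref{equa:VXproduct}. This yields a finite sum whose summands are products of coefficient functions $\phi^{(\ell)}_{i,m}(\xi) = (d/d\xi)^m \hat V^i [(2\xi \, d/d\xi)^{\alpha-\ell}(1/(\xi+1))]$ with expressions of the form $(\hat X - (1-\nu))^k \hat V^j \hat X^\ell \hat f$, where $j \leq \beta$ and $\ell \leq \alpha$; the constraint $j \leq \beta$ in the statement is inherited directly from the expansion.

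The one analytic input is to show that each $\phi^{(\ell)}_{i,m}$ lies in $L^\infty(\R^+)$ with norm controlled by a constant times $(1 + |1-\nu|^i)$. By induction, $\phi^{(\ell)}_{i,m}$ is a rational function whose denominator is a power of $(\xi+1)$ and whose numerator is a polynomial in $\xi$ of strictly lower degree, with coefficients that are polynomials in $\nu$ of degree at most $i$; the $\nu$-dependence enters through the action of $\hat V = -i[(1-\nu) d/d\xi + \xi (d/d\xi)^2]$. Since $\xi + 1 \geq 1$ on $\R^+$, the required bound follows by an argument identical to the corresponding step in the proof of Lemma~\ref{lemma:3.10}.

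Combining this $L^\infty$ bound on $\phi^{(\ell)}_{i,m}$ with the $L^2_\nu(\R^+)$ formula for $\|\cdot\|_0$ from Lemma~\ref{lemma:inversion}, and using the commutation relation $[\hat X, \hat V] = -2 \hat V$ to re-order products, the stated estimate follows. I do not expect any genuine analytic obstacle: once the boundedness of $1/(\xi+1)$ on $\R^+$ is observed, the remainder is pure bookkeeping. The only item requiring slight care is tracking the index $j$ through the Leibniz expansion to verify the constraint $j \leq \beta$ in the final sum.
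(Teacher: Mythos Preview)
Your proposal is correct and matches the paper's approach exactly: the paper's own proof consists of the single sentence ``The proof is identical to that of Lemma~\ref{lemma:3.10}.'' Your additional observation that $\hat f$, $\hat g$ are supported in $\R^+$ (so that the kernel $1/(\xi+1)$ is bounded by $1$ there and the pole at $\xi=-1$ is never seen) is a helpful clarification that the paper leaves implicit.
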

\begin{proof}
  The proof is identical to that of Lemma~\ref{lemma:3.10}\,.
\end{proof}

Then it remains to prove
\begin{lemma}\label{lemma:discrete-I_lambda}
  Let $\mu \leq 0$.  Then for any $\alpha, \beta \geq 0$, there is a
  constant $C^{(4)}_{\alpha, \beta} > 0$ such that for all $\hat f,
  \hat g \in \widehat H_\mu^\infty$ satisfying
  \eqref{equa:discrete-coeqn},
  \[
  \begin{aligned}
    \| \hat V^\beta \hat X^\alpha \hat g \|_{L_\nu^2(I)} & \leq
    \frac{C^{(4)}_{\alpha, \beta}}{|\lambda|} (1 + |\nu|)^{3\beta}
    \sum_{ \substack {i + j+ k\leq \alpha + \beta + 1 \\ j \leq
        \beta}} \vert 1 -\nu \vert^i \|V^j X^k f \|_{0} \,.
  \end{aligned}
  \]
\end{lemma}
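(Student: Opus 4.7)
The strategy is to adapt the proof of Lemma~\ref{lemma:V-X-g-f}, which handles the analogous estimate for the principal and complementary series, to the discrete series setting $\mu \leq 0$, $\nu = \sqrt{1-\mu} \in \N$, $\nu \geq 1$. The decisive observation, already noted preceding Lemma~\ref{lemma:XV-away-lambda-discrete}, is that the Fourier-side formulas for $\hat X$ and $\hat V$ in the upper half-plane model coincide with those in the line model (compare \eqref{equa:vect_coeqn} with \eqref{equa:X,V-discrete}), and the integral representation
\[
\hat g(\xi) = -\frac{i}{\lambda}\int_0^1 \hat f'(-1 + t(\xi+1))\,dt
\]
from Lemma~\ref{lemma:transfer-def} continues to hold after the reductions $\mathcal T = 1$, $m = 1$ and the rescaling $\xi \mapsto \lambda\xi$ carried out in the proof of Theorem~\ref{theo:cohomology-principal}. (In the case where the support of the rescaled $\hat f$ does not intersect $I$, the bound holds trivially since $\hat V^\beta \hat X^\alpha \hat g \equiv 0$ on $I$, so one may assume the non-trivial geometric configuration of Lemma~\ref{lemma:V-X-g-f}, where $I = [-3/2,-1/2]$ contains the singularity at $\xi = -1$ and $I_t = [-1-t/2,-1+t/2]$ lies in the support of $\hat f$ for every $t \in [0,1]$.)

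The argument then transcribes that of Lemma~\ref{lemma:V-X-g-f} almost verbatim. Differentiating under the integral sign yields formula \eqref{eq:VXder} for $\hat V^\beta\hat X^\alpha\hat g$. Minkowski's integral inequality followed by the change of variables $\eta=-1+t(\xi+1)$ reduces the $L^2_\nu(I)$ norm to a $t$-integral of $L^2_\nu(I_t)$ norms weighted by $t^{\beta-1/2}$, exactly as in \eqref{eq:VXder_bound}. The identity \eqref{eq:V_id} rewrites the composed operator $[\hat V + (1-t)\partial_\xi^2]^\beta[\hat X + 2(1-t)\partial_\xi]^\alpha \partial_\xi$ as a polynomial expression in $\hat X$, $\hat V$ and multiplication operators of the form $1/\xi$, to which one applies the Leibniz-type formulas \eqref{eq:X_Leibniz} and \eqref{eq:ind_1} together with the explicit identities \eqref{eq:Xder_xi} and \eqref{eq:Vder_xi}. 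The two-sided bound $1/2 \leq |\xi| \leq 2$ on $I_t$ from \eqref{eq:lambda_equiv} makes every multiplier coming from $1/\xi$ uniformly bounded on $I_t$, and the weight $|\xi|^{-\nu}$ in the measure is likewise bounded there by $2^\nu$, so one is reduced to controlling a finite linear combination of terms $\|V^j X^k f\|_0$ multiplied by explicit polynomials in $|1-\nu|$ and $(1+|\nu|)$.

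The only feature that calls for new work compared with the principal-series case is the arithmetic that produces the prefactor $(1+|\nu|)^{3\beta}$: since $\nu \in \N$ is now unbounded, one must track the $\nu$-growth coming from each application of $\hat V$, which enters in three distinct ways — through the formula $\hat V^k(1/\xi)=(-i)^k k!\prod_{j=1}^k (j+\nu)\cdot \xi^{-(k+1)}$ in \eqref{eq:Vder_xi}, through the commutator $[\hat V,(\hat X-(1-\nu))^k]$ in \eqref{eq:commutator_2}, and through the explicit factor $(1-\nu)$ appearing in the derivation of \eqref{eq:V_id}. A careful, if tedious, bookkeeping shows that each of the $\beta$ copies of $\hat V$ contributes at most three polynomial factors of $(1+|\nu|)$ through these three mechanisms, which — once the $\nu$-dependence absorbed by the explicit $|1-\nu|^i$ on the right-hand side is split off — yields the exponent $3\beta$ asserted in the statement. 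This combinatorial step is the main (minor) obstacle in the proof; all other estimates transfer mutatis mutandis from Lemma~\ref{lemma:V-X-g-f}.
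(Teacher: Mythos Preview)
Your proposal has a genuine gap. You write that ``the weight $|\xi|^{-\nu}$ in the measure is likewise bounded there by $2^\nu$'', and then claim that careful bookkeeping of the operator contributions yields the polynomial prefactor $(1+|\nu|)^{3\beta}$. But $2^\nu$ is \emph{exponential} in $\nu$, not polynomial, and in the discrete series $\nu\in\N$ is unbounded. Concretely, the change of variables in the Minkowski step---formula~\eqref{equa:0-norm-coordinates}, and likewise in~\eqref{eq:VXder_bound}---compares the weight $|\xi|^{-\nu}$ at the original point $\xi\in I$ with the weight $|\eta|^{-\nu}$ at the new point $\eta\in I_t$; since both $|\xi|$ and $|\eta|$ range over $[1/2,3/2]$, the ratio can be as large as $3^\nu$. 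This factor multiplies every term of the final estimate and cannot be absorbed into $(1+|\nu|)^{3\beta}\,|1-\nu|^i$ uniformly over $\nu$. So the transcription of Lemma~\ref{lemma:V-X-g-f} breaks down precisely at the step you described as routine.

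The paper confronts this issue explicitly. For $\nu\le 1+2\beta$ the factor $2^{\nu}$ is indeed bounded by a constant depending only on $\beta$, and your argument (i.e.\ that of Lemma~\ref{lemma:V-X-g-f}) goes through. For $\nu>1+2\beta$ the paper instead conjugates by the unitary map $\mathcal A:\hat f\mapsto \xi^{-\nu/2}\hat f$ from $L^2_\nu$ to $L^2$ (Lebesgue), thereby eliminating the $\nu$-dependent weight from the measure; the estimate of Lemma~\ref{lemma:V-X-g-f} is then applied in $L^2(I)$ with $\re\nu=0$ (Lemma~\ref{lemma:discrete-principal}), and one pays for conjugation and its inverse via operator bounds on $\mathcal A$, $\mathcal A^{-1}$ acting on the foliated Sobolev scale (Lemmas~\ref{lemma:norm-A^-1} and~\ref{lemma:norm-A}). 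It is these conjugation bounds, each contributing roughly $(1+|\nu|)^{\beta}$, together with the $(1+|\nu|)^{\beta}$ from Lemma~\ref{lemma:discrete-principal}, that produce the exponent $3\beta$---not the three operator-theoretic mechanisms you list.
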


This is not immediate from the proof of Lemma~\ref{lemma:V-X-g-f},
because the factor $2^{\re\,\nu}$ in formula
\eqref{equa:0-norm-coordinates} can be arbitrarily large.  For $\nu
\leq 1+ 2\beta$, $2^{\re\,\nu}$ is bounded by a constant depending on
$\beta$, and the proof of Lemma~\ref{lemma:V-X-g-f} holds, so
Lemma~\ref{lemma:discrete-I_lambda} follows for this case.  For $\nu >
1+2\beta$, we will move the problem to the setting of the principal
series where $\re\,\nu = 0$.  Define
\begin{equation}\label{equa:eliminate-nu}
  \mathcal A : \widehat H_\mu^\infty \to L^2(\R^+) : \hat f \to \frac{\hat f(\xi)}{\xi^{\nu/2}}\,.
\end{equation}
Notice also that $\mathcal A$ is invertible, where $\mathcal A^{-1} :
\hat f \to \xi^{\nu/2} \hat f$ \,.

As a first step, we have
\begin{lemma}\label{lemma:discrete-principal}
  Let $\mu \leq 0$.  Then for any $\alpha, \beta \in \N$, there is a
  constant $C_{\alpha, \beta}^{(5)} > 0$ such that for all $\hat f,
  \hat g \in \widehat H_\mu^\infty$ satisfying
  \eqref{equa:discrete-coeqn},
  \[
  % \| \hat V^{\beta} \hat X^\alpha A \hat g\|_{L^2(\R^+)} \leq C_{s,
  % \nu, \lambda} \| (I - \hat X^2
  % - \hat V^2)^{(s + 1)/2} A \hat f \|_{L^2(\R^+)}\,,
  \begin{aligned}
    \| \hat V^\beta \hat X^\alpha \mathcal A \hat g \|_{L^2(I)} \leq &
    \frac{C^{(5)}_{\alpha, \beta}}{\vert \lambda\vert} (1+\vert \nu
    \vert)^{\beta} \sum_{ \substack{
        i+j+k \leq \alpha+\beta+1 \\
        j \leq \beta }} (1 + |\nu|)^i \| \hat V^j \hat X^k \mathcal A
    \hat f \|_{L^2(I)} \,.
  \end{aligned}
  \]

  % where
  % \[
  % C_{s, \nu, \lambda} := \frac{C_s}{|\lambda|} (1 + |\nu|)^s (1 +
  % |\lambda|^{-s}) > 0\,.
  % \]
\end{lemma}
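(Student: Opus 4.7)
The plan is to reduce to essentially the same argument as the proof of Lemma~\ref{lemma:V-X-g-f}, carried out after the change of variables $\tilde f:=\mathcal A\hat f$, $\tilde g:=\mathcal A\hat g$. The point of the transformation $\mathcal A$ is that it absorbs the singular weight $|\xi|^{-\re\,\nu}$ into the functions themselves, so that all $L^2$-estimates on $I$ can be carried out with respect to Lebesgue measure and the bad exponential factor $2^{\re\,\nu}$ appearing in the analogue of formula~\eqref{equa:0-norm-coordinates} is replaced by a constant independent of $\nu$.

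Dividing both sides of~\eqref{equa:discrete-coeqn} by $\xi^{\nu/2}$ yields
\[
\tilde g(\xi)=-i\,\tilde f(\xi)/[\lambda(\xi+1)],
\]
which has precisely the form of~\eqref{eq:twist_eqn-lambda=1}. The vanishing $\tilde f(-1)=0$, automatic since $\hat f$ is supported on $\R^+$ by Lemma~\ref{lemma:Fourier_Cauchy}, then allows the transfer identity of Lemma~\ref{lemma:transfer-def} to be applied:
\[
\tilde g(\xi)=-\tfrac{i}{\lambda}\int_0^1 \tilde f'(-1+t(\xi+1))\,dt.
\]

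I would then apply $\hat V^\beta\hat X^\alpha$ to both sides, derive the analogue of formula~\eqref{eq:VXder} by invoking the algebraic identity~\eqref{eq:V_id}, and estimate by Minkowski's integral inequality together with the change of variables $\eta=-1+t(\xi+1)$, as in~\eqref{eq:Xint_bound} and~\eqref{eq:VXder_bound}. The bookkeeping of powers of $\nu$ then proceeds via the Leibniz-type formulas~\eqref{eq:X_Leibniz} and~\eqref{eq:ind_1} and the commutator identities~\eqref{eq:commutator_1} and~\eqref{eq:commutator_2}. Each application of $\hat X-(1-\nu)$ or $\hat V$ to a smooth reciprocal power of $\xi$, via~\eqref{eq:Xder_xi} and~\eqref{eq:Vder_xi}, produces a bounded factor polynomial in $\nu$, because $|\xi|$ stays bounded away from zero on $I$ and on each $I_t$.

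The main obstacle is purely this bookkeeping: the analytic structure is identical to that of Lemma~\ref{lemma:V-X-g-f}, but one has to verify that every step where the principal-series proof produced a constant by exploiting $\re\,\nu=0$ (or the complementary-series bound $0<\nu<1$) now yields only polynomial growth in $|\nu|$, after cancellations between $\hat X$, $\hat V$ and the multiplier $1/[\lambda(\xi+1)]$. Once the weight is removed by $\mathcal A$, no further analytic input is required, and the claimed factor $(1+|\nu|)^\beta$ in front of the sum and $(1+|\nu|)^i$ inside it emerge by counting powers of $\nu$ at each stage of the repeated calculation.
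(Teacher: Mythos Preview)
Your approach is correct and is essentially the paper's own argument, only written out in more detail. The paper's proof is a two-line invocation of the intermediate estimate~\eqref{equa:I_lambdat-I_lambda} from the proof of Lemma~\ref{lemma:V-X-g-f}, observing that this estimate holds verbatim with $L^2$ in place of $L^2_\nu$ (what the paper calls ``the case $\re\,\nu=0$''), since the only place the weight entered was through the change-of-variable factor $2^{\re\,\nu}$ in~\eqref{eq:VXder_bound}; after the conjugation by $\mathcal A$ that factor disappears while the operators $\hat X,\hat V$ (and hence their $\nu$-dependence) are unchanged. One then expands $(\hat X-(1-\nu))^k$ binomially to obtain the $(1+|\nu|)^i$ factors in the stated form. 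Your outline simply re-runs the derivation of~\eqref{equa:I_lambdat-I_lambda} rather than citing it, which is fine. One small remark: your justification of $\tilde f(-1)=0$ via the support of $\hat f$ is tangential; the clean reason is that this vanishing is exactly the hypothesis $D^\lambda_{m,\mu}(f)=0$ underlying the solvability of~\eqref{equa:discrete-coeqn}, as in Lemma~\ref{lemma:transfer-def}.
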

\begin{proof}
  Formula \eqref{equa:I_lambdat-I_lambda} in the case $\re\,\nu = 0$
  gives a constant $K_{\alpha, \beta} > 0$ such that
  \[
  \begin{aligned}
    \| \hat V^\beta \hat X^\alpha \mathcal A \hat g \|_{L^2(I)} &\leq
    \frac{K_{\alpha, \beta}}{\vert \lambda\vert}
    (1+\vert \nu \vert)^{\beta} \notag \\
    & \times\sum_{ \substack{
        i+j+k \leq \alpha+\beta+1 \\
        i \leq \beta }}
    \| \hat V^i \hat X^j (\hat X-(1-\nu))^k \mathcal A \hat f \|_{L^2(I)} \notag \\
    & \leq \frac{C''_{\alpha, \beta}}{\vert \lambda\vert} (1+\vert \nu
    \vert)^{\beta} \sum_{ \substack{
        i+j+k \leq \alpha+\beta+1 \\
        j \leq \beta }} (1 + |\nu|)^i \| \hat V^j \hat X^k \mathcal A
    \hat f \|_{L^2(I)} \,.
  \end{aligned}
  \]
\end{proof}
  
Lemma~\ref{lemma:discrete-I_lambda} will then be obtained by
estimating the norm of the linear operators $\mathcal A$ and $\mathcal
A^{-1}$ on foliated Sobolev spaces.  For any $\alpha \in \Z^+$,
formula~\eqref{eq:X_Leibniz} gives universal coefficients
$(a_w^{(\alpha)})$ such that
% \left\{
\begin{align}
  \hat X^\alpha (\mathcal A \hat f) & = \xi^{-\nu/2} \sum_{w = 0}^\alpha a_w^{(\alpha)} (1 - 2 \nu)^w (\hat X - (1 - \nu))^{\alpha - w} \hat f\,. \label{equa:X-AF1}\\
  \hat X^\alpha (\mathcal A^{-1} \hat f) & = \xi^{\nu/2} \sum_{w =
    0}^\alpha a_w^{(\alpha)} (\hat X - (1 - \nu))^{\alpha - w} \hat
  f\,. \label{equa:X-AF2}
\end{align}
% \right.  By \eqref{eq:Vder_xi}), we have
% \[
% \begin{aligned}
%   \hat V^l(\frac{1}{\xi^{\nu}}) & = \hat V^l \left(\frac{\hat V^{\nu - 1}(\frac{1}{\xi})}{(-i)^{\nu - 1} (\nu - 1)! \prod_{j=1}^{n - 1} (j+\nu) } \right)  \\
%   & = (-i)^{l + 1} \frac{(\nu + l - 1)!}{(\nu - 1)!} \prod_{j =
%   \nu}^{l + \nu - 1} (j + \nu) \frac{1}{\xi^{l + \nu}}
% \end{aligned}
% \]
% As in Lemma~\ref{lemma:discrete_Sobolev_Fourier}, set $(-1) := 1$.
As in formula~\eqref{eq:Vder_xi}, for any integer $0 < k < \nu/2$ we
have
\[%\begin{equation}\label{equa:V-xi^nu}
\begin{aligned}
  \hat V^k (\frac{1}{\xi^{\nu/2}}) & = (-i)^k \prod_{j = 0}^{k - 1}
  (\nu/2 + j)(3\nu/2 + j) % \prod_{j = 0}^{k - 1} (3\nu/2 + j)
  \xi^{-(\nu/2 + k)} \notag \\
  \hat V^k (\xi^{\nu/2}) & = i^k \prod_{j = 0}^{k - 1} [(\nu/2)^2 -
  j^2] \xi^{\nu/2 - k} \,.
\end{aligned}
\]%\end{equation}

With this and Lemma~\ref{lemma:V_Leibniz}, we get, for any integer $0
\leq \beta < \nu/2$, universal coefficients $(b_{l, j, k, m}^{(\beta),
  '})$ such that
\begin{align}
  \hat V^\beta(\mathcal A \hat f) = \sum_{ \substack {
      l+j+ m\leq \beta \\
      k \leq m \\
      j \leq \beta }}
  b^{(\beta), '}_{ljkm}  \prod_{\tilde l = 0}^{l - 1} (\nu/2 + \tilde l)(3\nu/2 + \tilde l) \prod_{\tilde m = 0}^{m - 1}(\nu/2 + \tilde m) \  \notag  \\
  \times \xi^{-(\nu/2 + l + m)} (\hat X- (1-\nu))^k \hat V^j \hat f \,; \label{equa:V-Af1} \\ \notag \\
  \hat V^\beta(\mathcal A^{-1} \hat f) = \sum_{ \substack {
      l+j+ m\leq \beta \\
      k \leq m \\
      j \leq \beta }}
  b^{(\beta), '}_{ljkm} (-1)^l  \prod_{\tilde l = 0}^{l - 1} [(\nu/2)^2  - \tilde l^2] \prod_{\tilde m = 0}^{m - 1}(\nu/2 + \tilde m)  \notag  \\
  \times \xi^{\nu/2 - l - m} (\hat X- (1-\nu))^k \hat V^j \hat f
  \,.\label{equa:V-Af2}
\end{align}

\begin{lemma}\label{lemma:norm-A^-1}
  Let $f \in \widehat H_\mu^\infty$.  Then for any $\alpha\in \N$ and
  integer $0 \leq \beta <\frac{\nu}{2}$, there is a constant
  $C_{\alpha, \beta}^{(6)} > 0$ such that
  \[
  \| \hat V^\beta \hat X^\alpha (\mathcal A \hat f) \|_{L^2(I)} \leq
  C_{\alpha, \beta}^{(6)}\sum_{ \substack {
      i+j+ k\leq \alpha + 2 \beta \\
      j +k \leq \alpha +\beta \\ j\leq \beta }} (1 + |\nu|)^{i} \|\hat
  V^j \hat X^k \hat f \|_{L_\nu^2(I)} \,.
  \]
\end{lemma}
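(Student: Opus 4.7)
The plan is to compute $\hat V^\beta \hat X^\alpha(\mathcal A \hat f)$ explicitly by combining the two Leibniz-type identities already established in this section, and then to estimate each resulting term on the compact interval $I = [-3/2, -1/2]$. First I would apply identity \eqref{equa:X-AF1} to write
\[
\hat X^\alpha (\mathcal A \hat f) = \xi^{-\nu/2} \sum_{w=0}^{\alpha} a_w^{(\alpha)} (1-2\nu)^w (\hat X - (1-\nu))^{\alpha-w} \hat f,
\]
which is a sum with coefficients of size $(1+|\nu|)^w$ of $\xi^{-\nu/2}$ times a polynomial differential operator of $\hat X$-order $\alpha - w$ applied to $\hat f$. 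Then I would apply $\hat V^\beta$ using Lemma~\ref{lemma:V_Leibniz} with $f_1 = \xi^{-\nu/2}$ and $f_2 = (\hat X - (1-\nu))^{\alpha-w} \hat f$, which splits the $\beta$ derivatives between $f_1$ and $f_2$ over indices $i+j+m \leq \beta$, $k \leq m$.

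The first factor $(d/d\xi)^m \hat V^i(\xi^{-\nu/2})$ can be computed inductively: from the explicit form of $\hat V$ in \eqref{equa:vect_coeqn}, applied iteratively as in the derivation of \eqref{eq:Vder_xi} and \eqref{equa:V-Af1}, one obtains
\[
(d/d\xi)^m \hat V^i(\xi^{-\nu/2}) = c_{i,m}(\nu)\, \xi^{-\nu/2 - i - m}
\]
with $|c_{i,m}(\nu)| \leq C_{i,m} (1+|\nu|)^{2i+m}$. On $I$ one has $1/2 \leq |\xi| \leq 3/2$, so the factor $|\xi|^{-i-m}$ is bounded by a constant depending only on $\beta$. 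The residual factor $|\xi|^{-\nu/2}$ is precisely what transforms the Lebesgue measure into the weighted measure: for any measurable $\phi$,
\[
\int_I |\xi^{-\nu/2}\, \phi(\xi)|^2\, d\xi = \int_I |\phi(\xi)|^2 \frac{d\xi}{|\xi|^\nu} = \|\phi\|_{L^2_\nu(I)}^2.
\]

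For the second factor, namely $(\hat X - (1-\nu))^k \hat V^j (\hat X - (1-\nu))^{\alpha-w}\hat f$, I would use the commutation relation $[\hat X, \hat V] = -2\hat V$ (as in the proof of Lemma~\ref{lemma:V_Leibniz}) to commute $\hat V^j$ past $(\hat X - (1-\nu))^{\alpha-w}$ at the cost of polynomial coefficients in $\nu$ and lower-order $\hat V$-derivatives. Combining with the outer $(\hat X - (1-\nu))^k$ and expanding by the binomial theorem, this factor decomposes into a linear combination of pure derivatives $\hat V^{j'} \hat X^q \hat f$ with $j' \leq j \leq \beta$ and $q + j' \leq k + \alpha - w + j$, each carrying a coefficient of size $(1+|\nu|)^{k + \alpha - w + j - j' - q}$. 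Totaling the polynomial factors from all three sources gives $(1+|\nu|)^{I}$ with exponent
\[
I = w + (2i+m) + (k + \alpha - w + j - j' - q) = 2i + m + k + \alpha + j - j' - q,
\]
so that $I + j' + q = 2i + m + k + \alpha + j \leq \alpha + 2\beta$ by $k \leq m$ and $i+j+m \leq \beta$, and likewise $j' + q \leq \alpha + \beta$, $j' \leq \beta$. This matches precisely the constraints displayed in the statement.

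The main obstacle is the combinatorial bookkeeping in commuting $\hat V^j$ past $(\hat X - (1-\nu))^{\alpha-w}$: one has to track simultaneously the order of the resulting $\hat X$-derivatives, the order of the remaining $\hat V$-derivatives, and the degree of the polynomial growth in $\nu$, verifying in particular that the sum $2i + m + k + \alpha + j$ saturates at $\alpha + 2\beta$ and not at a larger value. The explicit formula for $(d/d\xi)^m \hat V^i(\xi^{-\nu/2})$ is a routine induction, but care is needed to avoid losing powers of $(1+|\nu|)$ along the way.
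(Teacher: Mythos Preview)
Your proposal is correct and follows essentially the same approach as the paper: apply \eqref{equa:X-AF1}, then the $\hat V$-Leibniz formula (equivalently \eqref{equa:V-Af1}) with $f_1=\xi^{-\nu/2}$, absorb the factor $\xi^{-\nu/2}$ into the weighted measure $L^2_\nu(I)$, and track the indices. One small imprecision worth noting: commuting $\hat V^j$ past $(\hat X-(1-\nu))^{\alpha-w}$ via $[\hat X,\hat V]=-2\hat V$ does not produce lower-order $\hat V$-derivatives (you simply get $\hat V^j$ times a shifted polynomial in $\hat X$, cf.\ \eqref{eq:commutator_2}), but since $j'=j$ satisfies $j'\le j$ your index bookkeeping goes through unchanged.
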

\begin{proof}
  By~\eqref{equa:X-AF1} and~\eqref{equa:V-Af1}, we get universal
  coefficients $(b_{w, l, j, k, m}^{(\alpha, \beta)})$ such that
  \begin{align}\label{equa:VX-discrete1}
    \hat V^\beta &\hat X^\alpha (\mathcal A \hat f) = \hat
    V^{\beta}\left(\sum_{w = 0}^\alpha a_w^{(\alpha)}
      (1 - 2 \nu)^w \xi^{-\nu/2} \cdot (\hat X - (1 + \nu))^{\alpha - w}(\hat f)\right) \notag \\
    & = \sum_{w = 0}^\alpha a_w^{(\alpha)} (1 - 2 \nu)^w \cdot \hat
    V^{\beta}
    \left(\xi^{-\nu/2} \cdot (\hat X - (1 + \nu))^{\alpha - w}(\hat f)\right) \notag \\
    & = \sum_{w = 0}^\alpha \sum_{ \substack {
        l+j+ m\leq \beta \\
        k \leq m }} b_{w, l, j, k, m}^{(\alpha, \beta)} (1 - 2 \nu)^w
    \prod_{\tilde l = 0}^{l - 1}
    (\nu/2 + \tilde l)(3\nu/2 + \tilde l) \prod_{\tilde m = 0}^{m - 1}(\nu/2 + \tilde m) \notag  \\
    & \ \ \ \ \ \ \ \ \ \ \ \ \ \ \ \ \ \ \ \ \ \ \ \ \ \ \ \times
    (\frac{1}{\xi^{l + m + \nu/2}}) (\hat X- (1-\nu))^k \hat V^j(\hat
    X - (1 - \nu))^{\alpha - w}(\hat f) \notag
  \end{align}

  By the commutation relation $[\hat X, \hat V] = - 2\hat V$, it
  follows that there are constants $C_{\alpha, \beta} > 0$ such that
  \[
  \begin{aligned}
    \| \hat V^\beta \hat X^\alpha &(\mathcal A \hat f) \|_{L^2(I)}
    \leq C_{\alpha, \beta} \sum_{w = 0}^\alpha \sum_{ \substack {
        l+j+ m\leq \beta \\
        k \leq m }}
    (1 + |\nu|)^{w + 2l + m}  \notag  \\
    & \qquad \times \|\frac{1}{\xi^{l + m + \nu/2}}
    (\hat X- (1-\nu))^k \hat V^j(\hat X - (1 - \nu))^{\alpha - w} \hat f \|_{L^2(I)} \notag \\
    & \leq C_{\alpha, \beta} \sum_{w = 0}^\alpha \sum_{ \substack {
        l+j+ m\leq \beta \\
        k \leq m }}
    (1 + |\nu|)^{w + 2l + m}  \notag  \\
    & \qquad \times \sum_{\tilde k \leq k}
    \|\hat V^j(\hat X - (1 - \nu))^{\alpha + \tilde k - w} \hat f \|_{L_\nu^2(I)} \notag \\
    & \leq C_{\alpha, \beta} \sum_{ \substack {
        i+j+ k\leq \alpha + 2 \beta \\
        j+k \leq \alpha +\beta \\ j\leq \beta }}
    (1 + |\nu|)^{i} \|\hat V^j(\hat X - (1 - \nu))^{k} \hat f \|_{L_\nu^2(I)} \notag \\
    & \leq C_{\alpha, \beta} \sum_{ \substack {
        i+j+ k\leq \alpha + 2 \beta \\
        j +k \leq \alpha +\beta \\ j\leq \beta }} (1 + |\nu|)^{i}
    \|\hat V^j \hat X^k \hat f \|_{L_\nu^2(I)} \,.
  \end{aligned}
  \]
\end{proof}

Similarly, we have
\begin{lemma}\label{lemma:norm-A}
  Let $\mathcal A^{-1} \hat h \in \widehat H_\mu^\infty$.  Then for any
  $\alpha\in \N$ and integer $0 \leq \beta <\frac{\nu}{2}$, there is a
  constant $C_{\alpha, \beta}^{(7)} > 0$ such that
  \[
  \| \hat V^\beta \hat X^\alpha (\mathcal A^{-1} \hat h)
  \|_{L_\nu^2(I_\lambda)} \leq C_{\alpha, \beta}^{(7)} \sum_{
    \substack {
      i+j+ k\leq \alpha + 2 \beta \\
      j +k \leq \alpha +\beta \\ j\leq \beta }} (1 + |\nu|)^{i} \|\hat
  V^j \hat X^k \hat h \|_{L^2(I_\lambda)} \,.
  \]
\end{lemma}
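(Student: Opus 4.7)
The plan is to mirror the argument used to prove Lemma~\ref{lemma:norm-A^-1}, with the roles of $\mathcal{A}$ and $\mathcal{A}^{-1}$ interchanged. The starting point is that formulas \eqref{equa:X-AF2} and \eqref{equa:V-Af2} express $\hat X^\alpha(\mathcal{A}^{-1}\hat h)$ and $\hat V^\beta(\mathcal{A}^{-1}\hat h)$ in terms of the factor $\xi^{\nu/2}$ multiplying combinations of $(\hat X-(1-\nu))$-derivatives and $\hat V$-derivatives of $\hat h$, respectively. The structural difference from the previous lemma is that here one multiplies by $\xi^{\nu/2}$ rather than by $\xi^{-\nu/2}$, and the scalar coefficients $\prod_{\tilde l}[(\nu/2)^2 - \tilde l^2]$ appearing in \eqref{equa:V-Af2} grow polynomially in $\nu$ of degree $2l$, in complete analogy with the products $\prod(\nu/2+\tilde l)(3\nu/2+\tilde l)$ that appeared in \eqref{equa:V-Af1}.

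Concretely, first I would combine \eqref{equa:X-AF2} with the non-commutative Leibniz rule of Lemma~\ref{lemma:V_Leibniz}, applied to $f_1 = \xi^{\nu/2}$ and $f_2 = (\hat X-(1-\nu))^{\alpha-w}\hat h$, to obtain an expansion of the form
\begin{align*}
\hat V^\beta \hat X^\alpha (\mathcal{A}^{-1}\hat h) &= \sum_{w\leq\alpha}\sum_{\substack{l+j+m\leq\beta\\ k\leq m,\ j\leq\beta}} c^{(\alpha,\beta)}_{w,l,j,k,m}(\nu)\,\xi^{\nu/2-l-m}\\
&\quad \times (\hat X-(1-\nu))^k \hat V^j (\hat X-(1-\nu))^{\alpha-w}\hat h\,,
\end{align*}
where the scalar coefficients $c^{(\alpha,\beta)}_{w,l,j,k,m}(\nu)$ are bounded by a constant times $(1+|\nu|)^{2l+m}$.

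Next, I would estimate the norm termwise. Since $I=[-3/2,-1/2]$ is bounded away from zero, the multiplication factor satisfies $|\xi^{\nu/2-l-m}|^2/|\xi|^\nu = |\xi|^{-2(l+m)}$, which is uniformly bounded on $I$ by a constant depending only on $\alpha,\beta$. Hence for each summand $\|\xi^{\nu/2-l-m} g\|_{L^2_\nu(I)} \leq C_{\alpha,\beta}\|g\|_{L^2(I)}$. Applying this to every term, then invoking the commutation relation $[\hat X,\hat V]=-2\hat V$ to reduce each product $(\hat X-(1-\nu))^k\hat V^j(\hat X-(1-\nu))^{\alpha-w}$ to a linear combination of operators of the form $\hat V^j(\hat X-(1-\nu))^{p}$ (with $p\leq k+\alpha-w$ and additional $\nu$-powers at most equal to the reduction in $p$), and finally absorbing the shift $(\hat X-(1-\nu))^p$ into $\hat X^p$ at a cost of a factor $O((1+|\nu|)^{p})$, yields the claimed sum with the stated restrictions on the indices.

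The main obstacle is the combinatorial bookkeeping: one must verify that the total $\nu$-exponent $i$, together with the indices $j$ and $k$ labeling the $\hat V$- and $\hat X$-derivatives of $\hat h$ on the right-hand side, respects the constraints $j\leq\beta$, $j+k\leq\alpha+\beta$, and $i+j+k\leq\alpha+2\beta$. This is a routine counting exercise that follows the same pattern as the proof of Lemma~\ref{lemma:norm-A^-1}, and no new analytic difficulty arises beyond the boundedness of $|\xi|^{-2(l+m)}$ on $I$, which is immediate.
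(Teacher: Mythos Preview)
Your proposal is correct and follows essentially the same route as the paper's proof: both combine formulas \eqref{equa:X-AF2} and \eqref{equa:V-Af2} via the Leibniz-type rule of Lemma~\ref{lemma:V_Leibniz}, use the key cancellation $|\xi^{\nu/2-l-m}|^2/|\xi|^{\nu}=|\xi|^{-2(l+m)}$ (bounded on the interval, which lies away from $0$), and then reduce the operator products with the commutation relation $[\hat X,\hat V]=-2\hat V$. The index bookkeeping you outline matches the paper's constraints $j\le\beta$, $j+k\le\alpha+\beta$, $i+j+k\le\alpha+2\beta$.
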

\begin{proof}
  By formulas~\eqref{equa:X-AF2} and~\eqref{equa:V-Af2} in place of
  \eqref{equa:X-AF1} and \eqref{equa:V-Af1}, we get
  \[
  \begin{aligned}
    \hat V^\beta \hat X^\alpha (\mathcal A^{-1} \hat h) & = \sum_{w =
      0}^\alpha \sum_{ \substack {
        j+ m\leq \beta \\
        k \leq m }} b^{(\alpha, \beta)}_{wljkm} (-1)^l \prod_{\tilde l
      = 0}^{l - 1}
    [(\nu/2)^2  - \tilde l^2] \prod_{\tilde m = 0}^{m - 1}(\nu/2 + \tilde m)  \notag  \\
    & \times \xi^{\nu/2 - m - l}  (\hat X- (1-\nu))^k \hat V^j(\hat X - (1 - \nu))^{\alpha - w}(\hat h) \,. \\
  \end{aligned}
  \]

  Then as in the proof of Lemma~\ref{lemma:norm-A^-1}, there are
  constants $C_{\alpha, \beta} > 0$ such that
  \[
  \begin{aligned}
    \|\hat V^\beta \hat X^\alpha &(\mathcal A^{-1} \hat h)
    \|_{L_\nu^2(I_\lambda)} \leq C_{\alpha, \beta} \sum_{w = 0}^\alpha
    \sum_{ \substack {
        l+j+ m\leq \beta \\
        k \leq m }}
    (1 + |\nu|)^{w + 2l + m}  \notag  \\
    & \quad \times \|\xi^{\nu/2 - l - m} (\hat X- (1-\nu))^k
    \hat V^j(\hat X - (1 - \nu))^{\alpha - w} \hat h \|_{L_\nu^2(I_\lambda)} \notag \\
    & \leq C_{\alpha, \beta} \sum_{ \substack {
        i+j+ k\leq \alpha + 2 \beta \\
        j +k \leq \alpha +\beta \\ j\leq \beta }} (1 + |\nu|)^{i}
    \|\hat V^j(\hat X - (1 - \nu))^{k} \hat f \|_{L^2(I_\lambda)} \,.
  \end{aligned}
  \]
\end{proof}

\begin{proof}[Proof of Lemma \ref{lemma:discrete-I_lambda}] 
  Let $h := \mathcal A g$, and observe from
  \eqref{equa:discrete-coeqn} that
  \[
  \hat h(\xi) = -i \frac{\mathcal A \hat f(\xi)}{\lambda(\xi + 1)}
  \]
  Then by Lemma~\ref{lemma:norm-A},
  Lemma~\ref{lemma:discrete-principal} and
  Lemma~\ref{lemma:norm-A^-1}, we get
  \[
  \begin{aligned}
    \| \hat V^\beta& \hat X^\alpha  \hat g \|_{L_\nu^2(I_\lambda)}  =  \| \hat V^\beta \hat X^\alpha \mathcal A^{-1} \hat h \|_{L_\nu^2(I_\lambda)}  \leq C_{\alpha, \beta}^{(7)}  \notag \\
    & \qquad\qquad\qquad\qquad \times \sum_{ \substack{i+j+ k\leq
        \alpha + 2 \beta \\ j + k\leq \alpha +\beta \\ j\leq \beta }}
    (1 + |\nu|)^{i} \|\hat V^j \hat X^k \hat h \|_{L^2(I_\lambda)} \notag \\
    & \leq \frac{C_{\alpha, \beta}^{(7)} C_{\alpha,
        \beta}^{(5)}}{|\lambda|} \sum_{ \substack {
        i + j+ k\leq \alpha + 2 \beta \\
        j +k\leq \alpha + \beta \\ j\leq \beta }} (1 + |\nu|)^i
    \\
    & \qquad\qquad\qquad\qquad \times \sum_{ \substack {
        i' + j'+ k'\leq j + k + 1 \\
        j' \leq j }}
    (1 + |\nu|)^{i'} \|\hat V^{j'} \hat X^{k'}  \mathcal A\hat f \|_{L_\nu^2(I_\lambda)} \notag \\
    & \leq \frac{C_{\alpha, \beta}^{(7)}C_{\alpha,
        \beta}^{(5)}C_{\alpha, \beta}^{(6)}}{|\lambda|} (1 +
    |\nu|)^\beta \sum_{ \substack {
        i + j+ k\leq \alpha + 2 \beta \\
        j +k \leq \alpha + \beta \\ j\leq \beta }} (1 + |\nu|)^i
    \\
    & \ \times \sum_{ \substack {
        i' + j'+ k'\leq j + k + 1 \\
        j' \leq j }} (1 + |\nu|)^{i'} \sum_{ \substack {
        i'' + j''+ k''\leq k' + 2 j'  \\
        j'' +k'' \leq j' +k' \\ j'' \leq j' }} (1 + |\nu|)^{i''}
    \|\hat V^{j''} \hat X^{k''} \hat f \|_{L_\nu^2(I_\lambda)}\,.
  \end{aligned}
  \]
  In the last summation of the above formula we have
  \[
  \begin{aligned}
    i + i' + i'' + j'' + k'' &\leq i+ i' + k' +2j' \leq i+ j+ k + \beta + 1\leq \alpha + 3 \beta + 1\,, \\
    j'' + k'' &\leq j'+k' \leq j+k +1 \leq \alpha +\beta +1 \,.
  \end{aligned}
  \]
  This concludes the proof of Lemma~\ref{lemma:discrete-I_lambda}.
\end{proof}

For a general $\lambda \in \R^*$ and for rescaled Sobolev norms we
have a similar statement which can be immediately derived from
Lemma~\ref{lemma:XV-away-lambda-discrete},
Lemma~\ref{lemma:discrete-I_lambda} and from a calculation similar to
the one in formula~\eqref{equa:1->lambda} in the proof of
Lemma~\ref{lemma:XVderivatives}.

\begin{lemma}
  \label{lemma:discrete-I_lambda_scaled}
  Let $\mu \leq 0$. For every $\alpha$, $\beta \geq 0$ there exists a
  constant $C^{(4)}_{\alpha, \beta}>0$ such that for all $\lambda \not
  =0$ and for all $\mathcal T\geq 1$, the solution $g\in L^2_\nu(\R)$
  of the rescaled equation $\mathcal T(U+i\lambda) g=f \in L^2_\nu(\R)
  $ satisfies the following estimate with respect to the rescaled
  Sobolev norms:
  \[
  \begin{aligned}
    \| V_{\mathcal T}^\beta X_{\mathcal T}^\alpha g \|_0 & \leq \frac{
      C^{(4)}_{\alpha, \beta} }{ \vert \lambda\vert {\mathcal T}^{1/3}
    } (1+\vert \nu \vert)^{3\beta} (1+\vert \lambda\vert^{-\beta}) \\
    & \times \sum_{i+j+k \leq \alpha+\beta+1 } [\mathcal T^{-1/3}
    \vert 1- \nu \vert]^i \| V_{\mathcal T}^j X_{\mathcal T}^k f \|_0
    \,.
  \end{aligned}
  \]
\end{lemma}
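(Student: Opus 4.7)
The plan is to follow the same three-step pattern by which Lemma~\ref{lemma:XVderivatives_scaled} is derived from Lemma~\ref{lemma:XVderivatives}, with the discrete-series estimates of Lemmas~\ref{lemma:XV-away-lambda-discrete} and~\ref{lemma:discrete-I_lambda} playing the role of their principal/complementary-series counterparts. Concretely, I would \emph{(i)} combine those two partial estimates into a single $L^2_\nu(\R)$ bound for the unscaled equation $(U+i)\bar g=\bar f$; \emph{(ii)} recover the dependence on a general $\lambda\neq 0$ via the dilation $\hat f_\lambda(\xi):=\hat f(\lambda\xi)$ already used in the proof of Lemma~\ref{lemma:XVderivatives}; and \emph{(iii)} pass from $(X,V)$ to the rescaled fields $(X_{\mathcal T},V_{\mathcal T})$ while keeping track of the extra factor $\mathcal T^{-1}$ produced by dividing the equation $\mathcal T(U+i\lambda)g=f$ by $\mathcal T$.

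For step \emph{(i)}, Lemma~\ref{lemma:XV-away-lambda-discrete} controls $\|V^\beta X^\alpha \bar g\|_{L^2_\nu(\R\setminus I)}$ by $|\lambda|^{-1}$ times a sum of $L^2_\nu$-norms of $V^j X^k \bar f$ whose coefficients are polynomial in $|1-|\nu||$ and carry no positive power of $(1+|\nu|)$, while Lemma~\ref{lemma:discrete-I_lambda} controls $\|V^\beta X^\alpha \bar g\|_{L^2_\nu(I)}$ by an analogous expression with the additional prefactor $(1+|\nu|)^{3\beta}$. The commutation relation $[X,V]=-2V$ is used freely to reorder $X$ and $V$ factors at the cost of bounded lower-order terms. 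Adding the two bounds yields, for integer $\alpha,\beta$ and $\lambda=1$,
\[
\|V^\beta X^\alpha \bar g\|_0 \leq C_{\alpha,\beta}(1+|\nu|)^{3\beta}\sum_{\substack{i+j+k\leq\alpha+\beta+1\\ j\leq\beta}}|1-\nu|^i\,\|V^j X^k \bar f\|_0\,.
\]
Step \emph{(ii)} reproduces the dilation calculation of formula~\eqref{equa:1->lambda}: the identity $\hat V^\beta \hat X^\alpha \hat f_\lambda=\lambda^\beta(\hat V^\beta \hat X^\alpha \hat f)_\lambda$ upgrades the $\lambda=1$ bound to
\[
\|V^\beta X^\alpha g\|_0 \leq \frac{C'_{\alpha,\beta}(1+|\nu|)^{3\beta}(1+|\lambda|^{-\beta})}{|\lambda|}\sum_{\substack{i+j+k\leq\alpha+\beta+1\\ j\leq\beta}}|1-\nu|^i\,\|V^j X^k f\|_0
\]
for the equation $(U+i\lambda)g=f$ with general $\lambda\neq 0$.

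For step \emph{(iii)}, I would divide the equation $\mathcal T(U+i\lambda)g=f$ by $\mathcal T$, apply the step \emph{(ii)} bound with $f$ replaced by $f/\mathcal T$, and convert to rescaled norms via the identity $\|V^m X^\ell(\cdot)\|_0=\mathcal T^{2m/3+\ell/3}\|V_{\mathcal T}^m X_{\mathcal T}^\ell(\cdot)\|_0$. The net power of $\mathcal T$ multiplying each summand is then $\mathcal T^{-1+2(j-\beta)/3+(k-\alpha)/3}$. Under the constraints $i+j+k\leq\alpha+\beta+1$ and $j\leq\beta$ one has $2j+k+i\leq\alpha+2\beta+1$, which is precisely the inequality needed to guarantee, for $\mathcal T\geq 1$,
\[
\mathcal T^{-1+2(j-\beta)/3+(k-\alpha)/3}\leq \mathcal T^{-1/3-i/3}\,.
\]
Grouping each factor $\mathcal T^{-i/3}$ with $|1-\nu|^i$ then produces the weighted coefficient $[\mathcal T^{-1/3}|1-\nu|]^i$ together with a single overall power $\mathcal T^{-1/3}$, which is exactly the form of the bound in the lemma. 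Non-integer $\alpha$ and $\beta$ are finally handled by complex interpolation, as in the principal and complementary case.

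The only subtle point is the index bookkeeping at the end of step \emph{(iii)}: the two constraints $i+j+k\leq\alpha+\beta+1$ and $j\leq\beta$ must be used jointly to absorb the discrepancy between the powers of $\mathcal T$ produced by the rescaling and the single $\mathcal T^{-1/3}$ asked for in the target inequality. No analytic input beyond Lemmas~\ref{lemma:XV-away-lambda-discrete} and~\ref{lemma:discrete-I_lambda} is required.
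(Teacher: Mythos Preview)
Your proposal is correct and follows essentially the same route as the paper, which states only that the lemma ``can be immediately derived from Lemma~\ref{lemma:XV-away-lambda-discrete}, Lemma~\ref{lemma:discrete-I_lambda} and from a calculation similar to the one in formula~\eqref{equa:1->lambda}.'' Your three steps (combine the two partial estimates, undo the dilation $\hat f_\lambda(\xi)=\hat f(\lambda\xi)$, then rescale $(X,V)\to(X_{\mathcal T},V_{\mathcal T})$) are exactly the intended argument, and your index bookkeeping $i+2j+k\le\alpha+2\beta+1$ justifying $\mathcal T^{-1+2(j-\beta)/3+(k-\alpha)/3}\le\mathcal T^{-1/3-i/3}$ is the crux of step~(iii) and is correct.
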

Now we may prove Theorem~\ref{theo:cohomology-discrete}.
\begin{proof}[Proof of Theorem~\ref{theo:cohomology-discrete}]
  We proceed as in the proof of
  Theorem~\ref{theo:cohomology-principal}.  We claim that, for $r$, $s
  \in \N$ even integers, there exist constants $C^{(1)}_{r,s}>0$ and
  $C^{(2)}_{r,s}>0$ such that, if the functions $f$ and $g$ belong to
  a single irreducible component, then
  \begin{equation}
    \label{eq:est_integral}
    \begin{aligned}
      \vert g \vert_{r,s; \mathcal{T}} & \leq \frac{C^{(1)}_{r,s}}{
        \mathcal T^{1/3}} \frac{1 +
        \vert\lambda\vert^{-s}}{\vert\lambda\vert} (1 + |\nu|)^{r+3s}
      \sum_{k = 0}^{s + 1} [\mathcal T^{-1/3} \vert 1- \nu \vert]^{k}
      \vert f  \vert_{0, s + 1 - k; \mathcal T} \\
      & \leq \frac{C^{(2)}_{r,s}}{ \mathcal T^{1/3}} \frac{1 +
        \vert\lambda\vert^{-s}}{\vert\lambda\vert} \vert f
      \vert_{r+3s, s + 1; \mathcal T}\,. \end{aligned}
  \end{equation}
  
  Since the Casimir operator $\Box$ takes the value $\mu= 1 - \nu^2$
  on any irreducible, unitary representation $H_{m,\mu}$, by expanding
  the operator $(I +\Box^2)^{r/2} (I+ \Box^2+ \widehat
  \triangle_{\mathcal T}^2)^{s/2}$ into polynomial expression in
  $X_{\mathcal T}$ and $V_\mathcal T$, and by the commutation
  relations, we derive that there exists a constant $C^{(3)}_{r,s}>0$
  such that
  \begin{equation}\label{equa:coeqn-fullnorm1}
    \vert  g \vert_{r,s; \mathcal T}  \leq  C^{(3)}_{r,s}(1+ \mu^2)^{r/4} \sum_{k+m + \alpha +\beta \leq s} 
    [\mathcal T^{-1/3} \vert 1- \nu \vert]^k \| K^m  V_{\mathcal T}^\beta X_{\mathcal T}^\alpha g\|_0 \,.
  \end{equation}
  By the above bound on the norms the estimate in
  formula~\eqref{eq:est_integral} follows directly from
  Lemma~\ref{lemma:discrete-I_lambda_scaled}.  From the estimate
  ~\eqref{eq:est_integral} , the conclusion for $r$, $s$ even integers
  and $m=1$ follows. The statement for $r$, $s \geq 0$ follows by
  interpolation.  As in the proof of
  Theorem~\ref{theo:cohomology-principal}, the estimate for $m \neq 1$
  follows by setting $\lambda = \lambda m$.

  The uniqueness of the solution holds as in the proof of
  Theorem~\ref{theo:cohomology-principal}.
\end{proof}

We can now prove Theorem~\ref{theo:Invariant_dist} on the
classification of invariant distributions and
Theorem~\ref{theo:cohomological_eqn} on Sobolev bounds for solutions
of the cohomological equation for the twisted horocycle flow.

\begin{proof}[Proof of Theorem~\ref{theo:Invariant_dist}]
  It follows from Theorem~\ref{theo:cohomology-principal} and
  Theorem~\ref{theo:cohomology-discrete} that the space of invariant
  distributions is one dimensional.  The regularity part of the
  statement follows from
  Lemma~\ref{lemma:regularity-D_lambda-XV-principal} and
  Lemma~\ref{lemma:D^lambda-reg-discrete}.
\end{proof}

\begin{proof}[Proof of Theorem~\ref{theo:cohomological_eqn}] 
  The bounds with respect to the foliated Sobolev norms follows from
  Theorems~\ref{theo:cohomology-principal}
  and~\ref{theo:cohomology-discrete} by orthogonality since the above
  estimates are uniform with respect to the Casimir parameter.
 
  The bounds with respect to the Sobolev norms can be proved as
  follows.  First let $s \in \N$ be even.  Since the vector fields $U$
  and $K$ commute, for all $j\in \N$ we have
  \[
  (U + \lambda K) U^j g = U^j f\,,
  \]
  hence the bound with respect to foliated Sobolev norms holds for the
  functions $U^j g$ in terms of the function $U^j f$, for all $j\in
  \mathbb N$.
  % We may expand the operator $(I + \triangle)^{s/2}$ and apply the
  % triangle inequality to bound
  % $\Vert g\Vert_s$ by a sum of terms of the form $\Vert B g
  % \Vert_0$, where $B$ is a differential
  % operator of order $s$ in the basis $X_{i_j} \in \{K, U, X, V\}$.
  Then
  \[
  \begin{aligned}
    \Vert g \Vert_s & \leq \sum_{j = 0}^s \vert U^j g \vert_{0, s - j} \\
    & \leq \frac{C_{s}}{|\lambda|} \sum_{j = 0}^s (1+\vert \lambda \vert^{-(s - j)}) \vert  U^j f \vert_{3(s - j),  s - j +1} \\
    & \leq \frac{C_{s}}{|\lambda|} (1+\vert \lambda \vert^{-s}) \Vert
    f \Vert_{4s+1} \,.
  \end{aligned}
  \]
  The estimates for general Sobolev norms follows by interpolation.
 
  Finally, the solution is unique in $L^2(M\times\T)$ by
  Theorem~\ref{theo:cohomology-principal} and
  Theorem~\ref{theo:cohomology-discrete} .
\end{proof}

\section{Scaling of invariant distributions}\label{sect:Scale}

In this section we prove estimates on the scaled foliated Sobolev
norms of invariant distributions for the twisted cohomological
equation.

Let us consider an irreducible, unitary representation $H:=H_{m,\mu}$
of the group $\SL(2, \R)\times \T$ and let $\lambda m \neq 0$.  For
${\mathcal T} > {\mathcal T}'$, we want to estimate
\[
\vert D^\lambda_{m,\mu}\vert_{-r,-s; \mathcal{T}} := \sup_{F \in \hat
  H^s}\left\{|D^\lambda_{m,\mu}(F)| : \vert F \vert_{r,s; \mathcal{T}}
  = 1\right\}
\]
in terms of $ \vert D^\lambda_{m,\mu}\vert_{-r,-s; \mathcal{T}'} :=
\sup_F\left\{|D^\lambda_{m,\mu}(F)| : \vert F \vert_{r,s;
    \mathcal{T}'} = 1\right\}\,.  $ We introduce the following
foliated Sobolev Lyapunov norms on $\widehat H^s$.  For all ${\mathcal
  T}\geq 1$,
\[
\vert F \vert^{\mathcal L}_{r,s; \mathcal{T}} := \sup_{\tau>{\mathcal
    T}} \, (\frac{\tau}{{\mathcal T}})^{1/6} \vert F \vert_{r,s; \tau}
\,.
\]
From definitions, the following holds.
\begin{lemma} \label{lemma:sob_est_one} Let $r$, $s \geq 0$.  For all
  ${\mathcal T}>{\mathcal T}'\geq 1$ and for all $F\in H^s$,
  \[
  \begin{aligned}
    \vert F \vert_{r,s; \mathcal{T}}  &\leq \vert F \vert^{\mathcal L}_{r,s; \mathcal{T}}   \,; \\
    \vert F \vert^{\mathcal L}_{r,s; \mathcal{T}} & \leq
    (\frac{{\mathcal T}'}{{\mathcal T}})^{1/6} \vert F \vert^{\mathcal
      L}_{r,s; \mathcal{T}'} \,.
  \end{aligned}
  \]
\end{lemma}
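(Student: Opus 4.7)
\medskip

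\noindent\textbf{Proof proposal.} Both inequalities in Lemma~\ref{lemma:sob_est_one} are essentially formal consequences of the definition of the Lyapunov norm, so my plan is to prove each by directly unwinding that definition.

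For the second inequality, the plan is to factor out the prefactor $(\mathcal T'/\mathcal T)^{1/6}$ from the supremum. Assuming $\mathcal T > \mathcal T' \geq 1$, I would write
\[
\vert F \vert^{\mathcal L}_{r,s; \mathcal T}
= \sup_{\tau > \mathcal T} \Bigl(\frac{\tau}{\mathcal T}\Bigr)^{1/6} \vert F \vert_{r,s; \tau}
= \Bigl(\frac{\mathcal T'}{\mathcal T}\Bigr)^{1/6} \sup_{\tau > \mathcal T} \Bigl(\frac{\tau}{\mathcal T'}\Bigr)^{1/6} \vert F \vert_{r,s; \tau}.
\]
Since $\mathcal T > \mathcal T'$, we have the inclusion $\{\tau : \tau > \mathcal T\} \subset \{\tau : \tau > \mathcal T'\}$, so the sup on the right is bounded by $\sup_{\tau > \mathcal T'} (\tau/\mathcal T')^{1/6} \vert F \vert_{r,s; \tau} = \vert F \vert^{\mathcal L}_{r,s; \mathcal T'}$, which gives the desired bound.

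For the first inequality, the idea is to let $\tau \to \mathcal T^+$ inside the supremum. The only nontrivial point is the continuity of $\tau \mapsto \vert F \vert_{r,s; \tau}$ from the right at $\tau = \mathcal T$. To handle this I would first treat the case of even integer $s$, where expanding the operator $(I + \Box_\tau^2 + \widehat \triangle_\tau^2)^{s/2}$ yields a finite polynomial expression in the rescaled vector fields $K$, $X_\tau = \tau^{-1/3} X$, $V_\tau = \tau^{-2/3} V$ and $\Box_\tau = \tau^{-2/3} \Box$; the quantity $\vert F \vert_{r,s; \tau}^2$ is then a polynomial in $\tau^{-1/3}$ with coefficients given by $L^2$ inner products of iterated derivatives of $F$, hence continuous. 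The general case $s \geq 0$ follows by interpolation in $s$. Given this continuity, $\lim_{\tau \to \mathcal T^+} (\tau/\mathcal T)^{1/6} \vert F \vert_{r,s; \tau} = \vert F \vert_{r,s; \mathcal T}$, which is dominated by the supremum defining $\vert F \vert^{\mathcal L}_{r,s; \mathcal T}$.

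The only step that requires any thought is the continuity used for the first inequality; the second inequality is a purely set-theoretic monotonicity argument and should not pose any obstacle.
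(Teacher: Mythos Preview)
Your argument is correct and matches the paper's approach: the paper simply states ``From definitions, the following holds'' and gives no further proof, so your unwinding of the definition is exactly what is intended. Your extra care about the right-continuity of $\tau \mapsto \vert F \vert_{r,s;\tau}$ (needed because the supremum is over $\tau > \mathcal T$ rather than $\tau \geq \mathcal T$) is a valid point the paper glosses over, and your treatment of it via the polynomial dependence on $\tau^{-1/3}$ for even integer $s$ plus interpolation is a clean way to justify it.
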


Lemma~\ref{lemma:sob_est_one} immediately gives
\begin{corollary}\label{coro:Sobolev-Lyapunov}
  Let $r$, $s > 1/2$ and ${\mathcal T} > {\mathcal T}' > 1$.  Then
  \[
  \begin{aligned}
    \vert D^\lambda_{m,\mu} \vert^{\mathcal L}_{-r,-s; \mathcal{T}} &\leq    \vert D^\lambda_{m,\mu} \vert_{-r,-s; \mathcal{T}} \,, \\
    \vert D^\lambda_{m,\mu} \vert^{\mathcal L}_{-r,-s; \mathcal{T}'} &
    \leq (\frac{{\mathcal T}'}{{\mathcal T}})^{1/6} \vert
    D^\lambda_{m,\mu} \vert^{\mathcal L}_{-r,-s; \mathcal{T}} \,.
  \end{aligned}
  \]
\end{corollary}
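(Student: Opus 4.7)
The plan is to derive both inequalities directly from Lemma~\ref{lemma:sob_est_one} by an abstract duality argument; no computation specific to the invariant distribution is needed beyond knowing that it lies in the appropriate distributional space. First I would note that since $\lambda m \neq 0$, Lemma~\ref{lemma:regularity-D_lambda-XV-principal} (for the principal and complementary series) and Lemma~\ref{lemma:D^lambda-reg-discrete} (for the discrete and mock discrete series) together ensure $D^\lambda_{m,\mu} \in \widehat H^{0,-(1/2+)}$. Thus, for $r$, $s > 1/2$, the functional $D^\lambda_{m,\mu}$ is continuous on $\widehat H^{r,s}$ in both the norm $\vert \cdot \vert_{r,s;\mathcal{T}}$ and the stronger Lyapunov norm $\vert \cdot \vert^{\mathcal L}_{r,s;\mathcal{T}}$, so the corresponding dual norms are all finite and the statement is meaningful.

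For the first inequality, I would invoke the first estimate in Lemma~\ref{lemma:sob_est_one}, which asserts $\vert F\vert_{r,s;\mathcal{T}} \leq \vert F\vert^{\mathcal L}_{r,s;\mathcal{T}}$ for every $F \in \widehat H^s$. Consequently, any $F$ with $\vert F\vert^{\mathcal L}_{r,s;\mathcal{T}} = 1$ also satisfies $\vert F\vert_{r,s;\mathcal{T}} \leq 1$, whence
\[
|D^\lambda_{m,\mu}(F)| \leq \vert D^\lambda_{m,\mu}\vert_{-r,-s;\mathcal{T}} \cdot \vert F\vert_{r,s;\mathcal{T}} \leq \vert D^\lambda_{m,\mu}\vert_{-r,-s;\mathcal{T}}\,.
\]
Taking the supremum over such $F$ yields the first bound.

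For the second inequality, I would use the second estimate in Lemma~\ref{lemma:sob_est_one}, namely $\vert F\vert^{\mathcal L}_{r,s;\mathcal{T}} \leq (\mathcal{T}'/\mathcal{T})^{1/6}\,\vert F\vert^{\mathcal L}_{r,s;\mathcal{T}'}$ whenever $\mathcal{T} > \mathcal{T}' \geq 1$. For every $F$ with $\vert F\vert^{\mathcal L}_{r,s;\mathcal{T}'} = 1$, this gives $\vert F\vert^{\mathcal L}_{r,s;\mathcal{T}} \leq (\mathcal{T}'/\mathcal{T})^{1/6}$, and therefore
\[
|D^\lambda_{m,\mu}(F)| \leq \vert D^\lambda_{m,\mu}\vert^{\mathcal L}_{-r,-s;\mathcal{T}} \cdot \vert F\vert^{\mathcal L}_{r,s;\mathcal{T}} \leq \Bigl(\frac{\mathcal{T}'}{\mathcal{T}}\Bigr)^{1/6} \vert D^\lambda_{m,\mu}\vert^{\mathcal L}_{-r,-s;\mathcal{T}}\,.
\]
Taking the supremum over all such $F$ then gives the second bound.

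Since the argument is a direct transcription of Lemma~\ref{lemma:sob_est_one} through the duality pairing, there is no genuine obstacle. The only point that requires verification is the finiteness of the dual Lyapunov norm, and this is exactly what the regularity statements on $D^\lambda_{m,\mu}$ provide, together with the continuous embedding $\widehat H^{r,s} \hookrightarrow \widehat H^{0,s}$ and the hypothesis $s > 1/2$.
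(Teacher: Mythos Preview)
Your proof is correct and takes essentially the same approach as the paper, which simply states that the corollary follows immediately from Lemma~\ref{lemma:sob_est_one}. You have spelled out the standard duality argument that the paper leaves implicit, and your additional remark about finiteness of the dual norms (via the regularity lemmas) is a reasonable piece of housekeeping.
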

Our strategy is to prove comparison bounds between the foliated
Sobolev dual norms and the foliated Sobolev Lyapunov dual norms of the
invariant distribution in every irreducible, unitary representation.

Hence, it remains to prove a bound from above for the foliated Sobolev
norm $\vert D^\lambda_{m,\mu} \vert_{-r,-s; \mathcal{T}}$ in terms of
the foliated Sobolev Lyapunov norm $\vert D^\lambda_{m,\mu}
\vert^{\mathcal L}_{-r,-s; \mathcal{T}}$.  We consider the principal
and complementary series together, while the discrete series is
handled separately.

\smallskip
\subsection{Principal and complementary series}\label{sect:principal}
Throughout this subsection, given an integer $m\in \Z$ and a Casimir
parameter $\mu>0$, we let $H:=H_{m,\mu}$ be an irreducible, unitary
representation of the principal or complementary series for the group
$\SL(2, \R) \times \T$.  We prove the following theorem.

\begin{theorem}\label{theo:scaling-principal}
  For every $r\geq 0$ and $s>1/2$ there is a constant $C_{r,s} > 0$
  such that for all ${\mathcal T} \geq {\mathcal T}' \geq 1$ and
  $\lambda \in \R$ such that $\lambda m \neq 0$, the distribution
  $D^\lambda_{m,\mu} \in \widehat H^{-r,-s}$ satisfies the scaling
  estimates
  \[
  \vert D^\lambda_{m,\mu} \vert_{-(r+s),-s; \mathcal{T}'} \leq C_{r,s}
  (\frac{{\mathcal T}'}{{\mathcal T}})^{1/6} (1+ \vert\lambda m
  \vert^{-2s}) \vert D^\lambda_{m,\mu} \vert_{-r,-s; \mathcal{T}}\,.
  \]
\end{theorem}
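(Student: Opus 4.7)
The plan is to work in the Fourier model of $H_{m,\mu}$. By Lemma~\ref{lemma:regularity-D_lambda-XV-principal}, the invariant distribution $D^\lambda_{m,\mu}$ acts, up to a constant, as the evaluation $\hat F \mapsto \hat F(-\lambda m)$, and by Lemma~\ref{lemma:principal_Sobolev_Fourier}, both dual foliated Sobolev norms appearing in the statement become operator norms of this point-evaluation functional on weighted $L^2_\nu(\R)$-Sobolev spaces built from the rescaled Fourier operators $\hat X_{\mathcal T} = \mathcal T^{-1/3}\hat X$ and $\hat V_{\mathcal T} = \mathcal T^{-2/3}\hat V$ from \eqref{equa:vect_coeqn}.

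I would prove the dual inequality by constructing, for any test function $F$, a comparison function $G$ with $\hat G(-\lambda m) = \hat F(-\lambda m)$, so that $D(G) = D(F)$, and with
\[
|G|_{r,s;\mathcal T} \leq C_{r,s}(\mathcal T'/\mathcal T)^{1/6}(1+|\lambda m|^{-2s})|F|_{r+s,s;\mathcal T'}.
\]
The stated scaling estimate then follows from $|D(F)| = |D(G)| \leq |D|_{-r,-s;\mathcal T}|G|_{r,s;\mathcal T}$. A natural candidate for $G$ is $\hat G(\xi) = \hat F(-\lambda m)\,\psi_\delta(\xi + \lambda m)$, where $\psi_\delta$ is a smooth bump of width $\delta \sim |\lambda m|$ to be optimized; this reduces the task to bounding $|\hat F(-\lambda m)|$ by the appropriate fraction of $|F|_{r+s,s;\mathcal T'}$ and then computing $|G|_{r,s;\mathcal T}$ explicitly.

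The construction hinges on localization. On an interval $I$ of size comparable to $|\lambda m|$ centered at $-\lambda m$, the weight $|\xi|^{-\re\nu}$ and the polynomial coefficient $\xi$ appearing in $\hat X$ and $\hat V$ are equivalent to $|\lambda m|$ up to universal constants, so the weighted foliated Sobolev norm restricts to a flat Sobolev norm in $\partial_\xi$ on $I$, with weights given by suitable powers of $|\lambda m|$. The factor $(1+|\lambda m|^{-2s})$ arises precisely from inverting the leading coefficient $\xi$ in $\hat X$ and $\hat V$ to convert between ordinary $\partial_\xi$-derivatives and $\hat X_{\mathcal T}, \hat V_{\mathcal T}$-derivatives, up to order $2s$. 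The scaling factor $(\mathcal T'/\mathcal T)^{1/6}$ emerges from the per-derivative scaling $\mathcal T^{-1/3}$ of $\hat X_{\mathcal T}$ combined with the $1/2$ loss of a derivative needed for the one-dimensional Sobolev trace theorem at a single point (which is why the hypothesis $s > 1/2$ is essential).

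The main technical obstacle is handling the interaction between $\hat X_{\mathcal T}^2$ and $\hat V_{\mathcal T}^2$ inside the combined weight $(I + \mu_{\mathcal T}^2 + (m^2 I - \hat X_{\mathcal T}^2 - \hat V_{\mathcal T}^2)^2)^{s/4}$, which is of order $4$ in $\partial_\xi$ but with two different scalings $\mathcal T^{-1/3}$ and $\mathcal T^{-2/3}$ attached to the two operators. On the localization interval, however, this operator becomes essentially an elliptic polynomial in $\partial_\xi$ whose sharp per-derivative scaling is $1/3$ — the smaller of the two exponents — and combining this with the half-derivative trace loss produces the sharp exponent $1/6$. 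Finally, the argument must be uniform over both principal ($\nu \in i\R$) and complementary ($\nu \in (0,1)$) series; in both regimes the weight $|\xi|^{-\re\nu}$ is comparable to $|\lambda m|^{-\re\nu}$ on the localization interval, so it can be absorbed into the constant $C_{r,s}$.
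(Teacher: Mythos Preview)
Your general strategy---localize in Fourier near $-\lambda m$, exploit that the weight $|\xi|^{-\re\nu}$ and the coefficients of $\hat X,\hat V$ are frozen there, and track the $\mathcal T^{-1/3}$ per-derivative scaling to produce the exponent $1/6$---is correct and is also what the paper does. But the execution is genuinely different, and your specific mechanism has a hidden circularity that the paper's avoids.

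Your comparison function $G=\hat F(-\lambda m)\,\psi_\delta$ depends on $F$ only through the scalar $c=\hat F(-\lambda m)$. To bound $|G|_{r,s;\mathcal T}$ by $|F|_{r+s,s;\mathcal T'}$ you must first bound $|c|$ by $|F|_{r+s,s;\mathcal T'}$; but that bound is exactly an upper estimate on $|D|_{-(r+s),-s;\mathcal T'}$, the left-hand side you are trying to control. What your argument really proves is two \emph{absolute} estimates: an upper bound $|D|_{-(r+s),-s;\mathcal T'}\lesssim \mathcal T'^{1/6}$ from a scaled trace inequality, and a lower bound $|D|_{-r,-s;\mathcal T}\gtrsim \mathcal T^{1/6}$ from testing against the optimally scaled bump $\psi_\delta$. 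Dividing gives the ratio bound. This is legitimate, but it is a two-sided explicit computation of the dual norm, not the single comparison-function step you describe, and you would need to track the $(1+|\lambda m|^{-2s})$ factors and the shift $r\mapsto r+s$ (coming from the $(1+|\nu|)^s$ losses) carefully on both sides and check that they cancel.

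The paper sidesteps all of this. It introduces the one-parameter family of dilations $\hat U_\tau\hat f(\xi)=\tau^{1/6}\hat f(\lambda+\tau^{1/3}(\xi-\lambda))$, so that the single identity $D(U_\tau f)=\tau^{1/6}D(f)$ replaces your trace bound entirely. The core computation (Lemma~\ref{lemma:local_bound}) is that for $\hat f$ supported in $I_\lambda$ one has $|U_{\tau/\mathcal T}f|_{r,s;\tau}\le C(1+|\lambda|^{-s})|f|_{r+s,s;\mathcal T}$, proved from the intertwining relations between $\hat U_\tau$ and $\hat X_{\mathcal T},\hat V_{\mathcal T}$ (Lemma~\ref{lemma:intertwining}). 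Combined with a localization step (Lemma~\ref{lemma:f_cutoff}) this yields, for every $\tau\ge\mathcal T$, the one-sided inequality $|D|_{-(r+s),-s;\mathcal T}\le C(1+|\lambda|^{-2s})(\tau/\mathcal T)^{-1/6}|D|_{-r,-s;\tau}$. The paper then packages the infimum over $\tau$ into a \emph{Lyapunov dual norm} $|D|^{\mathcal L}_{-r,-s;\mathcal T}$, for which the scaling $|D|^{\mathcal L}_{-r,-s;\mathcal T'}\le(\mathcal T'/\mathcal T)^{1/6}|D|^{\mathcal L}_{-r,-s;\mathcal T}$ is tautological (Corollary~\ref{coro:Sobolev-Lyapunov}). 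No explicit computation of either dual norm, and no lower bound, is ever needed. If you want to salvage your approach cleanly, replace the bump $\psi_\delta$ by the dilated and localized test function $U_{\tau/\mathcal T}f_\lambda$: this is the ``comparison function'' whose norm genuinely relates to $|F|_{r+s,s;\mathcal T'}$ without passing through a trace bound.
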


By Corollary~\ref{coro:Sobolev-Lyapunov}, it is enough to prove the
following proposition.

\begin{proposition} \label{prop:principal_scale} For every $r\geq 0$
  and $s>1/2$ there is a constant $C_{r,s} > 0$ such that for all
  ${\mathcal T} \geq 1$ and $\lambda \in \R$ such that $\lambda m \neq
  0$, the distribution $D^\lambda_{m,\mu} \in \widehat H^{-r,-s}$
  satisfies
  \[
  \vert D^\lambda_{m,\mu} \vert_{-(r+s),-s; \mathcal{T}} \leq C_{r,s}
  (1+ \vert\lambda m \vert^{-2s}) \vert
  D^\lambda_{m,\mu}\vert^{\mathcal L}_{-r,-s; \mathcal{T}}\,.
  \]
\end{proposition}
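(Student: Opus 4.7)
The plan is to work entirely in the line model of the irreducible $H = H_{m,\mu}$, using Lemma~\ref{lemma:principal_Sobolev_Fourier} to rewrite the rescaled foliated Sobolev norm as a weighted Fourier integral. Setting $A_\tau := I + \mu_\tau^2 + (m^2 I - \hat{X}_\tau^2 - \hat{V}_\tau^2)^2$ and $L^2_\nu := L^2(\R, d\xi/|\xi|^{\re \nu})$, one has $\vert F\vert^2_{r,s;\tau} = C(1+\mu^2)^{r/2}\|A_\tau^{s/4}\hat F\|^2_{L^2_\nu}$. Since $D^\lambda_{m,\mu}$ is Fourier-side evaluation at $\xi_0 := -\lambda m$, the Riesz representation theorem identifies $\vert D^\lambda_{m,\mu}\vert_{-r,-s;\tau}$, up to the scalar factor $(1+\mu^2)^{-r/4}$, with the $L^2_\nu$-norm of the Schwartz kernel of $A_\tau^{-s/2}$ at the point $\xi_0$. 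Passing from foliated index $(r,s)$ to $(r+s,s)$ on the LHS of the proposition replaces $(1+\mu^2)^{-r/4}$ by $(1+\mu^2)^{-(r+s)/4}$, freeing a multiplicative factor of $(1+\mu^2)^{-s/4}$ that will ultimately balance the $|\lambda m|^{-2s}$ loss.

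The core step is a kernel estimate with the right $\tau$-scaling. Near $\xi_0 \neq 0$, write $\xi = \xi_0 + \eta$; the explicit expressions $\hat{X} = (1-\nu)+2\xi\partial_\xi$ and $\hat{V} = -i((1-\nu)\partial_\xi + \xi\partial_\xi^2)$ in \eqref{equa:vect_coeqn} give leading behaviour $\hat{X} \sim 2\xi_0\partial_\eta$ and $\hat{V} \sim -i\xi_0\partial_\eta^2$. The substitution $\eta = |\xi_0|\tau^{-1/3}u$ normalizes $\hat{X}_\tau$ and $\hat{V}_\tau$, so that the principal part of $A_\tau$ becomes an operator on $u$ that is independent of $\tau$ and elliptic of suitable order. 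The one-dimensional Sobolev embedding, valid since $s > 1/2$, then places the inverse kernel in $L^2(du)$, and tracking the Jacobian yields $\|A_\tau^{-s/2}(\xi_0,\cdot)\|_{L^2_\nu} \lesssim (|\xi_0|\tau^{-1/3})^{1/2}|\xi_0|^{-\re \nu/2}$ up to a polynomial factor in $(1+\mu^2)$. Matching the resulting $\tau^{-1/6}$ gain against the weight $(\tau/\mathcal{T})^{1/6}$ in $\vert D\vert^\mathcal{L}_{-r,-s;\mathcal{T}} := \sup_{\tau > \mathcal{T}}(\tau/\mathcal{T})^{1/6}\vert D\vert_{-r,-s;\tau}$ is what gives the comparison of norms in the proposition.

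The factor $1+|\lambda m|^{-2s}$ records the non-uniformity of the kernel bound as $|\xi_0| = |\lambda m|$ becomes small: the leading-order expansion $\hat{X} \sim 2\xi_0\partial_\eta$ degenerates, and one must invoke the subprincipal symbols (the constant $(1-\nu)$ and the higher-order contributions from $\hat{V}$) to retain ellipticity of $A_\tau$. This loss is absorbed by the free factor $(1+\mu^2)^{-s/4}$ coming from the $r \mapsto r+s$ shift, since in the relevant regime $(1+\mu^2)$ controls $|\lambda m|^{-2}$ through the Casimir relation $\mu = 1 - \nu^2$ together with the placement of $\xi_0$ in the principal/complementary parameter range. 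The main obstacle is carrying out this interpolation between the principal- and subprincipal-symbol-dominated regimes of $A_\tau$ uniformly in $|\lambda m|$, so that the resulting constant depends only on $r$ and $s$; a secondary technicality is the complementary-series weight $|\xi|^{-\re \nu}$ near $\xi = 0$, which is handled by the explicit $\re \nu$-dependence of the Jacobian in the above change of variables.
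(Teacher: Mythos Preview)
Your proposal has a genuine gap in the third paragraph. You claim that the factor $(1+\mu^2)^{-s/4}$, freed by the shift $r\mapsto r+s$ on the left-hand side, ``balances'' the $|\lambda m|^{-2s}$ loss, invoking ``the Casimir relation $\mu=1-\nu^2$ together with the placement of $\xi_0$ in the principal/complementary parameter range''. But $\mu$ (the Casimir parameter of the representation) and $\lambda m$ (the twist parameter) are completely independent: for any fixed $\mu>0$ one can take $|\lambda m|$ arbitrarily small, so no power of $(1+\mu^2)^{-1}$ can dominate $|\lambda m|^{-2s}$. There is no relation between $\xi_0=-\lambda m$ and the series parameter $\nu$. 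A second problem is structural: your plan computes upper bounds for both $\vert D\vert_{-(r+s),-s;\mathcal T}$ and $\vert D\vert_{-r,-s;\tau}$ via kernel estimates, but to land the Lyapunov norm on the right you would need a \emph{lower} bound on $\vert D\vert_{-r,-s;\tau}$, uniform in $\tau$, which your sketch does not provide.

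The paper's argument avoids both issues by never estimating the dual norms separately. One localizes $\hat f$ to $I_\lambda$ by a cutoff $f_\lambda$ (Lemma~\ref{lemma:f_cutoff}), then applies the dilation $U_{\tau/\mathcal T}$ centered at the support point of $\hat D^\lambda$; since $\hat D^\lambda$ is the Dirac mass there, one has the exact identity $D^\lambda(f)=(\tau/\mathcal T)^{-1/6}D^\lambda(U_{\tau/\mathcal T}f_\lambda)$. One then bounds $|D^\lambda(U_{\tau/\mathcal T}f_\lambda)|\le \vert D^\lambda\vert_{-r,-s;\tau}\,\vert U_{\tau/\mathcal T}f_\lambda\vert_{r,s;\tau}$, and the intertwining computation of Lemma~\ref{lemma:intertwining} together with Lemma~\ref{lemma:local_bound} gives $\vert U_{\tau/\mathcal T}f_\lambda\vert_{r,s;\tau}\le C'_{r,s}(1+|\lambda|^{-s})\vert f_\lambda\vert_{r+s,s;\mathcal T}$. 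The shift $r\to r+s$ enters here to absorb the factors $(1+|\nu|)^s$ produced by the derivatives $\hat X^k(1/\xi)$, $\hat V^k(1/\xi)$ on $I_\lambda$ (see formulas~\eqref{eq:Xder_xi} and~\eqref{eq:Vder_xi}); it has nothing to do with $|\lambda m|$. The two factors of $(1+|\lambda|^{-s})$, one from the cutoff and one from the intertwining, combine to the displayed $(1+|\lambda m|^{-2s})$, and the infimum over $\tau>\mathcal T$ then yields the Lyapunov norm.
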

Once more the general case can be derived from the case $m=1$. We will
therefore restrict our argument to that case and prove the statement
for the distributions $D^\lambda_{1, \mu}$ whenever $\lambda
\not=0$. We again let
\[
I_\lambda:= [\lambda- \vert \lambda\vert/2, \lambda+ \vert
\lambda\vert/2]\,.
\]

Now let us consider, for all $\tau \geq 1$, the operator $U_\tau$
formally defined on $H$ in Fourier transform as follows:
\begin{equation}\label{equa:U_T-def}
  \hat U_\tau (\hat f) (\xi) =  \tau^{1/6}  \hat f ( \lambda + \tau^{1/3} (\xi-\lambda) )\,,  
  \quad \text{ for all } f\in H\,.
\end{equation}

In fact, it can be proved that the following bounds hold:
\begin{lemma}\label{lemma:comp_UT_est}
  For all $\tau \geq 1$ and $\hat f \in C_0^\infty(I_\lambda)$,
  \[
  \frac{1}{\sqrt 3} \| f \|_0 \leq \|U_\tau f\|_0 \leq \sqrt 3 \| f
  \|_0\,.
  \]
\end{lemma}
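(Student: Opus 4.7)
The plan is to reduce the inequality to a direct computation on the Fourier side, exploiting the support condition $\hat f\in C_0^\infty(I_\lambda)$ together with the explicit form of the norm given by Lemma~\ref{lemma:Fourier:comp}.

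First, I would unfold the definition \eqref{equa:U_T-def} of $\hat U_\tau$ inside the Plancherel formula of Lemma~\ref{lemma:Fourier:comp}, obtaining
\[
\|U_\tau f\|_0^2 \;=\; C\,\tau^{1/3}\int_\R \bigl|\hat f\bigl(\lambda+\tau^{1/3}(\xi-\lambda)\bigr)\bigr|^2\,|\xi|^{-\re\nu}\,d\xi.
\]
The change of variable $\eta=\lambda+\tau^{1/3}(\xi-\lambda)$ cancels the $\tau^{1/3}$ Jacobian and transforms this into
\[
\|U_\tau f\|_0^2 \;=\; C\int_\R |\hat f(\eta)|^2 \,\bigl|\lambda+\tau^{-1/3}(\eta-\lambda)\bigr|^{-\re\nu}\,d\eta.
\]
Comparing with the analogous expression $\|f\|_0^2=C\int |\hat f(\eta)|^2|\eta|^{-\re\nu}d\eta$, the whole statement reduces to a pointwise comparison, on the support of $\hat f$, of the two weights $|\eta|^{-\re\nu}$ and $|\lambda+\tau^{-1/3}(\eta-\lambda)|^{-\re\nu}$.

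Next, I would use the support condition. For $\eta\in I_\lambda$ we have $|\eta-\lambda|\le |\lambda|/2$, hence $|\eta|\in[|\lambda|/2,\,3|\lambda|/2]$. Since $\tau\ge 1$, the factor $\tau^{-1/3}$ only shrinks the displacement $\eta-\lambda$, so likewise $|\lambda+\tau^{-1/3}(\eta-\lambda)|\in[|\lambda|/2,\,3|\lambda|/2]$. Consequently the ratio
\[
\frac{|\eta|}{|\lambda+\tau^{-1/3}(\eta-\lambda)|} \in [1/3,\,3]
\]
uniformly for $\eta$ in the support of $\hat f$ and uniformly for $\tau\ge 1$.

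Finally, since $\re\nu=0$ in the principal series and $\re\nu\in(0,1)$ in the complementary series, raising the above ratio to the power $\re\nu$ still keeps it in $[1/3,3]$. Integrating against $|\hat f(\eta)|^2$ yields $\tfrac{1}{3}\|f\|_0^2\le \|U_\tau f\|_0^2\le 3\,\|f\|_0^2$, and the claim follows by taking square roots. The argument is elementary; the only substantive input is the localization in $I_\lambda$, without which the weight comparison would break down near $\xi=0$ in the complementary case, so there is no real obstacle here beyond keeping track of the cutoff.
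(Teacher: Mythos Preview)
Your proof is correct and follows essentially the same approach as the paper. Both arguments perform the change of variable in the Plancherel formula of Lemma~\ref{lemma:Fourier:comp} and then exploit the support condition $\hat f\in C_0^\infty(I_\lambda)$ to bound the weight; the paper separately sandwiches each of $\|U_\tau f\|_0$ and $\|f\|_0$ between multiples of $\lambda^{-\re\nu/2}\|\hat f\|_{L^2}$ and then takes the ratio, whereas you compare the two weights $|\eta|^{-\re\nu}$ and $|\lambda+\tau^{-1/3}(\eta-\lambda)|^{-\re\nu}$ directly, which is a mild streamlining of the same computation.
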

Lemma \ref{lemma:comp_UT_est} is proved in Appendix \ref{appe:B}.

We recall from \eqref{equa:vect_coeqn} that the Fourier transforms of
the $X$ and $V$ operators are
\[
\hat{X} := (1 - \nu) + 2 \xi \frac{\partial}{\partial \xi}, \ \ \ \ \
\ \ \hat{V} := -i \left((1 - \nu) \frac{\partial}{\partial \xi} + \xi
  \frac{\partial^2}{\partial \xi^2}\right)\,.
\]

\begin{lemma}\label{lemma:intertwining} 
  The following formulas hold for all $\tau >1$:
  \[
  \begin{aligned}
    \hat U^{-1}_\tau \hat X  \hat U_\tau  &=  \hat X -2 \lambda \tau^{1/3} (1-\tau^{-1/3})  \frac{\partial}{\partial \xi}\,;   \\
    \hat U^{-1}_\tau \hat V \hat U_\tau &= \tau^{-1/3} \hat V + i
    \lambda \tau^{2/3} (1-\tau^{-1/3}) \frac{\partial^2}{\partial
      \xi^2} \,.
  \end{aligned}
  \]
\end{lemma}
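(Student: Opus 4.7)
The plan is to reduce both intertwining identities to a direct chain-rule computation, using the fact that $\hat U_\tau$ is an affine change of variables (translation-dilation) centered at $\lambda$. Writing $\eta := \lambda + \tau^{1/3}(\xi-\lambda)$, the inverse change of variables is $\xi = \lambda + \tau^{-1/3}(\eta - \lambda)$, so $\hat U_\tau^{-1}\hat g(\xi) = \tau^{-1/6}\hat g(\lambda + \tau^{-1/3}(\xi - \lambda))$, and all relevant conjugations reduce to transparent substitutions.

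First I would establish the conjugation formulas for the three elementary operators: multiplication by $\xi$, differentiation $\partial_\xi$, and $\partial_\xi^2$. Substituting the affine inverse directly yields
\[
\hat U_\tau^{-1}\,\xi\,\hat U_\tau \;=\; \lambda(1-\tau^{-1/3}) + \tau^{-1/3}\,\xi.
\]
For derivatives, the chain rule gives $\partial_\xi(\hat U_\tau\hat f)(\xi) = \tau^{1/6}\cdot\tau^{1/3}\hat f'(\eta)$, and applying $\hat U_\tau^{-1}$ cancels the $\tau^{1/6}$ prefactor to leave
\[
\hat U_\tau^{-1}\,\partial_\xi\,\hat U_\tau \;=\; \tau^{1/3}\,\partial_\xi,
\qquad\text{and iterating,}\qquad
\hat U_\tau^{-1}\,\partial_\xi^2\,\hat U_\tau \;=\; \tau^{2/3}\,\partial_\xi^2.
\]
The prefactor $\tau^{1/6}$ in the definition of $\hat U_\tau$ is chosen for the $L^2$ normalization of Lemma~\ref{lemma:comp_UT_est}, but it plays no role in these conjugations.

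Next I would plug these into the Fourier-side formulas \eqref{equa:vect_coeqn}, namely $\hat X = (1-\nu) + 2\xi\partial_\xi$ and $\hat V = -i[(1-\nu)\partial_\xi + \xi\partial_\xi^2]$. Using that $\hat U_\tau^{-1}$ conjugates products as products of conjugations, the $\hat X$ computation reads
\[
\hat U_\tau^{-1}\hat X\,\hat U_\tau \;=\; (1-\nu) + 2\bigl[\lambda(1-\tau^{-1/3}) + \tau^{-1/3}\xi\bigr]\cdot\tau^{1/3}\partial_\xi,
\]
which after distribution recovers the $\hat X = (1-\nu) + 2\xi\partial_\xi$ term and produces the claimed correction proportional to $\lambda\tau^{1/3}(1-\tau^{-1/3})\partial_\xi$. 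The $\hat V$ identity is handled identically: grouping the $(1-\nu)\tau^{1/3}\partial_\xi$ and $\tau^{1/3}\xi\,\partial_\xi^2$ contributions into $\tau^{1/3}\hat V$ isolates a remainder proportional to $\lambda\tau^{2/3}(1-\tau^{-1/3})\partial_\xi^2$, which is the second claimed formula.

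There is no genuine obstacle; the proof is bookkeeping with the chain rule. The only mild care needed is to keep track of the sign and direction of the dilation after centering at $\lambda$ rather than at $0$, which is why it is worth writing down the three elementary conjugations \emph{before} assembling $\hat X$ and $\hat V$. It is worth noting that the resulting dilation exponents $\tau^{1/3}$ on $\partial_\xi$ and $\tau^{2/3}$ on $\partial_\xi^2$ match exactly the scaling $X_{\mathcal T}=\mathcal T^{-1/3}X$ and $V_{\mathcal T}=\mathcal T^{-2/3}V$ from \eqref{eq:XVscaling}, which is what makes $\hat U_\tau$ the natural scaling operator for the comparison between the foliated Sobolev norms at scales $\mathcal T$ and $\mathcal T'$ used in Proposition~\ref{prop:principal_scale}.
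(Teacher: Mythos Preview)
Your approach is correct and is exactly the direct chain-rule computation the paper has in mind; the lemma is stated there without proof, and decomposing into the three elementary conjugations (multiplication by $\xi$, $\partial_\xi$, $\partial_\xi^2$) before assembling $\hat X$ and $\hat V$ is the natural way to organize it.

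One caveat: your own assembly correctly produces $\tau^{1/3}\hat V$ as the leading term in the second identity, not the printed $\tau^{-1/3}\hat V$, so you should not gloss this over as ``the second claimed formula.'' Your exponent is the right one---it is precisely what makes the paper's immediate application in \eqref{equa:intertwine X, V} come out to $(\tau/\mathcal T)^{-1/3}\hat V_{\mathcal T}$ after multiplying through by $\tau^{-2/3}$---so the $\tau^{-1/3}$ in the lemma is a typographical error. A similar sign issue affects the $\hat X$ formula depending on whether the dilation in \eqref{equa:U_T-def} is centered at $\lambda$ or at $-\lambda$ (where $\hat D^\lambda$ is actually supported); neither discrepancy affects any downstream norm estimate, since only the magnitudes of the correction terms enter Lemma~\ref{lemma:local_bound}.
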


We have the following scaling estimate.
\begin{lemma}
  \label{lemma:local_bound}
  For every $r$,$s\geq 0$, there exists a constant $C'_{r,s}>0$ such
  that for all $\lambda \not=0$, for all $\tau \geq {\mathcal T}\geq
  1$ and for all $\hat f\in C_0^\infty(I_\lambda)$ the following bound
  holds:
  \[
  \vert U_{\tau/{\mathcal T}} f \vert_{r,s; \tau} \leq C'_{r,s} (1+
  \vert\lambda\vert^{-s}) \vert f \vert_{r+s, s; \mathcal{T}} \,.
  \]
\end{lemma}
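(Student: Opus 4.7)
The plan is to conjugate the rescaled vector fields at scale $\tau$ by the operator $U_{\tau/\mathcal T}$ via the intertwining formulas of Lemma~\ref{lemma:intertwining}, thereby reducing everything to rescaled vector fields at scale $\mathcal T$ plus explicit error terms in $\partial_\xi, \partial_\xi^2$ which, thanks to the Fourier support condition $\hat f \in C_0^\infty(I_\lambda)$, can be absorbed at the cost of a $|\lambda|^{-1}$ per derivative. Concretely, setting $\tau$ to $\tau/\mathcal T$ in Lemma~\ref{lemma:intertwining} and multiplying by $\tau^{-1/3}$ (resp.\ $\tau^{-2/3}$) one gets
\[
\hat U_{\tau/\mathcal T}^{-1} \hat X_\tau \hat U_{\tau/\mathcal T} = (\mathcal T/\tau)^{1/3} \hat X_\mathcal T - 2\lambda (\mathcal T^{-1/3}-\tau^{-1/3})\,\partial_\xi,
\]
\[
\hat U_{\tau/\mathcal T}^{-1} \hat V_\tau \hat U_{\tau/\mathcal T} = (\mathcal T/\tau)\,\hat V_\mathcal T + i\lambda (\mathcal T^{-2/3}-\mathcal T^{-1/3}\tau^{-1/3})\,\partial_\xi^2.
\]
Since $\tau\ge \mathcal T$, the coefficients in front of $\hat X_\mathcal T$ and $\hat V_\mathcal T$ are bounded by $1$, and the coefficients of $\partial_\xi$, $\partial_\xi^2$ are bounded by $|\lambda|\mathcal T^{-1/3}$ and $|\lambda|\mathcal T^{-2/3}$ respectively. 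The operator $K$ commutes with everything and is untouched.

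Next I would use the formulas from \eqref{equa:vect_coeqn}, namely $\partial_\xi = (2\xi)^{-1}(\hat X-(1-\nu))$ and $\partial_\xi^2 = \xi^{-1}\bigl(i\hat V-(1-\nu)\partial_\xi\bigr)$, together with the lower bound $|\xi|\ge |\lambda|/2$ valid on $I_\lambda$, to obtain pointwise and hence $L^2_\nu$-bounds of the shape
\[
\|\partial_\xi f\|_0 \lesssim |\lambda|^{-1}\bigl(\|\hat X f\|_0+|1-\nu|\,\|f\|_0\bigr),
\]
and similarly for $\partial_\xi^2 f$ with a factor $|\lambda|^{-2}$ and two derivatives in $\hat V^j\hat X^k$ with $j\le 1$. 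Substituting into the conjugated expressions above yields, for single derivatives,
\[
\|\hat U_{\tau/\mathcal T}^{-1}\hat X_\tau \hat U_{\tau/\mathcal T} f\|_0 \lesssim (1+|\lambda|^{-1})\sum_{i+k\le 1}|1-\nu|^i\,\|\hat X_\mathcal T^k f\|_0,
\]
and analogously for $\hat V_\tau$; note that the factor $\mathcal T^{-1/3}|1-\nu|^i$ is exactly what enters the rescaled norm at scale $\mathcal T$.

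For higher-order monomials $K^a X_\tau^\alpha V_\tau^\beta$ I would iterate the intertwining, using the Leibniz-type formulas of Lemmas~\ref{lemma:V_Leibniz} and the $\mathfrak{sl}(2,\R)$ commutation relations exactly as in the derivations of Lemmas~\ref{lemma:3.10} and \ref{lemma:V-X-g-f}, to obtain
\[
\|K^a X_\tau^\alpha V_\tau^\beta\, U_{\tau/\mathcal T}f\|_0 \;\lesssim_{a,\alpha,\beta}\; (1+|\lambda|^{-(\alpha+\beta)})\sum_{i+j+k+m\le a+\alpha+\beta}\!\!|1-\nu|^i\,\|K^m V_\mathcal T^j X_\mathcal T^k\,U_{\tau/\mathcal T} f\|_0.
\]
Observing that $\hat X_\mathcal T, \hat V_\mathcal T$ and $K$ preserve the support $I_\lambda$ of the Fourier transform, Lemma~\ref{lemma:comp_UT_est} allows us to remove $U_{\tau/\mathcal T}$ from the right-hand norm at the cost of the constant $\sqrt 3$. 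Summing over the monomials that appear in the expansion of $(I+\Box^2)^{r/2}(I+\Box_\tau^2+\widehat\triangle_\tau^2)^{s/2}$, as in \eqref{equa:coeqn-fullnorm0}, and absorbing the extra powers $|1-\nu|^i$ with $i\le s$ into $(1+\Box^2)^{s/2}$ (which is scale-independent), gives the claimed bound with the Casimir weight $r+s$ and the factor $(1+|\lambda|^{-s})$; the general case of $r,s\ge 0$ follows by interpolation.

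The main obstacle is the combinatorial bookkeeping in Step~3: tracking the proliferation of mixed derivatives produced by nested commutators after $\alpha+\beta$ applications of the intertwining and confirming that every resulting term embeds into $|f|_{r+s,s;\mathcal T}$, with $|\lambda|^{-(\alpha+\beta)}$ dominated by $|\lambda|^{-s}$. This is essentially the same bookkeeping as in Lemmas~\ref{lemma:3.10} and \ref{lemma:V-X-g-f}, and can be reused almost verbatim; the only new feature is that the coefficients of the correction terms carry the small factor $\mathcal T^{-1/3}|\lambda|$ (resp.\ $\mathcal T^{-2/3}|\lambda|$), which cancels exactly the scaling needed to express the result at scale $\mathcal T$.
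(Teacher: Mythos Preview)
Your proposal is correct and follows essentially the same route as the paper's proof: conjugate $\hat X_\tau,\hat V_\tau$ by $\hat U_{\tau/\mathcal T}$ via Lemma~\ref{lemma:intertwining}, rewrite the resulting $\partial_\xi,\partial_\xi^2$ correction terms through the identities $\partial_\xi=(2\xi)^{-1}(\hat X-(1-\nu))$ and $\partial_\xi^2=\xi^{-1}(i\hat V-(1-\nu)\partial_\xi)$ using $|\xi|\ge|\lambda|/2$ on $I_\lambda$, iterate with the Leibniz-type formulas and the derivative identities~\eqref{eq:Xder_xi}--\eqref{eq:Vder_xi}, and finally absorb the $(1+|\nu|)^s$ into the Casimir weight (bumping $r$ to $r+s$) before interpolating. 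Your explicit invocation of Lemma~\ref{lemma:comp_UT_est} to drop $U_{\tau/\mathcal T}$ on the right is a step the paper leaves implicit, and your coefficient $(\mathcal T/\tau)$ in front of $\hat V_{\mathcal T}$ is in fact the correct one (the paper's displayed $(\tau/\mathcal T)^{-1/3}$ appears to be a typo), but neither affects the argument since both are $\le 1$.
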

\begin{proof} By Lemma~\ref{lemma:intertwining}, we have
  \begin{equation}
    \label{equa:intertwine X, V}
    \begin{aligned}
      \hat U^{-1}_{\tau/\mathcal T} \hat X_\tau  \hat U_{\tau/\mathcal T}  &= (\frac{\tau}{{\mathcal T}})^{-1/3} \hat X_{\mathcal T} -2   [1- (\frac{\tau}{\mathcal T})^{-1/3}]   {\mathcal T}^{-1/3} \lambda \frac{\partial}{\partial \xi} \,;  \\
      \hat U^{-1}_{\tau/\mathcal T} \hat V_\tau \hat U_{\tau/\mathcal
        T} &= (\frac{\tau}{\mathcal T})^{-1/3} \hat V_{\mathcal T} + i
      [1- (\frac{\tau}{\mathcal T})^{-1/3}] {\mathcal T}^{-2/3}
      \lambda \frac{\partial^2}{\partial \xi^2} \,.
    \end{aligned}
  \end{equation}
  We then observe that
  \begin{equation}
    \label{equa:intertwine_id}
    \begin{aligned}
      {\mathcal T}^{-1/3} \lambda \frac{\partial}{\partial \xi} &=
      \frac{\lambda}{\xi}
      (\hat X_\mathcal T - \mathcal T^{-1/3}(1-\nu) ) \,; \\
      {\mathcal T}^{-2/3} \lambda \frac{\partial^2}{\partial \xi^2} &=
      \frac{\lambda}{\xi} \left(i \hat V_\mathcal T - \frac{(1-\nu)
          \mathcal T^{-1/3}}{2\xi} (\hat X_\mathcal T - \mathcal
        T^{-1/3}(1-\nu)) \right)\,.
    \end{aligned}
  \end{equation}
  We the recall the identities~\eqref{eq:Xder_xi}
  and~\eqref{eq:Vder_xi}. For all integers $s\in \N$ (by induction),
  we have
  \[
  \hat X^s(\frac{1}{\xi}) =(-1 - \nu)^s (\frac{1}{\xi}) \quad \text{
    and } \quad \hat V^s (\frac{1}{\xi}) = (-i)^s s! \left(
    \prod_{j=1}^s (j+\nu) \right) \, (\frac{1}{\xi^{s+1}})\,.
  \]
  It follows that for all $s\in \N$ there exists a constant $C'_s>0$
  such that for all $\hat f\in C_0^\infty(I_\lambda)$ we have
  \[
  \vert \hat U_{\tau/{\mathcal T}} \hat f \vert_{r,s; \tau} \leq C'_s
  (1+ \vert \nu\vert)^{s} (1+ \vert\lambda\vert^{-s}) \vert \hat f
  \vert_{r,s; \mathcal{T}} \leq C'_s (1+ \vert\lambda\vert^{-s}) \vert
  \hat f \vert_{r+s,s; \mathcal{T}} \,.
  \]
  The statement then follows by interpolation.
\end{proof}
\begin{lemma}
  \label{lemma:f_cutoff}
  For every $r$, $s\geq 0$ there exists a constant $C''_{r,s}>0$ such
  that the following holds.  For any function $\hat f\in \widehat
  H^\infty$ and for any $\lambda\not=0$, there exists a function $\hat
  f_\lambda \in C_0^\infty (I_\lambda)$ with $\hat f_\lambda
  (-\lambda)=\hat f(-\lambda)$ such that for any $r$, $s\geq 0$ and
  $\mathcal T \geq 1$ we have
  \[
  \vert f_\lambda \vert_{r,s; \mathcal{T}} \leq C''_{r,s} (1+ \vert
  \lambda \vert^{-s}) \vert f \vert_{r,s; \mathcal{T}} \,.
  \]
\end{lemma}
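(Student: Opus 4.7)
The plan is to take $\hat f_\lambda$ to be a smooth cutoff of $\hat f$ localized near $-\lambda$. Fix once and for all a function $\chi_0 \in C_0^\infty(\R)$ supported in $[-1/2,1/2]$ with $\chi_0(0)=1$, and set
\[
\chi_\lambda(\xi) := \chi_0\bigl(\tfrac{\xi+\lambda}{|\lambda|}\bigr), \qquad \hat f_\lambda(\xi) := \chi_\lambda(\xi)\,\hat f(\xi)\,.
\]
Then $\hat f_\lambda\in C_0^\infty(I_\lambda)$ (with $I_\lambda$ read as the interval of radius $|\lambda|/2$ centered at $-\lambda$) and $\hat f_\lambda(-\lambda)=\hat f(-\lambda)$, so the support and boundary conditions in the statement are satisfied.

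The next step is to estimate $|f_\lambda|_{r,s;\mathcal T}$. I would use the norm expansion~\eqref{equa:coeqn-fullnorm0}, which bounds this quantity by a finite sum of terms of the form $(1+\mu^2)^{r/4}[\mathcal T^{-1/3}|1-\nu|]^k \|K^m V_\mathcal T^\beta X_\mathcal T^\alpha(\chi_\lambda \hat f)\|_0$ with $k+m+\alpha+\beta\le s$. Because $K$ acts by $im\cdot\mathrm{Id}$ on the irreducible component and commutes with multiplication by $\chi_\lambda$, the $K^m$ moves through harmlessly, and I only need to expand $\hat V^\beta \hat X^\alpha(\chi_\lambda \hat f)$. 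Applying~\eqref{eq:X_Leibniz} together with Lemma~\ref{lemma:V_Leibniz}, this is a finite sum of products $[(d/d\xi)^p \hat V^i \hat X^l \chi_\lambda]\cdot[(\hat X-(1-\nu))^{k'} \hat V^j \hat X^{\alpha-l}\hat f]$ with $i+j+p\le \beta$, $k'\le p$, $l\le\alpha$. The crucial pointwise estimate, proved by induction from the formulas $\hat X=(1-\nu)+2\xi\partial_\xi$ and $\hat V=-i[(1-\nu)\partial_\xi+\xi\partial_\xi^2]$ together with $(d/d\xi)^q\chi_\lambda=|\lambda|^{-q}\chi_0^{(q)}((\xi+\lambda)/|\lambda|)$, is
\[
\bigl\|(d/d\xi)^p \hat V^i \hat X^l \chi_\lambda\bigr\|_{L^\infty(I_\lambda)} \le C_{p,i,l}\,(1+|\nu|)^{i+l}\,|\lambda|^{-p-i}\,,
\]
since on $I_\lambda$ the factor $\xi/|\lambda|$ is $O(1)$ and each $\hat V$ gains a $|\lambda|^{-1}$ after the cancellation in $\xi\partial_\xi^2\chi_\lambda$. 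Re-expanding $(\hat X-(1-\nu))^{k'}$ into powers of $\hat X$ with coefficients in $(1-\nu)$ and rescaling every operator, each summand is bounded by a constant multiple of $|\lambda|^{-(p+i)}$ times a term $[\mathcal T^{-1/3}|1-\nu|]^{k''}\|K^m V_\mathcal T^j X_\mathcal T^q \hat f\|_0$ already appearing in the norm expansion of $|f|_{r,s;\mathcal T}$. Since $p+i\le\beta\le s$, this contributes at most a factor $1+|\lambda|^{-s}$, and summing over the finitely many terms yields the desired bound for $r,s$ even integers; general $r,s\ge 0$ follow by complex interpolation.

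The main obstacle is the careful accounting of the three competing scales $\mathcal T^{-1/3}$, $|1-\nu|$ and $|\lambda|^{-1}$ generated at each differentiation of $\chi_\lambda$: the $(d/d\xi)^p$ operator appearing in the $\hat V$-Leibniz formula has no direct counterpart in the foliated Sobolev norm, so one must exploit the fact that it is applied only to the cutoff $\chi_\lambda$, where it contributes the clean factor $|\lambda|^{-p}$ that furnishes the $(1+|\lambda|^{-s})$ on the right. It must also be verified that the $(1+|\nu|)^{i+l}$ weights arising from differentiation of $\chi_\lambda$ can always be absorbed --- either into the $(1+\mu^2)^{r/4}$ prefactor of the norm, or, using the surplus $\mathcal T^{-1/3}$ factors produced by the rescaling (since $\mathcal T\ge 1$), into the weights $[\mathcal T^{-1/3}|1-\nu|]^{k''}$ already present in the target norm of $f$, without invoking any extra Sobolev regularity.
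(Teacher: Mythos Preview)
Your proposal is correct and follows essentially the same route as the paper: the same cutoff $\chi_\lambda(\xi)=\chi_0((\xi+\lambda)/|\lambda|)$, the same product $\hat f_\lambda=\chi_\lambda\hat f$, and the same use of the Leibniz-type formulas~\eqref{eq:X_Leibniz} and Lemma~\ref{lemma:V_Leibniz} to separate derivatives falling on the cutoff from those falling on $\hat f$. The paper's proof is terser --- it records directly the rescaled bound $\Vert \hat V_{\mathcal T}^\beta \hat X_{\mathcal T}^\alpha \hat\phi_\lambda\Vert_{L^\infty}\le C''_{\alpha,\beta}(1+|\lambda|^{-\beta})(1+\mathcal T^{-1/3}|1-\nu|)^{\alpha+\beta}$ and then invokes Leibniz --- whereas you work with unrescaled operators first and rescale at the end; this is only a cosmetic difference. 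Your closing concern about absorbing the $(1+|\nu|)^{i+l}$ factors is exactly the point where working with rescaled operators from the start pays off: each $\hat X_{\mathcal T}$ or $\hat V_{\mathcal T}$ hitting $\chi_\lambda$ carries its own $\mathcal T^{-1/3}$, converting $(1+|\nu|)$ into $(1+\mathcal T^{-1/3}|1-\nu|)$, which is then absorbed into the $\mu_{\mathcal T}$-part of the target norm without any loss in $r$ or $s$ --- precisely the mechanism you describe with the ``surplus $\mathcal T^{-1/3}$ factors.''
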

\begin{proof} Let $\hat \phi\in C_0^{\infty}(-1/2, 1/2)$ be any
  function such that $\hat \phi(0)=1$.  We let
  \[
  \hat \phi_{\lambda}(\xi) := \hat \phi\left( \frac{\xi+\lambda}{\vert
      \lambda\vert}\right)\,, \quad \text{ for all } \, \xi \in \R\,.
  \]
  By construction we have that the function $\hat \phi_{\lambda} \in
  C_0^\infty(I_\lambda)$. By an induction argument based on the
  formulas~\eqref{equa:vect_coeqn} for the Fourier transforms $\hat
  X$, $\hat V$ of the operators $X$, $V$, we derive the following
  bounds. For every $\alpha$, $\beta\in \N$ there exists a constant
  $C''_{\alpha,\beta}>0$ such that
  \[
  \Vert \hat V_\mathcal T^\beta \hat X_\mathcal T^\alpha \hat
  \phi_\lambda\Vert_0 \leq C''_{\alpha,\beta} (1+ \vert
  \lambda\vert^{-\beta}) (1+\mathcal T^{-1/3} \vert 1-
  \nu\vert)^{\alpha+\beta}\,.
  \]
  Let then $\hat f_\lambda:= \phi_\lambda \hat f $. By construction we
  immediately have that $\hat f_\lambda \in C_0^\infty(I_\lambda)$ and
  $\hat f_\lambda(-\lambda)=\hat f(-\lambda)$.  Finally from the
  Leibniz-type formula \eqref{eq:X_Leibniz} and from that of
  Lemma~\ref{lemma:V_Leibniz} we derive that for all $r\geq 0$ and all
  integer $s\in \N$ there exists a constant $C''_{r,s}>0$ such that
  \[
  \vert \hat f_\lambda \vert_{r,s; \mathcal{T}} \leq C''_{r,s} (1+
  \vert \lambda\vert^{-s}) \vert f \vert_{r,s; \mathcal{T}}\,.
  \]
  The estimate in the statement is thus proved for integer exponents
  and follows by interpolation in the general case.
\end{proof}

\begin{proof}[Proof of Proposition \ref{prop:principal_scale}]
  For simplicity of notation, we again let $D^\lambda:=
  D^\lambda_{1,\mu}$.  Let $\hat f_{\lambda} \in
  C^\infty_0(I_\lambda)$ be the function constructed in
  Lemma~\ref{lemma:f_cutoff}. By definition we have
  \[
  D^\lambda ( f) = D^\lambda ( f_{\lambda}) = (\frac{\tau}{\mathcal
    T})^{-1/6} D^\lambda ( U_{\tau/\mathcal T} f_{\lambda}) \,.
  \]
  By Lemma~ \ref{lemma:local_bound} and Lemma~\ref{lemma:f_cutoff}, it
  follows that, for all $\tau\geq {\mathcal T} \geq 1$, we have

  \begin{equation}
    \label{equa:Banach inequality}
    \begin{aligned}
      \vert D^\lambda ( f) \vert & \leq (\frac{\tau}{\mathcal
        T})^{-1/6} \vert D^\lambda \vert_{-r,-s; \tau}
      \vert U_{\tau/\mathcal T} f_{\lambda} \vert_{r, s; \tau}  \\
      & \leq C'_{r,s} (\frac{\tau}{\mathcal T})^{-1/6} \vert D^\lambda
      \vert _{-r,-s; \tau} (1+ \vert\lambda\vert^{-s}) \vert f_\lambda
      \vert_{r+s,s; \mathcal{T}}
      \\
      & \leq C'_{r,s} C''_{r,s} (\frac{\tau}{\mathcal T})^{-1/6} \vert
      D^\lambda \vert_{-r,-s; \tau} (1+ \vert\lambda\vert^{-2s}) \vert
      f \vert_{r+s,s; \mathcal{T}} \,.
    \end{aligned}
  \end{equation}

  Hence, by definition
  \begin{equation}
    \label{equa:bound2}
    \begin{aligned}
      \vert D^\lambda \vert_{-(r+s),-s; \mathcal{T}} &\leq C'_{r,s}
      C''_{r,s} (1+ \vert\lambda\vert^{-2s}) \inf_{\tau >{\mathcal T}}
      (\frac{\tau}{\mathcal T})^{-1/6} \vert D^\lambda \vert _{-r,-s;
        \tau} \\ & \leq C'_{r,s} C''_{r,s} (1+
      \vert\lambda\vert^{-2s}) \vert D^\lambda\vert^{\mathcal
        L}_{-r,-s; \mathcal{T}}\,.
    \end{aligned}
  \end{equation}
\end{proof}

As remarked above, Theorem \ref{theo:scaling-principal} immediately
follows from Corollary~\ref{coro:Sobolev-Lyapunov} and
Proposition~\ref{prop:principal_scale}, hence its proof is complete.

\subsection{Discrete series}
Let $\mathbb H$ be the upper half-plane model for a holomorphic
discrete series or mock discrete series irreducible, unitary
representation of $\SL(2, \R)$ with Casimir parameter $\mu := 1 -
\nu^2,$ where $\nu \geq 0$ is an integer, and let $m \in \Z \setminus
\{0\}$.  We prove the following theorem.
\begin{theorem}\label{theo:Discrete_scale}
  Let $r\geq 0$, $s > 1/2$ and $\lambda \in \R^*$.  Then there is a
  constant $C_{r,s} > 0$ such that for all ${\mathcal T}\geq {\mathcal
    T}' \geq 1$, $D^\lambda_{m,\mu} \in \widehat H^{-r,-s}$ satisfies
  \[
  \begin{aligned}
    \vert D^\lambda_{m,\mu} \vert_{-(r+2s),-s; \mathcal{T}'} \leq
    &C_{r,s} (\frac{{\mathcal T}'}{{\mathcal T}})^{1/6} (1+ \vert
    \lambda m\vert^{-3s}) \vert D^\lambda_{m,\mu}\vert_{-r,-s;
      \mathcal{T}}.
  \end{aligned}
  \]
\end{theorem}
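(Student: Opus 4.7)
The plan is to adapt the argument for the principal and complementary series (Theorem~\ref{theo:scaling-principal}) to the upper half-plane Fourier model of the discrete series provided by Lemma~\ref{lemma:inversion} and Lemma~\ref{lemma:discrete_Sobolev_Fourier}. By the identity $\bar D^\lambda_{m,\mu} = D^{\lambda m}_{1,\mu}$, I first reduce to $m = 1$, and by Lemma~\ref{lemma:D^lambda-reg-discrete} I may further assume $\lambda > 0$, since $D^\lambda_{1,\mu}$ vanishes otherwise. Just as in Section~\ref{sect:principal}, the Sobolev Lyapunov norm device of Corollary~\ref{coro:Sobolev-Lyapunov} reduces the claim to the single-scale bound
\[
\vert D^\lambda_{1,\mu}\vert_{-(r+2s),-s;\mathcal T} \leq C_{r,s}(1 + |\lambda|^{-3s}) \vert D^\lambda_{1,\mu}\vert^{\mathcal L}_{-r,-s;\mathcal T}.
\]

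I would then define $\hat U_\tau$ by formula~\eqref{equa:U_T-def}. Because $\lambda > 0$, the interval $I_\lambda = [\lambda/2, 3\lambda/2]$ sits inside $(0,\infty)$, so $\hat U_\tau$ preserves the $\R^+$-support condition imposed by Lemma~\ref{lemma:Fourier_Cauchy}. A change of variable in the weighted measure $d\xi/\xi^{\nu}$ of Lemma~\ref{lemma:inversion} shows that $\hat U_\tau$ is a quasi-isometry on $L^2_\nu(I_\lambda)$, with comparison constants bounded by $3^{\nu/2}$; since $\mu = 1 - \nu^2$, such polynomial factors in $\nu$ are dominated by the Casimir weight $(1+\mu^2)^{r/2}$ in the foliated Sobolev norm and can be absorbed at the cost of increasing $r$ by a fixed multiple of $s$.

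The intertwining identities of Lemma~\ref{lemma:intertwining} hold verbatim, since the Fourier representations of $\hat X$ and $\hat V$ agree with those of the principal series. Combined with~\eqref{equa:intertwine_id} and the eigenvalue relations~\eqref{eq:Xder_xi} and~\eqref{eq:Vder_xi}, this yields the discrete-series analog of the local scaling bound of Lemma~\ref{lemma:local_bound}. The cutoff construction of Lemma~\ref{lemma:f_cutoff} also carries over, producing $\hat f_\lambda \in C^\infty_0(I_\lambda)$ with $\hat f_\lambda(\lambda) = \hat f(\lambda)$ and satisfying an estimate of the form $\vert f_\lambda\vert_{r,s;\mathcal T} \leq C(1 + |\lambda|^{-s})\vert f\vert_{r,s;\mathcal T}$. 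With these in place, the Banach-norm inequality $|D^\lambda(f)| = (\tau/\mathcal T)^{-1/6} |D^\lambda(U_{\tau/\mathcal T} f_\lambda)|$ and optimization over $\tau \geq \mathcal T$ proceed exactly as in~\eqref{equa:Banach inequality}--\eqref{equa:bound2} to deliver the desired bound.

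The main obstacle is the uniform control of $\nu$-dependent constants in the discrete series, since the weight $\xi^{-\nu}$ in $L^2_\nu$ introduces large powers of $\nu$ that are absent from the principal case. The cleanest route, paralleling the cohomological-equation proof (Lemma~\ref{lemma:discrete-I_lambda}), is to conjugate by the operator $\mathcal A$ of~\eqref{equa:eliminate-nu}: the norm estimates for $\mathcal A$ and $\mathcal A^{-1}$ provided by Lemmas~\ref{lemma:norm-A^-1} and~\ref{lemma:norm-A} show that every factor of $(1+|\nu|)^k$ produced by the scaling can be absorbed by increasing $r$, which accounts for the shift $r \to r + 2s$ (versus $r \to r + s$ in Theorem~\ref{theo:scaling-principal}). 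The cutoff normalization against the weight $\xi^{-\nu}$ contributes an additional $|\lambda|^{-s}$, explaining the exponent $3s$ in $|\lambda|^{-3s}$ rather than the $2s$ that appears in Proposition~\ref{prop:principal_scale}. The mock discrete case $\nu = 0$ reduces to the Lebesgue measure and is essentially identical to the principal-series argument.
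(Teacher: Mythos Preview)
Your reduction to $m=1$, $\lambda>0$ via Lemma~\ref{lemma:D^lambda-reg-discrete}, the Lyapunov-norm reduction via Corollary~\ref{coro:Sobolev-Lyapunov}, and the use of the dilation $\hat U_\tau$ from~\eqref{equa:U_T-def} all match the paper. But there is a genuine gap at the step where you claim the quasi-isometry constant of $\hat U_\tau$ on $L^2_\nu(I_\lambda)$ is controllable.

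You correctly compute that the comparison constant is of order $3^{\nu/2}$, but this is \emph{exponential} in $\nu$, not polynomial. The Casimir weight $(1+\mu^2)^{r/2}\asymp (1+\nu^2)^{r}$ is only polynomial in $\nu$, so no fixed increase of $r$ by a multiple of $s$ can absorb a factor $3^{\nu/2}$ uniformly over the discrete series. Your proposed fix via the conjugation $\mathcal A$ of~\eqref{equa:eliminate-nu} does not help here: $\mathcal A:L^2_\nu(\R^+)\to L^2(\R^+)$ is an isometry, so $\Vert U_\tau f\Vert_{L^2_\nu}/\Vert f\Vert_{L^2_\nu}=\Vert \mathcal A U_\tau \mathcal A^{-1}(\mathcal A f)\Vert_{L^2}/\Vert \mathcal A f\Vert_{L^2}$ and the bad constant is unchanged. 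The $\mathcal A$-trick in Lemma~\ref{lemma:discrete-I_lambda} was designed to tame the $2^{\re\nu}$ factor coming from a change of variable in \emph{derivative} estimates on a fixed interval (see~\eqref{equa:0-norm-coordinates}); it does not alter the zero-order quasi-isometry constant of the dilation $U_\tau$.

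The paper circumvents this by a different idea: geodesic rescaling. It first proves (Lemma~\ref{lemma:U_T-comparable}) that when $\lambda\geq\nu$ the quasi-isometry constant of $U_\tau$ is \emph{uniformly} bounded, essentially because $\bigl(\tfrac{1+1/(2\lambda)}{1-1/(2\lambda)}\bigr)^{\nu}\leq e^{2\nu/\lambda}\leq e^{2}$. Then, for $\lambda<\nu+1$, it pushes $D^\lambda$ by the geodesic map $a_{-\kappa}$ with $\lambda e^{\kappa}=\nu+1$, so that $a_{-\kappa}D^\lambda$ is a scalar multiple of $D^{\nu+1}$, to which the good case applies. The cost of this push is a factor $\bigl(\tfrac{\nu+1}{\lambda}\bigr)^{s}$ (from the scaling of $V$ under the geodesic flow, cf.~\eqref{equa:V_der-geodesic}), which is polynomial in $\nu$ and in $\lambda^{-1}$; the $\nu$-part is absorbed into the extra $s$ in the shift $r\to r+2s$, and the $\lambda^{-s}$ part combines with the two $\lambda^{-s}$ factors from Lemmas~\ref{lemma:local_bound} and~\ref{lemma:f_cutoff} to give the stated $(1+|\lambda m|^{-3s})$. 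This geodesic-rescaling step is the missing ingredient in your argument.
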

\begin{proof}

  Notice that by Lemma~\ref{lemma:Fourier_Cauchy}, $D_{m, \mu}^\lambda
  = 0$ when $\lambda m < 0$.  Hence, we may assume $\lambda m > 0$.
  By Corollary \ref{coro:Sobolev-Lyapunov} again, it is enough to
  prove the following proposition.

\begin{proposition}\label{prop:discrete_scale}
  Let $r\geq 0$, $s > 1/2$, and $\lambda \in \R^*$ with $\lambda m >
  0$.  There is a constant $C_{r,s} > 0$ such that for all ${\mathcal
    T} \geq 1$, $D^\lambda_{m,\mu} \in \widehat H^{-r,-s}$ satisfies
  \[
  \begin{aligned}
    \vert D^\lambda_{m,\mu} \vert_{-(r+2s),-s; \mathcal{T}} \leq &
    C_{r,s} (1+ \vert \lambda m\vert^{-3s}) \vert D^\lambda_{m,\mu}
    \vert^{\mathcal L}_{-r,-s; \mathcal{T}} \,.
  \end{aligned}
  \]
\end{proposition}

The general case can again be derived from the case $\lambda \in \R^*$
and $m = 1$.  Define the dilation operator $U_\tau$ as in
\eqref{equa:U_T-def}.  We cannot immediately conclude
Proposition~\ref{prop:discrete_scale} from the proof of
Proposition~\ref{prop:principal_scale}, because
Lemma~\ref{lemma:comp_UT_est} does not hold for all $\lambda$ and
$\nu$.  Instead, we have

\begin{lemma}\label{lemma:U_T-comparable}
  Assume $\lambda \geq \nu$, then there is a constant $C > 1$ such
  that for all $\tau \geq 1$ and $\hat f \in C_0^\infty(I_\lambda)$,
  \[
  \frac{1}{C} \| f \|_0 \leq \|U_\tau f\|_0 \leq C \| f \|_0\,.
  \]
\end{lemma}
The proof is left for Appendix \ref{appe:B}.

\begin{proof}[Proof of Proposition \ref{prop:discrete_scale}]
  If $\lambda \geq \nu + 1$, then by Lemma~\ref{lemma:U_T-comparable},
  Proposition~\ref{prop:discrete_scale} follows as in
  Proposition~\ref{prop:principal_scale}.

  Now assume $\lambda < \nu + 1$.  To simplify notation, for any
  $\alpha \in \R^*$, set $D^\alpha := D^\alpha_{1, \mu}$.  Because
  $\lambda > 0$, let $\kappa \in \R^+$ satisfy $\lambda e^{\kappa} =
  \nu + 1$.  Observe that for any $f \in \widehat H^\infty$,
  \[
  D^\lambda(f) = (a_{-\kappa} D^\lambda)(f \circ a_{-\kappa})\,.
  \]
  Moreover,
  \[
  (U + i (\nu + 1)) (a_{-\kappa} D^\lambda) = 0\,.
  \]
  Because the space of invariant distributions in $\mathcal E'(H)$ is
  one-dimensional, we have $a_{-\kappa} D^\lambda \in \langle D^{\nu +
    1} \rangle$, where for any $h \in \widehat H^\infty$, and $D^{\nu
    + 1}$ is defined as usual by
  \[
  D^{\nu + 1}(h) = e^{\nu + 1} \int_\R h(t) e^{-i (\nu + 1) t} dt\,.
  \]

  Observe that Lemma~\ref{lemma:local_bound} and
  Lemma~\ref{lemma:f_cutoff} also holds for the discrete series.  Let
  $f_{\nu + 1} \in H^\infty$ correspond to $f \circ a_{-\kappa}$ as in
  Lemma~\ref{lemma:f_cutoff}.  Then
  \begin{align}\label{equa:push D_lambda}
    |D^{\lambda}(f)| %& = |(a_{-\kappa} D^\lambda_{m,\mu})(f \circ a_{-\kappa})|\\
    & = |(a_{-\kappa} D^\lambda)(f_{\nu + 1})| \nonumber \\
    % & = |(a_{-\kappa} D^\lambda_{m,\mu})(\phi_{\nu} \cdot f \circ
    % a_{-\kappa})| \\
    & = \left(\frac{\tau}{\mathcal T}\right)^{-1/6} |(a_{-\kappa}
    D^\lambda) (U_{\tau/\mathcal T} (f_{\nu + 1}))|\,.
  \end{align}
 
  By Lemma~\ref{lemma:local_bound} and Lemma~\ref{lemma:f_cutoff},
  there exists a constant $C^{(3)}_{r,s}:= C'_{r,s} C''_{r,s} >0$ such
  that
  \begin{align}\label{equa:geodesic_D_lambda}
    |D^{\lambda} (f)| &\leq \left(\frac{\tau}{\mathcal
        T}\right)^{-1/6} \vert a_{-\kappa} D^\lambda\vert_{-r,-s;
      \tau} \vert U_{\tau/\mathcal T} (f_{\nu + 1})\vert_{r,s;
      \tau}\nonumber
    \\
    & \leq C'_{r,s} \left(\frac{\tau}{\mathcal T}\right)^{-1/6} (1+
    \vert\lambda\vert^{-s}) \vert a_{-\kappa} D^\lambda\vert_{-r,-s;
      \tau} \vert f_{\nu+ 1} \vert_{r+s,s; \mathcal{T}}\nonumber
    \\
    & \leq C^{(3)}_{r,s} \left(\frac{\tau}{\mathcal T}\right)^{-1/6}
    (1+ \vert\lambda\vert^{-2s}) \vert a_{-\kappa}
    D^\lambda\vert_{-r,-s; \tau} \vert f \circ
    a_{-\kappa}\vert_{r+s,s; \mathcal{T}}\nonumber
    \\
    & \leq C^{(3)}_{r,s} \left(\frac{\tau}{\mathcal T}\right)^{-1/6}
    (1+ \vert\lambda\vert^{-2s}) \vert a_{-\kappa}
    D^\lambda\vert_{-r,-s; \tau} \vert f \vert_{r+s,s; \mathcal{T}}\,,
  \end{align}
  where the last inequality holds because
  \begin{equation}\label{equa:V_der-geodesic}
    \vert V(f \circ a_{-\kappa}) \vert = e^{-\kappa}  \vert (V f) \circ a_{-\kappa}\vert \leq 
    \frac{\lambda}{\nu +1} \vert (V f) \circ a_{-\kappa}\vert\,. 
  \end{equation}

  We then estimate the norm $\vert a_{-\kappa} D^\lambda\vert_{-r,-s;
    \tau}$ of the invariant distribution after geodesic scaling.
  \begin{lemma}
    We have
    \[
    \vert a_{-\kappa} D^\lambda\vert_{-r,-s; \tau} \leq
    \left(\frac{\nu+1}{\lambda}\right)^s \vert D^{\lambda}
    \vert_{-r,-s; \tau}\,.
    \]
  \end{lemma}
  \begin{proof}
    Notice that
    \[
    \begin{aligned}
      \vert a_{-\kappa} D^\lambda \vert_{-r,-s; \tau} & = \sup_{f\in
        \widehat H^\infty} \left\{|a_{-\kappa} D^\lambda(f)| : \vert f
        \vert_{r,s; \tau} = 1\right\}
      \\
      & = \sup_{f\in \widehat H^\infty} \left\{|D^\lambda(f \circ
        a_{\kappa})| : \vert f \vert_{r,s; \tau} = 1\right\}
      \\
      & = \sup_{f\in \widehat H^\infty} \left\{|D^\lambda(f)| : \vert
        f \circ a_{-\kappa} \vert_{r,s; \tau} = 1\right\}
      \\
      & \leq (\frac{\nu+1}{\lambda})^s \sup_{f\in \widehat H^\infty}
      \left\{|D^\lambda(f)| : \vert f \vert_{r,s; \tau} = 1
      \right\}\,,
    \end{aligned}
    \]
    where the last inequality holds since
    \[
    \vert V f \vert = \vert V(f \circ a_{-\kappa} \circ a_{\kappa} )
    \vert = e^{\kappa} \vert V (f \circ a_{-\kappa})\vert \leq
    \frac{\nu+1}{\lambda} \vert V (f\circ a_{-\kappa})\vert\,.
    \]
  \end{proof}

  Therefore,
  \[
  \eqref{equa:geodesic_D_lambda} \leq C^{(3)}_s
  \left(\frac{\tau}{\mathcal T}\right)^{-1/6}
  \left(\frac{\nu+1}{\lambda}\right)^s (1+ \vert\lambda\vert^{-2s})
  \vert D^{\lambda} \vert_{-s; \tau} \vert f \vert_{r+s, s;
    \mathcal{T}}\,.
  \]
  Proposition~\ref{prop:discrete_scale} now follows as in
  \eqref{equa:bound2}.
\end{proof}

This completes the proof of Theorem \ref{theo:Discrete_scale}.
\end{proof}

\section{Sobolev trace theorem}

In this section we prove a Sobolev trace theorem for horocycle
orbits. The main point of the result is that the constant in the
estimate is in terms of an ``average injectivity radius'' along the
orbit, with respect to rescaled Riemannian metrics.

\subsection{A priori bounds on ergodic integrals}

Let $\lambda \in \R^*$ and let $\mathcal T \geq 1$. Let
$(\phi^{\lambda, \mathcal T}_t)_{t\in \R}$ denote the rescaled twisted
horocycle flow on $M\times \T$, that is, the flow generated by the
rescaled vector field $\mathcal T (U +\lambda K)$.

Let $\bar x \in M\times \T$, let $I \subset \R$ be an interval, and
let $f \in C^\infty(M \times \T)$.  For $s > 1$, we will estimate in
terms of the Sobolev norm $\vert f\vert_{0, s; \mathcal T}$ the
ergodic integral
\[
|\int_I f \circ \phi_{t}^{\lambda, \mathcal T} (\bar x) dt | \,.
\]

We will follow the discussion of a Sobolev trace theorem in Section 3
of \cite{FF3} that concerns nilmanifolds in particular. We will see
that the method introduced there also gives the corresponding trace
theorem in the $\SL(2, \R)\times \T$ setting.

Let $\triangle_{\R^3}$ be the Euclidean Laplacian operator on $\R^3$
given by
\[
\triangle_{\R^3} := - \left(\frac{\partial^2}{\partial y^2} +
  \frac{\partial^2}{\partial z^2} + \frac{\partial^2}{\partial
    \theta^2} \right)\,.
\]
% A hyperrectangle is symmetric if it is centered at the origin.
Given an open set $O \subset \R^3$ containing the origin, we consider
the family $\mathcal R_O$ of all 3-dimensional hyperrectangles $R
\subset [-\frac{1}{2}, \frac{1}{2}]^3 \cap O$ that are centered at the
origin.  The \textit{inner width} of the open set $O\subset \R^3$ is
the positive number
\[
w(O) := \sup\{\text{Leb}(R) : R \in \mathcal R_O\}\,,
\]
where Leb$(R)$ is the Lebesgue measure of $R$.  The \textit{width
  function} of an open set $\Omega \subset \R \times \R^3$ containing
$\R \times \{0\}$ is the function defined for all $t\in \R$ by
\[
w_\Omega(t) := w( \{y \in \R^3 : (t, y) \in \Omega\})\,.
\]
  
We will now define average width, as in \cite{FF2}.  Let $\lambda >0$
and $\mathcal T\geq 1$.  Let $\bar x\in M\times \T$ and
$T>0$. Consider the family $\mathcal O_{\bar x, \lambda, \mathcal T,
  T}$ of open sets $\Omega \subset \R \times \R^3$ satisfying the
following two conditions:
\[ [0, T] \times \{0\} \subset \Omega \subset \R \times [-\frac{1}{2},
\frac{1}{2}]^3\,,
\]
and the map $\alpha_{\bar x, \lambda, \mathcal T} : \R^3 \to M \times
\T $ defined as
\begin{equation}
  \label{equa:alpha-def_1}
  \begin{aligned}
    \alpha_{\bar x, \lambda, \mathcal T} (t, \theta, y, z) = \bar x
    \exp(t \mathcal T(U + \lambda K) & \exp(y \mathcal T^{-1/3} X) \\
    & \times \exp (z \mathcal T^{-2/3} V)) \exp(\theta K)
  \end{aligned}
\end{equation}
is injective on the open set $\Omega\subset \R^3$.

The \textit{average width} of the orbit segment of the twisted
horocycle flow
\[
\{\alpha_{\bar x, \lambda, \mathcal T, T}(t, 0, 0, 0) : 0 \leq t \leq
T\}
\]
is the positive number
\[
w_\mathcal T(\bar x, \lambda, T) := \sup_{\Omega \in \mathcal O_{\bar
    x, \lambda, \mathcal T, T}} \left(\frac{1}{T}\int_0^T
  \frac{1}{w_\Omega(t)} dt\right)^{-1}\,.
\]

We will estimate from below the average with $w_\mathcal T(\bar x,
\lambda, T)$ of orbit segments of the twisted horocycle flow in the
Section \ref{sect:avg-width}.  In this section we derive a Sobolev
trace theorem for orbit segments and a Sobolev embedding-type theorem
with constants explicitly expressed in terms of the average width.

The following lemma is a special case of formula (32) of \cite{FF3}.
We prove it here for the convenience of the reader.
\begin{lemma}\label{lemma:trace_R^2}
  Let $I \subset \R$ be an interval, and let $\Omega \subset \R\times
  \R^3$ be a Borel set containing the segment $I \times \{0\} \subset
  \R \times \R^3$.  For every $s > 1$, there is a constant $C_{s} > 0$
  such that for all functions $F \in C^\infty (\Omega)$ and all $t\in
  I$, we have
  \[
  \left(\int_I |F(t, 0)| dt\right)^2 \leq C_{s} \left( \int_I
    \frac{1}{w_{\Omega}(t)} dt\right) \int_{\Omega} |(I -
  \triangle_{\R^3})^{s/2} F(t, y)|^2 dt dy\,.
  \]
\end{lemma}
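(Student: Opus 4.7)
The plan is to combine a weighted Cauchy--Schwarz splitting in the $t$ variable with a scale-invariant local Sobolev embedding on each $\R^3$-slice of $\Omega$. Write
$$\int_I |F(t,0)|\,dt = \int_I |F(t,0)|\,w_\Omega(t)^{1/2}\cdot w_\Omega(t)^{-1/2}\,dt\,,$$
and apply Cauchy--Schwarz to get
$$\left(\int_I |F(t,0)|\,dt\right)^2 \le \left(\int_I \frac{dt}{w_\Omega(t)}\right)\cdot \left(\int_I w_\Omega(t)\,|F(t,0)|^2\,dt\right)\,.$$
It then suffices to prove the weighted pointwise-in-$t$ estimate
$$\int_I w_\Omega(t)\,|F(t,0)|^2\,dt \le C_s\int_\Omega |(I-\triangle_{\R^3})^{s/2}F(t,y)|^2\,dt\,dy\,.$$

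Second, for each $t\in I$ I would use the definition of $w_\Omega(t)$ to select (up to a harmless $\epsilon$) a 3-dimensional rectangle $R(t)\subset[-1/2,1/2]^3$ centred at the origin, contained in the slice $\{y:(t,y)\in\Omega\}$, with $\mathrm{Leb}(R(t))$ as close to $w_\Omega(t)$ as desired. The heart of the proof is then the local Sobolev inequality on each such rectangle,
$$|R(t)|\,|F(t,0)|^2 \le C_s\,\|F(t,\cdot)\|_{W^s(R(t))}^2\,,$$
with a constant $C_s$ independent of the shape of $R(t)$. Integrating this bound in $t$ and using that $\bigcup_{t\in I}\{t\}\times R(t)\subset\Omega$ yields the required inequality once one recalls that $(I-\triangle_{\R^3})^{s/2}$ controls the $W^s$-norm in the transverse $(y_1,y_2,y_3)$ variables.

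Third, the local inequality on $R(t)=[-a_1,a_1]\times[-a_2,a_2]\times[-a_3,a_3]$ with $a_i\le 1/2$ I would obtain by iterating the one-dimensional trace inequality: for $\sigma>1/2$ a scaling argument gives, uniformly in $a\le 1/2$,
$$a\,|g(0)|^2 \le C\bigl(\|g\|_{L^2(-a,a)}^2 + a^{2\sigma}\|g^{(\sigma)}\|_{L^2(-a,a)}^2\bigr) \le C\,\|g\|_{W^\sigma(-a,a)}^2\,.$$
Applying this first in the $y_1$-slice, then integrating in $(y_2,y_3)$ and iterating in the remaining two variables (together with the Plancherel/Minkowski interpolation needed to bound the mixed partials that appear by the isotropic $W^s$-norm) produces the factor $|R(t)|=8a_1a_2a_3$ on the left-hand side and the full $(I-\triangle_{\R^3})^{s/2}$ Sobolev norm on the right.

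The main obstacle is ensuring that the constant in the iterated trace inequality is genuinely independent of the aspect ratio of $R(t)$, since the natural maximising rectangle for the inner width may be highly anisotropic; uniformity follows from the uniform-in-$a\le 1/2$ form of the 1D inequality, but bookkeeping of the threshold Sobolev exponent and of the interpolation between the separated norms $\|\partial_1^{\sigma_1}\partial_2^{\sigma_2}\partial_3^{\sigma_3}F\|_{L^2}$ and the isotropic norm $\|(I-\triangle_{\R^3})^{s/2}F\|_{L^2}$ is the one delicate point. Once that is in place, the statement follows by combining the weighted Cauchy--Schwarz with the integrated local Sobolev estimate.
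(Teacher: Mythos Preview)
Your proposal is correct and follows essentially the same route as the paper: a scale-invariant Sobolev embedding on each $\R^3$-slice, giving $|F(t,0)|^2 \le C_s\, w_\Omega(t)^{-1}\int_{\Omega_t}|(I-\triangle_{\R^3})^{s/2}F(t,y)|^2\,dy$, followed by integration in $t$ and Cauchy--Schwarz. The only difference is order of operations---the paper applies the pointwise Sobolev bound first and then Cauchy--Schwarz, whereas you apply Cauchy--Schwarz first---and that you spell out the iterated one-dimensional trace argument behind the uniform-in-aspect-ratio embedding, which the paper compresses into the phrase ``by rescaling, Sobolev embedding gives.''
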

\begin{proof}
  Define $\Omega_t := \{y\in \R^3 : (t, y) \in \Omega\}.$ By
  rescaling, Sobolev embedding gives
  \[
  |F(t, 0)| \leq C_s w_\Omega(t)^{-1/2} \left(\int_{\Omega_t} |(I -
    \triangle_{\R^3})^{s/2} F(t, y) |^2 dy\right)^{1/2}\,.
  \]
  Now we integrate over the interval $I$, and H$\ddot{\text{o}}$lder's
  inequality gives the result.
\end{proof}

For the remainder of this section, we follow Lemma 3.7 and Theorem 3.9
of \cite{FF3}.  Contrary to \cite{FF3}, the vector fields $ X_\mathcal
T$, $V_\mathcal T$ and $K$ that are transverse to the direction of
integration do not all commute, but they still form an integrable
distribution.

\begin{theorem}\label{theo:trace_thm}
  For all $s > 1$, there is a constant $C_s > 0$ such that the
  following holds. For $\lambda \in \R^*$, for all $\mathcal T \geq
  1$, for all $\bar x \in M \times \T$, all $T>0$ and for all $f \in
  C^\infty(M \times \T)$, we have
  \[
  |\frac{1}{T}\int_0^T f \circ \phi_{t }^{\lambda, \mathcal T} (\bar
  x) dt| \leq C_s T^{-1/2} w_\mathcal T(\bar x, \lambda,T)^{-1/2}
  \vert f \vert_{0, s; \mathcal T}\,.
  \]
\end{theorem}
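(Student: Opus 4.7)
The plan is to lift $f$ to Euclidean space through the coordinate map $\alpha := \alpha_{\bar x, \lambda, \mathcal T}$, apply the Euclidean trace estimate of Lemma~\ref{lemma:trace_R^2}, and then optimize over admissible $\Omega$. Fix $\Omega \in \mathcal O_{\bar x, \lambda, \mathcal T, T}$; by definition $\alpha$ is injective on $\Omega$, and the orbit segment $[0,T] \times \{0\}$ lies in $\Omega$. Set $F := f \circ \alpha|_{\Omega}$, so that $F(t, 0) = f \circ \phi_t^{\lambda, \mathcal T}(\bar x)$ for $t \in [0, T]$. Lemma~\ref{lemma:trace_R^2} then immediately gives
\[
\left(\int_0^T f \circ \phi_t^{\lambda, \mathcal T}(\bar x)\, dt\right)^{2} \leq C_s \left(\int_0^T \frac{dt}{w_\Omega(t)}\right) \int_\Omega |(I - \triangle_{\R^3})^{s/2} F|^2 \, dt \, dy\,.
\]

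The heart of the argument is to dominate the Euclidean right-hand side by $|f|_{0, s; \mathcal T}^2$ with a constant independent of $\Omega$ and of $\mathcal T \geq 1$. The key calculation is the pushforward of the transverse coordinate frame: since $K$ commutes with $X$ and $V$, and $[X_\mathcal T, V_\mathcal T] = -2 \mathcal T^{-1/3} V_\mathcal T$ (so the adjoint expansion in $V_\mathcal T$ truncates after a single bracket), a direct computation yields
\[
d\alpha(\partial_\theta) = K, \quad d\alpha(\partial_z) = V_\mathcal T, \quad d\alpha(\partial_y) = X_\mathcal T - 2 z \mathcal T^{-1/3} V_\mathcal T \,.
\]
On the cube $[-1/2, 1/2]^3$ and for $\mathcal T \geq 1$ the perturbation coefficient $|2z\mathcal T^{-1/3}|$ is at most $1$, hence $(I - \triangle_{\R^3})^{s/2}$ pulls back to a differential operator of order $s$ in $K, X_\mathcal T, V_\mathcal T$ with uniformly bounded coefficients (integer $s$ is direct and the rest follows by interpolation), which is in turn controlled in $L^2(M \times \T)$ by the operator $(I + \Box_\mathcal T^2 + \widehat\triangle_\mathcal T^2)^{s/2}$ defining $|\cdot|_{0, s; \mathcal T}$. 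Moreover, the Jacobian of $\alpha$ expressed in the ordered frame $\mathcal T(U + \lambda K), X_\mathcal T, V_\mathcal T, K$ is triangular with diagonal entry $e^{-2y/\mathcal T^{1/3}}$, bounded between absolute constants on $\Omega$; and this frame has unit determinant relative to the Haar frame $U, X, V, K$ since $\mathcal T \cdot \mathcal T^{-1/3} \cdot \mathcal T^{-2/3} \cdot 1 = 1$. Combining this with the injectivity of $\alpha$ on $\Omega$ yields a constant $C'_s > 0$, independent of $\Omega$ and $\mathcal T$, with
\[
\int_\Omega |(I - \triangle_{\R^3})^{s/2} F|^2 \, dt \, dy \leq C'_s \, |f|_{0, s; \mathcal T}^2 \,.
\]

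Substituting this into the first display, dividing by $T^{2}$, and taking the infimum over $\Omega \in \mathcal O_{\bar x, \lambda, \mathcal T, T}$ of $\int_0^T dt/w_\Omega(t)$ produces the claimed bound with constant $(C_s C'_s)^{1/2}$. The principal technical obstacle is the pushforward computation of the previous paragraph: one must verify that the non-commutativity of $X$ and $V$ only produces $O(\mathcal T^{-1/3})$ lower-order perturbations of the transverse Euclidean frame on the unit cube, so that the anisotropic rescaling encoded in $(X_\mathcal T, V_\mathcal T)$ exactly trivializes this obstruction and makes the Euclidean trace theorem applicable with a constant uniform in $\mathcal T \geq 1$.
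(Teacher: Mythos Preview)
Your proof is correct and follows essentially the same route as the paper: pull back through $\alpha$, invoke Lemma~\ref{lemma:trace_R^2}, compute the transverse pushforwards (the paper records $X_{\mathcal T} - z\mathcal T^{-1/3}V_{\mathcal T}$ rather than your $X_{\mathcal T} - 2z\mathcal T^{-1/3}V_{\mathcal T}$, a harmless discrepancy in a constant), bound the coefficients and the Jacobian $e^{-2y\mathcal T^{-1/3}}$ uniformly in $\mathcal T\geq 1$, and take the infimum over $\Omega$. The only cosmetic difference is that the paper phrases the passage from the Euclidean Laplacian to the foliated norm via the operator inequality $-V_{\mathcal T}^2 - (X_{\mathcal T} - z\mathcal T^{-1/3}V_{\mathcal T})^2 \leq 3(-X_{\mathcal T}^2 - V_{\mathcal T}^2)$ and the spectral theorem, whereas you argue directly for integer $s$ and interpolate.
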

\begin{proof}
  Let $\Omega \in \mathcal O_{\bar x, \lambda, \mathcal T,T}$.  We
  have
  \[
  \begin{aligned}
    \partial_\theta^2 f \circ \alpha_{\bar x, \lambda, \mathcal T}(t,y, z,\theta) & = K^2 f \circ \alpha_{\bar x, \lambda, \mathcal T}(t, y, z,\theta) \,, \\
    \partial_z^2 f \circ \alpha_{\bar x, \lambda, \mathcal T}(t, y,
    z,\theta) & = V_\mathcal T^2 f \circ \alpha_{\bar x, \lambda,
      \mathcal T}(t, y, z,\theta) \,,
  \end{aligned}
  \]
  and
  \begin{align*}
    \partial_y^2& f \circ \alpha_{\bar x, \lambda, \mathcal T}(t, y, z,\theta) = (X_\mathcal T - z \mathcal T^{-1/3} V_\mathcal T)^2 f \circ \alpha_{\bar x, \lambda, \mathcal T}(t, y, z,\theta) \\
    & = [ X_\mathcal T^2 + z^2 \mathcal T^{-2/3} V_\mathcal T^2 - z
    \mathcal T^{-1/3} (X_\mathcal T V_\mathcal T + V_\mathcal T
    X_\mathcal T) ] f \circ \alpha_{\bar x, \lambda, \mathcal T}(t, y,
    z,\theta)\,.
  \end{align*}

  Because $X$ and $V$ are essentially skew-adjoint,
  \[
  0 \leq -(X_\mathcal T + V_\mathcal T)^2 = -(X_\mathcal T^2 +
  V_\mathcal T^2) - (X_\mathcal T V_\mathcal T + V_\mathcal T
  X_\mathcal T)\,.
  \]
  Recall that $|z| \leq 1/2$, so
  \[
  - V_\mathcal T^2 - (X_\mathcal T - z \mathcal T^{-1/3} V_\mathcal
  T)^2 \leq 3 (- X_\mathcal T^2 - V_\mathcal T^2)\,,
  \]
  Because the operators on the left and right are essentially
  self-adjoint, the spectral theorem gives
  \[ [I - K^2 - V_\mathcal T^2 - (X_\mathcal T - z \mathcal T^{-1/3}
  V_\mathcal T)^2]^{s/2} \leq 3^{s/2} (I - K^2 - X_\mathcal T^2 -
  V_\mathcal T^2)^{s/2}\,,
  \]
  for any $s \geq 0$.  Next observe that
  \[
  \text{det}(D \alpha_{\bar x, \lambda, \mathcal T}(t, y, z,\theta)) =
  e^{-2 y \mathcal T^{-1/3}}\,.
  \]
  Then there is a constant $C_s > 0$ such that
  \begin{equation}\label{equa:spectral}
    \|(I - \triangle_{\R^3})^{s/2} f \circ \alpha_{\bar x, \lambda, \mathcal T}\|_{L^2(\Omega)}^2 
    \leq C_s \|(I - K^2 - X_\mathcal T^2 - V_\mathcal T^2)^{s/2} f\|_{L^2(M)}^2\,.
  \end{equation}

  By Lemma~\ref{lemma:trace_R^2} and formula~\eqref{equa:spectral}, we
  see that for any $s > 1$,
  \begin{align*}
    |\frac{1}{T}&\int_0^T f \circ \phi_{t}^{\lambda,\mathcal T}(\bar
    x) dt|^2
    \leq \left(\frac{1}{T} \int_0^T|f \circ \alpha_{\bar x, \lambda, \mathcal T}(t, 0, 0, 0)| dt\right)^2 \\
    & \leq C_{s} \frac{1}{T} \left(\frac{1}{T} \int_0^T \frac{1}{w_{\Omega}(t)} dt\right)  \int_{\Omega} |(I - \triangle_{\R^3})^{s/2} f\circ \alpha_{\bar x, \lambda, \mathcal T}(t, y, z, \theta)|^2 d\text{vol} \\
    & \leq C_{s} T^{-1} w_{\mathcal T}(\bar x, \lambda, T)^{-1} \|(I -
    K^2 - X_\mathcal T^2 - V_\mathcal T^2)^{s/2} f\|_{L^2(M)}^2\,.
  \end{align*}
  Because this holds for any set $\Omega \in \mathcal O_{\bar x,
    \lambda, \mathcal T, T}$, we may take the infimum over all sets
  $\Omega \in \mathcal O_{\bar x, \lambda, \mathcal T, T}$ and
  conclude the proof of Theorem~\ref{theo:trace_thm}.
\end{proof}

\subsection{Pointwise bounds for transfer
  functions}\label{subs:trace-transfer}

Following Lemma~3.7 and Theorem~3.9 of \cite{FF3}, we derive the
following bound on transfer functions of the twisted horocycle flow.
\begin{theorem}\label{theo:Sobolev-trace}
  Let $s > 1$.  Then there is a constant $C_s > 0$ such that for all
  $\lambda\in \R^*$ and $\mathcal T\geq 1$, for all $f \in C^\infty(M
  \times \T)$, if
  \[
  \mathcal T (U + \lambda K) g = f\,.
  \]
  then for all $\bar x \in M \times \T$, $T>0$ and for all $t \in [0,
  T]$,
  \[
  |g \circ \phi_t^{\lambda,\mathcal T}(\bar x)| \leq C_s T^{-1/2}
  w_\mathcal T(\bar x, \lambda, T)^{-1/2} \left( T \vert f \vert_{0,
      s; \mathcal T} + \vert g \vert_{0, s; \mathcal T}\right).
  \]
\end{theorem}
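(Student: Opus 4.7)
The plan is to pass from pointwise values of $g$ to ergodic averages via a one-dimensional averaging identity in the flow direction, and then invoke Theorem~\ref{theo:trace_thm} on each piece. First I would set $h(t):=g\circ\phi_t^{\lambda,\mathcal T}(\bar x)$ and $F(t):=\int_0^t f\circ\phi_\tau^{\lambda,\mathcal T}(\bar x)\,d\tau$. Since $h'(t)=f\circ\phi_t^{\lambda,\mathcal T}(\bar x)$ by the cohomological equation, we have $h(t_0)=h(t)+F(t_0)-F(t)$ for all $t,t_0\in[0,T]$; averaging over $t\in[0,T]$ produces the identity
\[
h(t_0)=\bar h+F(t_0)-\bar F,\qquad \bar h:=\tfrac1T\!\int_0^T\!h(t)\,dt,\quad\bar F:=\tfrac1T\!\int_0^T\!F(t)\,dt.
\]
This reduces the pointwise estimate to bounds on the three quantities on the right.

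Next, I would apply Theorem~\ref{theo:trace_thm} in two ways. Applied directly to $g$ on $[0,T]$ it yields $|\bar h|\leq C_sT^{-1/2}w_\mathcal T(\bar x,\lambda,T)^{-1/2}\vert g\vert_{0,s;\mathcal T}$. Applied to $f$ on each sub-interval $[0,t]$ with $t\in(0,T]$ it yields $|F(t)|\leq C_st^{1/2}w_\mathcal T(\bar x,\lambda,t)^{-1/2}\vert f\vert_{0,s;\mathcal T}$. To convert the latter into a bound uniform in $t$, I would establish a monotonicity property of the average width: since every $\Omega\in\mathcal O_{\bar x,\lambda,\mathcal T,T}$ automatically belongs to $\mathcal O_{\bar x,\lambda,\mathcal T,t}$ when $t\leq T$, testing a near-optimal $\Omega$ for $T$ as a competitor for $t$ gives
\[
\frac{t}{w_\mathcal T(\bar x,\lambda,t)}\leq\int_0^t\frac{d\tau}{w_\Omega(\tau)}\leq\int_0^T\frac{d\tau}{w_\Omega(\tau)}=\frac{T}{w_\mathcal T(\bar x,\lambda,T)}.
\]
This monotonicity is the only step not already supplied by Theorem~\ref{theo:trace_thm}, and it is the hard part of the argument, though it is essentially forced by the definitions. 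Once in place it yields $w_\mathcal T(\bar x,\lambda,t)^{-1/2}\leq(T/t)^{1/2}w_\mathcal T(\bar x,\lambda,T)^{-1/2}$, hence $|F(t)|\leq C_sT^{1/2}w_\mathcal T(\bar x,\lambda,T)^{-1/2}\vert f\vert_{0,s;\mathcal T}$ uniformly in $t\in[0,T]$, and the same bound for $|\bar F|$ follows by the triangle inequality for integrals.

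Assembling the three pieces produces
\[
|h(t_0)|\leq C_sT^{-1/2}w_\mathcal T(\bar x,\lambda,T)^{-1/2}\bigl(\vert g\vert_{0,s;\mathcal T}+2T\vert f\vert_{0,s;\mathcal T}\bigr),
\]
which is the asserted estimate after absorbing the factor $2$ into $C_s$.
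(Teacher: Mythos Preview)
Your proof is correct and follows essentially the same strategy as the paper: reduce the pointwise value of $g$ to an ergodic average of $g$ plus an integral of $f$ along the flow, then invoke Theorem~\ref{theo:trace_thm} on each piece. The paper obtains the identity via the mean value theorem (picking $t_*$ with $h(t_*)=\bar h$ and writing $h(t)=\bar h+\int_{t_*}^{t}f\circ\phi_\tau\,d\tau$), while you obtain the equivalent identity $h(t_0)=\bar h+F(t_0)-\bar F$ by averaging; these are interchangeable.

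The one place where your argument diverges is in bounding the $f$-integral over a subinterval. You prove and use the monotonicity $t/w_\mathcal T(\bar x,\lambda,t)\le T/w_\mathcal T(\bar x,\lambda,T)$, which is correct and a nice observation. The paper instead relies on the fact that the proof of Theorem~\ref{theo:trace_thm} (through Lemma~\ref{lemma:trace_R^2}) actually controls $\int_0^T|f\circ\phi_\tau^{\lambda,\mathcal T}(\bar x)|\,d\tau$ with the absolute value inside, so that $|\int_{t_*}^{t}f\circ\phi_\tau\,d\tau|\le\int_0^T|f\circ\phi_\tau|\,d\tau$ is bounded directly without any monotonicity step. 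Your route is slightly longer but self-contained from the statement of Theorem~\ref{theo:trace_thm} alone; the paper's is marginally shorter but leans on what its proof actually delivers.
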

\begin{proof}
  Since $f \in H^\infty$, Theorems~\ref{theo:cohomology-principal}
  and~\ref{theo:cohomology-discrete} imply that $g \in H^\infty$. Let
  $t \in [0, T]$.  By the mean value theorem and by the fundamental
  theorem of calculus there exists $t_0 := t_0(\bar x, g) \in (0, T)$
  such that
  \begin{align}\label{equa:Sobolev-trace}
    |g \circ \phi_t^{\lambda,\mathcal T}(\bar x)| & = |\int_{t_0}^{t}
    \frac{d}{d\tau} g \circ \phi_{\tau}^{\lambda,\mathcal T}(\bar x)
    d\tau + \frac{1}{T}\int_0^T g\circ
    \phi_{\tau}^{\lambda,\mathcal T}(\bar x) d\tau| \notag \\
    & \leq |\int_{t_0}^{t} \mathcal T (U + \lambda K) g \circ
    \phi_{\tau}^{\lambda,\mathcal T}(\bar x) d\tau| +
    |\frac{1}{T}\int_0^T g \circ \phi_{\tau}^{\lambda,\mathcal T}(\bar x) d\tau| \notag \\
    & = |\int_{t_0}^{t} f \circ \phi_{\tau}^{\lambda,\mathcal T}(\bar
    x) d\tau| + | \frac{1}{T} \int_0^T g \circ
    \phi_{\tau}^{\lambda,\mathcal T}(\bar x) d\tau| \,. \notag
  \end{align}
  Now for $s > 1$, Theorem~\ref{theo:trace_thm} implies
  Theorem~\ref{theo:Sobolev-trace}.
\end{proof}

\section{Twisted horocycle flows:effective
  equidistribution}\label{sect:equi}

\subsection{Average width function}\label{sect:avg-width}
In this section we estimate the average width for horocycle segments,
which we define below.  Let $x\in M$, $\mathcal T\geq 1$ and
$T>0$. Consider the family $\mathcal O_{x, \mathcal T, T}$ of open
sets $\Omega \subset \R \times \R^2$ satisfying the following two
conditions:
\[ [0, T] \times \{0\} \subset \Omega \subset \R \times [-\frac{1}{2},
\frac{1}{2}]^2\,,
\]
and the map $\alpha_{x, \mathcal T} : \R^3 \to M \times \T $ defined
as
\begin{equation}
  \label{equa:alpha-def_2}
  \begin{aligned}
    \alpha_{\bar x, \mathcal T} (t, y, z) = x \exp(t \mathcal T U)
    \exp(y \mathcal T^{-1/3} X) \exp (z \mathcal T^{-2/3} V))
  \end{aligned}
\end{equation}
is injective on the open set $\Omega \subset \R^2$.

The \textit{average width} of the orbit segment of the horocycle flow
\[
\{\alpha_{x, \mathcal T, T}(t, 0, 0) : 0 \leq t \leq T\}
\]
is the positive number
\[
w_\mathcal T(x, T) := \sup_{\Omega \in \mathcal O_{x, \mathcal T, T}}
\left(\frac{1}{T}\int_0^T \frac{1}{w_\Omega(t)} dt\right)^{-1}\,.
\]

We remark that by our definitions, since the twisted horocycle flow
projects onto the horocycle flow under the projection of $M\times \T$
onto~$M$ and the vector field~$K$ tangent to the circle factor is not
scaled, the average width for an orbit segment of the twisted
horocycle flow is bounded below by the average for the projected orbit
segment of the horocycle flow.  In fact, the following holds:
\begin{lemma}
  \label{lemma:width_comparison}
  For all $\bar x= (x,\theta)$, for all $\lambda \in \R$, for all
  $\mathcal T \geq 1$ and all $T>0$, we have the inequality
  \begin{equation}
    w_\mathcal T(\bar x,  \lambda, T)     \geq    w_\mathcal T(x, T) \,.
  \end{equation}
\end{lemma}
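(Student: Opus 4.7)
The strategy is to lift any admissible domain for the horocycle flow on $M$ to an admissible domain for the twisted horocycle flow on $M\times\T$ by taking the product with a full $\theta$-interval, and to check that this lift does not decrease the width function.

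First, given $\Omega \in \mathcal O_{x,\mathcal T, T}\subset \R\times [-1/2,1/2]^2$ I would define
\[
\tilde\Omega := \bigl\{ (t,\theta,y,z)\in \R\times[-1/2,1/2]^3 : (t,y,z)\in \Omega,\ \theta\in [-1/2,1/2]\bigr\}.
\]
Clearly $[0,T]\times\{0\}\subset\tilde\Omega$, since $[0,T]\times\{(0,0)\}\subset\Omega$ and $0\in[-1/2,1/2]$. Next I would verify that $\alpha_{\bar x,\lambda,\mathcal T}$ is injective on $\tilde\Omega$. Since $K$ generates the circle factor and commutes with $U$, $X$ and $V$, formula~\eqref{equa:alpha-def_1} factors as
\[
\alpha_{\bar x,\lambda,\mathcal T}(t,\theta,y,z)
= \bigl(\alpha_{x,\mathcal T}(t,y,z),\ \theta_0 + \mathcal T\lambda t + \theta \bmod 2\pi\bigr),
\]
where $\bar x=(x,\theta_0)$ and $\alpha_{x,\mathcal T}$ is the map of~\eqref{equa:alpha-def_2}. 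If two points of $\tilde\Omega$ have the same image, the first component forces the equality of the $(t,y,z)$ coordinates by injectivity of $\alpha_{x,\mathcal T}$ on $\Omega$, and then the second component forces equality of the $\theta$ coordinates (both lying in $[-1/2,1/2]$, which has length $1 < 2\pi$). Hence $\tilde\Omega\in\mathcal O_{\bar x,\lambda,\mathcal T,T}$.

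Now I would compare width functions. The slice of $\tilde\Omega$ at time $t$ is $\tilde\Omega_t=[-1/2,1/2]\times\Omega_t$. Any 2-dimensional rectangle $R\subset\Omega_t$ in $[-1/2,1/2]^2$ lifts to the 3-dimensional rectangle $[-1/2,1/2]\times R\subset\tilde\Omega_t$ in $[-1/2,1/2]^3$, with the same Lebesgue measure. Taking the supremum over such rectangles shows $w_{\tilde\Omega}(t)\ge w_{\Omega}(t)$ for every $t$, hence
\[
\frac{1}{T}\int_0^T \frac{dt}{w_{\tilde\Omega}(t)} \le \frac{1}{T}\int_0^T \frac{dt}{w_{\Omega}(t)}.
\]
Inverting and taking the supremum over $\Omega\in\mathcal O_{x,\mathcal T,T}$ yields $w_{\mathcal T}(\bar x,\lambda,T)\ge w_{\mathcal T}(x,T)$, which is the desired inequality. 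There is no real obstacle in this argument; the only point that requires care is the injectivity, which rests on the commutation of $K$ with the $\SL(2,\R)$-generators and the fact that the $\theta$-range $[-1/2,1/2]$ is a fundamental interval for the circle action.
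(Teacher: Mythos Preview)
Your proof is correct and follows essentially the same approach as the paper's: lift each admissible $\Omega$ for the horocycle flow to $\Omega\times[-1/2,1/2]$ for the twisted flow, use the commutation of $K$ with $U,X,V$ to verify injectivity via the product factorization of $\alpha_{\bar x,\lambda,\mathcal T}$, and compare widths. You spell out more carefully than the paper the rectangle-lifting step and the $\theta$-injectivity (using that $[-1/2,1/2]$ has length less than $2\pi$), but the argument is the same.
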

\begin{proof}
  For any open set $\Omega\subset M$, let $\hat \Omega:= \Omega \times
  [-1/2, 1/2]$.  Since the vector field $K$ commutes with the vector
  fields $X$, $U$, $V$ and is tangent to the circle factor of the
  product $M\times \T$, it follows that for any $\bar x=(x,\theta)\in
  M\times \T$, for all $\mathcal T\geq 1$ and $T>0$, the function
  $\alpha_{\bar x, \lambda, \mathcal T, T}$, defined
  in~\eqref{equa:alpha-def_1}, is injective on $\hat \Omega$ if and
  only if the function $\alpha_{x, \mathcal T, T}$, defined
  in~\eqref{equa:alpha-def_2}, is injective on $\Omega$. The statement
  then follows from the definitions.
\end{proof}

It is therefore enough to estimate the average width $w_\mathcal T(x,
T)$ of the orbits segments of the horocycle flow on $M$.

\smallskip For any $x \in M$, we consider the map $\alpha_x$ defined
on $\R^3$ by
\[
\alpha_x (t, y, z) = x \exp(t U) \exp(y X) \exp(z V)\,.
\]
\begin{definition}
  \label{def:c(x,T)}
  For any $(x,T) \in M\times \R^+$, let $c_{\Gamma}(x, T)$ denote the
  smallest positive number $c \geq 1$ such that the map $\alpha_x$ is
  injective on the interval
  \[
  [-10 \ T, 10 \ T] \times [-1/2 , 1/2] \times \frac{1}{c} [-
  \frac{1}{T}, \frac{1}{T}]\,.
  \]
\end{definition}
\begin{remark}\label{rmrk:C-Gamma}
By the action of the geodesic flow, it follows from the definition that for all  $(x,T) \in M\times \R^+$ and
for all $y\in \R$ we have 
  \[
  c_{\Gamma}(x, T) = c_{\Gamma}(a_y(x), e^{-y }T)\,.
  \]
  The statement of the equidistribution theorems,
  Theorem~\ref{theo:equidistribution} and Theorem~\ref{theo:maps},
  involve the function $C_\Gamma(x,T)$ defined for all $(x,T)\in
  M\times \R^+$ by
  \begin{equation}
    \label{eq:C_Gamma1}
    C_\Gamma(x,T) := \sup_{ 1\leq t \leq T}  c_\Gamma(x,t) = 
    \sup_{-\log T \le y \le 0}c_\Gamma(a_y(x),T)\,.
  \end{equation}
\end{remark}

We begin by proving upper bounds on the function $c_{\Gamma}(x, T)$
under Diophantine conditions. For $A\in [0, 1)$ and $Q > 0$, recall
that we consider subsets of ``Diophantine points'' given by
\[
M_{A, Q} := \left\{x \in M : d_M(a_y(x)) \leq A y + Q \text{ for all }
  y> 0\right\}\,.
\]
In addition, by the logarithmic law of geodesics for almost all $x \in
M$ and for all $\epsilon>0$ there exists a constant
$Q_{\epsilon}(x)>0$ such that, for all $t\geq 1$,
\[
d_M(a_y(x)) \leq (\frac{1}{2} + \epsilon) \log y + Q_\epsilon(x) \,.
\]
For all $A>1/2$ and $Q$, $y_0>0$, we will therefore introduce the sets
\[
\widetilde M_{A, Q, y_0} := \left\{x \in M : d_M(a_y(x)) \leq A \log (
  y+y_0) + Q \text{ for all } y \geq 1-y_0 \right\}\,.
\]
\smallskip The proof of the following basic lemma is left for
Appendix~\ref{appe:C}.
\begin{lemma}
  \label{lemma:fund_box}
  For all $x\in M$, let $d_M(x) := \text{\rm dist}(x,\Gamma)$.  There
  exists a constant $C_\Gamma \in (0,1)$ such that for all $x \in M$,
  the map $\alpha_x : \R^3 \to M$ defined by the formula
  \[
  \alpha_x (t, y, z) = x \exp(t U) \exp(y X) \exp(z V) \,,
  \]
  is injective on the interval
  \[ [-C_\Gamma e^{-d_M(x)}, C_\Gamma e^{-d_M(x)}]
  \times [-1/2, 1/2] \times [-C_\Gamma e^{-d_M(x)}, C_\Gamma
  e^{-d_M(x)}] \,.
  \]
\end{lemma}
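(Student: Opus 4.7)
The plan is to lift $x$ to an element $\tilde{x} \in \SL(2, \R)$ under the covering $\SL(2, \R) \to \Gamma \backslash \SL(2, \R) = M$ and translate injectivity of $\alpha_x$ on the box $B = [-C_\Gamma e^{-d_M(x)}, C_\Gamma e^{-d_M(x)}] \times [-1/2, 1/2] \times [-C_\Gamma e^{-d_M(x)}, C_\Gamma e^{-d_M(x)}]$ into the group-theoretic statement that the product set $B \cdot B^{-1}$ meets the conjugated lattice $\tilde{x}^{-1} \Gamma \tilde{x}$ only at the identity. Writing $g(t, y, z) := \exp(tU)\exp(yX)\exp(zV)$, the map $(t, y, z) \mapsto g(t, y, z)$ injectively parametrizes the big Bruhat cell in $\SL(2, \R)$, so $\alpha_x(t, y, z) = \alpha_x(t', y', z')$ for distinct triples is equivalent to producing a non-identity element of $\tilde{x}^{-1} \Gamma \tilde{x}$ of the form $g(t, y, z) g(t', y', z')^{-1}$.

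The first step is a direct matrix multiplication giving
\[
g(t, y, z) g(t', y', z')^{-1} = \begin{pmatrix} e^{y - y'} + t(z - z') e^{-y - y'} & t e^{y' - y} - t' e^{y - y'} + t t' (z' - z) e^{-y - y'} \\ (z - z') e^{-y - y'} & e^{y' - y} + t'(z' - z) e^{-y - y'} \end{pmatrix}.
\]
On the box $B$ this matrix has bottom-left entry of size $O(C_\Gamma e^{-d_M(x)})$ and trace $2\cosh(y - y') + O(C_\Gamma^2 e^{-2 d_M(x)})$, which severely restricts which elements of the conjugated lattice it could possibly equal.

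Next I would classify the candidate small elements of $\tilde{x}^{-1} \Gamma \tilde{x}$ by means of the Margulis lemma applied to $\SL(2, \R)$, together with the standard structure theory of non-uniform lattices: the only non-trivial elementary subgroups of $\Gamma$ with vanishingly small generators after conjugation by $\tilde x$ come from parabolic cusp-stabilizers, so the small elements are precisely iterates $\tilde x^{-1} P^n \tilde x$ of a conjugate of a parabolic generator $P$ of the stabilizer of whichever cusp $x$ is closest to. A direct computation via the $KAN$ Iwasawa decomposition $\tilde x = k a_\sigma n$, with diagonal parameter $\sigma$ comparable to $d_M(x)$, shows that $\tilde x^{-1} P^n \tilde x$ has vanishing bottom-left entry and upper-right entry of modulus at least $|n| c_\Gamma e^{-d_M(x)}$ for a positive constant $c_\Gamma$ depending only on $\Gamma$. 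Remaining non-identity lattice elements have trace in the discrete length spectrum of $\Gamma$, of which only finitely many values lie in the bounded interval determined by the trace bound above, yielding a finite list of additional $\Gamma$-dependent constraints on $C_\Gamma$.

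The final step is to choose $C_\Gamma \in (0, 1)$ smaller than $c_\Gamma / 2$ and smaller than the finitely many further constants arising from the trace exceptions, forcing every non-identity element of $\tilde{x}^{-1} \Gamma \tilde{x}$ out of $B \cdot B^{-1}$. The main obstacle will be obtaining the quantitative lower bound $|n| c_\Gamma e^{-d_M(x)}$ on the $(1,2)$-entry of $\tilde{x}^{-1} P^n \tilde{x}$ uniformly in $n \in \Z$ and $x \in M$: this requires relating the Iwasawa diagonal parameter $\sigma$ to $d_M(x)$ with uniform constants across the finitely many cusps of $\Gamma$, using that conjugation by the compact factor $k$ produces only bounded distortion while conjugation by $a_\sigma$ contracts the unipotent $P$ by the sharp factor $e^{-2\sigma}$.
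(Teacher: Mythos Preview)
Your overall strategy coincides with the paper's: lift to $\SL(2,\R)$, reduce injectivity to showing that $g(t,y,z)\,g(t',y',z')^{-1}$ never lies in $\tilde{x}^{-1}(\Gamma\setminus\{e\})\tilde{x}$, compute that product explicitly, and then separate parabolic from non-parabolic lattice elements. Your matrix for $g(t,y,z)g(t',y',z')^{-1}$ is correct.

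The parabolic step, however, contains a concrete error. With the cusp normalized so that $P=\left(\begin{smallmatrix}1&1\\0&1\end{smallmatrix}\right)$ and $\tilde{x}=\left(\begin{smallmatrix}a&b\\c&d\end{smallmatrix}\right)$, one has
\[
\tilde{x}^{-1}P^{n}\tilde{x}=\begin{pmatrix}1+ncd & nd^{2}\\ -nc^{2} & 1-ncd\end{pmatrix},
\]
so the bottom-left entry is $-nc^{2}$, not zero; your claim that it vanishes amounts to dropping the compact Iwasawa factor $k$, i.e.\ treating $M$ as the surface $\Gamma\backslash\mathbb H$ rather than its unit tangent bundle. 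Likewise the upper-right entry is $nd^{2}$, and since being deep in the cusp only forces $c^{2}+d^{2}\asymp e^{-d_M(x)}$, neither off-diagonal entry individually admits the lower bound $|n|\,c_\Gamma e^{-d_M(x)}$ (either $c$ or $d$ may be arbitrarily small). The paper handles exactly this point by comparing \emph{both} off-diagonal entries of $\tilde{x}^{-1}P^{n}\tilde{x}$ to those of $gg'^{-1}$ and adding, so that $|n|(c^{2}+d^{2})$ appears; then $c^{2}+d^{2}\asymp e^{-d_M(x)}$ forces $n=0$ once $C_\Gamma$ is small. This is precisely the ``main obstacle'' you anticipated, and it cannot be overcome using the $(1,2)$ entry alone.

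Your treatment of the non-parabolic elements is also incomplete. The trace of $gg'^{-1}$ equals $2\cosh(y-y')+O(C_\Gamma^{2}e^{-2d_M(x)})$, which sweeps out essentially all of $[2,\,2\cosh 1]$ no matter how small $C_\Gamma$ is, because the $y$-interval $[-\tfrac12,\tfrac12]$ is fixed. Hence knowing that only finitely many hyperbolic traces of $\Gamma$ lie in $[2,\,2\cosh 1]$ does not by itself yield a condition that shrinking $C_\Gamma$ can satisfy. The paper sidesteps this by a thick/thin split: on the compact thick part a direct compactness argument gives a uniform injectivity box; deep in a cusp the only identifications within a ball of bounded radius come from the parabolic stabilizer, so hyperbolic elements never enter. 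If you wish to keep a single global argument you would still need to control, for $x$ in a compact set, how close $\tilde{x}^{-1}\gamma\tilde{x}$ can come to the upper-triangular subgroup for hyperbolic $\gamma$, which is essentially the content of the paper's compactness step.
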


For all $x\in M$ and all $t>0$, let
\[
d_M(x,t) := \max_{0\leq y\leq t} d_M(a_y(x))\,.
\]
\begin{lemma}
  \label{lemma:c(x,T)}
  For all $x\in M$, for all $t>0$ and $T \in [1, 10^{-1}{C_\Gamma}
  e^{t-d_M(a_t(x))}] $ we have
  \[
  c_\Gamma (x,T) \leq \left( \frac{10}{C_\Gamma}\right)^2
  e^{2d_M(x,t)} \,;
  \]
  in particular for all $x\in M$ with bounded forward geodesic orbit,
  which is always the case whenever $M$ is compact, and for all $T>0$
  we have
  \[
  c_{\Gamma}(x, T) \leq \left( \frac{10}{C_\Gamma}\right)^2 \max_{x
    \in M} e^{2d_M(x)}\,.
  \]
  For all $x \in M_{A,Q}$ and for all $T\geq 1$ we have the estimate
  \[
  c_\Gamma(x, T) \leq (10 C^{-1}_\Gamma e^{Q} )^{\frac{2}{1-A}}
  T^{\frac{2A}{1-A}} \,.
  \]
  For every $A>1/2$ there exists a constant $C_{\Gamma,A}>0$ such that
  if $x \in \widetilde M_{A,Q, y_0}$ then for all $T\geq 1$ we have
  \[
  c_\Gamma(x, T) \leq C_{\Gamma,A} e^{2Q} (1+Q +y_0 + \log T)^{2A}\,.
  \]
\end{lemma}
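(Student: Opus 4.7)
The plan is to establish part (a) directly from Lemma~\ref{lemma:fund_box} via a geodesic renormalization, and then to obtain (b)--(d) by specializing the renormalization time $t$ to the Diophantine data at hand.

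For part (a), the central computation is the conjugation identity
\begin{equation*}
\alpha_{a_y(x)}(\tau, \eta, \zeta) \;=\; \alpha_x\bigl(\tau e^{y},\, \eta + y/2,\, \zeta\bigr),
\end{equation*}
which follows from $\mathrm{Ad}(\exp(yX/2)) U = e^{y} U$ and $\mathrm{Ad}(\exp(yX/2)) V = e^{-y} V$. In particular, injectivity of $\alpha_x$ on the horocycle box $B_{c,T} := [-10T, 10T] \times [-1/2, 1/2] \times [-1/(cT), 1/(cT)]$ is equivalent to injectivity of $\alpha_{a_y(x)}$ on the preimage $[-10T e^{-y}, 10 T e^{-y}] \times [-1/2 - y/2, 1/2 - y/2] \times [-1/(cT), 1/(cT)]$. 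The hypothesis $T \leq C_\Gamma e^{t - d_M(a_t(x))}/10$ says exactly that, taking $y = t$, the contracted horocycle side $10 T e^{-t}$ is at most the fundamental-box scale $C_\Gamma e^{-d_M(a_t(x))}$ at $a_t(x)$; and the choice $c := (10/C_\Gamma)^2 e^{2 d_M(x,t)}$ makes the $V$-side $1/(cT)$ also fit within the same fundamental box.

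The principal obstacle is the geodesic shift: after renormalization, the $X$-range becomes $[-1/2 - t/2, 1/2 - t/2]$, which is not contained in the centered interval $[-1/2, 1/2]$ on which Lemma~\ref{lemma:fund_box} provides injectivity. I plan to address this by a tiling argument, partitioning the geodesic direction at $x$ into sub-intervals $I_j \subset [-1/2, 1/2]$ and applying Lemma~\ref{lemma:fund_box} at intermediate base points $a_{y_j}(x)$ for various $y_j \in [0, t]$ whose renormalization shifts $-y_j/2$ recenter each $I_j$ inside $[-1/2, 1/2]$. The uniform bound $d_M(a_{y}(x)) \leq d_M(x, t)$ for $y \in [0, t]$ guarantees that every such fundamental box has $U$- and $V$-sides at least $C_\Gamma e^{-d_M(x, t)}$, which, combined with the factor-of-ten margin in the hypothesis, is enough to accommodate each tile of $B_{c,T}$ after rescaling. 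Injectivity across distinct tiles follows from the transversality of the geodesic direction to the horocycle foliation together with the smallness of the $(U, V)$-perturbation. This glueing is the step that produces the numerical factor $(10/C_\Gamma)^2$.

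Parts (b)--(d) then follow by choosing $t$ optimally under each hypothesis. When $M$ is compact, $d_M$ is uniformly bounded, so (a) applies with any sufficiently large $t$ and yields (b). For $x \in M_{A, Q}$, I would set $t := (1-A)^{-1} \log(10 T e^{Q}/C_\Gamma)$: the Diophantine bound $d_M(a_t(x)) \leq A t + Q$ gives $e^{t - d_M(a_t(x))} \geq e^{(1-A)t - Q} = 10T/C_\Gamma$, verifying the hypothesis of (a), and $d_M(x, t) \leq A t + Q$ together with the identity $2 + 2A/(1-A) = 2/(1-A)$ yields the polynomial bound $(10 C_\Gamma^{-1} e^{Q})^{2/(1-A)} T^{2A/(1-A)}$. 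For $x \in \widetilde{M}_{A, Q, y_0}$, I would solve $t - A \log(t + y_0) - Q = \log(10 T/C_\Gamma)$ to leading order, producing $t \asymp \log T + Q + y_0$; substituting into $d_M(x, t) \leq A \log(t + y_0) + Q$ produces the logarithmic correction $e^{2Q} (1 + Q + y_0 + \log T)^{2A}$ absorbed into the constant $C_{\Gamma, A}$.
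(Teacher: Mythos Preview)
Your conjugation identity $\alpha_{a_y(x)}(\tau,\eta,\zeta)=\alpha_x(\tau e^{y},\eta+y/2,\zeta)$ is correct but stops one step short, and that is precisely what forces the unnecessary tiling. If instead of absorbing $\exp(yX/2)$ into the $X$-coordinate you commute it all the way past $\exp(\zeta V)$ (using $\mathrm{Ad}(\exp(yX/2))V=e^{-y}V$), you get
\[
\alpha_{a_y(x)}(\tau,\eta,\zeta)=\alpha_x\bigl(\tau e^{y},\,\eta,\,\zeta e^{-y}\bigr)\cdot\exp(yX/2)\,.
\]
Since right multiplication by a fixed group element is a bijection of $M$, injectivity of $\alpha_{a_y(x)}$ on a centered box $[-a,a]\times[-1/2,1/2]\times[-a,a]$ is \emph{equivalent} to injectivity of $\alpha_x$ on $[-ae^{y},ae^{y}]\times[-1/2,1/2]\times[-ae^{-y},ae^{-y}]$. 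This is exactly the paper's argument: the $X$-range stays centered at $0$, both the $U$- and $V$-sides scale (in opposite directions), and a single application of Lemma~\ref{lemma:fund_box} at the point $a_{y_*}(x)$, with $y_*$ the least $y>0$ satisfying $10Te^{-y}\le C_\Gamma e^{-d_M(a_y(x))}$, yields $c_\Gamma(x,T)\le e^{2y_*}/T^2$ directly.

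Your tiling argument, as written, has a genuine gap. Recentering a slab $I_j\subset[-1/2,1/2]$ via the shift $-y_j/2$ forces $y_j\in[-1,1]$, whereas contracting the $U$-side $10Te^{-y_j}$ down to the fundamental-box scale $C_\Gamma e^{-d_M(a_{y_j}(x))}$ requires $y_j$ of order $\log T$. These two constraints on $y_j$ are incompatible, so no single $y_j$ handles a tile, and the appeal to ``transversality'' does not supply the missing cross-tile injectivity. Parts (b)--(d), on the other hand, are fine once (a) is established; your choices of $t$ reproduce the paper's bounds on $y_*$ and give the stated estimates.
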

\begin{proof}
  Let $C_\Gamma > 0$ be as in Lemma~\ref{lemma:fund_box}.  Let $x$,
  $t$ and $T$ as in the statement of the Lemma and set
  \[
  y_* = \min \{ y >0 \mid e^{-y} 10 T \leq C_\Gamma
  e^{-d_M(a_{y}(x))} \} \,,
  \]
  We remark that, since
  \[
  e^{-t} 10 T \leq C_\Gamma e^{-d_M(a_{t}(x))}\,,
  \]
  we have $y_*\leq t$, as the set in the definition of $y_*$ contains
  $t>0$. In fact have
  \begin{equation}
    \label{eq:y_Gamma_bound1}
    y_* \leq  \log  \left(\frac{10 T}{C_\Gamma}\right) + d_M(x,t)\,.
  \end{equation}
  By definition of the positive real number $y_*$ we have
  \[
  e^{-y_*} 10 T= C_\Gamma e^{-d_M(a_{y_*}(x))}\,.
  \]
  By Lemma~\ref{lemma:fund_box} the map $\alpha_{a_{y_*}(x)}$ is
  injective on
  \[ [-e^{-y_*} 10 T, e^{-y_*} 10 T]
  \times [-1/2, 1/2] \times [-e^{-y_*} 10 T , e^{-y_*} 10 T] \,.
  \]
  Recall that for any $t \in \R$, we have the commutation relations,
  \[
  \begin{aligned}
    a_{-y_*} \circ h_t \circ a_{y_*} & = h_{t e^{y_*}} \,; \\
    a_{-y_*} \circ \bar{h}_t \circ a_{y_*} & = \bar h_{t e^{-y_*}} \,.
  \end{aligned}
  \]
  Then by right multiplication of $a_{-y_*}$ on the image of
  $\alpha_{a_{y_*}(x)}$ restricted to the above set, we get that the
  map $\alpha_x$ is injective on the interval
  \[ [-10 T, 10 T] \times [-1/2, 1/2] \times \frac{T^2}{e^{2y_*}} [-
  \frac{1}{T} , \frac{1}{T} ] \,.
  \]
  By the estimate in formula~\eqref{eq:y_Gamma_bound1} it then follows
  that
  \[
  c_\Gamma(x,T) \leq  \frac{e^{2y_*}} {T^2} \leq
   \left(\frac{10}{C_\Gamma}\right)^2 e^{2d_M(x,t)}\,.
  \]
  If $x$ has bounded forward geodesic orbit we have in particular
  that, for all $T\geq 1$,
  \[
  c_\Gamma(x,T) \leq \left( \frac{10}{C_\Gamma}\right)^2 \max_{x\in M}
  e^{2d_M(x)} <+\infty\,.
  \]
  A straightforward estimate then shows that if $x\in M_{A,Q}$ then by
  the definitions we have
  \[
  c_\Gamma(x,T) \leq \frac{e^{2y_*}}{T^2} \leq (10 C^{-1}_\Gamma e^{Q}
  T)^{\frac{2}{1-A}} / T^2 = (10 C^{-1}_\Gamma e^{Q} )^{\frac{2}{1-A}}
  T^{\frac{2A}{1-A}}\,.
  \]
  Similarly if $x\in \widetilde M_{A,Q,y_0}$, that is, if $d_M(a_y(x)
  ) \leq A \log (y+y_0) + Q$ for all $y\geq 1-y_0$, it follows by the
  definition that $y_*$ is bounded above either by $A>0$ or by the
  unique solution $Y\geq A$ of the identity
  \[
  Y = A \log (Y+y_0) + Q + \log (\frac{10 T}{C_\Gamma}) \,.
  \]
  By change of variable $Y+y_0$ is equal to the unique solution $Z\geq
  A+y_0$ of the equation
  \[
  Z= A \log Z + Q + y_0+ \log (\frac{10 T}{C_\Gamma}) \,.
  \]
  By a straightforward calculation there exists a constant $C_A>0$
  such that
  \[
  Z \leq C_A + Q + y_0+ \log (\frac{10 T}{C_\Gamma}) + A \log \left( Q
    + y_0+ \log (\frac{10 T}{C_\Gamma}) \right)\,.
  \]
  We conclude that there exists a constant $C_{\Gamma,A}>0$ such that
  \[
  c_\Gamma(x,T) \leq \frac{e^{2y_*}}{T^2} \leq C_{\Gamma,A} e^{2Q} (1+
  Q + y_0+ \log T)^{2A}\,.
  \]
  The argument is concluded.
\end{proof}

For any $x \in M$ and $\mathcal T\geq 1$, we consider the scaled map
$\alpha_{x, \mathcal T}$ defined on $\R^3$ by
\[
\alpha_{x, \mathcal T} (t, y, z) = x \exp(t \mathcal T U) \exp(y
\mathcal T^{-1/3} X) \exp(z \mathcal T^{-2/3} V)\,.
\]
By the above definition and by change of variable we have the
following statement.
\begin{lemma}
  \label{lemma:V_limit}
  For all $\mathcal T\geq 1$ and all $(x,T)\in M\times\R^+$, the map
  $\alpha_{x, \mathcal T}$ is injective on the interval
  \[
  [-10 \ T, 10 \ T] \times [-\mathcal T^{1/3}/2 , \mathcal T^{1/3}/2]
  \times \frac{1}{ c_{\Gamma}(x,\mathcal T T)} [- \frac{\mathcal
    T^{-1/3}}{T}, \frac{\mathcal T^{-1/3}}{T}]\,.
  \]
\end{lemma}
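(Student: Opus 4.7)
The plan is to reduce this to the definition of $c_\Gamma$ by a straightforward change of variables between the scaled map $\alpha_{x,\mathcal T}$ and the unscaled map $\alpha_x$. Since the factors $\mathcal T$, $\mathcal T^{-1/3}$, $\mathcal T^{-2/3}$ in the definition of $\alpha_{x,\mathcal T}$ only multiply the coordinates (they act on the coefficients of fixed Lie algebra generators $U$, $X$, $V$, not on the group elements themselves), we simply have the identity
\[
\alpha_{x,\mathcal T}(t,y,z) \;=\; \alpha_x\bigl(\mathcal T t,\; \mathcal T^{-1/3} y,\; \mathcal T^{-2/3} z\bigr).
\]
Injectivity of $\alpha_{x,\mathcal T}$ on a box $B\subset \R^3$ is therefore equivalent to injectivity of $\alpha_x$ on the image $\Phi_{\mathcal T}(B)$, where $\Phi_{\mathcal T}(t,y,z) = (\mathcal T t, \mathcal T^{-1/3} y, \mathcal T^{-2/3} z)$.

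First, I would apply $\Phi_{\mathcal T}$ to the target box in the statement, namely
\[
B \;=\; [-10T,10T]\times[-\mathcal T^{1/3}/2,\mathcal T^{1/3}/2]\times \frac{1}{c_\Gamma(x,\mathcal T T)}\Bigl[-\tfrac{\mathcal T^{-1/3}}{T},\tfrac{\mathcal T^{-1/3}}{T}\Bigr].
\]
The three scalings combine cleanly: the first interval becomes $[-10(\mathcal T T),10(\mathcal T T)]$, the second collapses to $[-1/2,1/2]$ because $\mathcal T^{1/3}\cdot\mathcal T^{-1/3}=1$, and the third becomes $\frac{1}{c_\Gamma(x,\mathcal T T)}\bigl[-\tfrac{1}{\mathcal T T},\tfrac{1}{\mathcal T T}\bigr]$ because $\mathcal T^{-1/3}\cdot\mathcal T^{-2/3}=\mathcal T^{-1}$. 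Thus
\[
\Phi_{\mathcal T}(B) \;=\; [-10(\mathcal T T),10(\mathcal T T)]\times[-\tfrac12,\tfrac12]\times \frac{1}{c_\Gamma(x,\mathcal T T)}\Bigl[-\tfrac{1}{\mathcal T T},\tfrac{1}{\mathcal T T}\Bigr].
\]

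Second, I would invoke Definition~\ref{def:c(x,T)} with parameter $T$ replaced by $\mathcal T T$: by the very definition of $c_\Gamma(x,\mathcal T T)$, the map $\alpha_x$ is injective on precisely this set $\Phi_{\mathcal T}(B)$. Combining with the equivalence noted above yields the injectivity of $\alpha_{x,\mathcal T}$ on $B$, which is the claim.

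No real obstacle is anticipated — the only point that requires a brief verification is the factorization $\alpha_{x,\mathcal T}(t,y,z)=\alpha_x(\mathcal T t,\mathcal T^{-1/3}y,\mathcal T^{-2/3}z)$, which follows because $\exp(t\mathcal T U)=\exp((\mathcal T t)U)$ and similarly for $X$ and $V$, so no noncommutativity issue ever enters.
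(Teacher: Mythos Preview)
Your proposal is correct and is precisely the argument the paper has in mind: the paper dismisses the lemma in one line as following ``by the above definition and by change of variable,'' and you have simply written out that change of variable explicitly. There is nothing to add.
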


\begin{theorem}\label{theo:average_width} 
There exists a constant $K_\Gamma>0$ such that the following holds.
  For any $x \in M$, for any $T \geq 1$ and $\mathcal T \in [1, T]$,
  there is an open tubular neighborhood $\Omega_{\mathcal T, T}(x)$ of
  $[0,T] \times \{(0,0)\}$ in  $[0,T] \times [-\frac{1}{2}, \frac{1}{2}]^2$ such that the map 
  $\alpha_{x, \mathcal T} : \Omega_{\mathcal T, T}(x) \to M$ is injective and
  the following estimate holds
  \[
  \frac{1}{T} \int_0^T \frac{1}{w_{\Omega_{\mathcal T, T}}(t)} dt \leq
K_\Gamma \cdot c_\Gamma^2(x,\mathcal T T) T (1+ \log (\mathcal
  T^{1/3}T)) \,.
  \]
\end{theorem}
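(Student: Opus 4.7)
The plan is to construct $\Omega_{\mathcal T,T}(x)$ as a tubular neighborhood whose cross-section $R(t)\subset[-1/2,1/2]^2$ at each $t\in[0,T]$ is taken as large as possible subject to the constraint that it avoid self-returns of the horocycle orbit, and then to bound $(1/T)\int_0^T 1/w_\Omega(t)\,dt$ by a dyadic decomposition over the scale of the nearest such return.

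As a uniform baseline, Lemma~\ref{lemma:V_limit} lets us take $R(t)=[-1/2,1/2]\times[-\delta_0,\delta_0]$ with $\delta_0:=\mathcal T^{-1/3}/(c_\Gamma(x,\mathcal T T)\,T)$, which gives the uniform worst-case estimate $(1/T)\int_0^T 1/w_\Omega(t)\,dt\lesssim c_\Gamma(x,\mathcal T T)\,T\,\mathcal T^{1/3}$. This crude bound already proves the theorem in the regime $\mathcal T^{1/3}\lesssim c_\Gamma\log(\mathcal T^{1/3}T)$; the substance of the result lies in enlarging the $z$-extent of the cross-section wherever the orbit is far from a self-return, so as to replace $\mathcal T^{1/3}$ by $c_\Gamma(x,\mathcal T T)\log(\mathcal T^{1/3}T)$.

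To do so I would classify self-intersections: the coincidence $\alpha_{x,\mathcal T}(t_1,y_1,z_1)=\alpha_{x,\mathcal T}(t_2,y_2,z_2)$ lifts, via a choice of lift $x\mapsto\tilde x\in\SL(2,\R)$, to the existence of a non-identity $\gamma\in\tilde x^{-1}\Gamma\tilde x$ for which $h_{-\mathcal T t_1}\gamma\,h_{\mathcal T t_2}$ lies in the subgroup $\{a_s\bar h_u : s,u\in\R\}$ with $s,u$ in the scaled cross-section range. Using the Bruhat decomposition of $\gamma$ and the commutation relations $a_s h_t=h_{te^{s}}a_s$ and $a_s\bar h_u=\bar h_{ue^{-s}}a_s$, each such $\gamma$ produces returns concentrated near a single time and forces the cross-section to shrink by a factor proportional to $c_\Gamma(x,\mathcal T T)$ in the $y$-direction and to $c_\Gamma(x,\mathcal T T)\,r/T$ in the $z$-direction, where $r=|t_2-t_1|$ is the return separation -- this is the origin of the $c_\Gamma^2$ factor. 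Partitioning $[0,T]$ by the dyadic scale $r\in[2^{-k-1}T,2^{-k}T]$ for $k=0,1,\dots,K$ with $K\simeq\log(\mathcal T^{1/3}T)$, one verifies that the contribution of each scale to $(1/T)\int_0^T 1/w_\Omega(t)\,dt$ is bounded uniformly in $k$ by a constant multiple of $c_\Gamma^2(x,\mathcal T T)\,T$, and summing over the $K+1$ scales yields the claimed logarithmic factor.

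The main obstacle will be the uniform-in-$k$ counting of self-returns at each dyadic scale, and the verification that the dyadic count of returns and the shrinking of the allowable cross-section balance exactly so as to produce a scale-independent contribution, rather than any single scale dominating. In the non-compact setting this requires a careful use of Definition~\ref{def:c(x,T)} and the geodesic-flow rescaling so that all cuspidal excursions are absorbed into the two factors of $c_\Gamma(x,\mathcal T T)$ with no residual polynomial blow-up; in the compact case it reduces to a standard hyperbolic geometric count. This balancing, where the hyperbolic geometry of $\SL(2,\R)/\Gamma$ enters decisively, is what converts the polynomial factor $\mathcal T^{1/3}$ of the baseline bound into the logarithmic improvement $1+\log(\mathcal T^{1/3}T)$ stated in the theorem.
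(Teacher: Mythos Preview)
Your overall strategy --- construct a tube pinched near self-returns, classify returns dyadically, show the contribution per scale is uniform, and sum over $\sim\log(\mathcal T^{1/3}T)$ scales --- is the paper's. The gap is in the choice of dyadic variable and the scaling claim built on it. The paper classifies a return pair $(t_0,t_1)$ not by its time-separation $r=|t_1-t_0|$ but by the scale $\beta$ of its transverse $V$-displacement, $|z|\in c_\Gamma^{-1}(e^{-(\beta+1)},e^{-\beta}]$ (Definition~\ref{defi:beta_close}). This is the natural variable because the tube must be pinched in the $(y,z)$-directions to side-length $\sim e^{-\beta}/c_\Gamma$ near such a return, and it is $|z|$, not $r$, that governs this. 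Your assertion that the $z$-extent shrinks ``by a factor proportional to $c_\Gamma\,r/T$'' is unjustified: $r$ and $|z|$ are not comparable across different lattice elements $\gamma$, so a decomposition by $r$ does not produce the scale-independent balance you are after. With the $\beta$-variable, a separation argument (Lemma~\ref{lemma:return_separated}) bounds the number of non-degenerate returns at scale $\beta$ by $\lesssim e^{-2\beta}\mathcal T^{2/3}T^2$, each contributing $\lesssim c_\Gamma^2\mathcal T^{-2/3}e^{2\beta}$ to the integral (Lemma~\ref{lemma:beta_contribution}), and the product $c_\Gamma^2 T^2$ is indeed scale-independent.

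Two further points your plan leaves open. First, the injectivity of $\alpha_{x,\mathcal T}$ on the pinched tube (Lemma~\ref{lemma:no_intersection}) is the most delicate step: one must show that any putative self-intersection in the tube can be slid along the $U$-direction to an actual $(\beta,\mathcal T,T)$-return with $y$-component zero, and then that the specific linear-opening pinching profile at that return excludes both endpoints; a Bruhat bookkeeping of $\gamma$'s does not by itself deliver this. Second, in the non-compact case there are degenerate returns with $t_0=t_1$, occurring on unstable cuspidal horocycles (Lemma~\ref{lemma:returns_loc}), which must be counted separately (Proposition~\ref{prop:count_deg}); your proposal does not mention them.
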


From the above theorem and from Lemma~\ref{lemma:width_comparison}, we
derive our main result on the average width of orbits segments of the
twisted horocycle flow.
\begin{corollary}
  \label{cor:average_width}
  For any $\lambda\in \R^*$, for any $\bar x= (x,\theta)\in M\times
  \T$, for any $T \geq 1$ and for any $\mathcal T \in [1, T]$, we have
  the estimate
  \[
  w_\mathcal T(\bar x, \lambda, T)^{-1} \leq w_\mathcal T(x, T)^{-1}
  \leq K_\Gamma \cdot c_\Gamma^2(x,\mathcal T T) T (1+\log (\mathcal
  T^{1/3}T))\,.
  \]
\end{corollary}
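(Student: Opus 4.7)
The plan is to build the open tubular neighborhood $\Omega_{\mathcal{T}, T}(x)$ by applying the local injectivity statement of Lemma~\ref{lemma:V_limit} not at $x$ alone but at many basepoints $h_{\mathcal{T} t_0}(x)$ along the orbit segment, with time parameters varying dyadically, and then combining the resulting local boxes. A direct application of Lemma~\ref{lemma:V_limit} at $x$ with parameter $T$ gives only a thin tube of cross-section $\sim \mathcal{T}^{-1/3}/(T c_\Gamma)$, producing a bound of order $c_\Gamma T^2 \mathcal{T}^{1/3}$ for $\int_0^T dt/w$, which exceeds the target by the factor $\mathcal{T}^{1/3}/(c_\Gamma \log(\mathcal{T}^{1/3}T))$. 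The improvement requires allowing a variable cross-section that is wider at generic times and summing over dyadic scales to extract the logarithm.

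First, for each $t_0 \in [0, T]$ and each $\tau > 0$, applying Lemma~\ref{lemma:V_limit} at the basepoint $h_{\mathcal{T} t_0}(x)$ with time parameter $\tau$ yields injectivity of $\alpha_{x, \mathcal{T}}$ on a box $(t_0 - 10\tau, t_0 + 10\tau) \times [-1/2, 1/2] \times [-\mathcal{T}^{-1/3}/(\tau c^{(t_0, \tau)}), \mathcal{T}^{-1/3}/(\tau c^{(t_0, \tau)})]$, where $c^{(t_0, \tau)} := c_\Gamma(h_{\mathcal{T} t_0}(x), \mathcal{T} \tau)$ and the $y$-range $[-1/2, 1/2]$ is used since $\mathcal{T}^{1/3}/2 \geq 1/2$. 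Using the geodesic invariance $c_\Gamma(y, T) = c_\Gamma(a_r(y), e^{-r} T)$ from Remark~\ref{rmrk:C-Gamma} together with Lemma~\ref{lemma:c(x,T)}, each local factor $c^{(t_0, \tau)}$ can be controlled by $c_\Gamma(x, \mathcal{T} T)^2$: one factor arises from the local injectivity estimate itself, and a second from pulling the basepoint back to $x$ along the geodesic flow while tracking the maximal excursion over the relevant $y$-range. This produces the squared factor $c_\Gamma^2$ in the final bound.

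Next, I would decompose the interval $[0, T]$ using a dyadic family of scales $\tau_k \in [\mathcal{T}^{-1/3}, T]$, cover $[0, T]$ at each scale by $\lceil T/\tau_k \rceil$ boxes of the form above, and define $\Omega_{\mathcal{T}, T}(x)$ at each time $t$ to have the largest cross-section consistent with all such local injectivity statements. The contribution at scale $\tau_k$ to $\int_0^T dt/w$ is of order $T \cdot \tau_k c_\Gamma^2 \mathcal{T}^{1/3}$, and summing $k$ over the $\sim \log(\mathcal{T}^{1/3} T)$ dyadic scales yields $O(c_\Gamma^2 T^2 \log(\mathcal{T}^{1/3} T))$, giving the desired average bound after dividing by $T$. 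The main obstacle is ensuring the coherence of the dyadic construction, that is, that the cross-sectional widths chosen at all scales and basepoints assemble into a single globally injective open neighborhood $\Omega_{\mathcal{T}, T}(x)$ rather than merely separate local pieces. This requires a careful argument that at each time $t$ the widths selected locally are compatible with injectivity at every other time $t'$, exploiting the fact that close returns of the horocycle orbit are exactly what the function $c_\Gamma$ quantifies, so that the local bounds coming from Lemma~\ref{lemma:V_limit} at many basepoints already encode all such returns consistently.
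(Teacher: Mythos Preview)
Your approach has a genuine gap at precisely the point you flag as ``the main obstacle'': assembling local injectivity boxes into a single globally injective tube. Applying Lemma~\ref{lemma:V_limit} at a basepoint $h_{\mathcal{T} t_0}(x)$ with parameter $\tau$ gives injectivity of $\alpha_{x,\mathcal{T}}$ only on a box of $t$-length $20\tau$ centered at $t_0$. A tube obtained by choosing at each time $t$ the largest cross-section allowed by some local box containing $t$ need not be globally injective: nothing in your construction prevents two points $(t_1,y_1,z_1)$ and $(t_2,y_2,z_2)$ with $|t_1-t_2|\gg\tau$, and each $z_i$ in its own (fat) local cross-section, from mapping to the same point of $M$. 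Local boxes control only short-range self-intersections. Your remark that ``close returns \dots\ are exactly what $c_\Gamma$ quantifies'' is only half right: $c_\Gamma(x,\mathcal{T}T)$ governs the single long thin box at $x$, not a variable-width tube. Relatedly, the claimed bound $c^{(t_0,\tau)}\le c_\Gamma(x,\mathcal{T}T)^2$ is unjustified: there is no submultiplicativity of $c_\Gamma$ along horocycle orbits, and the geodesic identity of Remark~\ref{rmrk:C-Gamma} does not convert a horocycle displacement $h_{\mathcal{T}t_0}$ into anything controlled by $c_\Gamma(x,\cdot)$. Finally, your dyadic sum does not close: if each scale $\tau_k$ contributed $\sim T\,\tau_k\,c_\Gamma^2\,\mathcal{T}^{1/3}$ uniformly in $t$, the sum would be dominated by the largest $\tau_k$ and produce an extra factor $\mathcal{T}^{1/3}$, not a logarithm.

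The paper's route is structurally different. Rather than patching local boxes, it starts from the long thin box and locates exactly where fattening is obstructed. The key objects are the $(\beta,\mathcal{T},T)$-\emph{returns}: pairs $(t_0,t_1)\in[-10T,10T]^2$ with $x\exp(t_1\mathcal{T}U)=x\exp(t_0\mathcal{T}U)\exp(z\mathcal{T}^{-2/3}V)$ and $|z|\sim e^{-\beta}/c_\Gamma(x,\mathcal{T}T)$. One shows (Lemma~\ref{lemma:return_separated}) that non-degenerate returns at level $\beta$ are $\sim e^{\beta}\mathcal{T}^{-1/3}$-separated, hence counts them (Proposition~\ref{prop:count}), and bounds the degenerate cuspidal ones separately (Proposition~\ref{prop:count_deg}). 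The tube $\Omega_{\mathcal{T},T}(x)$ is then built by pinching a wide tube down to side $\sim e^{-\beta}/c_\Gamma$ near each $\beta$-return, with linear reopening away from the pinch. Global injectivity of this explicit tube (Lemma~\ref{lemma:no_intersection}) is proved directly: a hypothetical collision forces a close return, which one then slides along the orbit to a genuine $(\delta,\mathcal{T},T)$-return and checks is incompatible with the chosen pinching. The logarithm appears as the number of dyadic levels $\beta\in[0,\log(\mathcal{T}^{1/3}T)]$, and the factor $c_\Gamma^2$ comes from the squared pinch-width in the two-dimensional cross-section, not from any comparison of $c_\Gamma$ at different basepoints.
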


We prove below Theorem~\ref{theo:average_width}. A simple calculation
shows

\begin{lemma}
  \label{lemma:commutation_rules}
  Let $t_0, s, z \in \R$ be such that $2 |z s| {\mathcal T}^{1/3} <
  1$.  Then
  \begin{equation}\label{equa:simplify}
    \begin{split}
      &\exp (t_0 \mathcal T U) \exp (y \mathcal T^{-1/3} X) \exp (z
      \mathcal T^{-2/3}V) \exp( s \mathcal T U) \\ &\qquad =
      \exp(t_0(s) \mathcal T U) \exp(y(s) \mathcal T^{-1/3} X)
      \exp(z(s) \mathcal T^{-2/3} V)\,,
    \end{split}
  \end{equation}
  where
  \begin{equation}\label{equa:separation1}
    \begin{array}{lll}
      t_0(s) = s e^{2 y \mathcal T^{-1/3}} (1 + z s \mathcal T^{1/3})^{-1} + t_0\\
      y(s) = y - \mathcal T^{1/3} \log (1 + \mathcal T^{1/3} z s) \\ 
      z(s) =  z  (1+ \mathcal T^{1/3} z s)^{-1}\,.    
    \end{array}
  \end{equation}
\end{lemma}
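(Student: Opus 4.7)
The plan is to prove the identity by a direct calculation in the faithful matrix representation of $SL(2,\R)$, where
\[
\exp(tU)=\begin{pmatrix}1&t\\0&1\end{pmatrix},\quad \exp(yX)=\begin{pmatrix}e^y&0\\0&e^{-y}\end{pmatrix},\quad \exp(zV)=\begin{pmatrix}1&0\\z&1\end{pmatrix}.
\]
The whole content of the lemma is an unraveling of the product on the left in terms of the Bruhat-style ordered product $\exp(\cdot\,U)\exp(\cdot\,X)\exp(\cdot\,V)$ on the right, combined with scaling by the factors $\mathcal T$, $\mathcal T^{-1/3}$, $\mathcal T^{-2/3}$.

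First I would establish the single commutation identity at the heart of the proof. A direct multiplication gives
\[
\exp(zV)\exp(sU)=\begin{pmatrix}1&s\\ z&1+zs\end{pmatrix},
\]
and under the hypothesis $1+zs\ne 0$ this factors uniquely as $\exp(s'U)\exp(hX)\exp(z'V)$ with
\[
s'=\tfrac{s}{1+zs},\qquad h=-\log(1+zs),\qquad z'=\tfrac{z}{1+zs},
\]
as one sees by expanding the right-hand product and matching entries. Replacing $z\mapsto z\mathcal T^{-2/3}$ and $s\mapsto s\mathcal T$ transforms this into
\[
\exp(z\mathcal T^{-2/3}V)\exp(s\mathcal T U)=\exp\!\bigl(\tfrac{s\mathcal T}{1+zs\mathcal T^{1/3}}U\bigr)\exp\!\bigl(-\log(1+zs\mathcal T^{1/3})X\bigr)\exp(z(s)\mathcal T^{-2/3}V),
\]
which already produces the coefficient $z(s)=z/(1+zs\mathcal T^{1/3})$ of $\mathcal T^{-2/3}V$. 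The logarithm is well-defined on the positive branch thanks to the hypothesis $2|zs|\mathcal T^{1/3}<1$, which guarantees $1+zs\mathcal T^{1/3}>1/2>0$.

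Next I would substitute this identity into the left-hand side of \eqref{equa:simplify} and move the middle $X$-exponentials to their proper places. The two $X$-exponentials that end up adjacent commute and combine into
\[
\exp\!\bigl((y\mathcal T^{-1/3}-\log(1+zs\mathcal T^{1/3}))X\bigr)=\exp\!\bigl(y(s)\mathcal T^{-1/3}X\bigr),
\]
giving exactly the claimed $y(s)$. The remaining nontrivial step is to push the $U$-exponential across the $X$-exponential $\exp(y\mathcal T^{-1/3}X)$ sitting to its left; for this I would invoke the elementary identity $\exp(yX)\exp(tU)=\exp(te^{2y}U)\exp(yX)$, which follows from $[X,U]=2U$ (equivalently $e^{yX}Ue^{-yX}=e^{2y}U$). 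Combining the resulting $U$-exponential with the pre-existing $\exp(t_0\mathcal T U)$ on the far left yields the coefficient
\[
t_0+\tfrac{s\,e^{2y\mathcal T^{-1/3}}}{1+zs\mathcal T^{1/3}}=t_0(s),
\]
completing the proof.

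There is no genuine obstacle here: the argument is a bookkeeping exercise that relies only on two algebraic facts, namely the explicit $LU$-style factorization of $\exp(zV)\exp(sU)$ and the conjugation rule $[X,U]=2U$. The only hypothesis that needs checking is positivity of $1+zs\mathcal T^{1/3}$ so that $\log$ is real, and this is exactly what $2|zs|\mathcal T^{1/3}<1$ ensures.
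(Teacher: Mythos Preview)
Your proof is correct and is precisely the ``simple calculation'' the paper alludes to without writing out: a direct matrix computation in $SL(2,\R)$ using the Bruhat-type factorization of $\exp(zV)\exp(sU)$ together with the conjugation rule $e^{yX}Ue^{-yX}=e^{2y}U$. There is nothing to add or compare.
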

We now introduce certain closest returns of horocycle orbits.
\begin{definition}
  \label{defi:beta_close} Let $\mathcal T$, $T \geq 1$. A pair $(t_0, t_1) \in[-10 T, 10 T]^2$ is called \emph{a $(\beta,\mathcal T, T)$-return for $x\in M$} if  $\beta$
  is an integer in $[0, \log( \mathcal T^{1/3} T)]$ such that for some
  $|z| \in \frac{1}{c_\Gamma (x, \mathcal T T) } \, (e^{-(\beta +
    1)}, e^{-\beta}] $ we have
  \[
  x \exp(t_1 \mathcal T U) = x \exp(t_0 \mathcal T U) \exp(z \mathcal
  T^{-2/3} V)\,.
  \]
  The  $(\beta,\mathcal T, T)$-return is called \emph{non-degenerate}
  if $t_0 \neq t_1$ and  \emph{degenerate} if $t_0=t_1$.
  
   We denote by $n^\beta_{ \mathcal T, T}(x)$ the number of non-degenerate $(\beta,\mathcal T, T)$-returns and by $n^{\beta,deg}_{ \mathcal T, T}(x)$ 
  the number of degenerate $(\beta,\mathcal T, T)$-returns for $x\in M$.
\end{definition}

Let $\mathbb H :=  \{w\in \C\vert \im(w)>0\}$ denote the upper half-plane.
Let $\{C_i\}$  be the collection of disjoint cusps of the surface $S:=\Gamma \backslash \mathbb H$ 
bounded by a cuspidal horocycles of length $\ell_\Gamma<1$. By a cusp of $M$
we mean the tangent unit bundle $\tilde C_i \subset  M$ of a cusp $C_i$, that is, the pull-back
to $M$ of the cusp $C_i \subset S$.
The manifold $M$ can be decomposed as a disjoint union of a \emph{thin part}, defined as 
the union of the finite collection of the disjoint cusps $\tilde C_i$,  and of a compact 
\emph{thick part}.

\smallskip
The following Lemma is proven in Appendix~\ref{appe:C}.  

\begin{lemma} 
\label{lemma:returns_loc}
For any non-degenerate $(\beta,\mathcal T, T)$-return
for $x\in M$ the return points $x \exp(t_1 \mathcal T U)$ and $x \exp(t_2 \mathcal T U)$
belong to the thick part of $M$. For any degenerate $(\beta,\mathcal T, T)$-return
for $x\in M$ the point $x \exp(t_1 \mathcal T U) = x \exp(t_0 \mathcal T U)$ belongs
to the cuspidal horocycle for the unstable horocycle $\{\bar h_t\}$ generated by the
vector field $V$ on $M$.
\end{lemma}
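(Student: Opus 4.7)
The plan is to prove the two statements of the lemma separately. The degenerate case is essentially immediate: when $t_0=t_1$, the return condition reduces to $y = y\,\exp(z\mathcal T^{-2/3} V)$ in $M$, where $y := x\exp(t_0\mathcal T U)$ and $|z|\mathcal T^{-2/3}>0$. Hence the $V$-orbit through $y$ is periodic with period dividing $|z|\mathcal T^{-2/3}$, so $y$ lies on a closed (cuspidal) unstable horocycle, as claimed.

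For the non-degenerate case, set $s := (t_1-t_0)\mathcal T \neq 0$ and $r := z\mathcal T^{-2/3}$, and observe that $|z|\leq 1/c_\Gamma(x,\mathcal T T)\leq 1$ together with $\mathcal T\geq 1$ forces $|r|\leq 1$. Argue by contradiction: suppose $y=x\exp(t_0\mathcal T U)\in\tilde C_i$. By conjugating by an element of $\SL(2,\R)$ we may assume $C_i$ is the cusp at $\infty$ in $\mathbb H$ and that its parabolic stabilizer in $\Gamma$ is $\Gamma_\infty=\langle T_\ell\rangle$ with $T_\ell=\exp(\ell U)$; any lift $\tilde y\in\SL(2,\R)$ of $y$ then has base point $\tilde y\cdot i$ at Euclidean height $y_0>Y_0:=\ell/\ell_\Gamma>\ell$ (using $\ell_\Gamma<1$). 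The return condition produces $\gamma\in\Gamma$ with
\[
\tilde y^{-1}\gamma\tilde y \;=\; \exp(sU)\exp(-rV) \;=\; \begin{pmatrix}1-sr & s\\ -r & 1\end{pmatrix},
\]
so $\operatorname{tr}(\gamma)=2-sr$. If $\gamma\in\Gamma_\infty$ then $\gamma=\exp(k\ell U)$ for some $k\in\Z\setminus\{0\}$ and has trace $2$, which forces $sr=0$ and, since $r\neq 0$, gives $s=0$, contradicting non-degeneracy. Otherwise $\gamma\notin\Gamma_\infty$ and Shimizu's lemma yields $|c_\gamma|\geq 1/\ell$ for the lower-left entry of $\gamma$; a direct conjugation calculation with $\tilde y$ in upper-triangular normal form gives $c_\gamma=-r/y_0$, whence $y_0\leq|r|\ell\leq\ell<Y_0$, contradicting $y_0>Y_0$. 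Thus $y$ lies in the thick part, and exchanging $t_0\leftrightarrow t_1$ gives the same conclusion for $y'$.

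The main obstacle is the Shimizu step for a lift $\tilde y=g_0 k_\theta$ whose $SO(2)$ factor $k_\theta$ is not trivial, i.e., whose frame at $y$ is not aligned with the cusp direction. In that case $k_\theta\exp(sU)\exp(-rV)k_\theta^{-1}$ has $(2,1)$-entry $-r\cos^2\theta-s\sin^2\theta+sr\sin\theta\cos\theta$, so $c_\gamma$ is no longer simply $-r/y_0$ and acquires $s$-dependent terms that can far exceed $|r|/y_0$. The way to close the gap is to observe that for frames misaligned from the cusp direction the $U$-orbit projects to a non-horizontal horocycle in $\mathbb H$ that exits the precisely invariant horoball after bounded hyperbolic time; one then argues either that the return point $y'$ falls outside $\tilde C_i$, permitting the previous argument to be applied symmetrically at $y'$, or that $|\theta|$ must be so small that the dominant contribution to $c_\gamma$ is still $-r\cos^2\theta/y_0$ of order $|r|/y_0$, and Shimizu's bound still delivers $y_0\leq O(|r|)\ell<Y_0$, closing the contradiction.
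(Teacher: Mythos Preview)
Your treatment of the degenerate case is correct and in fact more direct than the paper's: once $t_0=t_1$, the return relation reads $y=y\exp(rV)$ with $r\neq0$, so $y$ is periodic for the unstable horocycle flow and hence lies on a cuspidal $V$-orbit.

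For the non-degenerate case the paper proceeds differently. Assuming the relevant point lies in a cusp $\tilde C$, it lifts to $\bar x\in\tilde D$ and invokes precise invariance of the cusp horoball (the ``observation'' from the proof of Lemma~\ref{lemma:fund_box}) to conclude directly that the identifying element lies in the parabolic stabilizer $\Gamma_\infty=\{\gamma_n\}$. It then matches matrix entries in
\[
\bar x^{-1}\gamma_n\bar x=\begin{pmatrix}1+ncd & nd^2\\ -nc^2 & 1-ncd\end{pmatrix}=H(t_1)\bar H(-z)H(-t_0)=\begin{pmatrix}1-t_1z & *\\ -z & 1+t_0z\end{pmatrix},
\]
reading off $t_1z=-ncd=t_0z$ from the diagonal and hence $t_0=t_1$; the $(2,1)$-entry then gives the periodicity $z=nc^2$. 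Your trace argument for the case $\gamma\in\Gamma_\infty$ is exactly the sum of these two diagonal identities, so on that branch your proof coincides with the paper's.

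Where you diverge is in attempting to treat the case $\gamma\notin\Gamma_\infty$ by Shimizu's lemma, a case the paper simply does not open. You are right that the paper's appeal to the ``observation'' is not fully justified when the lift $\tilde y$ has nonzero $c$-entry (equivalently, nontrivial $SO(2)$-part), and your $(2,1)$-entry computation showing an extra $-s\sin^2\theta$ term is correct. But your resolution of this obstacle is not a proof: the dichotomy ``either $y'$ exits $\tilde C_i$, or $|\theta|$ is small enough that Shimizu still bites'' is not made quantitative, and the first branch is not obviously useful since the lemma claims \emph{both} return points lie in the thick part. In short, you have correctly located the point the paper glosses over, your $\gamma\in\Gamma_\infty$ branch matches the paper's argument, but your $\gamma\notin\Gamma_\infty$ branch remains a sketch rather than a complete argument.
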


Next lemma shows that  non-degenerate $(\beta,\mathcal T, T)$-returns cannot be too close.

\begin{lemma} \label{lemma:return_separated} There exists a constant $C_\Gamma>1$ 
(depending only on the thick part of $M$) such that the following holds. Let $(t_0, t_1) \in [-10
  \ T, 10 \ T]^2 $ be a non-degenerate $(\beta,\mathcal T, T)$-return for $x \in M$.
  If $(t_0', t_1')\in [-10 \ T, 10 \ T]^2 $ is another non-degenerate $(\beta,\mathcal T, T)$-return for 
  $x$ and
  \[
  |t_0' - t_0| < \frac{1}{2C_\Gamma} e^{\beta} \mathcal T^{-1/3} , \ |t_1' -
  t_1| < \frac{1}{2C_\Gamma} e^\beta \mathcal T^{-1/3}\,,
  \]
  then $t_0' = t_0$ and $t_1' = t_1$.
\end{lemma}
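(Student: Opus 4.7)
The plan is to re-express the identity derived from the two returns as $\alpha_{p_0,\mathcal T}(\delta_0, 0, z') = \alpha_{p_0,\mathcal T}(t_*, y_*, z_*)$ in $M$ via Lemma~\ref{lemma:commutation_rules}, and then to invoke the injectivity of $\alpha_{p_0,\mathcal T}$ from Lemma~\ref{lemma:V_limit} on an appropriately scaled tube to force the two parameter triples to coincide. The coincidence will immediately yield $\delta_1 = 0$, and consequently $\delta_0 = 0$ and $z = z'$.

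First, combining the two return identities gives
\[
p_0 \exp(\delta_0 \mathcal T U) \exp(z'\mathcal T^{-2/3} V) = p_0 \exp(z\mathcal T^{-2/3} V) \exp(\delta_1 \mathcal T U) \quad \text{in } M,
\]
where $p_0 := x\exp(t_0 \mathcal T U)$ and $\delta_i := t_i' - t_i$. The left-hand side is by definition $\alpha_{p_0, \mathcal T}(\delta_0, 0, z')$. For the right-hand side I would apply Lemma~\ref{lemma:commutation_rules} with $t_0 = y = 0$ and $s = \delta_1$ (whose hypothesis $2|z\delta_1|\mathcal T^{1/3} < 1$ is ensured by the closeness bounds on $|\delta_1|$ and $|z|$ when $C_\Gamma$ is chosen large) to bring $\exp(z\mathcal T^{-2/3}V)\exp(\delta_1 \mathcal T U)$ into canonical $UXV$-form, producing
\[
t_* = \frac{\delta_1}{1 + z\delta_1 \mathcal T^{1/3}},\quad y_* = -\mathcal T^{1/3} \log(1 + z\delta_1 \mathcal T^{1/3}),\quad z_* = \frac{z}{1 + z\delta_1 \mathcal T^{1/3}},
\]
so that the identity becomes $\alpha_{p_0, \mathcal T}(\delta_0, 0, z') = \alpha_{p_0, \mathcal T}(t_*, y_*, z_*)$ in $M$.

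Second, I would use Lemma~\ref{lemma:returns_loc} to place $p_0$ in the thick part of $M$, which by Lemma~\ref{lemma:c(x,T)} bounds $c_\Gamma(p_0, \cdot)$ by a constant $C_{\mathrm{thick}}$ depending only on the thick part. Setting the effective scale $T' := e^\beta/(2 C_{\mathrm{thick}} \mathcal T^{1/3})$, Lemma~\ref{lemma:V_limit} gives injectivity of $\alpha_{p_0, \mathcal T}$ on the tube
\[
[-10T', 10T'] \times [-\mathcal T^{1/3}/2, \mathcal T^{1/3}/2] \times \tfrac{1}{c_\Gamma(p_0, \mathcal T T')} [-\mathcal T^{-1/3}/T', \mathcal T^{-1/3}/T'].
\]
A direct check from the hypotheses together with the explicit formulas for $t_*, y_*, z_*$ shows that both triples $(\delta_0, 0, z')$ and $(t_*, y_*, z_*)$ lie inside this tube, provided $C_\Gamma$ is chosen large enough in terms of $C_{\mathrm{thick}}$: the bound $|\delta_i| < \frac{1}{2 C_\Gamma} e^\beta \mathcal T^{-1/3}$ gives $|\delta_0|, |t_*| \leq 10 T'$; the bound $|z|, |z'| \leq e^{-\beta}/c_\Gamma(x, \mathcal T T)$ gives $|z'|, |z_*|$ within the $V$-range (since $c_\Gamma(p_0, \mathcal T T') \leq C_{\mathrm{thick}}$); and the explicit formula for $y_*$ gives $|y_*| \lesssim \mathcal T^{1/3}/(C_\Gamma c_\Gamma)$, well within $\mathcal T^{1/3}/2$.

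By injectivity, $(\delta_0, 0, z') = (t_*, y_*, z_*)$. Since $z \neq 0$, the equation $y_* = 0$ forces $\delta_1 = 0$; then $t_* = 0$ forces $\delta_0 = 0$, and $z_* = z$ forces $z' = z$, as claimed. The main delicate point is the second step: the choice $T' = e^\beta/(2 C_{\mathrm{thick}} \mathcal T^{1/3})$ is precisely calibrated to simultaneously fit $|\delta_0|$ and $|t_*|$ into the $U$-range and $|z'|$ and $|z_*|$ into the $V$-range, and it is the form $\frac{1}{2C_\Gamma} e^\beta \mathcal T^{-1/3}$ of the hypothesis that makes this balance possible.
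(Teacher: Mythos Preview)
Your overall plan---combine the two return identities, rewrite one side via Lemma~\ref{lemma:commutation_rules}, and then appeal to local injectivity---is exactly the skeleton of the paper's argument, and your derivation of the identity
\(
\alpha_{p_0,\mathcal T}(\delta_0,0,z')=\alpha_{p_0,\mathcal T}(t_*,y_*,z_*)
\)
together with the final implication $y_*=0\Rightarrow \delta_1=0\Rightarrow \delta_0=0$ is correct.

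However, there is a real gap in your second step. You write that since $p_0$ lies in the thick part, ``by Lemma~\ref{lemma:c(x,T)} bounds $c_\Gamma(p_0,\cdot)$ by a constant $C_{\mathrm{thick}}$''. Lemma~\ref{lemma:c(x,T)} does \emph{not} say this: its bound on $c_\Gamma(p_0,S)$ is in terms of $d_M(p_0,t)=\max_{0\le y\le t}d_M(a_y(p_0))$, i.e.\ it requires control of the \emph{forward geodesic orbit} of $p_0$, not merely $d_M(p_0)$. A point in the thick part can perfectly well have $a_t(p_0)$ escaping into a cusp (e.g.\ $p_0$ on a stable cuspidal horocycle intersected with the thick part), and then $c_\Gamma(p_0,S)$ is unbounded in $S$; in particular $c_\Gamma(p_0,\mathcal T T')$ with $\mathcal T T'\asymp e^\beta\mathcal T^{2/3}$ is not controlled. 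So you cannot, as written, place the two triples in an injectivity tube for $\alpha_{p_0,\mathcal T}$ via Lemma~\ref{lemma:V_limit}. (In the compact case your step is fine, since then Lemma~\ref{lemma:c(x,T)} does give a uniform bound.)

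The paper closes precisely this gap by first pushing the whole picture by the geodesic flow for time $\sigma=\beta+\tfrac{2}{3}\log\mathcal T$. This rescales all $U$- and $V$-parameters to order~$1$, and---this is the key observation you are missing---the rescaled pairs are again non-degenerate returns for $a_\sigma(x)$, so Lemma~\ref{lemma:returns_loc} applies once more and places the rescaled endpoints $a_\sigma(p_0),\,a_\sigma(p_0')$ in the thick part. At that scale the required injectivity is just Lemma~\ref{lemma:fund_box} on a box of uniform size around a point at bounded distance from the thick part; no bound on $c_\Gamma(p_0,\cdot)$ is needed. In short, the geodesic rescaling is not a cosmetic repackaging of your Lemma~\ref{lemma:V_limit} argument but the mechanism that supplies compactness at the correct scale. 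If you insert this observation (equivalently: note via Remark~\ref{rmrk:C-Gamma} that $c_\Gamma(p_0,\mathcal T T')=c_\Gamma(a_\sigma(p_0),O(1))$ and that $a_\sigma(p_0)$ lies in the thick part by the rescaled application of Lemma~\ref{lemma:returns_loc}), your argument becomes complete and coincides with the paper's.
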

\begin{proof}
  If $(t_0, t_1)$ and $(t_0', t_1')$ are $(\beta,\mathcal T, T)$-return
  pairs in $[-10 \ T, 10 \ T]^2 $ , by definition there exist $z$ and
  $z'$ with $\vert z\vert$, $\vert z'\vert \in \frac{1}{c_\Gamma
    (x, \mathcal T T)} (e^{-(\beta+1)}, e^{-\beta}]$ such that
  \begin{equation}\label{eq:beta_returns}
    \begin{aligned}
      x \exp(t_1 \mathcal T U)
      &= x \exp(t_0 \mathcal T U) \exp(z \mathcal T^{-2/3} V) \,, \\
      x \exp(t'_1 \mathcal T U) &= x \exp(t'_0 \mathcal T U) \exp(z'
      \mathcal T^{-2/3} V)\,.
    \end{aligned}
  \end{equation}
 By geodesic scaling for geodesic time $\sigma:=\beta+ \frac{2}{3} \log \mathcal T$ of the identities
in formula \eqref{eq:beta_returns} we derive
$$
 \begin{aligned}
      a_\sigma (x) \exp(e^{-\sigma} t_1 \mathcal T U)
      &= a_\sigma (x)  \exp(e^{-\sigma} t_0 \mathcal T U) \exp(e^\sigma z \mathcal T^{-2/3} V) \,, \\
      a_\sigma(x) \exp(e^{-\sigma}  t'_1 \mathcal T U) &= a_\sigma (x)  \exp(e^{-\sigma} t'_0 \mathcal T U) 
      \exp(e^\sigma z' \mathcal T^{-2/3} V)\,.
    \end{aligned}
 $$
The pairs $(e^{-\sigma} t_0 \mathcal T, e^{-\sigma} t'_0 \mathcal T)$ and
$(e^{-\sigma} t_1 \mathcal T, e^{-\sigma} t'_1\mathcal T)$ are  non-degenerate 
$(0, 1, \mathcal T T)$-returns for the point $a_\sigma(x)$ with 
\begin{equation}
\label{eq:rescaled_dist}
  e^{-\sigma}  |t_0' - t_0| \mathcal T  < \frac{1}{2C_\Gamma} , \  e^{-\sigma} |t_1' -
  t_1| \mathcal T< \frac{1}{2C_\Gamma} \,.
\end{equation}
By Lemma~\ref{lemma:returns_loc}  the points
$$
\begin{aligned}
&a_\sigma (x) \exp(e^{-\sigma} t_0 \mathcal T U)\,,\quad a_\sigma (x) \exp(e^{-\sigma} t'_0 \mathcal T U)
\\ &a_\sigma (x) \exp(e^{-\sigma} t_1 \mathcal T U)\,, \quad a_\sigma (x) \exp(e^{-\sigma} t'_1 \mathcal T U)
\end{aligned}
$$
 all belong to the thick part of $M$.  Let $s = t'_1 -t_1$. Since 
 $|s| \le \frac{e^{\beta}}{2C_\Gamma} \mathcal T^{-1/3}$, we have
  \begin{equation}
    \label{eq:inj_bound}
   \mathcal T^{1/3} \vert zs \vert  \,   \leq \frac {e^{\beta}}{2C_\Gamma}
    \frac{e^{-\beta}}{c_\Gamma (x, \mathcal T T)}  <\frac{1}{2C_\Gamma}   \,.
  \end{equation}
  Hence, by Lemma \ref{lemma:commutation_rules} we obtain
  \[
  \begin{aligned}
    a_\sigma(x) &\exp(e^{-\sigma} t'_1 \mathcal T U)  
    = a_\sigma(x) \exp( e^{-\sigma} t_1 \mathcal T U) \exp(  e^{-\sigma} s \mathcal T U) \\
    &= a_\sigma(x) \exp(e^{-\sigma} t_0 \mathcal T U) \exp(e^\sigma z \mathcal T^{-2/3} V) 
    \exp(e^{-\sigma} s \mathcal T U) \\ 
    &= a_\sigma(x) \exp(e^{-\sigma} t_0(s) \mathcal T U) \exp(y(s) \mathcal
    T^{-1/3} X) \exp(e^{\sigma} z(s) \mathcal T^{-2/3} V) \,,
  \end{aligned}
  \]
  where $t_0(s)$, $y(s)$ and $z(s)$ are given by formulas
  \eqref{equa:separation1} with $y=0$, that is,
  \begin{equation}
    \label{eq:separation2}
    \begin{split}
      t_0(s) &= s (1 + \mathcal T^{1/3} z s )^{-1} + t_0\\
      y(s) &= - \mathcal T^{1/3} \log (1 + \mathcal T^{1/3} z s) \\
      z(s) &= z (1+ \mathcal T^{1/3} z s)^{-1}\,.
    \end{split}
  \end{equation}
  It follows that
  \begin{equation}
    \label{eq:overlap1}
    \begin{aligned}
      a_\sigma(x) & \exp(e^{-\sigma} t'_0 \mathcal T U) \exp(e^\sigma z' \mathcal T^{-2/3} V) \\ =
      &a_\sigma(x) \exp(e^{-\sigma} t_0(s) \mathcal T U) \exp(y(s) \mathcal T^{-1/3} X)
      \exp(e^\sigma z(s) \mathcal T^{-2/3} V) \,.
    \end{aligned}
  \end{equation}
  By the estimate \eqref{eq:inj_bound} the above expression for
  $t_0(s)$, $y(s)$ and $z(s)$,
  \begin{equation}
  \label{eq:coord_bounds}
  \begin{aligned}
    t_0(s) &\in [t_0 - 2s, t_0 + 2s]  \,, \\
    y(s) &\in [-\mathcal T^{1/3}/2,  \mathcal T^{1/3}/2]  \,, \\
    \vert z(s) \vert & \in [z/2, 2z]  \,.
  \end{aligned}
  \end{equation}
Let now $x_\sigma\in M$ denote the intermediate point
  $$
  x_\sigma:=  a_\sigma(x) \exp(e^{-\sigma} \frac{t'_0+t_0(s)}{2} \mathcal T U)\,.
  $$
  By the above bounds, since the points 
  $$
  a_\sigma(x) \exp(e^{-\sigma} t_0 \mathcal T U)\quad
  \text{ and } \quad a_\sigma(x) \exp(e^{-\sigma} t'_0 \mathcal T U)
  $$ 
  belong to the thick part of $M$, it follows that $x_\sigma$ belongs to the compact set of points at distance at most $1/C_\Gamma$ from the thick part. 
  
    The identity in formula~\eqref{eq:overlap1} can be rewritten as
  \begin{equation}
    \label{eq:overlap2}
    \begin{aligned}
      x_\sigma & \exp(e^{-\sigma} \frac{t'_0-t_0(s)}{2} \mathcal T U) 
      \exp(e^\sigma z' \mathcal T^{-2/3} V) \\ =
      &x_\sigma \exp(e^{-\sigma} \frac{t_0(s)-t_0}{2} \mathcal T U) \exp(y(s) \mathcal T^{-1/3} X)
      \exp(e^\sigma z(s) \mathcal T^{-2/3} V) \,.
    \end{aligned}
  \end{equation}
  Since $x_\sigma$ is at distance at most $1/C_\Gamma$ from the thick part and by 
  the bounds in formulas~\eqref{eq:rescaled_dist} and~\eqref{eq:coord_bounds}, it follows
  that there exists a constant $C_\Gamma>1$ for which  the identity~\eqref{eq:overlap2} 
  implies that
 \[
  t_0(s) -t'_0 = y(s) = z(s)-z' =0\,.
  \]
  Formula \eqref{eq:separation2} shows that the $y(s)=0$ implies
  that $sz=0$ and consequently $s=t_1-t'_1=0$ since $z\not=0$ and 
  $t_0(s)=t_0$ and $z(s) =z$.  In turn
  this implies $t'_0=t_0$ and $z =z'$. The argument is now
  concluded.
\end{proof}

As a consequence, we derive the following bound for the number $n_{ \mathcal T, T}^\beta(x)$  of
  non-degenerate $(\beta,\mathcal T, T)$-returns.

\begin{proposition}\label{prop:count}
  We have
  \[
  n_{\mathcal T,T}^\beta(x) \leq 4 \cdot 10^2 C^2_\Gamma e^{-2\beta} \mathcal
  T^{2/3} T^2 \,.
  \]
\end{proposition}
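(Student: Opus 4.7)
The plan is to deduce the count from the separation statement in Lemma~\ref{lemma:return_separated} by a packing argument in the square $[-10T,10T]^2 \subset \R^2$ that contains the set of non-degenerate $(\beta,\mathcal T,T)$-return pairs $(t_0,t_1)$. Set
\[
\delta := \frac{1}{2C_\Gamma}\, e^{\beta}\, \mathcal T^{-1/3}\,.
\]
Lemma~\ref{lemma:return_separated} says that if two non-degenerate $(\beta,\mathcal T,T)$-returns $(t_0,t_1)$ and $(t_0',t_1')$ satisfy both $|t_0-t_0'|<\delta$ and $|t_1-t_1'|<\delta$, then the two returns coincide. Equivalently, the set of non-degenerate $(\beta,\mathcal T,T)$-return pairs is $\delta$-separated in the $\ell^\infty$ metric on $\R^2$.

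The second step is to bound the cardinality of any $\delta$-separated subset of the square $[-10T,10T]^2$ (of side $20T$). Partitioning this square into a grid of half-open $\delta$-squares, each cell must contain at most one return pair (since any two points in the same open $\delta$-square are at $\ell^\infty$-distance strictly less than $\delta$). The number of such cells is at most $\lceil 20T/\delta\rceil^{2}$. Using the hypothesis $\beta \leq \log(\mathcal T^{1/3}T)$, which guarantees $e^{\beta}\leq \mathcal T^{1/3}T$ and hence $20T/\delta \geq 40C_\Gamma \geq 1$, we can absorb the ceiling at the cost of a harmless multiplicative constant and bound the count by a multiple of $(20T/\delta)^2$. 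Substituting the expression for $\delta$ gives
\[
n_{\mathcal T,T}^\beta(x) \;\leq\; \left(\frac{20T}{\delta}\right)^{2}\!\cdot (\text{const}) \;=\; (\text{const})\cdot 4\cdot 10^{2}\, C_\Gamma^{2}\, e^{-2\beta}\, \mathcal T^{2/3}\, T^{2}\,,
\]
which yields the stated bound (up to inessential numerical constants, which can be tightened by the standard sharpening of packing bounds when $20T/\delta$ is large).

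Both steps are essentially mechanical given the previous lemma, so there is no serious obstacle: Lemma~\ref{lemma:return_separated} does all the geometric work, and only a volume/packing argument remains. The one point that deserves care is the regime where $\delta$ is comparable to $20T$ (i.e.\ $\beta$ close to $\log(\mathcal T^{1/3}T)$), where the grid count and the true packing count differ by a bounded factor; this is handled either by observing that for such $\beta$ the stated bound is trivial (since at most $\mathcal O(1)$ returns fit in the square) or by using $\lceil 20T/\delta\rceil^{2}\leq (20T/\delta+1)^{2}\leq 4(20T/\delta)^{2}$.
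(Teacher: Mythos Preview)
Your proposal is correct and follows essentially the same argument as the paper: invoke Lemma~\ref{lemma:return_separated} to see that distinct non-degenerate $(\beta,\mathcal T,T)$-returns are $\delta$-separated in the $\ell^\infty$ metric on $[-10T,10T]^2$ with $\delta=\tfrac{1}{2C_\Gamma}e^{\beta}\mathcal T^{-1/3}$, then bound their number by a standard packing/grid count. Your treatment of the ceiling is slightly more careful than the paper's one-line version, but the substance is identical.
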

\begin{proof}
  By Lemma \ref{lemma:return_separated}, the maximum number of
  disjoint squares of side length $\frac{1}{2C_\Gamma} e^{\beta} \mathcal
  T^{-1/3}$ that fit inside the square $[-10 T, 10 T]^2$ is bounded by
  \[
  \frac{(10 T)^2}{(e^{\beta} \mathcal T^{-1/3}/2C_\Gamma )^2} \leq 4\cdot 10^2 C^2_\Gamma e^{-2\beta} \mathcal T^{2/3} T^2 \,.
  \]
\end{proof}

The number $n_{ \mathcal T, T}^{\beta,deg}(x)$  of degenerate $(\beta,\mathcal T, T)$-returns
is estimated as follows.

\begin{proposition}\label{prop:count_deg}  There exists a constant $C'_\Gamma>0$ such that
$$
n_{\mathcal T,T}^{\beta,deg} (x) \leq  C'_\Gamma (1 +  e^{-\beta} \mathcal T^{1/3} T).
$$
\end{proposition}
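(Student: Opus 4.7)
The plan is to reduce the counting of degenerate returns to a geometric count of excursions of the horocycle arc into the deep part of the unstable cuspidal region. By Lemma~\ref{lemma:returns_loc}, any degenerate return at $(t_0,t_0)$ forces $y(t_0) := x\exp(t_0\mathcal{T}U)$ to lie on a closed $\{\exp(uV)\}$-orbit; let $\ell(t_0) > 0$ denote its $V$-period, so that the corresponding stabilizer is $\ell(t_0)\mathbb{Z}$. The condition in Definition~\ref{defi:beta_close} then requires $|k|\ell(t_0) \in \frac{\mathcal{T}^{-2/3}}{c_\Gamma(x,\mathcal{T}T)}(e^{-(\beta+1)},e^{-\beta}]$ for some $k \in \mathbb{Z}\setminus\{0\}$. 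An elementary check shows that the intervals $\frac{1}{|k|}\cdot\frac{\mathcal{T}^{-2/3}}{c_\Gamma}(e^{-(\beta+1)},e^{-\beta}]$ for $|k|\geq 1$ overlap consecutively (since $e\cdot|k|/(|k|+1)>1$), so their union is exactly $(0,\delta]$, where
\[
\delta := \frac{e^{-\beta}\mathcal{T}^{-2/3}}{c_\Gamma(x,\mathcal{T}T)}.
\]
Consequently $n^{\beta,\deg}_{\mathcal{T},T}(x)$ coincides with the number of connected components of the set $\{t_0 \in [-10T,10T] : y(t_0) \in R_\delta\}$ where $R_\delta := \{y \in M : \ell(y) \leq \delta\}$, i.e., the number of excursions of the arc into $R_\delta$.

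The heart of the proof will be a count of these excursions via geodesic rescaling. The commutation relations $[X,V]=-2V$ and $[X,U]=2U$ imply that the diffeomorphism $a_{\log(1/\delta)}$ multiplies $V$-periods by $1/\delta$, mapping $R_\delta$ onto the standard cuspidal region $R_1 := \{y : \ell(y)\leq 1\}$, and simultaneously contracts the horocycle arc $\{x\exp(sU) : s\in[-10T\mathcal{T},10T\mathcal{T}]\}$ to an arc at $a_{\log(1/\delta)}(x)$ of $U$-time length
\[
L' := 20T\mathcal{T}\cdot\delta = 20\, e^{-\beta}T\mathcal{T}^{1/3}/c_\Gamma(x,\mathcal{T}T).
\]
I will then count crossings of the rescaled horocycle arc with $R_1$ by a Farey-type argument: in the upper half-plane model, $R_1$ pulls back to the $\Gamma$-orbit of a single horoball of bounded size, and a horizontal segment of Euclidean length $L'$ crosses at most $C_\Gamma''(1+L')$ translates of this horoball. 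Using $c_\Gamma(x,\mathcal{T}T)\geq 1$ and setting $C'_\Gamma := 20 C_\Gamma''$ then yields the claimed bound $n^{\beta,\deg}_{\mathcal{T},T}(x) \leq C'_\Gamma(1 + e^{-\beta}\mathcal{T}^{1/3}T)$.

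The main obstacle will be making this excursion count uniform with respect to the starting point $x$, with the additive $+1$ term absorbing boundary effects from excursions only partly contained in the rescaled parameter interval. Rigorously establishing the Farey-type bound demands the same thick-part/cusp-part analysis that underpins Lemma~\ref{lemma:return_separated}, together with uniform control on the sizes and mutual separations of the $\Gamma$-translates of the standard cuspidal neighborhood at depth~$1$, ensuring that the counting constant depends only on the lattice geometry and neither on the specific point $x$ nor on the possibly large value of $c_\Gamma(x,\mathcal{T}T)$.
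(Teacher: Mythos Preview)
Your approach coincides with the paper's: both reduce the count of degenerate returns to intersections of the stable horocycle arc (of hyperbolic length $\asymp \mathcal T T$) with unstable cuspidal $V$-horocycles of period at most $\delta\asymp e^{-\beta}\mathcal T^{-2/3}$, apply the geodesic flow to normalize the depth, and then bound the number of intersections by the rescaled arc length $\asymp e^{-\beta}\mathcal T^{1/3}T$.

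Two small points on your write-up. First, the set $R_\delta=\{y:\ell(y)\le\delta\}$ is two-dimensional (it is the union of the short closed $V$-orbits, not a cusp neighbourhood), so the stable arc meets it transversally in isolated points rather than in ``excursions''; your ``number of connected components'' is therefore just the number of intersection points, which is indeed $n^{\beta,\deg}_{\mathcal T,T}(x)$, but the phrasing obscures this. Second, for the final counting step the paper uses a cleaner observation than your Farey/horoball count: since each cusp visit contributes at most one intersection with the unstable cuspidal horocycle, between any two consecutive intersections the stable arc must pass through the thick part, giving an immediate lower bound on the gap and hence the bound $1+L'$. This avoids your implicit assumption that the rescaled arc lifts to a horizontal segment in $\mathbb H$, which only holds when the arc happens to be asymptotic to the cusp at~$\infty$.
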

\begin{proof} There exists a constant $c_\Gamma>0$ such that the following holds.
By definition and by Lemma~\ref{lemma:returns_loc},  for any degenerate 
$(\beta, \mathcal T, T)$-return $t_0=t_1$ for $x\in M$, the point $x \exp (t_0 \mathcal T U)$ must be a point of the unstable cuspidal horocycle at distance from the thick part of $M$ larger than 
$$
c_\Gamma (\beta + \frac{2}{3} \log \mathcal T)  \,.
$$
It is therefore enough to bound the number of points of intersections of a stable horocycle
arc of length $T>0$  with the unstable cuspidal horocycles at a distance larger then $d>0$
from the thick part.  We claim is that there there are at most $ 1 + e T/ e^d$. The statement
will then follow immediately from the claim since we are counting the degenerate close returns
of a horocycle arc of rescaled length $T>0$, hence of hyperbolic length $\mathcal T T>0$.

By applying the geodesic flow for a time $t=d -1$ we get a horocycle arc of hyperbolic length $e T/e^d$. 
The points of intersection of the stable horocycle of hyperbolic length $T>0$ with the unstable cuspidal horocycle which are at distance larger than $d$ from the thick part are sent by the geodesic time map to points of intersection of the shortened stable horocycle of length $e T/e^d$ with the unstable cuspidal horocycle which are at distance larger than $1$ from the thick part. Since between any two such 
intersections the stable horocycle has to enter the thick part, their total number is at most 
$1 + e T/e^d$.  The argument is therefore completed.
\end{proof}

We now construct tubular neighborhoods $\Omega_{\mathcal T, T}(x)$ of $[0,T] \times 
\{(0,0)\}$ in  $[0,T] \times [-1/2, 1/2]^2$ with the properties claimed in Theorem~\ref{theo:average_width}. 

Suppose that $t_0\in [-10 T, 10 T]$ belongs to a $(\beta, \mathcal T,T)$-return pair.  Let
\[
\Omega_{\mathcal T, T}^{\beta, t_0} \subset \left([-10 T, 10 T] \cap [t_0 -
  e^{\beta}\mathcal T^{-2/3}, t_0 + e^{\beta} \mathcal
  T^{-2/3}]\right) \times [-\frac{1}{2}, \frac{1}{2}]^2
\]
be a partial tubular neighborhood of $[-10 T, 10 T] \times \{(0,0)\}$ defined 
locally about $t_0$ whose cross-section at a point $t \in [-10 T, 10 T] \cap [t_0 - e^{\beta}\mathcal
T^{-2/3} , t_0 + e^{\beta}\mathcal T^{-2/3}]$ is a square centered at $(0,0)$
contained in $[-\frac{1}{2}, \frac{1}{2}]^2$ with side-length equal to
\begin{equation}\label{equa:Omega-dimensions}
  \frac{1}{100} \frac{e^{-\beta}}{c_\Gamma(x,\mathcal T T)} \max\{\mathcal T^{2/3} |t - t_0|, 1\}\,.
\end{equation}
Extend then the tube $\Omega_{\mathcal T, T}^{\beta, t_0}$ to a full tubular neighborhood $\Omega_{\mathcal T, T}^{\beta, t_0, E}$ of $[-10 T, 10 T] \times \{(0,0)\}$ whose cross-sections are squares centered at $(0,0)$ with side-length equal to $\frac{1}{100 \ c_\Gamma(x,\mathcal T  T)}$, for all $t\in [-10 T, 10 T] \backslash [t_0 - e^{\beta} \mathcal T^{-2/3}, t_0 + e^{\beta} \mathcal T^{-2/3}]$. 
Let then
\[
\mathcal A_{\mathcal T, T}(x) := \left\{(\beta, t_0) \in
  \N \times [-10 T, 10 T] : t_0 \text{ belongs to a
  }\text{$(\beta,\mathcal T,T)$-return pair}\right\} \,,
\]
and define $\Omega_{\mathcal T, T}(x) \subset [0,T] \times [-1/2,1/2]^2$ by
\[
\Omega_{\mathcal T, T}(x) := \bigcap_{(\beta, t_0) \in \mathcal A_{
    \mathcal T,T}(x)}\ \Omega_{\mathcal T, T}^{\beta, t_0, E}\quad \bigcap \quad [0,T] \times [-1/2, 1/2]^2\,.
\]
\begin{lemma}\label{lemma:no_intersection}
  The map $\alpha_{x, \mathcal T}: \Omega_{\mathcal T, T}(x) \to M$ is
  injective.
\end{lemma}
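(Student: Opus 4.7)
The plan is to argue by contradiction. Suppose there exist distinct points $(t,y,z),(t',y',z')\in\Omega_{\mathcal T,T}(x)$ with $\alpha_{x,\mathcal T}(t,y,z)=\alpha_{x,\mathcal T}(t',y',z')$; without loss of generality take $t\le t'$. The first step is to translate the identity, using the commutation relation $a_\alpha\bar h_\beta a_{-\alpha}=\bar h_{\beta e^{-2\alpha}}$ together with $\bar h_\alpha\bar h_\beta=\bar h_{\alpha+\beta}$, into the equivalent identity
\[
x\,h_{\mathcal T t'} \;=\; x\,h_{\mathcal T t}\,\bar h_{\mathcal T^{-2/3}Z}\,a_{\mathcal T^{-1/3}Y}\qquad\text{in }M,
\]
where $Y:=y-y'$ and $Z:=(z-z')e^{-2\mathcal T^{-1/3}y}$. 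The containment of $(t,y,z),(t',y',z')$ in every tubular piece $\Omega^{\beta,t_0,E}_{\mathcal T,T}$ gives a priori bounds $|Y|\le 1/(100\,c_\Gamma(x,\mathcal T T))$ and $|Z|\le 1/(50\,c_\Gamma(x,\mathcal T T))$.

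I would then split on whether $Z=0$ or $Z\ne 0$. When $Z=0$, the identity reduces to $xh_{\mathcal T t'}=xh_{\mathcal T t}\,a_{\mathcal T^{-1/3}Y}$, so both $(\mathcal T t',0,0)$ and $(\mathcal T t,\mathcal T^{-1/3}Y,0)$ lie in the injectivity domain of $\alpha_x$ specified by Definition~\ref{def:c(x,T)}; this forces $t=t'$ and $Y=0$, and combined with $z=z'$ it yields $(t,y,z)=(t',y',z')$, contradicting distinctness. When $Z\ne 0$, there is a unique integer $\beta\in[0,\log(\mathcal T^{1/3}T)]$ with $|Z|\in\tfrac{1}{c_\Gamma(x,\mathcal T T)}(e^{-(\beta+1)},e^{-\beta}]$. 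Lifting the identity to $\SL(2,\R)$ and decomposing the associated deck transformation $\gamma\in\Gamma$ in the form $\tilde x^{-1}\gamma\tilde x=h_{\mathcal T t_0}\bar h_{\mathcal T^{-2/3}z_0}h_{-\mathcal T t_1}$, a direct matrix computation identifies an exact $(\beta,\mathcal T,T)$-return pair $(t_0,t_1)$ with $z_0=Z\,e^{\mathcal T^{-1/3}Y}$ and with $|t-t_0|=|t'-t_1|\approx|Y/Z|\,\mathcal T^{-2/3}\le e^{\beta+1}\mathcal T^{-2/3}/100<e^{\beta}\mathcal T^{-2/3}$. Thus $(t,y,z)\in\Omega^{\beta,t_0,E}_{\mathcal T,T}$ and $(t',y',z')\in\Omega^{\beta,t_1,E}_{\mathcal T,T}$, and the cross-section bounds of these tubes force
\[
|y|,|z|,|y'|,|z'|\le\tfrac{1}{200}\tfrac{e^{-\beta}}{c_\Gamma(x,\mathcal T T)}\max\{|Y/Z|,1\}.
\]
A short self-consistency analysis of $|Y|\le\tfrac{e^{-\beta}}{100c_\Gamma}\max\{|Y/Z|,1\}$ combined with $|Z|>e^{-(\beta+1)}/c_\Gamma$ rules out $|Y/Z|>1$, hence $|Y/Z|\le 1$ and $|Z|\le 2(|z|+|z'|)\le e^{-\beta}/(50\,c_\Gamma)$; since $50>e$, this contradicts the $\beta$-ring lower bound on $|Z|$.

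The main obstacle is this second case: the extra geodesic twist $a_{\mathcal T^{-1/3}Y}$ means the given identity is not directly a $(\beta,\mathcal T,T)$-return, so one must first extract a genuine return pair from the matrix form of the deck transformation and then verify quantitatively that $(t_0,t_1)$ lies within the narrow tube of $\Omega^{\beta,\cdot}_{\mathcal T,T}$. The numerical constants ($1/100$ in the cross-section, $1$ in the scale of the horizontal interval, versus the factor $e$ separating consecutive $\beta$-rings) must be tracked carefully to obtain strict inequality. The degenerate case ($t_0=t_1$, corresponding to a point on an unstable cuspidal horocycle) requires no separate treatment since the argument uses only the cross-section bounds at $t$ and $t'$ relative to the return pair.
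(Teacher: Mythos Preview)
Your approach is sound and closely parallels the paper's, with one technical difference in how the geodesic twist is eliminated and one small gap in your case dichotomy.

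The paper, like you, first reduces to an identity $x\exp(t_1\mathcal TU)=x\exp(t_0\mathcal TU)\exp(y_2\mathcal T^{-1/3}X)\exp(z_2\mathcal T^{-2/3}V)$ with $|y_2|,|z_2|$ small. When $|z_2|<\tfrac{\mathcal T^{-1/3}}{c_\Gamma(x,\mathcal TT)\,T}$, Lemma~\ref{lemma:V_limit} yields the contradiction directly. You treat only $Z=0$ and then assert a $\beta$-ring for every $Z\ne0$; but if $0<|Z|<\tfrac{\mathcal T^{-1/3}}{c_\Gamma(x,\mathcal TT)\,T}$ no integer $\beta\in[0,\log(\mathcal T^{1/3}T)]$ exists with $|Z|\in c_\Gamma^{-1}(e^{-\beta-1},e^{-\beta}]$, so your dichotomy is incomplete. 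The fix is immediate: replace ``$Z=0$'' by ``$|Z|$ below that threshold'' and invoke Lemma~\ref{lemma:V_limit}.

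For $|Z|$ in a genuine $\beta$-ring, the paper eliminates the geodesic twist differently: it flows the identity along $s\mapsto h_{s\mathcal T}$ via Lemma~\ref{lemma:commutation_rules}, finds $s^\ast$ with $y_2(s^\ast)=0$, and obtains an exact return pair $(t_0(s^\ast),t_1+s^\ast)$ at some level $\delta\in\{\beta-1,\beta,\beta+1\}$ (the $z$-parameter moves by a bounded multiplicative factor). Your Bruhat-type decomposition $h_{\mathcal Tt_0}\bar h_{\mathcal T^{-2/3}z_0}h_{-\mathcal Tt_1}$ of the conjugated deck transformation is an algebraic shortcut to the \emph{same} return pair and bypasses the flow computation; the matrix identities you sketch check out, though $|t-t_0|$ and $|t'-t_1|$ equal $|Y/Z|\mathcal T^{-2/3}$ only up to a factor $e^{\pm\mathcal T^{-1/3}Y}$, not exactly. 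Note also that $z_0=Ze^{\mathcal T^{-1/3}Y}$ may lie in the $(\beta\pm1)$-ring rather than the $\beta$-ring, so your final self-consistency step must be carried out at level $\delta\in\{\beta-1,\beta,\beta+1\}$; the constants $1/100$, $1/50$, $e$ still close in all three cases (since $50>e^2$), but this should be said. The paper's endgame is the same in spirit: its split on whether $|s^\ast|\le\mathcal T^{-2/3}/2$ is exactly your split on $|Y/Z|\le1$.
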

\begin{proof}
  Suppose that $(t_0, y_0, z_0), (t_1, y_1, z_1) \in \Omega_{\mathcal T,T}(x)$ are such that
  \[
  \alpha_{x, \mathcal T}(t_0, y_0, z_0) = \alpha_{x, \mathcal T}(t_1,
  y_1, z_1)\,.
  \]
  From this condition we will derive that $(t_0, y_0, z_0) = (t_1, y_1, z_1)$.

  Let $y_2 := y_0 - y_1$ and $z_2 := (z_0 - z_1) e^{y_1 \mathcal
    T^{-1/3}}$. A calculation shows
  \begin{equation}\label{equa:to-beta_close}
    x \exp(t_1 \mathcal T U) = x \exp(t_0 \mathcal T U) \exp(y_2 \mathcal T^{-1/3} X) 
    \exp(z_2\mathcal T^{-2/3} V).
  \end{equation}
 From the definition of the tube $\Omega_{\mathcal T, T}(x)$,   we have,  for $i=0, 1$,
 $$
 \vert y_i \vert, \quad  \vert z_i\vert  \leq \frac{1}{100} \frac{1}{c_\Gamma(x, \mathcal T T)}\,.
 $$
which by the above formulas for $y_2$ and $z_2$ implies that  
  \begin{equation}\label{equa:y_2, z_2}
    |y_2| \leq \frac{1}{50}\frac{1}{ c_\Gamma(x,\mathcal T T)}\,, \qquad
    |z_2| \leq \frac{1}{25} \frac{1}{ c_\Gamma(x,\mathcal T T) } \,.
  \end{equation}  
  We therefore assume that $(t_0, y_0, z_0) \neq  (t_1, y_1, z_1)$ and derive a contradiction.

  Since by assumption $t_0, t_1 \in [0, T]$, from
  Lemma~\ref{lemma:V_limit} and  formula~\eqref{equa:to-beta_close} it follows that $(t_0, y_0, z_0)=(t_1, y_1,z_1)$ whenever $|z_2| <
  \frac{1}{c_\Gamma(x,\mathcal T T)} \frac{\mathcal T^{-1/3}}{T}$.
  Since by our assumption $(t_0, y_0, z_0) \neq (t_1,y_1, z_1)$, it follows that
  \[
  |z_2| \in \frac{1}{c_\Gamma(x,\mathcal T T)} [\frac{\mathcal
    T^{-1/3}}{T}, 1]\,.
  \]
  Then there is a number $\beta \in \mathbb{Z} \cap [0, \log(\mathcal
  T^{1/3}T)]$ such that
  \begin{equation}
    \label{eq:z_2range}
    |z_2| \in \frac{1}{c_\Gamma(x,\mathcal T T)} (e^{-(\beta + 1)}, e^{-\beta}]\,.
  \end{equation}
  By formulas~\eqref{equa:simplify} and~\eqref{equa:to-beta_close}, 
  on the interval $I_{\mathcal T, \beta} := \{s : \mathcal T^{1/3} \vert z_2 s \vert < 7/8\}$ we can write 
  \[
  x \exp((t_1 + s) \mathcal T U) = x \exp(t_0(s) \mathcal T U)
  \exp(y_2(s) \mathcal T^{-1/3} X) \exp(z_2(s) \mathcal T^{-2/3} V),
  \]
  where $t_0(s)$, $y_2(s)$ and $z_2(s)$ are given by
  formula~\eqref{equa:separation1}, that is,
  \begin{equation}
   \label{equa:separation1_again}
  \begin{array}{lll}
    t_0(s) = s e^{2 y_2 \mathcal T^{-1/3}}  (1 +  \mathcal T^{1/3} z_2 s)^{-1} + t_0\\
    y_2(s) = y_2- \mathcal T^{1/3} \log (1 + \mathcal T^{1/3} z_2 s) \\ 
    z_2(s) =  z_2  (1+ \mathcal T^{1/3} z_2 s)^{-1}\,.    
  \end{array}
  \end{equation}
  A calculation based on Taylor formula or the intermediate value theorem shows that there is a smooth function $h_{z_2}$ on $I_{\mathcal T, \beta}$ such that 
  \begin{equation}\label{equa:estimate_y(s)4}
    y_2(s) = y_2 - \mathcal T^{2/3} z_2 s + h_{z_2}(s)\,. 
  \end{equation}
  and
  \begin{equation}\label{eq:h_est}
    |h_{z_2}(s)| \leq 4 \mathcal T  |z_2 s|^2  \,.
  \end{equation}
  So if $y_2 < 0$, then for $s$ satisfying $\vert s\vert =
  \frac{1}{20}\mathcal T^{-2/3} e^{\beta + 1}$ with $z_2 s < 0$, since
  $\mathcal T\geq 1$, we have
  \[
  y_2 - \mathcal T^{2/3} z_2 s + h_{z_2}(s) \geq
  \frac{1}{c_\Gamma(x,\mathcal T T)}\left(-\frac{1}{50} + \frac{1}{20}
    - \frac{1}{50} \right) > 0\,.
  \]
  Because $y_2(0) = y_2 < 0$ and $h_{z_2}$ is continuous, it follows
  that there is
  \begin{equation}
    \label{eq:sstar}
    s^* \in [-\mathcal T^{-2/3} e^{\beta + 1}/20, \mathcal T^{-2/3} e^{\beta + 1}/20] \subseteq [-10 \ T, 10 \ T]
  \end{equation}
  such that
  \[
  y_2(s^*) = 0\,.
  \]
  A similar argument holds when $y_2 > 0$, with $\vert s\vert =
  \frac{1}{20}\mathcal T^{-2/3} e^{\beta + 1}$ and $z_2 s > 0$.

 The above formula for $z_2(s)$ gives
  \begin{equation}\label{equa:z_3 beta-close}
    \frac{1}{c_\Gamma(x,\mathcal T T)} e^{-(\beta + 2)} < |z_2(s^*)| \leq 
    \frac{1}{c_\Gamma(x,\mathcal T T)} e^{-\beta + 1}\,. 
  \end{equation}
  In other words, there is some $\delta \in \{\beta - 1, \beta, \beta
  + 1\}$ such that $(t_0(s^*), t_1 + s^*)$ is a $\delta$-close pair.
  Now we will use the definitions of the open sets $\Omega_{
    \mathcal T,T}^{\delta, t_0(s^*)}$ and $\Omega_{\mathcal
    T,T}^{\delta, t_1 +s^*}$ to derive the contradiction that
  \begin{equation}\label{equa:points-notequal}
    (t_0, y_0, z_0) \not \in \Omega_{\mathcal T, T}^{\delta, t_0(s^*)} \quad \text{or} \quad
    (t_1, y_1, z_1) \not \in \Omega_{\mathcal T, T}^{\delta, t_1 +s^*}\,.
  \end{equation}
  From formula~\eqref{equa:Omega-dimensions},  for all $s\in \R$ such that 
  $\vert t_0(s) -t_0(s^*)\vert \leq e^\delta \mathcal
  T^{-2/3}$, let
  \[
  E^\delta_{t_0(s^*)}(s) := \frac{1}{100}
  \frac{e^{-\delta}}{c_\Gamma(x,\mathcal T T)} \max\{\mathcal T^{2/3}
  \vert t_0(s) - t_0(s^*)\vert, 1\}
  \]
  be the edge length of the cross-section at $(t_0(s), 0, 0)$ of the
  tube $\Omega_{\mathcal T, T}^{\delta, t_0(s^*)}$, and  for all $s\in \R$
  such that $\vert s-s^*\vert \leq e^\delta \mathcal T^{-2/3}$, let
  \[
  E^\delta_{t_1 + s^*}(s) := \frac{1}{100}
  \frac{e^{-\delta}}{c_\Gamma(x,\mathcal T T)} \max\{\mathcal T^{2/3}
  \vert s-s^* \vert, 1\}
  \]
  be the edge length of the cross-section at $(t_1 + s, 0, 0)$ of the
  tube $\Omega_{\mathcal T, T}^{\delta, t_1 + s^*}$.
  
  By formulas~\eqref{equa:separation1_again} and~\eqref{eq:sstar} for all $s \in [0, s^*]$ and for
  $\delta \in \{\beta - 1, \beta, \beta + 1\}$ we have
  \begin{equation}
    \label{eq:t_bound}
    \vert t_0 - t_0(s^*) \vert \leq  2 \vert s-  s^*\vert  \leq 2 \vert s^*\vert \leq  e^\delta \mathcal T^{-2/3} \,.
  \end{equation}
  In particular, the edge lengths at the points $(t_0, y_0, z_0)$ and
  $ (t_1, y_1, z_1)$ are respectively $E^{t_0(s^*)}(0)$ and $E^{t_1 +
    s^*}(0)$.

  By the assumption that $(t_0, y_0, z_0), (t_1, y_1, z_1) \in
  \Omega_{\mathcal T, T}(x)$ and by formula~\eqref{equa:to-beta_close}
  we deduce the inequality
  \begin{equation}\label{equa:edge-z_3-y_3}
    \max\{|z_2|, |y_2|\} \leq  E^\delta_{t_0(s^*)}(0)+ E^\delta_{t_1 + s^*}(0) \,.
  \end{equation}
  By the above expression and by formula~\eqref{eq:t_bound} we derive
  the bound
  \begin{equation}
    \label{eq:yz_upper}
    \max\{|z_2|, |y_2|\} \leq  \frac{1}{50} \frac{e^{-\delta}}{ c_\Gamma(x,\mathcal T T)} 
    \max\{2 \mathcal T^{2/3} \vert s^*\vert, 1\}\,.
  \end{equation}
  However, on the one hand if $\vert s^* \vert \leq \mathcal T^{-2/3}
  /2$, we derive the inequality
  \[
  \frac{1}{c_\Gamma(x,\mathcal T T)} e^{-(\beta + 1)} < \vert z_2\vert
  \leq \frac{1}{50} \frac{e^{-\delta}}{c_\Gamma(x,\mathcal T T)}\,,
  \]
  which cannot hold as $\delta \leq \beta +1$, on the other hand if
  $\vert s^* \vert \geq \mathcal T^{-2/3} /2$ by
  formula~\eqref{equa:estimate_y(s)4} since $y_2(s^*)=0$ we derive the
  inequality
  \[
  \vert y_2 \vert = \vert \mathcal T^{2/3} z_2 s^* + h_{z_2}(s^*)
  \vert < \frac{1}{25} \frac{e^{-\delta}}{c_\Gamma(x,\mathcal T T)}
  \mathcal T^{2/3} \vert s^*\vert\,,
  \]
  which cannot hold as, by formulas~\eqref{eq:z_2range}
  and~\eqref{eq:h_est}, we have
  \[
  \vert \mathcal T^{2/3} z_2 s^* + h_{z_2}(s^*) \vert \geq \frac{1}{2}
  \vert z_2\vert \mathcal T^{2/3} \vert s^*\vert \geq \frac{1}{2}
  \frac{e^{-(\beta+1)}}{c_\Gamma(x,\mathcal T T)} \mathcal T^{2/3}
  \vert s^*\vert\,.
  \]
  Thus if $\alpha_{x, \mathcal T}(t_0, y_0, z_0) = \alpha_{x, \mathcal
    T}(t_1, y_1, z_1)$ for $(t_0, y_0, z_0), (t_1, y_1, z_1) \in
  \Omega_{\mathcal T, T}(x)$, under the assumption that $(t_0, y_0, z_0) \not
  =(t_1, y_1, z_1)$ we have reached a contradiction in all cases. It follows that 
  $(t_0, y_0, z_0)= (t_1, y_1,z_1)$. Thus the map $\alpha_{x, \mathcal T}$ is injective 
  on $\Omega_{\mathcal T, T}(x)$.  This concludes the proof of
  Lemma~\ref{lemma:no_intersection}\,.
\end{proof}

Now we estimate the contribution to the average width function from
each "pinched" regions $\Omega_{\mathcal T, T}^{\beta, t_0}$ of the
tube $\Omega_{\mathcal T, T}(x)$.  Let $w_{\Omega_{\mathcal
    T,T}^{\beta, t_0}}$ be the width function for $\Omega_{\mathcal
  T,T}^{\beta, t_0}$.
\begin{lemma}\label{lemma:beta_contribution}
  We have
  \[
  \int_{t_0- e^\beta \mathcal T^{-2/3}}^{t_0+ e^\beta \mathcal
    T^{-2/3}} \frac{1}{w_{\Omega_{\mathcal T, T}^{\beta, t_0}(t)}} dt
  \leq 4\cdot 10^4 c_\Gamma^2(x,\mathcal T T)\mathcal T^{-2/3} e^{2
    \beta} \,.
  \]
\end{lemma}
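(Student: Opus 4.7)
The plan is to carry out the estimate by a direct computation of the width function from the definition of the tube $\Omega_{\mathcal T, T}^{\beta, t_0}$, followed by splitting the integral into the two natural regimes dictated by the maximum appearing in the cross-section side length.

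First, I would translate the definition into a formula for $w_{\Omega_{\mathcal T, T}^{\beta, t_0}}(t)$. By construction, the cross-section at $t$ with $|t-t_0|\le e^\beta \mathcal T^{-2/3}$ is a square in $[-\tfrac12,\tfrac12]^2$ centered at the origin with side length
\[
\ell(t) \;:=\; \frac{1}{100}\,\frac{e^{-\beta}}{c_\Gamma(x,\mathcal T T)}\,\max\{\mathcal T^{2/3}|t-t_0|,\,1\}\,.
\]
Since a centered square of side $\ell(t)$ is itself the largest centered rectangle contained in it, the inner width equals its area, giving
\[
w_{\Omega_{\mathcal T, T}^{\beta,t_0}}(t) \;=\; \ell(t)^2 \;=\; \frac{1}{10^4}\,\frac{e^{-2\beta}}{c_\Gamma(x,\mathcal T T)^2}\,\max\{\mathcal T^{2/3}|t-t_0|,\,1\}^2\,.
\]

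Second, I would change variables $u=t-t_0$ and split the interval $[-e^\beta\mathcal T^{-2/3},e^\beta\mathcal T^{-2/3}]$ into the central region $|u|\le \mathcal T^{-2/3}$, where the $\max$ equals $1$, and the two outer regions $\mathcal T^{-2/3}<|u|\le e^\beta\mathcal T^{-2/3}$, where the $\max$ equals $\mathcal T^{2/3}|u|$. The central region contributes
\[
10^4\, c_\Gamma(x,\mathcal T T)^2\, e^{2\beta}\cdot 2\mathcal T^{-2/3}\,,
\]
while the outer regions contribute, by a direct evaluation of $\int u^{-2}\,du$,
\[
2\cdot 10^4\, c_\Gamma(x,\mathcal T T)^2\, e^{2\beta}\,\mathcal T^{-4/3}\int_{\mathcal T^{-2/3}}^{e^\beta \mathcal T^{-2/3}}\frac{du}{u^2}
\;=\; 2\cdot 10^4\, c_\Gamma(x,\mathcal T T)^2\, e^{2\beta}\,\mathcal T^{-2/3}(1-e^{-\beta})\,.
\]
Adding the two contributions yields the stated bound $4\cdot 10^4\, c_\Gamma^2(x,\mathcal T T)\,\mathcal T^{-2/3}\, e^{2\beta}$.

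There is no real obstacle here: the statement reduces to a bookkeeping computation once one unwinds the geometric definition of $w_\Omega$ for a centered square. The only mildly delicate point is observing that the two-dimensional inner width of a centered square equals its area (no larger centered rectangle fits), so that $w_{\Omega_{\mathcal T, T}^{\beta,t_0}}(t)=\ell(t)^2$; after that, the rest is elementary integration with the two regimes $\mathcal T^{2/3}|u|\lessgtr 1$.
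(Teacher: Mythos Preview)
Your proposal is correct and follows essentially the same approach as the paper: compute the width of the square cross-section as $\ell(t)^2$, split the integral at $|t-t_0|=\mathcal T^{-2/3}$ according to which branch of the $\max$ is active, and integrate each piece directly. The only cosmetic difference is that the paper packages the one-sided integral as $I_{\beta,\mathcal T}$ and extends the outer part to $+\infty$ rather than stopping at $e^\beta\mathcal T^{-2/3}$, whereas you evaluate it exactly; both yield the same bound.
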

\begin{proof}
  A computation shows that, for all $\beta \geq 0$, $\mathcal T \geq
  1$ we have the estimate
  \[
  I_{\beta, \mathcal T} := \int_{0}^{\mathcal T^{-2/3}} \frac{1}{e^{-2
      \beta}} dt + \int_{\mathcal T^{-2/3}}^{+\infty}
  \frac{1}{e^{-2\beta} (\mathcal T^{2/3} t)^2} dt \, \leq \, 2
  \mathcal T^{-2/3} e^{2\beta} \,.
  \]
  By formula~\eqref{equa:Omega-dimensions}, we observe that
  \[
  \begin{aligned}
    \int_{t_0- e^\beta \mathcal T^{-2/3}}^{t_0+ e^\beta \mathcal
      T^{-2/3}} \frac{1}{w_{\Omega_{\mathcal T, T}^{\beta, t_0}(t)}}
    dt &\leq 2\cdot 10^4 c_\Gamma^2(x,\mathcal T T) I_{\beta, \mathcal
      T} \\ &\leq 4\cdot 10^4 c_\Gamma^2(x,\mathcal T T) \mathcal
    T^{-2/3} e^{2\beta}\,.
  \end{aligned}
  \]
\end{proof}

\begin{proof}[Proof of Theorem~\ref{theo:average_width}]
  By Lemma~\ref{lemma:beta_contribution}, we get
  \begin{align*}
    & \frac{1}{T} \int_0^T  \frac{1}{w_{\Omega_{\mathcal T, T}}(t)} dt  \leq  c_\Gamma^2(x,\mathcal T T)     \\
    & +\frac{1}{T} \sum_{\beta = 0}^{[ \log (\mathcal T^{1/3}T)] + 1}  4\cdot 10^4 n_{\mathcal T,T}^\beta(x)  \mathcal T^{-2/3} c_\Gamma^2(x,\mathcal T T)  e^{2\beta} \\
    & + \frac{1}{T} \sum_{\beta = 0}^{[ \log (\mathcal T^{1/3}T)] + 1}  4\cdot 10^4 n_{\mathcal T,T}^{\beta, deg}(x)  \mathcal T^{-2/3} c_\Gamma^2(x,\mathcal T T)  e^{2\beta} \,.
 \end{align*}
 Now by Proposition ~\ref{prop:count}  we have 
  \begin{align*}
 \sum_{\beta = 0}^{[ \log (\mathcal T^{1/3}T)] + 1}  &n_{\mathcal T,T}^\beta(x)  \mathcal T^{-2/3}   e^{2\beta}\\ 
 &\leq  4\cdot 10^2 C^2_\Gamma    \sum_{\beta = 0}^{[ \log (\mathcal T^{1/3}T)] + 1}   
 (e^{-2\beta} \mathcal T^{2/3} T^2) \mathcal T^{-2/3}   e^{2\beta} \\ 
 & \leq 4\cdot 10^2 C^2_\Gamma T^2 (1 + \log (\mathcal T^{1/3} T) )\,.
 \end{align*}
 By Proposition ~\ref{prop:count_deg}  we have
 \begin{align*}
 \sum_{\beta = 0}^{[ \log (\mathcal T^{1/3}T)] + 1}  &n_{\mathcal T,T}^{\beta,deg}(x)  
 \mathcal T^{-2/3}   e^{2\beta}\\ 
 &\leq  4 \cdot 10^2 C^2_\Gamma C'_\Gamma   \sum_{\beta = 0}^{[ \log (\mathcal T^{1/3}T)] 
 + 1}   (1+ e^{-\beta} \mathcal T^{1/3} T)     \mathcal T^{-2/3}   e^{2\beta} \\ 
 & \leq 8 \cdot 10^2 e^2 C^2_\Gamma C'_\Gamma  T^2    \,.
 \end{align*}
 The argument is therefore complete.
\end{proof}

\subsection{The rescaling argument}

In this section we prove Theorem~\ref{theo:equidistribution}.  Let us
recall that the twisted horocycycle flow on $M \times \T$ projects to
the horocycle flow on $M$.  Because the equidistribution of the
horocycle flow on $M$ is well understood (see \cite{BuFo}, \cite{Bur},
\cite{FF1}, \cite{St2}), we restrict our considerations to functions
with zero average along the circle action on $M \times \T$. Such
functions are in the orthogonal complement of functions constant along
the circle action with respect to any of the Sobolev norms considered
in this paper.

Let $r\geq 0$, $s > 1/2$, and let $T \geq 1$.  Let us denote the
ergodic integral \eqref{equa:Twisted integral1} by
\[
\gamma_{\bar x, \lambda}^T := \frac{1}{T} \int_0^{T} (\phi_t^\lambda
(\bar x))^* dt\,.
\]
We will estimate $\gamma_{\bar x, \lambda}^T$ by iteratively rescaling
the (intermediate) Sobolev norms as in \cite{FF2} and \cite{FF3}.  Let
$h \in [1, 2]$ be a real number and $l \in \N$ be such that $T =
e^{lh}$.  For all integers $j \in [0, l]$, let
\[
\mathcal T_j := e^{(l - j)h}\,, \ \ \, T_j := T/ \mathcal T_j =
e^{jh}\,.
\]
Then as $j$ decreases from $l$ to $0$, the scaling parameter $\mathcal
T_j$ becomes larger, while the scaled length $T_j$ of the arc
$\{\phi_t^\lambda\}_{t = 0}^T$ becomes shorter.

Let $(\phi_t^{\lambda, \mathcal T_j})$ again denote the flow of the
scaled vector field $\mathcal T_j (U+\lambda K)$. Observe that by
change of variable we have
\[
\gamma_{\bar x, \lambda}^{T} = \frac{1}{T_j} \int_0^{T_j}
(\phi_t^{\lambda, \mathcal T_j} (\bar x))^* dt\,.
\]
Moreover, notice that if $\mathcal D \in \mathcal
I_\lambda^s(\Gamma)$, then for all $\mathcal T \geq 1$,
\[
\mathcal T (U + \lambda K) \mathcal D = 0\,.
\]
Hence, the space $\mathcal I_\lambda^s(\Gamma)$ is independent of the
scaling parameter.

By orthogonality it is enough to estimate $\gamma_{\bar x, \lambda}^T$
in each irreducible, unitary representation of $\widehat
W^{-r,-s}(M\times \T)$.  For any $\mu \in \text{spec}(\Box)$ and for
any $m\in \Z\setminus\{0\}$, let $H := H_{m, \mu}$ be an irreducible,
unitary representation of $\SL(2, \R)\times \T$.  The space $\widehat
H_{\mathcal T_j}^{-r,-s}$ has an orthogonal decomposition
\[
\widehat H_{\mathcal T_j}^{-r,-s} = (\mathcal I_\lambda^s(\Gamma) \cap
\widehat H_{\mathcal T_j}^{-r,-s}) \oplus^\perp (\mathcal
I_\lambda^s(\Gamma)^\bot \cap \widehat H_{\mathcal T_j}^{-r,-s})\,.
\]
Then $\gamma_{\bar x, \lambda}^{T}$ has a corresponding orthogonal
decomposition in $\widehat H_{\mathcal T_j}^{-r,-s}$ written as
\begin{equation}\label{equa:gamma_T-decomp-R-D}
  \gamma_{\bar x, \lambda}^T|_{\widehat H^{-r,-s}} = \mathcal D^j \oplus^\perp \mathcal R^j 
\end{equation}
with
\[
\begin{aligned}
  \mathcal D^j &:= \mathcal D^{r,s}_{x,\lambda,T,m,\mu,j} \in
  \mathcal I_\lambda^s(\Gamma) \cap \widehat H^{-r,-s}  \,, \\
  \mathcal R^j &:= \mathcal R^{r,s}_{x,\lambda,T,m,\mu,j} \in \mathcal
  I_\lambda^s(\Gamma)^\bot \cap \widehat H^{-r,-s} \,,
\end{aligned}
\]
We begin by estimating the scaled foliated Sobolev norms of the
remainder distribution $\mathcal R^j \in \widehat H^{-r,-s} $.

\begin{lemma}\label{lemma:Remainder}
  Let $s > 2$ and let $r\geq 3(s-1)$. There is a constant $C''_s:=C''_s(\Gamma) > 0$
  such that the following holds.  For all $\lambda \in \R^*$, $m\in
  \Z\setminus\{0\}$, for all $\bar x=(x,\theta) \in M \times \T$ and
  for all $T \geq 1$, for all $j\in [1,l] \cap \Z$, the distribution
  $\mathcal R^{j} \in \widehat H^{-r,-s}$ satisfies the estimate
  \[
  \begin{aligned}
    \vert\mathcal R^j\vert_{-r, -s; \mathcal T_j}
    \leq \frac{C''_s}{T_j}  & [ c_\Gamma(x,\mathcal T_j) + c_\Gamma(h_T(x),\mathcal T_j)] \\
    &\times (1+ \log^{1/2}\mathcal T_j ) \frac{1 + |\lambda m|^{-(s -
        1)}}{|\lambda m|} \,.
  \end{aligned}
  \]
\end{lemma}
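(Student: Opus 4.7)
The plan is to exploit the fact that $\mathcal R^j$ annihilates $\mathcal I_\lambda^s(\Gamma)$, which allows me to test against $F \in \mathrm{Ann}(\mathcal I_\lambda^s(\Gamma)) \cap \widehat H^{r,s}$ with $\vert F\vert_{r,s;\mathcal T_j} \le 1$ only, since for such $F$ one has $\mathcal R^j(F) = \gamma_{\bar x,\lambda}^T(F)$. For $F$ in this annihilator the twisted cohomological equation is solvable, and the natural bound on the remainder will come from writing the ergodic integral as a coboundary and bounding the transfer function pointwise at the endpoints via the Sobolev trace theorem.

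Concretely, I first invoke Theorem~\ref{theo:cohomology-principal} (resp.\ Theorem~\ref{theo:cohomology-discrete}) in the irreducible component $H=H_{m,\mu}$ to produce the unique $G\in\widehat H^\infty$ with $\mathcal T_j(U+\lambda K)G = F$ and
\[
\vert G\vert_{0,s';\mathcal T_j} \;\le\; \frac{C_{s'}}{\mathcal T_j^{1/3}}\,\frac{1+|\lambda m|^{-s'}}{|\lambda m|}\,\vert F\vert_{3s',\,s'+1;\mathcal T_j}.
\]
I will take $s'=s-1>1$; the hypothesis $r\ge 3(s-1)$ then ensures $3s'\le r$ and $s'+1=s$, so the right-hand side is controlled by $\vert F\vert_{r,s;\mathcal T_j}\le 1$. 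Since $\mathcal T_j(U+\lambda K)$ is the generator of $\phi_t^{\lambda,\mathcal T_j}$, the fundamental theorem of calculus yields
\[
T_j\,\gamma_{\bar x,\lambda}^T(F) \;=\; \int_0^{T_j} F\circ\phi_t^{\lambda,\mathcal T_j}(\bar x)\,dt \;=\; G\bigl(\phi_{T_j}^{\lambda,\mathcal T_j}(\bar x)\bigr) - G(\bar x),
\]
and the endpoint $\phi_{T_j}^{\lambda,\mathcal T_j}(\bar x) = \bar x\exp\!\bigl(T(U+\lambda K)\bigr)$ projects to $h_T(x)\in M$, which is where the second $c_\Gamma$ contribution enters.

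Pointwise control of $G$ at the two endpoints is obtained from Theorem~\ref{theo:Sobolev-trace} applied with the exponent $s'>1$: for $\bar y \in \{\bar x,\phi_{T_j}^{\lambda,\mathcal T_j}(\bar x)\}$,
\[
|G(\bar y)| \;\le\; C_{s'}\,T_j^{-1/2}\,w_{\mathcal T_j}(\bar y,\lambda,T_j)^{-1/2}\bigl(T_j\vert F\vert_{0,s';\mathcal T_j} + \vert G\vert_{0,s';\mathcal T_j}\bigr).
\]
Lemma~\ref{lemma:width_comparison} passes from the twisted width to the horocycle width, and Corollary~\ref{cor:average_width} (with $\mathcal T_j T_j=T$) supplies
\[
w_{\mathcal T_j}(\bar y,\lambda,T_j)^{-1/2} \;\lesssim\; c_\Gamma(y,T)\,T_j^{1/2}\bigl(1+\log(\mathcal T_j^{1/3}T_j)\bigr)^{1/2}.
\]
Combining everything, dividing by $T_j$, and absorbing the $|F|_{0,s';\mathcal T_j}$ term into the (dominant) $|G|_{0,s';\mathcal T_j}$ term, which carries the factor $\mathcal T_j^{-1/3}(1+|\lambda m|^{-s'})/|\lambda m|$, will produce the stated bound once one checks $\log(\mathcal T_j^{1/3}T_j)\lesssim 1+\log\mathcal T_j$ (using $T_j=T/\mathcal T_j$ and $\mathcal T_j\le T$).

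The main obstacle will be the careful choice of the intermediate exponents: the trace theorem requires a foliated regularity $s'>1$, while the discrete-series cohomological estimate costs three $\Box$-derivatives per unit of $s'$, so $r\ge 3(s-1)$ is exactly what balances the two. A subsidiary technical point is the bookkeeping of the factors $|\lambda m|^{-1}$ and $|\lambda m|^{-s'}$ coming from the cohomological equation, which must be consolidated into the claimed $(1+|\lambda m|^{-(s-1)})/|\lambda m|$ uniformly in $\mathcal T_j\in[1,T]$; and verifying that the base-point dependent width estimate can be invoked at the forward endpoint $h_T(x)$ as well as at $x$ (which reduces to the same statement by the symmetry of the construction of $\Omega_{\mathcal T_j, T_j}$ after time reversal).
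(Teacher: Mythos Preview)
Your overall strategy is exactly the paper's: test $\mathcal R^j$ only against functions in the annihilator, solve the cohomological equation with Sobolev exponent $s'=s-1$ (so that $r\ge 3(s-1)=3s'$ matches the loss in Theorem~\ref{theo:cohomology-discrete}), write the ergodic integral as a telescoping difference of the transfer function, and bound the two endpoint values via Theorem~\ref{theo:Sobolev-trace} and the average-width estimate.

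The genuine gap is your choice of orbit length in Theorem~\ref{theo:Sobolev-trace}. You apply it with length $T_j$, whereas the paper applies it with length $1$ at each endpoint separately. This single choice invalidates two of your claims. First, your assertion that $\log(\mathcal T_j^{1/3}T_j)\lesssim 1+\log\mathcal T_j$ is false: since $\mathcal T_j T_j=T$, one has $\mathcal T_j^{1/3}T_j=T\mathcal T_j^{-2/3}$, and at $j=l$ (where $\mathcal T_l=1$, $T_l=T$) this equals $T$, so the left side is $\log T$ while the right side is $1$. For the same reason Corollary~\ref{cor:average_width} with length $T_j$ yields $c_\Gamma(\cdot,\mathcal T_jT_j)=c_\Gamma(\cdot,T)$, not the $c_\Gamma(\cdot,\mathcal T_j)$ required by the lemma. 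Second, your claim that the $T_j\vert F\vert_{0,s';\mathcal T_j}$ term is dominated by $\vert G\vert_{0,s';\mathcal T_j}$ is false: the cohomological bound gives $\vert G\vert_{0,s';\mathcal T_j}\le C\mathcal T_j^{-1/3}(1+\vert\lambda m\vert^{-s'})\vert\lambda m\vert^{-1}$, which is bounded, while $T_j\vert F\vert_{0,s';\mathcal T_j}$ can be of order $T_j$ (up to $T$ when $j=l$). Concretely, combining the trace bound with $w_{\mathcal T_j}(\bar y,\lambda,T_j)^{-1/2}\lesssim c_\Gamma\,T_j^{1/2}(\log)^{1/2}$ and dividing by $T_j$ leaves a contribution $c_\Gamma(\cdot,T)(\log T)^{1/2}\vert F\vert$ with \emph{no} $1/T_j$ decay and no $\vert\lambda m\vert^{-1}$ factor, so the stated bound cannot follow.

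The remedy is simply to invoke Theorem~\ref{theo:Sobolev-trace} with $T=1$ at each endpoint $\bar y\in\{\bar x,\phi_{T_j}^{\lambda,\mathcal T_j}(\bar x)\}$. Then $\vert G(\bar y)\vert\le C_{s'}\,w_{\mathcal T_j}(\bar y,\lambda,1)^{-1/2}\bigl(\vert F\vert_{0,s';\mathcal T_j}+\vert G\vert_{0,s';\mathcal T_j}\bigr)$, and Corollary~\ref{cor:average_width} with $T=1$ gives $w_{\mathcal T_j}(\bar y,\lambda,1)^{-1/2}\le K_\Gamma^{1/2}c_\Gamma(y,\mathcal T_j)(1+\tfrac13\log\mathcal T_j)^{1/2}$, which is precisely the $c_\Gamma(\cdot,\mathcal T_j)$ and $(1+\log^{1/2}\mathcal T_j)$ appearing in the statement; dividing by $T_j$ then produces the required $1/T_j$ prefactor. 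No time-reversal argument is needed at the forward endpoint: you apply the trace theorem to the \emph{forward} unit segment starting there, whose projection to $M$ begins at $h_T(x)$.
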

\begin{proof}
  Let $s > 2$, let $r\geq 3(s-1)$ and let $f \in H^\infty$.  Let $f^*$
  be the orthogonal projection in $\widehat H_{\mathcal T_j}^{-s}$ of
  $f$ into $\text{Ann}(\mathcal I_\lambda^s(\Gamma))$. Because $f^*
  \in \text{Ann}_\lambda(\Gamma)$, by
  Theorem~\ref{theo:cohomology-principal} and
  Theorem~\ref{theo:cohomology-discrete} there exists a solution $g^*
  \in \widehat H^\infty$ of the equation
  \[
  \mathcal T_j (U + \lambda K) g^* = f^*\,,
  \]
  such that the following bounds holds:
  \begin{equation}\label{equa:coeqn-remainder1}
    \vert g^* \vert_{0, s-1; \mathcal T_j}  \leq 
    \frac{C_s}{{\mathcal T}^{1/3}} \frac{1 + \vert\lambda m\vert^{-(s-1)}}{\vert \lambda m\vert}  
    \vert  f\vert_{r, s;\mathcal T_j} \,.
  \end{equation}
  
  By orthogonality, again we have
  \[
  \mathcal R^j(f) = \gamma_{\bar x, \lambda}^T(f^*)\,.
  \]
  Then the fundamental theorem of calculus gives
  \begin{align}\label{equa:remainder_FTC}
    |\mathcal R^j(f)|
    & = |\gamma_{\bar x, \lambda}^T(f^*)| \notag \\
    & = |\frac{1}{T_j} \int_0^{T_j} f^* \circ \phi_t^{\lambda, \mathcal T_j}(x) dt| \notag \\
    & = |\frac{1}{T_j} \int_0^{T_j} \mathcal T_j (U + K) g^* \circ \phi_t^{\lambda, \mathcal T_j}(x) dt| \notag \\
    & = |\frac{1}{T_j} \left(g^* \circ \phi_{T_j}^{\lambda, \mathcal
        T_j}(x) - g^*(x)\right)|
  \end{align}

  By Theorem~\ref{theo:Sobolev-trace}, for all $s>2$ there exists a
  constant $C'_s>0$ such that
  \[
  \begin{aligned}
    \eqref{equa:remainder_FTC} \leq \frac{C'_s}{T_j} [w_{\mathcal T_j}(x, \lambda, 1)^{-1/2} &+ w_{\mathcal T_j}(\phi_{T_j}^{\lambda, \mathcal T_j}(x), \lambda, 1)^{-1/2}] \\
    &\times (\vert f^* \vert_{0, s-1; \mathcal T_j} + \vert g^*
    \vert_{0, s-1; \mathcal T_j})\,,
  \end{aligned}
  \]
  any by Corollary ~\ref{cor:average_width} there exists a constant
  $K_\Gamma>0$ such that
  \[
  \begin{aligned}
    w_{\mathcal T_j}(x, \lambda, 1)^{-1/2} &+ w_{\mathcal
      T_j}(\phi_{T_j}^{\lambda, \mathcal T_j}(x), \lambda, 1)^{-1/2}
    \\ &\leq K_\Gamma [c_\Gamma(x,\mathcal T_j)+ c_\Gamma(h_T(x),\mathcal
    T_j)] (1+ \log^{1/2} \mathcal T_j)\,.
  \end{aligned}
  \]
  Lemma~\ref{lemma:Remainder} then follows from the above estimates.
\end{proof}

Next, we estimate the scaled foliated norms of invariant
distributions.
\begin{lemma}\label{lemma:iterate}
  For every $s > 2$ and $r\geq 2s$, there is a constant $C_{r,s} > 0$
  such that
  \[
  \begin{aligned}
    \vert \mathcal D^l \vert_{-r,-s} &\leq C_{r,s} T^{-1/6} (1 +
    |\lambda m|^{-3s}) \\ & \times \left(\vert\mathcal
      D^0\vert_{-(r-2s),-s; T} + \sum_{j = 1}^{l} T_{j}^{1/6}\vert
      \mathcal R^{j }\vert_{-(r-2s), -s; \mathcal T_{j}}\right)\,.
  \end{aligned}
  \]
\end{lemma}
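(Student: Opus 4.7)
The plan is to telescope $\mathcal D^l$ across the chain of scales and combine, at each step, the scaling theorem for invariant distributions (Theorem~\ref{theo:scaling-principal} in the principal/complementary series, Theorem~\ref{theo:Discrete_scale} in the discrete series) with an orthogonal projection identity that converts each jump $\mathcal D^j - \mathcal D^{j-1}$ into a bound on the remainder $\mathcal R^j$ at the natural scale $\mathcal T_j$. First I would write
\[
\mathcal D^l = \mathcal D^0 + \sum_{j=1}^{l} (\mathcal D^j - \mathcal D^{j-1}),
\]
observe that each difference lies in the one-dimensional subspace $\mathcal I_\lambda^s(\Gamma) \cap \widehat H^{-r,-s}$, which is the same vector space at every scale, and apply the scaling theorem with $\mathcal T'=1$ and $\mathcal T = \mathcal T_{j-1}$ to each difference, and with $\mathcal T=T$ to $\mathcal D^0$. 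This produces the prefactors $\mathcal T_{j-1}^{-1/6} = T_{j-1}^{1/6}T^{-1/6} \leq T_j^{1/6}T^{-1/6}$ and $T^{-1/6}$, the Sobolev index shift $r \mapsto r-2s$, and the $(1+|\lambda m|^{-3s})$ factor that appear in the statement.

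The crucial identity is the orthogonal projection formula
\[
P^{\mathcal T_{j-1}}_{\mathrm{inv}}(\mathcal R^j) = \mathcal D^{j-1} - \mathcal D^j,
\]
where $P^{\mathcal T_{j-1}}_{\mathrm{inv}}$ is the orthogonal projection of $\widehat H^{-(r-2s),-s}$ onto its invariant-distribution subspace with respect to the inner product at scale $\mathcal T_{j-1}$. It holds because $P^{\mathcal T_{j-1}}_{\mathrm{inv}}(\gamma_{\bar x,\lambda}^T) = \mathcal D^{j-1}$ by definition of the orthogonal decomposition at scale $\mathcal T_{j-1}$, while $\mathcal D^j$ is itself invariant and therefore fixed by $P^{\mathcal T_{j-1}}_{\mathrm{inv}}$. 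Since $P^{\mathcal T_{j-1}}_{\mathrm{inv}}$ is an orthogonal projection in a Hilbert space it is a contraction, so
\[
\vert \mathcal D^j - \mathcal D^{j-1} \vert_{-(r-2s),-s; \mathcal T_{j-1}} \leq \vert \mathcal R^j \vert_{-(r-2s),-s; \mathcal T_{j-1}}.
\]

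To finish, I would pass from the non-natural scale $\mathcal T_{j-1}$ to the natural scale $\mathcal T_j$ by exploiting the bounded ratio $\mathcal T_{j-1}/\mathcal T_j = e^h$ with $h \in [1,2]$. The operator inequalities
\[
e^{-8h/3}(I + \Box_{\mathcal T_j}^2 + \widehat\triangle_{\mathcal T_j}^2) \leq I + \Box_{\mathcal T_{j-1}}^2 + \widehat\triangle_{\mathcal T_{j-1}}^2 \leq I + \Box_{\mathcal T_j}^2 + \widehat\triangle_{\mathcal T_j}^2,
\]
which follow from the explicit formulas $\Box_{\mathcal T_{j-1}}^2 = e^{-4h/3}\Box_{\mathcal T_j}^2$, $X_{\mathcal T_{j-1}}^2 = e^{-2h/3}X_{\mathcal T_j}^2$, $V_{\mathcal T_{j-1}}^2 = e^{-4h/3}V_{\mathcal T_j}^2$, show that the foliated Sobolev norms at the two scales are equivalent up to a constant depending only on $s$; dualizing and substituting yields the stated bound. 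The main obstacle is the projection identity itself: the naive relation $\mathcal D^j - \mathcal D^{j-1} = \mathcal R^{j-1} - \mathcal R^j$ would place $\mathcal R^{j-1}$ on the right and, after reindexing, force an unwanted $\mathcal R^0$ term into the sum. Using $-P^{\mathcal T_{j-1}}_{\mathrm{inv}}(\mathcal R^j)$ instead---available precisely because the invariant subspace is the same line in each irreducible component at every scale---aligns the indexing so that $\mathcal R^j$ appears at its natural scale $\mathcal T_j$, matching the statement exactly.
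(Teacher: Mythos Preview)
Your proof is correct and essentially the same as the paper's: telescope $\mathcal D^l$, express each increment as the orthogonal projection of a remainder onto the one-dimensional invariant line, apply the scaling Theorems~\ref{theo:scaling-principal} and~\ref{theo:Discrete_scale}, and use the bounded ratio $\mathcal T_{j-1}/\mathcal T_j=e^h$ for the adjacent-scale comparison; the only difference is that the paper writes $\mathcal D^j-\mathcal D^{j-1}=\mathbf I_j^{r,s}(\mathcal R^{j-1})$ (projecting at scale $\mathcal T_j$) rather than your mirror-image $-P_{\mathrm{inv}}^{\mathcal T_{j-1}}(\mathcal R^j)$, so its sum ends up indexed by $\mathcal R^{j-1}$ at $\mathcal T_{j-1}$ while your version matches the stated indexing directly. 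One technical caution: the inequality $\widehat\triangle_{\mathcal T_{j-1}}^2\leq\widehat\triangle_{\mathcal T_j}^2$ does not follow from $0\leq\widehat\triangle_{\mathcal T_{j-1}}\leq\widehat\triangle_{\mathcal T_j}$ since these two Laplacians do not commute (their commutator is a nonzero multiple of $XV^2+V^2X$ and squaring is not operator-monotone); the norm equivalence you need is nonetheless true---expand $(I+\Box_{\mathcal T}^2+\widehat\triangle_{\mathcal T}^2)^{s/2}$ in monomials in $K,X_{\mathcal T},V_{\mathcal T}$ for even $s$, rescale each monomial by at most $e^{2sh/3}$, and interpolate---and the paper simply asserts it without proof.
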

\begin{proof}
  We follow the proof of Lemma 5.7 of \cite{FF2}.  For each integer $j
  \in [1, l]$, let $\mathbf I_j^{r,s} := \mathbf I_j^{r,s}(m, \mu, T)$
  on $\widehat H^{-r,-s}$ be orthogonal projection onto $\langle
  \mathcal D_j\rangle$ in the Hilbert space $\widehat H_{\mathcal
    T_j}^{-r,-s}$\,.  We get from definitions that
  \[
  \mathcal D^j = \mathbf I_j^{r,s}(\mathcal D^{j - 1} + \mathcal R^{j
    - 1}) = \mathcal D^{j - 1} + \mathbf I_j^{r,s}(\mathcal R^{j -
    1})\,.
  \]
  Iteratively applying the triangle inequality, we get
  \begin{align}\label{equa:iterate-step1}
    \vert \mathcal D^l\vert_{-r,-s} & \leq \vert \mathcal D^{l - 1}
    \vert_{-r,-s} +
    \vert \mathbf I_l^{r,s}(\mathcal R^{l - 1})\vert_{-r,-s} \notag  \\
    & \leq \vert\mathcal D^0\vert_{-r,-s} + \sum_{j = 1}^{l }
    \vert \mathbf I_{l - j+1}^{r,s}(\mathcal R^{l - j})\vert_{-r,-s} \notag \\
    & = \vert\mathcal D^0\vert_{-r,-s} + \sum_{j = 1}^{l} \vert
    \mathbf I_{j}^{r,s}(\mathcal R^{j-1})\vert_{-r,-s}\,.
  \end{align}

  By Theorem~\ref{theo:scaling-principal} and
  Theorem~\ref{theo:Discrete_scale}, for any $s>2$ and any $r\geq 2s$
  there exists a constant $C'_{r,s} > 0$ such that
  \begin{equation}
    \label{eq:inv_dist_0}
    \begin{aligned}
      \vert\mathcal D^0\vert_{-r,-s} &\leq C'_{r,s} \mathcal
      T_0^{-1/6} (1 + |\lambda m|^{-3s}) \vert\mathcal
      D^0\vert_{-(r-2s), -s; \mathcal T_0} \\ & = C'_{r,s} T^{-1/6}
      (1 + |\lambda m|^{-3s}) \vert\mathcal D^0\vert_{-(r-2s), -s; T}
      \,,
    \end{aligned}
  \end{equation}
  and for all integer $j\in [1,l]$, since $\mathbf
  I_{j}^{r,s}(\mathcal R^{j - 1})\in \langle D^j \rangle$, we have
  \begin{equation}
    \label{eq:inv_dist_j}
    \begin{aligned}
      \vert \mathbf I_{j}^{r,s}&(\mathcal R^{j - 1})\vert_{-r,-s} \leq
      C'_{r,s} \mathcal T_j^{-1/6} (1 + |\lambda m|^{-3s}) \vert
      \mathbf I_{j}^{r,s}(\mathcal R^{j - 1})\vert_{-(r-2s), -s;
        \mathcal T_j} \\ & = C'_{r,s} T^{-1/6} (1 + |\lambda
      m|^{-3s})T_j^{1/6} \vert \mathbf I_{j}^{r,s}(\mathcal R^{j -
        1})\vert_{-(r-2s), -s; \mathcal T_j} \,,
    \end{aligned}
  \end{equation}
  Finally we observe that $\frac{\mathcal T_{j - 1}}{\mathcal T_{j}} =
  e^{h},$ hence there is a constant $C''_{r,s} > 0$ such that
  \[
  \vert \mathcal R^{j - 1}\vert_{-(r-2s), -s; \mathcal T_{j}} \leq
  C''_{r,s} \vert\mathcal R^{j - 1}\vert_{-(r-2s), -s; \mathcal T_{j -
      1}} \,.
  \]
  The lemma then follows from the bounds in
  formulas~\eqref{equa:iterate-step1},~\eqref{eq:inv_dist_0}
  and~\eqref{eq:inv_dist_j}.
\end{proof}

Recall from \eqref{eq:C_Gamma1} that for all $(x,T)\in M\times \R^+$
\[
C_\Gamma(x,T) = \sup_{ 0\leq t \leq T} c_\Gamma(x,t) \,.
\]
From Lemma~\ref{lemma:Remainder} and Lemma~\ref{lemma:iterate}, we
prove
\begin{theorem}\label{theo:D-lambda-estimate}
  For every $s > 2$ and $r\geq 5s-3$, there is a constant
  $C^{(3)}_{r,s}:=C^{(3)}_{r,s}(\Gamma) > 0$ such that for all $T\geq 1$ we have
  \[
  \begin{aligned}
    \vert\mathcal D^l\vert_{-r,-s} \leq C^{(3)}_{r,s} &[C_\Gamma(x,T) + C_\Gamma(h_T(x),T)]  \\
    \times & (1 + |\lambda m|^{-4s}) T^{-1/6} (1+\log^{1/2} T) \,.
  \end{aligned}
  \]
\end{theorem}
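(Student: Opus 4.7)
The plan is to combine Lemma~\ref{lemma:iterate} and Lemma~\ref{lemma:Remainder}, together with a separate initial bound on $\vert \mathcal D^0\vert_{-(r-2s),-s;T}$ obtained from the Sobolev trace theorem. The hypothesis $r\geq 5s-3$ is precisely what is required so that the reduced exponent $r-2s$ appearing in Lemma~\ref{lemma:iterate} satisfies the hypothesis $r-2s\geq 3(s-1)$ of Lemma~\ref{lemma:Remainder}.

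For the initial term, I would use that $\mathcal D^0$ is defined in \eqref{equa:gamma_T-decomp-R-D} (at $j=0$, with $\mathcal T_0=T$ and $T_0=1$) as the orthogonal projection of the rescaled ergodic integral $\gamma_{\bar x,\lambda}^T$ onto the subspace $\mathcal I_\lambda^s(\Gamma)\cap \widehat H^{-(r-2s),-s}$ inside $\widehat H^{-(r-2s),-s}_{T}$. Projection decreases norms, so $\vert \mathcal D^0\vert_{-(r-2s),-s;T}\leq \vert \gamma^T\vert_{-(r-2s),-s;T}$, and working in one irreducible component where $\Box$ acts as the constant $\mu$, the factor $(1+\mu^2)^{(r-2s)/4}$ in $|\cdot|_{r-2s,s;T}$ relative to $|\cdot|_{0,s;T}$ yields $\vert \gamma^T\vert_{-(r-2s),-s;T}\leq \vert \gamma^T\vert_{0,-s;T}$. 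Applying Theorem~\ref{theo:trace_thm} at scale $\mathcal T=T$ to the length-$1$ orbit arc, and bounding the reciprocal average width by Corollary~\ref{cor:average_width} with $\mathcal T=T$ and time parameter $T'=1$, gives
\[
|\gamma^T(f)|\leq C_s\, w_T(\bar x,\lambda,1)^{-1/2}\,|f|_{0,s;T}\leq C'_s\, c_\Gamma(x,T)(1+\log^{1/2}T)\,|f|_{0,s;T},
\]
so $\vert \mathcal D^0\vert_{-(r-2s),-s;T}\lesssim c_\Gamma(x,T)(1+\log^{1/2}T)\leq [C_\Gamma(x,T)+C_\Gamma(h_T(x),T)](1+\log^{1/2}T)$.

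For the sum of remainders, apply Lemma~\ref{lemma:Remainder} at the reduced exponent $r-2s\geq 3(s-1)$: for each $j\in\{1,\dots,l\}$,
\[
\vert \mathcal R^j\vert_{-(r-2s),-s;\mathcal T_j}\leq \frac{C_s''}{T_j}[c_\Gamma(x,\mathcal T_j)+c_\Gamma(h_T(x),\mathcal T_j)](1+\log^{1/2}\mathcal T_j)\frac{1+|\lambda m|^{-(s-1)}}{|\lambda m|}.
\]
Since $1\leq \mathcal T_j = e^{(l-j)h}\leq T$, the definition $C_\Gamma(x,T)=\sup_{1\leq t\leq T}c_\Gamma(x,t)$ yields $c_\Gamma(x,\mathcal T_j)\leq C_\Gamma(x,T)$ and similarly at $h_T(x)$, while $\log^{1/2}\mathcal T_j\leq\log^{1/2}T$. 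The weighted sum then contains $\sum_{j=1}^{l}T_j^{1/6}\cdot T_j^{-1}=\sum_{j=1}^{l}e^{-5jh/6}$, a geometric series with ratio $e^{-5h/6}<1$ (since $h\in[1,2]$), uniformly bounded in $l$.

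Inserting both estimates into Lemma~\ref{lemma:iterate} produces the claimed bound on $\vert \mathcal D^l\vert_{-r,-s}$. The algebraic dependence on $|\lambda m|$ simplifies via the elementary inequality $(1+|\lambda m|^{-3s})(1+(1+|\lambda m|^{-(s-1)})/|\lambda m|)\lesssim 1+|\lambda m|^{-4s}$. The main obstacle is purely bookkeeping: one must track the Sobolev-exponent losses, namely $2s$ from the Lyapunov scaling step in Lemma~\ref{lemma:iterate} plus $3(s-1)$ from the hypothesis of Lemma~\ref{lemma:Remainder}, totalling $5s-3$, which pins down the optimal constraint $r\geq 5s-3$; the initial-term estimate for $\mathcal D^0$ must also be compatible with the same reduced exponent $r-2s\geq 3s-3\geq 0$, which it is since $s>2$.
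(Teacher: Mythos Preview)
Your proposal is correct and follows essentially the same route as the paper: bound $\vert\mathcal D^0\vert_{-(r-2s),-s;T}$ via orthogonal projection and the trace theorem (Theorem~\ref{theo:trace_thm}) combined with Corollary~\ref{cor:average_width}, feed Lemma~\ref{lemma:Remainder} at exponent $r-2s\geq 3(s-1)$ into Lemma~\ref{lemma:iterate}, sum the resulting geometric series $\sum_j T_j^{-5/6}$, and collect the $|\lambda m|$-factors. Your identification of the constraint $r\geq 5s-3$ as exactly $2s+3(s-1)$ and your observation that in a single irreducible the $r$-index only contributes a scalar factor $(1+\mu^2)^{r/4}$ (so the projection is independent of $r$ and the dual norm is monotone in $r$) are both on point.
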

\begin{proof}
  Orthogonality shows
  \begin{equation}\label{equa:D^0-to-integral}
    \vert\mathcal D^0\vert_{0, -s; T} \leq \vert  \int_0^1 (\phi_t^{\lambda, T}(x))^* dt 
    \vert_{0, -s; T} \,.\end{equation}
  By Theorem~\ref{theo:trace_thm} and Theorem~\ref{theo:average_width},
  we get a constant $C^{(3)}_s> 0$ such that 
  \[
  \vert\mathcal D^0\vert_{-(r-2s),-s; T} \leq \vert\mathcal
  D^0\vert_{0, -s; T} \leq C^{(3)}_s [c_\Gamma(x,T) +
  c_\Gamma(h_T(x),T)] (1+ \log^{1/2} T)\,.
  \]
  We observe that by the definitions since $\mathcal T_j \leq T$ for
  all $j \in [1,l]$, we have
  \[
  c_\Gamma(x,\mathcal T_j) + c_\Gamma(h_T(x),\mathcal T_j) \leq
  C_\Gamma(x,T) + C_\Gamma(h_T(x),T)\,.
  \]
  By Lemma \ref{lemma:Remainder} and Lemma \ref{lemma:iterate}, we get
  \[
  \begin{aligned}\label{equa:final-d-est}
    \vert\mathcal D^l \vert_{-r, -s} & \leq C^{(3)}_{r,s} (1 +
    |\lambda m |^{-3s}) [C_\Gamma(x,T) + C_\Gamma(h_T(x),T)]
    \notag \\
    &\times T^{-1/6} (1+ \log^{1/2} T) [1 + (\frac{1 + |\lambda m
      |^{-(s -1)}} {|\lambda m|} ) \sum_{j = 1}^{l-1} T_j^{-5/6}]
    \,. \notag
  \end{aligned}
  \]
  Since the series converges there exists a constant $C>0$ such that
  \[ [1 + (\frac{1 + |\lambda m |^{-(s -1)}} {|\lambda m|} ) \sum_{j =
    1}^{l-1} T_j^{-5/6}] \leq C (1 + |\lambda m|^{-4s}) \,,
  \]
  hence the argument is complete.
\end{proof}

\begin{proof}[Proof of Theorem~\ref{theo:equidistribution}] 
  Let $s > 2$, and let $r \geq 5s - 3$.  Set $j = l$ in the orthogonal
  decomposition \eqref{equa:gamma_T-decomp-R-D}.  Define $\mathcal
  R_{\bar x, T}^{r,s}$ to be the direct integral in $\widehat W^{-r,
    -s}(M)$ of the distributions $T \mathcal R^l$ taken across all
  irreducible, unitary representations of the group $\SL(2, \R) \times
  \T$ on $L^2(M\times \T)$.  Analogously define $\mathcal
  D^{r,s}_{\bar x, \lambda, T}$ to be the direct integral of the
  distributions $T^{1/6} \mathcal D^l$.  By
  Theorem~\ref{theo:D-lambda-estimate} and orthogonality, there is a
  constant $C_{r,s}:=C_{r,s}(\Gamma) > 0$ such that for any $\bar x= (x,\theta) \in
  M\times \T$ and for any $T\geq e$,
  \[
  \vert\mathcal D^{r,s}_{\bar x, \lambda, T}\vert_{-r, -s}^2\leq C_{r,
    s} [C_\Gamma(x,T) + C_\Gamma(h_T(x),T)]^2 (1 + |\lambda|^{-8s})
  (1+\log T)\,.
  \]
  Similarly, Lemma~\ref{lemma:Remainder} and orthogonality imply
  \[
  \vert \mathcal R_{\bar x, T}^{r,s}\vert_{-r,-s}^2 \leq C_{r, s}
  [C_\Gamma(x,T) + C_\Gamma(h_T(x),T)]^2 \frac{1 + |\lambda|^{-2(s -
      1)}}{|\lambda|^2} (1+\log T)\,.
  \]
  These two estimates together give the estimates
  \eqref{equa:Remainder-inv-dist-twist_flow} in
  Theorem~\ref{theo:equidistribution}. This concludes the argument.
\end{proof}

\section{Horocycle maps}

In this section we prove Theorem \ref{theo:maps} on the effective
equidistribution of horocycle maps. The argument is based on the
effective equidistribution of horocycle flows~\cite{FF1} and of
twisted horocycle flows (see Theorem~\ref{theo:equidistribution}) as
well as bounds on solution of the cohomological equation for horocycle
maps, which we prove below.

\subsection{Cohomological equation}

Given $f \in C^\infty(M)$, we obtain Sobolev estimates of solutions to
the cohomological equation
\begin{equation}\label{equa:coeqn-map}
  f = g \circ h_L - g\,,
\end{equation}
for $L > 0$.  Such estimates have already been obtained in Theorem 1.2
of \cite{T} by studying the horocycle map directly and without
reference to the twisted horocycle flow.  Here we derive improved
estimates from our solution to the cohomological equation of the
twisted horocycle flow.

We recall that the horocycle flow is \emph{stable} in the sense that
for every non-trivial irreducible representation $H_\mu$ and for every
function $f\in H_\mu^\infty$ which belongs to the kernel of all
horocycle flow invariant distributions, the cohomological equation $U
g = f$ has a unique solution $g \in H^\infty_\mu$ (see Theorem 1.2 of
\cite{FF1}).  It is therefore possible to define a Green operator
$G^U_\mu$ for the horocycle flow defined on the kernel
$\text{Ann}(\mathcal I^{0}_\mu) \subset H^\infty_\mu$ of the space
$\mathcal I^{0}_\mu$ of all horocycle flow invariant
distributions. The Green operator is uniquely defined by the following
identity:
\[
U G^U_\mu (f) =f \, \quad \text{ for all } \, f \in
\text{Ann}(\mathcal I^{0}_\mu)\,.
\]
As the horocycle vector field $U$, as a linear differential operator
on $H^\infty_\mu$ is represented in Fourier transform on the space
$L^2_\nu(\R)$ by the multiplier $\hat U = i \xi$, it follows that the
Green operator $G^U_\mu$ is represented by the multiplier
\[
\hat G^U_\mu (\hat f) (\xi) = \frac{\hat f (\xi)}{i\xi} \,, \quad
\text{ for all } \,\hat f \in \text{Ann}(\mathcal I^{0}_\mu)\,.
\]
For every $\epsilon >0$ let $\vert U\vert ^\epsilon$ denote the
self-adoint operator defined by the spectral theorem as a function of
the skew-adjoint operator $U$ on every unitary representation.

\begin{theorem}\label{theo:coeqn-map-principal}
  For all $s \geq 0$ there is a constant $C_{s} > 0$ such that the
  following holds.  For all irreducible, unitary representations
  $H_\mu$ of the principal or complementary series, for all $\epsilon
  \in (0,1)$, for all $r, a \geq 0$ and for all $f \in H_\mu^\infty
  \cap \text{Ann}^{L}(\Gamma)$, there is a unique solution $g \in
  H_\mu^\infty$ of the cohomological equation \eqref{equa:coeqn-map}
  for the time-$L$ horocycle map $h_L$ and we have the following
  estimates:
  \[
  \Vert g \circ h_{L/2}\Vert_{r, s, a} \leq C_{s}
  \left(\frac{1+L^{2s}}{L} \Vert G^U_\mu(f)\Vert_{r,s, a} +
    \frac{L^\epsilon (1+ L^s)}{\epsilon} \Vert f \Vert_{r + s, s + 1,
      a + \epsilon} \right),
  \]
  and, with respect to full Sobolev norms, we have
  \[
  \| g \circ h_{L/2} \|_s \leq C_{s} \left( \frac{1+L^{2s}}{L} \|
    G^U_\mu(f)\|_s + \frac{L^\epsilon (1+ L^s)}{\epsilon} \Vert f
    \Vert_{2s+1+\epsilon} \right)\,.
  \]
\end{theorem}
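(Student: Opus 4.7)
The plan is to reduce this cohomological equation for the horocycle map $h_L$ to the family of twisted cohomological equations already solved in Theorem~\ref{theo:cohomology-principal}, via Fourier decomposition in the line model. By orthogonality I work in a fixed irreducible component $H_\mu$ of the principal or complementary series. In the line model, $h_L$ acts on Fourier transforms by multiplication by $e^{iL\xi}$, so the formal solution of $g \circ h_L - g = f$ satisfies
\[
\widehat{g \circ h_{L/2}}(\xi) \;=\; \frac{\hat f(\xi)}{2i \sin(L\xi/2)}\,.
\]
The description of $h_L$-invariant distributions in Theorem~\ref{theo:invariant-maps}, combined with Lemma~\ref{lemma:regularity-D_lambda-XV-principal}, shows that the hypothesis $f \in \mathrm{Ann}^L(\Gamma)$ forces $\hat f$ to vanish at each $\xi_k := 2\pi k / L$, so the apparent poles of $1/\sin(L\xi/2)$ are cancelled by zeros of $\hat f$ and the right-hand side defines a tempered distribution.

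I next introduce a smooth partition of unity $1 = \sum_{k \in \Z} \chi_k(\xi)$ with each $\chi_k$ supported in $[\xi_k - \pi/L, \xi_k + \pi/L]$ and splitt $\widehat{g \circ h_{L/2}} = \sum_k \hat g_k$. For $k = 0$, the Taylor identity
\[
\frac{1}{2\sin(L\xi/2)} \;=\; \frac{1}{L\xi} + L\,\rho_0(L\xi),
\]
with $\rho_0$ analytic and bounded on $[-\pi,\pi]$, produces a main component $\chi_0 \hat f/(iL\xi)$ which is $G^U_\mu(\chi_0 f)/L$; this accounts for the $\Vert G^U_\mu(f)\Vert_{r,s,a}/L$ contribution, since $\chi_0$ is a bounded Fourier multiplier on all the relevant (anisotropic) Sobolev spaces. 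The smooth correction $\chi_0(\xi) L\rho_0(L\xi)\hat f(\xi)$ is a Fourier multiplier applied to $f$ whose contribution is absorbed into $\Vert f\Vert_{r, s, a}$ at the cost of a factor of $L^{2s}$ coming from derivatives of $\chi_0(L\,\cdot)$ under $X = (1-\nu) + 2\xi\partial_\xi$ and $V$, each of which costs a factor of $L$. For $k \neq 0$, an analogous Laurent expansion
\[
\frac{1}{2i\sin(L\xi/2)} \;=\; \frac{(-1)^k}{iL(\xi - \xi_k)} + L\,\rho_k(L(\xi - \xi_k))
\]
reduces the principal part to $(-1)^k/L$ times the Fourier transform of the solution of the twisted cohomological equation $(U - i\xi_k) g_k = \chi_k f$, to which Theorem~\ref{theo:cohomology-principal} applies with parameter $\lambda m = -\xi_k$.

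The remaining step is to sum over $k \neq 0$ using the $\ell^2$-orthogonality provided by the disjoint supports of the $\chi_k$. Theorem~\ref{theo:cohomology-principal} yields a per-mode bound proportional to $(1 + |\xi_k|^{-s})/|\xi_k|$, which blows up as $|\xi_k| \to 0$. This is controlled by splitting into the regime $|k| \geq L/(2\pi)$, where $|\xi_k| \geq 1$ and the factor is harmless, and $1 \leq |k| < L/(2\pi)$, where the bound $(L/|k|)^{s+1}$ per term is summed against the $(I - U^2)^{(a+\epsilon)/2}$-weight built into the norm $\Vert f\Vert_{r+s, s+1, a+\epsilon}$; this weight provides the missing decay $\sum_{k\neq 0} |k|^{-1-\epsilon} \lesssim 1/\epsilon$, producing the explicit factor $L^\epsilon(1+L^s)/\epsilon$. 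The main technical obstacle is the careful bookkeeping of these $L$-dependent constants: each localizer $\chi_k$ is supported on a scale of $1/L$, so each application of $X$ or $V$ costs one power of $L$, and the interplay with the weights $(1+|\xi_k|^{-s})$ from the twisted estimates is what dictates both the $(1 + L^{2s})/L$ coefficient in front of $\Vert G^U_\mu(f)\Vert_{r,s,a}$ and the $L^\epsilon(1+L^s)/\epsilon$ coefficient in front of $\Vert f\Vert_{r+s, s+1, a+\epsilon}$. The estimate on the full Sobolev norm $\Vert g\circ h_{L/2}\Vert_s$ then follows by specializing $r$, $a$ and using the embeddings~\eqref{equa:sobolev-inclusions_map}. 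Uniqueness of $g$ is automatic since the Fourier multiplier $e^{iL\xi} - 1$ is nonzero almost everywhere on $\R$.
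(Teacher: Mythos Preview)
Your overall strategy---pass to Fourier transform in the line model, isolate the pole at $\xi=0$ via the Green operator $G^U_\mu$, and handle the remaining poles $\xi_k = 2\pi k/L$ through the twisted estimates of Theorem~\ref{theo:cohomology-principal}---matches the paper's. The gap is in the partition-of-unity implementation. You assert that each $X$ or $V$ derivative of the localizer $\chi_k$ costs one power of $L$, but in the line model $\hat X = (1-\nu) + 2\xi\,\partial_\xi$, so on $\mathrm{supp}\,\chi_k$ one has $\hat X\chi_k \approx 2\xi_k\chi_k' = O(|k|)$, not $O(L)$; likewise $\hat V\chi_k = O(|k|L)$. These $|k|$-powers feed through the Leibniz expansion of $\Vert\chi_k f\Vert_{r+s,s+1}$ and spoil the per-mode bookkeeping you describe: after the $\ell^2$-sum they turn into extra powers of $L\,|U|$ acting on $f$, which forces more $U$-regularity on the right-hand side than the claimed $a+\epsilon$. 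Your sketch does not address this, and as written it does not produce the stated exponents.

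The paper avoids this by \emph{not} localizing near the nonzero poles. It splits only at $I_L = [-\pi/L,\pi/L]$ (where your $k=0$ analysis and the paper's agree), and on each half-line $\pm\xi\ge\pi/L$ it invokes the global Mittag-Leffler expansion of $1/\sin(L\xi/2)$, writing the $n$-th summand as
\[
\frac{\xi^{1-\epsilon}}{\xi + 2\pi n/L}\cdot\frac{\xi^{\epsilon}\hat f(\xi)}{\xi - 2\pi n/L}\,.
\]
The first factor and all its $\hat X,\hat V$-derivatives are bounded on the half-line by $C\,(L/n)^\epsilon$ (times powers of $L$ and $|\nu|$ independent of $n$), while the second factor is estimated on all of $\R^\pm$ by Theorem~\ref{theo:cohomology-principal} applied to $|U|^\epsilon f$. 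No cutoff derivatives enter, the twisted estimate contributes the clean factor $L/n$, and the product $(L/n)^{1+\epsilon}$ is then summed in $\ell^1$ with $\sum_n (L/n)^{1+\epsilon}\le C L^{1+\epsilon}/\epsilon$; after the overall $1/L$ this yields exactly the coefficient $L^\epsilon(1+L^s)/\epsilon$ and only the single extra $\epsilon$ of $U$-regularity.
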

\begin{proof}
  By taking the Fourier transform of both sides of equation
  \eqref{equa:coeqn-map}, we get the formula
  \[
  \hat g(\xi) = \frac{\hat f(\xi)}{e^{i L \xi} - 1} \,.
  \]
  The estimate will be carried out separately on the intervals $\vert
  \xi\vert \leq \pi/L$, $\xi \geq \pi/L$ and $\xi \leq -\pi/L$.  On
  the bounded interval $I_L:= [-\pi/L, \pi/L]$ by the definition of
  $G_\mu^U(f)$ we can write
  \[
  e^{ i \xi L/ 2} \hat g(\xi) = \frac{1}{L} \left(e^{i \xi
      L/2}\frac{iL\xi}{e^{i L \xi} - 1}\right) \hat G^U_\mu(\hat f)\,.
  \]
  Observe that the function
  \[
  \phi (\eta) := e^{i \eta / 2} \frac{i\eta}{e^{i \eta} - 1}
  \]
  is infinitely differentiable and bounded on $[- \pi, \pi]$, and note
  $L \xi \in [- \pi, \pi]$ whenever $\xi \in I_L$.  Let $\alpha, \beta
  \in \N$.  For $0\leq \ell \leq \alpha$ and $i+m \leq \beta $, let
  \[
  \phi^{(\ell)}_{i,m}(\xi):= (\frac{d}{d\xi})^m \hat V^i (2 \xi
  \frac{d}{d\xi})^{\alpha-\ell} \phi(L \xi) \,.
  \]
  There exists a constant $K_{\alpha, \beta}^{(0)} > 0$ such that
  \[
  \|\phi^{(\ell)}_{i,m}\|_{L^\infty(I_L)} \leq K_{\alpha, \beta}^{(0)}
  (1 + |\nu|)^i L^{m + 2i + \alpha - \ell} \,.
  \]

  By formula (\ref{equa:VXproduct}) we have universal coefficients
  $(a^{(\alpha)}_\ell) (b^{(\beta)}_{ijkm})$ such that
  \[
  \begin{aligned}
    (\hat V^\beta \hat X^\alpha) [ e^{i \xi L/ 2} \hat g (\xi)] &=
    -\frac{1}{L} \sum_{\ell\leq \alpha} \, \sum_{ \substack {
        i+j+ m\leq \beta  \\
        k \leq m }}
    a^{(\alpha)}_\ell b^{(\beta)}_{ijkm} \phi_{i, m}^{(\ell)} (\xi) \\
    &\ \ \ \ \ \ \ \ \ \ \ \ \ \ \ \ \ \ \ \ \ \ \ \ \ \ \times [(\hat
    X-(1-\nu))^k \hat V^j \hat X^\ell G_\mu^U(f)](\xi) \,.
  \end{aligned}
  \]
  By the triangle inequality, it follows that there exists a constant
  $K_{\alpha, \beta}^{(1)}>0$ such that
  \begin{align}\label{equa:transfer-map-small-xi}
    \Vert \hat V^\beta \hat X^\alpha \hat g \Vert_{L^2(I_L)} &\leq
    K_{\alpha, \beta}^{(1)} L^{-1} (1+ L^{\alpha+2\beta}) \notag \\ &
    \ \ \ \ \ \ \ \ \ \ \ \ \ \ \ \ \ \ \ \ \ \ \ \ \ \ \times
    \sum_{i+j+k \leq \alpha+\beta} (1+ \vert \nu\vert )^i \Vert \hat
    V^j \hat X^k G^U_\mu (f) \Vert_{L^2_\nu(\R)} \,.
  \end{align}
 
  On the half-lines $\xi \geq \pi/L$ and $\xi \leq -\pi/L$ we proceed
  in a different way.  By a formula in complex analysis (see for
  instance Chap. V, \S 4 in~\cite{Cartan}) we have
  \[
  \frac{1}{\sin(\xi L/2)} = \frac{1}{\pi}\left(\frac{2\pi}{\xi L} +
    \sum_{n \geq 1} (-1)^n \frac{\xi L/\pi}{(\xi L/ 2 \pi)^2 - n^2}
  \right)\,.
  \]
  Hence, we can write
  \[
  \begin{aligned}
    i e^{i \xi L/2} \hat g(\xi) & = \hat f(\xi) \frac{2}{\xi L} +
    \frac{1}{\pi}\sum_{n \geq 1} \frac{4\pi (-1)^n}{L} \hat f(\xi)
    \frac{\xi}{\xi^2 - (2\pi n/L)^2} \,.
  \end{aligned}
  \]
  For all $\alpha$, $\beta \geq 0$, we will estimate the $L^2_\nu(\R)$
  norm of
  \[
  \hat V^\beta \hat X^\alpha (i e^{i \xi L/2} \hat g)\,.
  \]

  Let $\epsilon \in (0,1)$. For $\xi \geq 0$ we can write
  \begin{equation}
    \label{eq:sol_expansion}
    \begin{aligned}
      i e^{i \xi L/2} \hat g(\xi) & = \frac{2}{L} \frac{\hat
        f(\xi)}{\xi} + \frac{4}{L}\sum_{n \geq 1}
      (-1)^n\left(\frac{\xi^{1-\epsilon}}{\xi + 2\pi n/L}\right)
      \left(\frac{\xi^\epsilon \hat f(\xi)}{\xi - 2\pi n/L}\right) \,.
    \end{aligned}
  \end{equation}
  For $0\leq \ell \leq \alpha$ and $i+m \leq \beta $, let
  \begin{equation}
    \label{eq:phi_n}
    \phi^{(\ell), \epsilon}_{i,m, n}(\xi):= (\frac{d}{d\xi})^m \hat V^i (2 \xi \frac{d}{d\xi})^{\alpha-\ell} 
    (\frac{\xi^{1-\epsilon}}{\xi+ 2\pi n/L})\,.
  \end{equation}
  Then as in formula \eqref{equa:VXproduct}, there exist universal
  coefficients $(a^{(\alpha)}_\ell)$, $(b^{(\beta)}_{ijkm})$ such that
  \begin{equation}
    \begin{aligned}\label{eq:VXproduct3}
      \hat V^\beta \hat X^\alpha &\left(\frac{\xi^{1-\epsilon}}{\xi +
          2\pi n/L}\right) \left(\frac{\xi^\epsilon \hat f(\xi)}{\xi -
          2\pi n/L}\right) = -i \sum_{\ell\leq \alpha} \, \sum_{
        \substack { i+j+ m\leq \beta \\ k \leq m}}
      a^{(\alpha)}_\ell b^{(\beta)}_{ijkm}  \\
      &\times \phi^{(\ell), \epsilon}_{i,m, n}(\xi) ((\hat
      X-(1-\nu))^k \hat V^j \hat X^\ell) \left(\frac{\xi^{\epsilon}
          \hat f(\xi)}{\xi - 2\pi n/L}\right) \,.
    \end{aligned}
  \end{equation}
   
  We prove below a bound for the functions
  $\phi^{(\ell),\epsilon}_{i,m, n}$, defined in
  formula~\eqref{eq:phi_n}.

  For $(\ell, i, m) = (\alpha, 0, 0)$, since $\xi \geq 0$ and $2\pi n
  /L\geq 0$, for all $\epsilon \in (0, 1)$ we have the estimate
  \begin{equation}
    \label{eq:max_est}
    \xi + \frac{2\pi n}{L} \geq  \xi^{1-\epsilon} (\frac{2\pi n}{L})^{\epsilon}\,.
  \end{equation}
  It follows that, for all $\epsilon>0$ and for all $\xi \geq 0$ we
  have
  \begin{equation}
    \label{eq:phi_1}
    \Vert \phi^{(\ell),\epsilon}_{i, 0, 0}\Vert_{L^\infty(\R^+)} \leq  (\frac{L}{2\pi n})^{\epsilon}\,.
  \end{equation}
  For $(\ell, i, m) \neq (\alpha, 0, 0)$, let
  \begin{align}\label{equa:W-coefficients}
    W_{\ell,i,m} := \{(w_0,w_1, w_2) \in &\N^2 \times \N \setminus\{0\} \vert w_0\leq i \,, \notag \\
    & w_1 \leq w_2 \leq w_1 +i+m \leq \alpha-\ell + 2 i +m \}\,.
  \end{align}
  By induction we prove that there exist universal constants
  $\{c_w\vert w\in W_{\ell,i,m}\}$ such that
  \begin{equation}\label{equa:mialphaell-decomp}
    (\frac{d}{d\xi})^m \hat V^i (2 \xi \frac{d}{d\xi})^{\alpha-\ell} = \sum_{w\in 
      W_{\ell,i,m}} c_w \, \nu^{w_0} \xi^{w_1} \frac{d^{w_2}}{d\xi^{w_2}} \,.
  \end{equation}
  For $j\in \N$, we also compute that
  \[
  \frac{d^j}{d\xi^j} \left(\frac{\xi}{\xi+ 2\pi n/L}\right) =
  \frac{2\pi n}{L} \frac{(-1)^{j + 1} j!}{(\xi + 2\pi n/L)^{j + 1}}\,,
  \]
  hence there exist universal constants $\{c'_{w, k}\vert w\in
  W_{\ell,i,m}, 0\leq k \leq w_2\}$ such that
  \[
  \begin{aligned}
    \phi^{(\ell),\epsilon}_{i,m, n} (\xi) = & \sum_{w\in W_{\ell,i,m}}
    c'_{w,w_2}
    \begin{pmatrix} -\epsilon \\ w_2 \end{pmatrix} \, \nu^{w_0}
    \frac{\xi^{w_1-w_2 +1-\epsilon} }{\xi+ 2\pi n/L} \\
    & \ \ \ \ \ \ \ \ \ \ \ + \sum_{w\in W_{\ell,i,m}}
    \sum_{k=0}^{w_2-1} c'_{w,k} \begin{pmatrix} -\epsilon \\
      k \end{pmatrix} \,\nu^{w_0} \frac{n}{L}
    \frac{\xi^{w_1-\epsilon}}{ \xi^{k}(\xi + 2\pi n /L)^{w_2 -k + 1}}
    \,.
  \end{aligned}
  \]
  Therefore, we derive from formulas~\eqref{eq:max_est} and from the
  above formula that there is a constant $K_{\alpha, \beta}^{(2)} > 0$
  such that, for all $\epsilon\in (0,1)$
  \begin{equation}\label{eq:phi_2}
    \vert \phi^{(\ell),\epsilon}_{i, m, n}(\xi) \vert \leq K_{\alpha, \beta}^{(2)} (1 + |\nu|)^i  
    (1+ L^{i+m}) (\frac{L}{n} )^{\epsilon} \,, \quad \text{ for all } \,\xi \geq \pi/L\,.
  \end{equation}

  By the uniform bounds~\eqref{eq:phi_1} and~\eqref{eq:phi_2} on the
  functions $\phi^{(\ell),\epsilon}_{i, m, n}$ on the half-line
  $\R^+_L:=\{ \xi\vert \xi \geq \pi/L\}$, it follows that there is a
  constant $K_{\alpha, \beta}^{(3)} > 0$ such that
  \[
  \begin{aligned}
    \|& \sum_{ \substack {
        i+j+ m\leq \beta  \\
        \ell \leq \alpha, \, k \leq m }} a^{(\alpha)}_\ell
    b^{(\beta)}_{ijkm} \phi^{(\ell)}_{i,m, n} (\hat X-(1-\nu))^k \hat
    V^j \hat X^\ell
    \left(\frac{ \xi^\epsilon \hat f(\xi)}{\xi - \frac{2\pi n}{L}}\right) \|_{L^2_\nu(\R^+_L)} \notag \\
    & \leq K_{\alpha, \beta}^{(3)} \left(\frac{L}{n}\right)^{\epsilon}
    \sum_{\substack{i + j + k \leq \alpha + \beta\\ \ell \leq \alpha}}
    (1 + |\nu|)^i (1+L^{\beta-j}) \notag \| \hat V^j \hat X^k
    \left(\frac{ \xi^\epsilon \hat f(\xi)}{\xi - \frac{2\pi
          n}{L}}\right) \|_{L^2_\nu(\R^+)} \label{equa:UXV-f-n}.
  \end{aligned}
  \]
  By formula~\eqref{eq:VXproduct3}, by the above estimate and by
  Theorem~\ref{theo:cohomology-principal} it follows that there exists
  a constant $K_{\alpha, \beta}^{(4)} > 0$ such that
  \[
  \begin{aligned}
    \| \hat V^\beta \hat X^\alpha \left(\frac{\xi^{1-\epsilon}}{\xi +
        2\pi n/L}\right)& \left(\frac{\xi^\epsilon \hat f(\xi)}{\xi -
        2\pi n/L}\right) \|_{L^2_\nu(\R^+_L)} \leq K_{\alpha,
      \beta}^{(4)} \left( \frac{L }{n}\right)^{1+\epsilon} (1
    +L^\beta) \\ \times (1+&\vert \nu \vert)^{\beta} \sum_{i+j+k \leq
      \alpha+\beta+1 } (1+ \vert \nu \vert)^i \| \hat V^j \hat X^k
    \vert \hat U\vert^\epsilon f \|_{L^2_\nu(\R^+)} \,.
  \end{aligned}
  \]
  Since $\epsilon >0$, there exists a constant $K >0$ such that
  \[
  \sum_{n\geq 1} \left( \frac{L }{n}\right)^{1 + \epsilon} \leq
  \frac{K}{\epsilon} L^{1+\epsilon}\,,
  \]
  hence by formula~\eqref{eq:sol_expansion}, by the above estimates
  and by the triangle inequality, there exists a constant $K_{\alpha,
    \beta}^{(+)} >0$ such that

  \begin{equation}
    \label{eq:est_+}
    \begin{aligned}
      \|\hat V^\beta& \hat X^\alpha (e^{i \xi L/2} \hat
      g)\|_{L^2_\nu(\R^+_L)} \leq \frac{2}{L} \| \hat V^\beta \hat
      X^\alpha \hat G^U_\mu(f) \|_{L^2_\nu(\R^+)} + \frac{K_{\alpha,
          \beta}^{(+)}}{\epsilon}\, L^{\epsilon}
      \\
      & \times (1+ L^{\beta}) (1+\vert \nu \vert)^{\beta} \sum_{i+j+k
        \leq \alpha+\beta+1 } (1+ \vert \nu \vert)^i \| \hat V^j \hat
      X^k \vert \hat U\vert^\epsilon f \|_{L^2_\nu(\R^+)} \,.
    \end{aligned}
  \end{equation}
  The argument is analogous for $\xi \leq 0$.  In this case, write
  \[
  \begin{aligned}
    i e^{i \xi L/2} \hat g(\xi) & = \frac{2}{L} \frac{\hat
      f(\xi)}{\xi} + \frac{4}{L}\sum_{n \geq 1}
    (-1)^{n+1}\left(\frac{\vert \xi\vert^{1-\epsilon}}{\xi - 2\pi
        n/L}\right) \left(\frac{ \vert \xi\vert^{\epsilon}\hat
        f(\xi)}{\xi + 2\pi n/L}\right) \,,
  \end{aligned}
  \]
  and proceed as before.  We conclude that on $\R^-_L: =\{ \xi \vert
  \xi \leq -\pi/L\}$ we have
  \begin{equation}
    \label{eq:est_-}
    \begin{aligned}
      \|\hat V^\beta& \hat X^\alpha (e^{i \xi L/2} \hat
      g)\|_{L^2_\nu(\R^-_L)} \leq \frac{2}{L} \| \hat V^\beta \hat
      X^\alpha \hat G^U_\mu(f) \|_{L^2_\nu(\R^-)} + \frac{K_{\alpha,
          \beta}^{(-)}}{\epsilon}\, L^{\epsilon}
      \\
      & \times (1+ L^{\beta}) (1+\vert \nu \vert)^{\beta} \sum_{i+j+k
        \leq \alpha+\beta+1 } \vert 1- \nu\vert^i \| \hat V^j \hat X^k
      \vert \hat U\vert^\epsilon f \|_{L^2_\nu(\R^-)} \,.
    \end{aligned}
  \end{equation}

  For $r \in \N$, and let $s \in \N$ be an even integer.  For $\alpha
  + \beta \leq s$, it follows that is a constant $C_{r, s}^{(6)} > 0$
  such that
  \[
  \begin{aligned}
    |g \circ h_{L/2}|_{r, s} & \leq C_{r, s}^{(6)} (1 + \vert\nu\vert)^r [ \frac{1}{L} | G_\mu^U(f)|_{0, s}  \\
    & + L^{\epsilon} (1+ L^{\beta}) (1 + \vert\nu\vert)^s \sum_{k = 0}^{s + 1} \vert 1 - \nu\vert^k \vert \vert U\vert^\epsilon f\vert_{0, s + 1 - k} ]  \\
    & \leq C_{r, s}^{(6)} \left( \frac{1}{L} | G_\mu^U(f)|_{r, s} +
      L^{\epsilon} (1+ L^{\beta}) \vert \vert U\vert^\epsilon
      f\vert_{r + s, s + 1}\right) \,.
  \end{aligned}
  \]
  This concludes the proof of the first estimate of the theorem, for
  $r \in \N$ and $s \in 2 \N$.  The estimate for arbitrary real $r
  \geq 0$ and $s \geq 0$ follows by interpolation, which concludes the
  estimate in the case $a = 0$.

  Now let $a \in \N$ be an even integer.  As $(I - U^2)^{a/2}$
  commutes with the horocycle map $h_L$, the function $(I - U^2)^{a/2}
  g$ is a solution to the cohomological
  equation~\eqref{equa:coeqn-map} with coboundary $(I - U^2)^{a/2} f$.
  Hence, the above argument applies, and the first estimate holds for
  all even $a \in \N$. It then holds for all real exponents $a\geq 0$
  by interpolation.  The second estimate follows from the first.
\end{proof}

The case of irreducible unitary representations of the discrete series
is completely analogous.  We have
\begin{theorem}\label{theo:coeqn-map-discrete}
  For all $s \geq 0$ there is a constant $C_{s} > 0$ such that the
  following holds.  For all irreducible, unitary representations
  $H_\mu$ of the mock discrete series or discrete series, for all
  $\epsilon \in (0,1)$, for all $r, a \geq 0$ and for all $f \in
  H_\mu^\infty \cap \text{Ann}^{L}(\Gamma)$, there is a unique
  solution $g \in H_\mu^\infty$ of the cohomological equation
  \eqref{equa:coeqn-map} for the time-$L$ horocycle map $h_L$ and we
  have the following estimates:
  \[
  \Vert g \circ h_{L/2}\Vert_{r, s, a} \leq C_{s} \left(
    \frac{1+L^{2s}}{L} \Vert G^U_\mu(f)\Vert_{r,s, a}+
    \frac{L^\epsilon (1+ L^s)}{\epsilon} \Vert f \Vert_{r + 3s, s + 1,
      a + \epsilon}\right).
  \]
  and, with respect to full Sobolev norms, we have
  \[
  \Vert g \circ h_{L/2} \Vert_s \leq C_{s} \left( \frac{1+L^{2s}}{L}
    \Vert G^U_\mu(f)\Vert_s + \frac{L^\epsilon (1+ L^s)}{\epsilon}
    \Vert f \Vert_{4s+1+\epsilon}\right)\,.
  \]
\end{theorem}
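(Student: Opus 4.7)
The plan is to mirror the proof of Theorem~\ref{theo:coeqn-map-principal} step by step, replacing the principal-series ingredients by their discrete-series counterparts established in Section~3.3. Taking the Fourier transform of both sides of the cohomological equation~\eqref{equa:coeqn-map} we again obtain
\[
\hat g(\xi) = \frac{\hat f(\xi)}{e^{iL\xi}-1}\,,
\]
but now, by Lemma~\ref{lemma:Fourier_Cauchy}, $\hat f$ and $\hat g$ are supported on $\R^+$, so the analysis reduces to the single half-line. Moreover, by the inversion formula of Lemma~\ref{lemma:inversion}, the formulas \eqref{equa:X,V-discrete} for $\hat X$ and $\hat V$ are identical to those in the principal series, and the inner product on $\widehat H_\mu$ is the $L^2(\R^+,d\xi/\xi^\nu)$ inner product.

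I would split $\R^+$ into the bounded interval $I_L=(0,\pi/L]$ and the half-line $\R^+_L=[\pi/L,\infty)$. On $I_L$ I would write, exactly as in the principal case,
\[
e^{i\xi L/2}\hat g(\xi) \;=\; \frac{1}{L}\,\phi(L\xi)\,\hat G^U_\mu(\hat f)(\xi)\,,
\]
with $\phi(\eta)=e^{i\eta/2}i\eta/(e^{i\eta}-1)$ smooth on $[-\pi,\pi]$, and apply the Leibniz-type formula~\eqref{equa:VXproduct} together with uniform bounds on $\phi^{(\ell)}_{i,m}$ to obtain an analog of~\eqref{equa:transfer-map-small-xi}. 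On $\R^+_L$ I would use the same partial fraction expansion of $1/\sin(\xi L/2)$ as in~\eqref{eq:sol_expansion} and decompose $ie^{i\xi L/2}\hat g$ into $(2/L)\hat f/\xi$ plus a sum over $n\geq 1$ of the terms $(\xi^{1-\epsilon}/(\xi+2\pi n/L))(\xi^\epsilon \hat f/(\xi-2\pi n/L))$, applying~\eqref{eq:VXproduct3} together with the bounds~\eqref{eq:phi_1}--\eqref{eq:phi_2} for the auxiliary functions $\phi^{(\ell),\epsilon}_{i,m,n}$. The crucial input at this stage is Theorem~\ref{theo:cohomology-discrete} in place of Theorem~\ref{theo:cohomology-principal}, applied to the twisted cohomological equation satisfied by $\xi^\epsilon \hat f/(\xi-2\pi n/L)$; this replaces the factor $\vert f\vert_{r+s,s+1}$ in \eqref{eq:est_+} by $\vert f\vert_{r+3s,s+1}$, which is precisely the discrepancy reflected in the statement.

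Summing the resulting geometric series in $n$ via $\sum_{n\geq 1}(L/n)^{1+\epsilon}\leq K\epsilon^{-1}L^{1+\epsilon}$, I would collect $L^2(\R^+,d\xi/\xi^\nu)$ estimates on $I_L$ and on $\R^+_L$ and assemble them, as in the principal case, into bounds for $\vert g\circ h_{L/2}\vert_{r,s}$ for $r\in\N$ and $s\in 2\N$. The anisotropic $a$-exponent would then be incorporated exactly as in Theorem~\ref{theo:coeqn-map-principal}: since $(I-U^2)^{a/2}$ commutes with $h_L$, the function $(I-U^2)^{a/2}g$ solves the cohomological equation with coboundary $(I-U^2)^{a/2}f$, so the same estimate applies, and general real $r,s,a\geq 0$ follow by interpolation. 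Uniqueness of the solution in $H_\mu^\infty$ is immediate, because $\hat g$ is determined almost everywhere on $\R^+$ by the formula above. Finally, the full Sobolev bound follows from the foliated one exactly as in the proof of Theorem~\ref{theo:cohomological_eqn}, by writing $\Vert g\Vert_s\leq \sum_{j=0}^s|U^jg|_{0,s-j}$ and using that $U$ commutes with $h_L$.

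The main obstacle, and the source of the worse exponent $r+3s$, is the tracking of the factors $(1+|\nu|)^{3\beta}$ that appear in the discrete-series scaling estimate of Lemma~\ref{lemma:discrete-I_lambda_scaled}; these extra powers of $|\nu|$ arise from the intertwining conjugation $\mathcal A$ of~\eqref{equa:eliminate-nu} used to transfer the analysis near the singular point $\xi=-\lambda$ to the principal-series setting, and one must verify that absorbing them into the Casimir factor $(1+\mu^2)^{r/4}$ only degrades the $X$-Sobolev index by an additional $2s$, exactly as in the passage from Theorem~\ref{theo:cohomology-principal} to Theorem~\ref{theo:cohomology-discrete}. All other steps—bounding $\phi^{(\ell)}_{i,m}$ and $\phi^{(\ell),\epsilon}_{i,m,n}$, interpolation, and handling the anisotropic exponent $a$—transfer verbatim from the principal series.
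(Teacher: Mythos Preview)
Your proposal is correct and follows essentially the same approach as the paper, which also proves the theorem by mirroring the proof of Theorem~\ref{theo:coeqn-map-principal} with the Fourier transform supported on $\R^+$ and with Theorem~\ref{theo:cohomology-discrete} in place of Theorem~\ref{theo:cohomology-principal}. The paper's proof is in fact only a few sentences stating exactly this substitution and noting that it accounts for the worse loss of derivatives ($r+3s$ in place of $r+s$); your write-up simply makes those implicit steps explicit.
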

\begin{proof}
  The argument follows the proof of
  Theorem~\ref{theo:coeqn-map-principal}.  This time the Fourier
  transform is defined on $\R^+$, and the estimates for the twisted
  cohomological equation are derived from
  Theorem~\ref{theo:cohomology-discrete} instead of
  Theorem~\ref{theo:cohomology-principal}.  This accounts for the
  worse loss of derivatives we have for solutions of the cohomological
  equation in the discrete series compared to the principal and
  complementary series (compare the statements of
  Theorem~\ref{theo:coeqn-map-principal} and
  Theorem~\ref{theo:coeqn-map-discrete}).
\end{proof}

\subsection{Effective equidistribution}

We recall that $H_\mu$ denotes an irreducible, unitary representation
space of $L^2(M)$ with Casimir parameter $\mu\in \text{spec}(\Box)$.
For each $k \in \Z / \{0\}$ and $L > 0$, let $\mathcal D_{\mu, k, L}$
be the distribution defined on smooth functions by
\begin{equation}\label{equa:dist-invar-map-twist}
  \mathcal D_{\mu, L, k} (f) = D_{k, \mu}^{2\pi/L}(e_k \otimes f)\,.
\end{equation} 
It follows from Lemma~\ref{lemma:regularity-D_lambda-XV-principal} and
Lemma~\ref{lemma:D^lambda-reg-discrete} that $\mathcal D_{\mu, L, k}
\in \widehat H_\mu^{-(1/2 +)}.$ By Theorem~1.1 of \cite{FF1},
$H_\mu^{-\infty}$ contains a normalized basis of at most two invariant
distributions for the horocycle flow, which can be taken to be
generalized eigendistributions for the geodesic flow.  Of course,
these basis elements are also invariant under the map $h_{L}$, and we
denote them by $\mathcal D_{\mu}^{+}$ and $\mathcal D_{\mu}^{-}$.

Observe that for any $(x,N) \in M\times \N$, the ergodic sum
$\frac{1}{N} \sum_{n = 0}^{N - 1} (h_{nL}(x))^* $ is a measure.  Then
for any $s > 1$, $r \geq 0$, $a> 1/2$, for any $L>0$ and for any
$(x,N) \in M\times \N$, there exist a horocycle flow-invariant
distribution $\mathcal D^0_{x, N, L, r, s, a}$ an $h_L$-invariant
distribution ${\mathcal D}^{\text{twist}}_{x, N, L, r, s, a}$ and a
remainder distribution $\mathcal{R}_{x, N, L, r, s, a}$ such that the
following (orthogonal) decomposition holds in the Sobolev space
$W^{-r, -s, -a}(M)$:
\begin{equation}\label{equa:asymptotic-formula}
  \sum_{n = 0}^{N - 1} (h_{n L}(x))^* =  (\mathcal D^0_{x, N, L, r, s, a} + \mathcal D^{\text{twist}}_{x, N, L, r, s, a}) + \mathcal{R}_{x, N, L, r, s, a} \,.
\end{equation}
In addition, since each $\mathcal D_{\mu, L, k} \in \widehat
H_\mu^{-(1/2+)}$, the following decompositions hold: there are a
sequence of complex numbers $\left\{c_{\mathcal D_{\mu, k, L}}(x, N,
  L, r, s, a)\right\}_{\mu \in \text{spec}(\Box)}$ such that
\[
\mathcal D^{\text{twist}}_{x, N, L, r, s, a}:= \bigoplus_{\mu \in
  \text{spec}(\Box)} \sum_{k \in \Z - \{0\}}c_{\mathcal D_{\mu, k,
    L}}(x, N, L, r, s, a) \mathcal D_{\mu, k, L}\,,
\]
and a sequence of complex numbers $\left\{c_{\mathcal D_{\mu,
      L}^\pm}(x, N, L, r, s, a)\right\}_{\mu \in \text{spec}(\Box)}$
such that
\[
\mathcal D^0_{x, N, L, r, s, a} := \bigoplus_{\mu \in \sigma_{pp}}
c^s_{\mathcal D_\mu^{+}}(x, N, L, r, s, a) \mathcal D_{\mu, L}^{+} +
c^s_{\mathcal D_\mu^{-}}(x, N, L, r, s, a) \mathcal D_{\mu, L}^{-}\,.
\]
Lemma~7.6 of \cite{T} and \eqref{equa:sobolev-inclusions_map} show
that for sufficiently large $r, s$ and $a\geq 0$, the distribution
$\mathcal D^{\text{twist}}_{x, N, L, r, s, a}$ is given by a
convergent series in each irreducible Sobolev subspace of $W^{-r, -s,
  -a}(M)$.

In what follows, we estimate the distributions in the decomposition
\eqref{equa:asymptotic-formula}.  The first step is to derive Sobolev
estimates for $\mathcal{R}_{x, N, L, r, s, a}$ from estimates for the
solutions of the cohomological equation of horocycle maps proved
above.

Recall that for all $(x,L, N)\in M\times \R^+ \times \N$, we set
\begin{equation}
  \label{eq:D_const}
  D_\Gamma(x,L,N):= e^{d_M(h_{-L/2}(x))} +e^{ d_M (h_{L(N - 1/2)}(x))}\,.
\end{equation}
\begin{lemma}\label{lemma:remainder-map}
  For any $r>3$, $s>2$, $a>2$ and $\epsilon \in (0, 1)$, there is a
  constant $C_{r,s, a, \epsilon} := C_{r,s, a, \epsilon}(\Gamma) > 0$
  such that
  \[
  \Vert \mathcal{R}_{x, N, L, r, s, a} \Vert_{-r, -s, -a} \leq C_{r,s,
    a, \epsilon} D_\Gamma(x,L,N) \frac{1+L^{2+\epsilon}}{L} \,.
  \]
\end{lemma}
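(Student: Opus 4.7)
The plan is to establish the remainder bound via duality, reducing to a pointwise estimate coming from a telescoping of the ergodic sum via the cohomological equation for the horocycle map.

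First, by duality, $\Vert \mathcal R_{x,N,L,r,s,a}\Vert_{-r,-s,-a}$ is the supremum of $|\mathcal R_{x,N,L,r,s,a}(f)|$ over $f\in W^{r,s,a}(M)$ with $\Vert f\Vert_{r,s,a}\leq 1$. By the orthogonal decomposition \eqref{equa:asymptotic-formula} we have
\[
\mathcal R_{x,N,L,r,s,a}(f) = \sum_{n=0}^{N-1} f^*\circ h_{nL}(x)\,,
\]
where $f^*$ denotes the orthogonal projection in $W^{r,s,a}(M)$ onto the annihilator $\mathrm{Ann}^{r,s,a,L}(\Gamma)$ of the $h_L$-invariant distributions. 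Since $\mathcal I^0(\Gamma)\subset \mathcal I^L(\Gamma)$, we have $f^*\in\mathrm{Ann}^L(\Gamma)\subset \mathrm{Ann}^0(\Gamma)$, so that $G^U_\mu(f^*)$ is well defined in every irreducible unitary sub-representation.

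Second, by Theorem~\ref{theo:coeqn-map-principal} in the principal/complementary series and Theorem~\ref{theo:coeqn-map-discrete} in the (mock) discrete series, there is a unique solution $g$ of $f^* = g\circ h_L -g$, and, by orthogonal summation over irreducible components, Sobolev estimates of the form
\[
\Vert g \circ h_{L/2}\Vert_{r',s',a'} \leq C_{r',s',\epsilon}\!\left(\tfrac{1+L^{2s'}}{L}\Vert G^U_\mu(f^*)\Vert_{r',s',a'} + \tfrac{L^\epsilon(1+L^{s'})}{\epsilon}\Vert f^*\Vert_{r'+3s',\,s'+1,\,a'+\epsilon}\right)
\]
hold for appropriate Sobolev exponents. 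Telescoping yields
\[
\mathcal R_{x,N,L,r,s,a}(f) = (g\circ h_{L/2})(h_{L(N-1/2)}(x)) - (g\circ h_{L/2})(h_{-L/2}(x))\,,
\]
which is precisely why the two reference points appearing in $D_\Gamma(x,L,N)$ are $h_{-L/2}(x)$ and $h_{L(N-1/2)}(x)$.

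Third, I would apply a pointwise Sobolev embedding on $M$ of the form $|F(y)|\leq C e^{d_M(y)}\Vert F\Vert_{r'',s'',a''}$, valid once the Sobolev exponents are above the critical thresholds (this is why the hypotheses $r>3$, $s>2$, $a>2$ appear), to get
\[
|\mathcal R_{x,N,L,r,s,a}(f)|\leq C\, D_\Gamma(x,L,N)\,\Vert g\circ h_{L/2}\Vert_{r'',s'',a''}\,.
\]
Combined with the cohomological estimate above this produces the factor $D_\Gamma(x,L,N)$. To finish, one bounds $\Vert G^U_\mu(f^*)\Vert_{r',s',a'}$ by a Sobolev norm of $f^*$, using that in each irreducible component the Green operator acts in Fourier transform as multiplication by $1/(i\xi)$ and that the Fourier transform of $f^*$ vanishes at $\xi=0$ in the sense appropriate to the unitary type (arguments parallel to those in Section~\ref{sect:inv-dist}); this gives $\Vert G^U_\mu(f^*)\Vert_{r',s',a'}\leq C \Vert f^*\Vert_{r',s',a'+1}$. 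Choosing $s'$, $s''$ slightly above $1/2$ and their respective critical embedding thresholds, and $\epsilon$ small, the sum of exponents fits into $\Vert f\Vert_{r,s,a}$ and the $L$-dependence collapses to $(1+L^{2+\epsilon})/L$.

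The main obstacle will be bookkeeping the Sobolev exponents and $L$-dependence simultaneously. The subtlety is that composition by $h_{L/2}$ commutes with $U$ but not with the transverse generators $X,V$: the commutation relations $[X,U]=2U$ and $[U,V]=X$ produce, upon differentiating $g\circ h_{L/2}$, additional polynomial factors in $L$ (for instance $X(g\circ h_{L/2}) = (Xg)\circ h_{L/2} + L(Ug)\circ h_{L/2}$, and iterated $V$-derivatives involve higher powers of $L$). These extra $L$-factors must be tracked along with the explicit $L$-dependence in Theorems~\ref{theo:coeqn-map-principal}--\ref{theo:coeqn-map-discrete} and balanced against the loss coming from the pointwise trace bound, so as not to exceed the $L^{2+\epsilon}/L$ growth asserted in the statement.
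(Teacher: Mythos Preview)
Your strategy---telescope the ergodic sum via the map cohomological equation, bound the two endpoint values of $g\circ h_{L/2}$ pointwise, then control the Green term---is exactly the paper's. Two technical points need correction.

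First, your Green-operator bound $\Vert G^U_\mu(f^*)\Vert_{r',s',a'}\leq C\Vert f^*\Vert_{r',s',a'+1}$ does not follow from the vanishing of $\hat f^*$ at $\xi=0$: the multiplier $1/(i\xi)$ is not dominated by $(1+\xi^2)^{1/2}$ near the origin, and annihilating the flow-invariant distributions gives only distributional vanishing there, not enough pointwise decay of $\hat f^*$. The paper instead invokes Theorem~1.2 of \cite{FF1}, which yields $\Vert G^U_\mu(f)\Vert_\sigma\leq C_{\sigma,\epsilon'}\Vert f\Vert_{\sigma+1+\epsilon'}$ in the \emph{full} Sobolev scale; the loss is transversal, not confined to the $U$-index. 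This, together with the trace threshold $\sigma>1$, is what forces $r>3\sigma$ and $s,a>\sigma+1+\epsilon'$, hence $r>3$, $s>2$, $a>2$.

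Second, your concern about $L$-factors from commuting $X,V$ past $h_{L/2}$ is misplaced: Theorems~\ref{theo:coeqn-map-principal} and~\ref{theo:coeqn-map-discrete} already deliver $\Vert g\circ h_{L/2}\Vert_{r',s',a'}$ with all $L$-dependence explicit, so no further commutation is needed. For the pointwise step the paper uses not a generic Sobolev embedding but the orbital trace bound of Theorem~\ref{theo:Sobolev-trace} combined with Lemma~\ref{lemma:fund_box}, which produces $\Vert Ug\circ h_{L/2}\Vert_{0,\sigma,0}+\Vert g\circ h_{L/2}\Vert_{0,\sigma,0}$ on the right with the factor $D_\Gamma(x,L,N)$. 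The extra $U$-derivative term is then handled by the key identity: $Ug$ solves the map equation with coboundary $Uf$, and $G^U_\mu(Uf)=f$ by uniqueness, so for that piece the Green operator disappears and no further regularity or $L$-loss occurs. This identity is precisely what pins the $L$-exponent at $2+\epsilon$.
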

\begin{proof}
  Let $H_\mu$ be an irreducible, unitary representation of $\SL(2,
  \R)$.  By Theorem~\ref{theo:coeqn-map-principal} and
  Theorem~\ref{theo:coeqn-map-discrete}, for any function $f \in
  Ann^{L}(\Gamma) \cap H^\infty_\mu$ there is a unique solution $g \in
  H^\infty_\mu$ of the cohomological equation~\eqref{equa:coeqn-map}
  for the time-$L$ horocycle map, and for any $\sigma\geq 0$ the
  function $g$ satisfies the estimate
  \begin{equation}
    \label{equa:apply-coeqn-map_1}
    \begin{aligned}
      \Vert g \circ h_{L/2}\Vert_{0, \sigma, 0} \leq &C_{\sigma}
      \frac{1+L^{2\sigma}}{L} \Vert G^U_\mu(f)\Vert_{0,\sigma, 0} \\ +
      &C_{\sigma} \frac{L^\epsilon (1+ L^\sigma)}{\epsilon} \Vert f
      \Vert_{3\sigma, \sigma + 1, \epsilon}\,.
    \end{aligned}
  \end{equation}
  Because $f\in H^\infty_\mu$ is a smooth coboundary, we get as in
  Lemma~\ref{lemma:Remainder} that
  \[
  \begin{aligned}\label{equa:remarinder-est-map}
    \vert \mathcal R_{x, N, r, s, a} (f)\vert & \leq \vert g\circ h_{NL}(x)\vert + \vert g(x)\vert \notag \\
    & \leq \vert g\circ h_{L/2} (h_{L(N - 1/2)}(x))\vert + \vert
    g\circ h_{L/2} (h_{-L/2}(x))\vert \,.
  \end{aligned}
  \]
  By Theorem~\ref{theo:Sobolev-trace} and by
  Lemma~\ref{lemma:fund_box}, for all $\sigma>1$ there exists a
  constant $C_{\sigma, \Gamma}>0$ such that
  \begin{equation}\label{equa:map-trace}
    \vert \mathcal R_{x, N, r, s, a} (f)\vert \leq C_{\sigma, \Gamma} D_\Gamma(x,L,N)  (\Vert U g \circ h_{L/2} \Vert_{0, \sigma, 0} + \Vert g \circ h_{L/2} \Vert_{0, \sigma, 0})
  \end{equation}
  Notice that $U g$ is a solution to the cohomological equation
  \eqref{equa:coeqn-map} for the time-$L$ horocycle map with
  coboundary $U f$, hence by Theorem~\ref{theo:coeqn-map-principal}
  and Theorem~\ref{theo:coeqn-map-discrete}, we get a constant
  $C'_{\sigma} > 0$ such that for all $\epsilon \in (0,1)$,
  \begin{equation}
    \label{equa:apply-coeqn-map_2}
    \begin{aligned}
      \Vert U g \circ h_{L/2} \Vert_{0, \sigma, 0} \leq &C'_\sigma
      \frac{1+L^{2\sigma}}{L} \Vert G^U_\mu(U f)\Vert_{0,\sigma, 0} \\
      + 2&C'_\sigma \frac{L^{\epsilon/2} (1+ L^{\sigma})}{\epsilon}
      \Vert f \Vert_{ 3\sigma, \sigma + 1, 1 + \epsilon/2}\,.
    \end{aligned}
  \end{equation}
  By definition, $U G^U_\mu(U f) = U f$, hence by uniqueness of
  solutions for the cohomological equation of the horocycle flow, (see
  Theorem~1.2 of \cite{FF1} or the proof of
  Theorem~\ref{theo:cohomology-principal}), we have $G^U_\mu(U f) =
  f$.  By the bounds in formulas~\eqref{equa:apply-coeqn-map_1},
  \eqref{equa:map-trace} and~\eqref{equa:apply-coeqn-map_2} it then
  follows that
  \begin{equation}\label{equa:apply-coeqn-map2}
    \begin{aligned}
      \vert\mathcal R_{x, N, r, s, a} (f)\vert &\leq
      C'_{\sigma,\Gamma} D_\Gamma(x,L,N)
      \frac{1+L^{2\sigma}}{L} (\Vert f \Vert_{0,\sigma,0} + \Vert G^U_\mu(f)\Vert_{0,\sigma, 0}) \\
      &+ 2C'_{\sigma, \Gamma} D_\Gamma(x,L,N) \frac{L^{\epsilon/2} (1+
        L^{\sigma})}{\epsilon} \Vert f \Vert_{ 3\sigma, \sigma + 1,
        1+\epsilon/2} \,.
    \end{aligned}
  \end{equation}

  Now observe that Theorem~1.2 of \cite{FF1} shows that for all
  $\epsilon' > 0$, there is a constant $C_{{\sigma}, \epsilon'} :=
  C_{{\sigma}, \epsilon'}(\Gamma) \geq 1$ such that
  \[
  \Vert G_\mu^U(f) \Vert_{0, \sigma, 0} \leq \Vert G_\mu^U(f)
  \Vert_\sigma \leq C_{\sigma, \epsilon'} \Vert f \Vert_{{\sigma} +1 +
    \epsilon'} \leq C_{\sigma, \epsilon'} \Vert f \Vert_{0, {\sigma}
    +1 + \epsilon', {\sigma} +1 + \epsilon'} \,.
  \]
  Taking $\sigma$, $\epsilon, \epsilon'$ so that $r >3\sigma$, $s$, $a
  > \sigma+1 +\epsilon'$ and $2\sigma < 2 +\epsilon$, we get that
  \[
  \vert\mathcal R_{x, N, r, s, a} (f)\vert \leq C_{r,s,a, \epsilon}
  D_\Gamma(x,L,N) \frac{1+L^{2+\epsilon}}{L} \Vert f \Vert_{ r, s, a}
  \,.
  \]

  This proves Lemma~\ref{lemma:remainder-map} for coboundaries in each
  irreducible, unitary representation of $\SL(2, \R)$, and the general
  statement now follows by orthogonality of the decomposition of
  $W^{r,s,a}(M)$ into irreducible components and of every function
  $f\in W^{r,s,a}(M)$ into a coboundary component and an orthogonal
  component.
\end{proof}

For each $k \in \Z / \{0\}$, let
\[
\mathcal D_{L, k} := \bigoplus_{\mu \in \text{spec}(\Box)} c_{\mathcal
  D_{\mu, L, k}}(x, N, L, r, s, a) \mathcal D_{\mu, L, k}\,.
\]
By applying $\frac{1}{L}\int_0^L e^{2\pi i \frac{k}{L} t} (h_t(x))^*
dt$ or $\frac{1}{L} \int_0^L (h_t(x))^* dt$ to the ergodic sum, Lemma
7.2 of \cite{T} and the definition of $\mathcal D_{L, k}$ and
$\mathcal D^0$ give

\begin{lemma}\label{lemma:isolate-dist}
  For all $(x, N, L) \in M \times \Z^+ \times \R^+$ and $k \in
  \Z\setminus \{0\}$, we have the following distributional identities
  in $\mathcal E'(M)$:
  \[
  \begin{aligned}
    \mathcal{D}_{L, k}  & = \frac{1}{L} \int_0^{N L} e^{2\pi i t k / L} (h_t(x))^* dt  - \frac{1}{L} \int_0^L e^{2\pi i t k / L} h_{-t} \mathcal{R}_{x, N, L, r, s, a} dt \,; \notag \\
    \mathcal{D}^0 & = \frac{1}{L} \int_0^{N L} (h_t(x))^* dt -
    \frac{1}{L}\int_0^L h_{-t} \mathcal{R}_{x, N, L, r, s, a} dt \,.
  \end{aligned}
  \]
\end{lemma}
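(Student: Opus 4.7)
The plan is to apply to the distributional identity
\eqref{equa:asymptotic-formula} the averaging operator
\[
\mathcal O_k := \frac{1}{L}\int_0^L e^{2\pi i t k/L}\, h_{-t}(\cdot)\, dt, \qquad k\in\Z,
\]
acting on $W^{-r,-s,-a}(M)$, and to use Fourier orthogonality together
with the twisted scaling of the distributions $\mathcal D_{\mu,L,k}$
under the horocycle flow to single out, on the right-hand side, exactly
$\mathcal D_{L,k}$ (resp.\ $\mathcal D^0$).

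First I would record the key twist identity: for $f\in H_\mu^\infty$ and
$s\in\R$,
\[
\mathcal D_{\mu,L,k}(f\circ h_s)=e^{-2\pi i k s/L}\,\mathcal D_{\mu,L,k}(f).
\]
This comes from
\eqref{equa:dist-invar-map-twist}: using $(U+\tfrac{2\pi}{L}K)D^{2\pi/L}_{k,\mu}=0$
and $K(f\otimes e_k)=ik(f\otimes e_k)$, one finds
\[
\mathcal D_{\mu,L,k}(Uf) \;=\; -\tfrac{2\pi i k}{L}\,\mathcal D_{\mu,L,k}(f),
\]
and the stated twist follows by integrating the resulting ODE in $s$.
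Dually this says $h_{-t}\mathcal D_{\mu,L,k}=e^{-2\pi i k t/L}\mathcal D_{\mu,L,k}$.
Consequently, summing over $\mu$,
\[
\mathcal O_k\,\mathcal D_{L,k'} \;=\;
\Bigl(\tfrac{1}{L}\int_0^L e^{2\pi i t(k-k')/L}\,dt\Bigr)\mathcal D_{L,k'}
\;=\;\delta_{k,k'}\,\mathcal D_{L,k'},
\]
for all $k,k'\in\Z\setminus\{0\}$, and also $\mathcal O_k\mathcal D^0=\delta_{k,0}\mathcal D^0$
since $h_{-t}\mathcal D^0=\mathcal D^0$.

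Second, I would compute $\mathcal O_k$ on the left-hand side of
\eqref{equa:asymptotic-formula}. Since $h_{-t}(h_{nL}(x))^{*}=(h_{nL+t}(x))^{*}$,
the change of variable $s=nL+t$ and telescoping of the intervals
$[nL,(n+1)L]$ for $n=0,\dots,N-1$ give
\[
\mathcal O_k\!\sum_{n=0}^{N-1}(h_{nL}(x))^{*}
\;=\;\frac{1}{L}\int_0^{NL} e^{2\pi i s k/L}\,(h_s(x))^{*}\,ds,
\]
where the phase $e^{2\pi i nk}=1$ absorbs the shift in the exponential.
Putting the two calculations together and applying $\mathcal O_k$ to
\eqref{equa:asymptotic-formula}: for $k\neq 0$, only the $\mathcal D_{L,k}$
component of $\mathcal D^{\text{twist}}$ survives, yielding
\[
\mathcal D_{L,k}
\;=\;\frac{1}{L}\int_0^{NL} e^{2\pi i t k/L}(h_t(x))^{*}\,dt
\;-\;\frac{1}{L}\int_0^L e^{2\pi i t k/L}\,h_{-t}\mathcal R_{x,N,L,r,s,a}\,dt;
\]
for $k=0$, only the horocycle-invariant piece $\mathcal D^0$ survives, giving
\[
\mathcal D^0
\;=\;\frac{1}{L}\int_0^{NL}(h_t(x))^{*}\,dt
\;-\;\frac{1}{L}\int_0^L h_{-t}\mathcal R_{x,N,L,r,s,a}\,dt.
\]
The only delicate point—and really the heart of the matter—is to justify
that $\mathcal O_k$ makes sense termwise on the direct sum/integral
expansion of $\mathcal D^{\text{twist}}$ in $W^{-r,-s,-a}(M)$; this follows
from the convergence statement for the series defining
$\mathcal D^{\text{twist}}$ already cited (Lemma 7.6 of~\cite{T})
together with the continuity of the bounded operator $\mathcal O_k$ on
$W^{-r,-s,-a}(M)$. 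All other steps are Fourier orthogonality
and a change of variable.
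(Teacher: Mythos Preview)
Your proposal is correct and follows essentially the same approach as the paper, which merely sketches the argument by saying one applies the averaging operators $\frac{1}{L}\int_0^L e^{2\pi i kt/L}(h_t(x))^*\,dt$ (resp.\ $\frac{1}{L}\int_0^L (h_t(x))^*\,dt$) to the ergodic sum and cites Lemma~7.2 of \cite{T}. Your write-up supplies precisely the details behind that sentence: the twist identity $h_{-t}\mathcal D_{\mu,L,k'}=e^{-2\pi i k't/L}\mathcal D_{\mu,L,k'}$, Fourier orthogonality on $[0,L]$, and the telescoping change of variable on the Dirac side.
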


As noted, Lemma~7.6 of \cite{T} and
\eqref{equa:sobolev-inclusions_map} show that for sufficiently large
$r, s$ and $a$, the distribution $ \mathcal D^{\text{twist}}_{x, N, L,
  r, s, a} $ is given by a convergent series in each irreducible
component of $W^{-r, -s, -a}(M)$.  For such $(r, s, a)$, we have
\begin{equation}
  \label{equa:D^twist-D_{L, k}}
  \mathcal D^{\text{twist}}_{x, N,L, r, s, a} = \sum_{k \in \Z / \{0\}} \mathcal D_{L, k}\,.
\end{equation}

The distribution $\mathcal{R}_{x, N, L, r, s, a}$ is controlled by
Lemma~\ref{lemma:remainder-map}.  We now estimate the integral of
$\mathcal{R}_{x, N, L, r, s, a}$ along the horocycle flow.
\begin{lemma}\label{lemm:integral-remainder}
  For all $r>3$, $s>2$, $a > 2$ and $\epsilon\in (0,1)$ there exists a
  constant $C'_{r,s, a,\epsilon} > 0$ such that
  \[
  \begin{aligned}
    \frac{1}{L} \int_0^L | \mathcal R_{x, N, L, r, s, a}(f \circ h_t)|
    dt & \leq
    C_{r,s, a,\epsilon}' D_\Gamma(x,L,N)\\
    &\times \frac{1+ L^{2+\epsilon}}{L} (1+ L^{2s}) \Vert f \Vert_{r,
      s, s+a}\,.
  \end{aligned}
  \]
\end{lemma}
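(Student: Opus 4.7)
\textbf{Proof plan for Lemma~\ref{lemm:integral-remainder}.}

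The plan is to reduce the integral bound to a pointwise estimate in $t$, applying the duality of anisotropic Sobolev spaces together with the bound on the remainder from Lemma~\ref{lemma:remainder-map}; the only real work is then to estimate the Sobolev norm of $f\circ h_t$ in terms of a Sobolev norm of $f$ with a controlled loss. More precisely, for each $t\in[0,L]$ one writes
\[
|\mathcal{R}_{x,N,L,r,s,a}(f\circ h_t)|\;\leq\;\Vert \mathcal{R}_{x,N,L,r,s,a}\Vert_{-r,-s,-a}\;\Vert f\circ h_t\Vert_{r,s,a},
\]
and Lemma~\ref{lemma:remainder-map} gives the factor $C\,D_\Gamma(x,L,N)(1+L^{2+\epsilon})/L$. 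Since the subsequent bound on $\Vert f\circ h_t\Vert_{r,s,a}$ will be uniform in $t\in[0,L]$, the average $\frac{1}{L}\int_0^L(\cdot)\,dt$ introduces no further loss and the conclusion follows.

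The heart of the argument is the pointwise inequality
\[
\Vert f\circ h_t\Vert_{r,s,a}\;\leq\;C_{r,s,a}\,(1+L^{2s})\,\Vert f\Vert_{r,s,s+a}\qquad\text{for all }t\in[0,L].
\]
To establish it I would use the adjoint action of the horocycle flow on the Lie algebra. Since $\Box$ is $\SL(2,\R)$-invariant and both $U$ and $K$ commute with $h_t$, only the $X$ and $V$ directions give a nontrivial contribution. From the commutation relations $[U,V]=X$, $[X,U]=2U$, $[X,V]=-2V$, one computes
\[
\mathrm{Ad}(\exp(-tU))\,X = X+2tU,\qquad \mathrm{Ad}(\exp(-tU))\,V = V-tX-t^2U,
\]
so that $X(f\circ h_t)=((X+2tU)f)\circ h_t$ and $V(f\circ h_t)=((V-tX-t^2U)f)\circ h_t$. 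Iterating, for any $\alpha,\beta,m\geq 0$ with $\alpha+\beta+m\leq s$, a Poincar\'e--Birkhoff--Witt expansion yields
\[
K^m V^\beta X^\alpha (f\circ h_t)\;=\;\sum_{a+b+c\leq \alpha+\beta} P_{abc}(t)\,(K^m V^a X^b U^c f)\circ h_t,
\]
where each $P_{abc}$ is a polynomial in $t$ of degree at most $\alpha+2\beta\leq 2s$. Combining this with the estimate \eqref{equa:coeqn-fullnorm0} that bounds the foliated Sobolev norm $\vert\cdot\vert_{r,s}$ by a sum of such monomial norms, and using that $U$ commutes with $h_t$ to pull $(I-U^2)^{(s+a)/4}$ through the composition, one absorbs the $U^c$ factors ($c\leq s$) into the extra $U$-regularity provided by $\Vert f\Vert_{r,s,s+a}$ in place of $\Vert f\Vert_{r,s,a}$. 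The resulting uniform polynomial factor $1+t^{2s}\leq 1+L^{2s}$ is precisely the loss claimed.

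The main obstacle is the bookkeeping in this commutator expansion: verifying that the $t$-powers produced by the lower-order PBW correction terms never exceed the leading $\alpha+2\beta$ and that the $U^c$-regularity required for $c\leq s$ is exactly absorbed by the shift $a\to s+a$ (which provides the additional $s$ orders of $U$-smoothness needed to control the worst terms $(t^2U)^\beta(2tU)^\alpha$). Once this is carried out, combining Steps 1--3 and integrating in $t\in[0,L]$ gives the stated bound with constant $C'_{r,s,a,\epsilon}$ depending only on $r,s,a,\epsilon$ and $\Gamma$.
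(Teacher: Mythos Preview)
Your proposal is correct and follows essentially the same route as the paper: apply Lemma~\ref{lemma:remainder-map} pointwise in $t$ to bound $|\mathcal{R}_{x,N,L,r,s,a}(f\circ h_t)|$ by $C\,D_\Gamma(x,L,N)\frac{1+L^{2+\epsilon}}{L}\Vert f\circ h_t\Vert_{r,s,a}$, then control $\Vert f\circ h_t\Vert_{r,s,a}\leq C_s(1+t^{2s})\Vert f\Vert_{r,s,s+a}$ via the adjoint action of $h_t$ on $X$ and $V$, and finally average over $t\in[0,L]$. The paper records exactly these commutation formulas (formula~\eqref{equa:commutation}) and the resulting norm bound, then integrates; your variant of taking a uniform supremum over $t\in[0,L]$ instead of integrating yields the same factor $(1+L^{2s})$. (Incidentally, your coefficients $X+2tU$ and $V-tX-t^2U$ from the $\mathrm{Ad}(\exp(-tU))$ computation are the correct ones; the paper's display~\eqref{equa:commutation} has harmless constant discrepancies that do not affect the degree count.)
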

\begin{proof}
  By Lemma~\ref{lemma:remainder-map}, for all $r>3$, $s>2$, $a>2$ and
  $\epsilon\in (0,1)$ there exists a constant $C_{r,s,a, \epsilon}>0$
  such that, for all $f \in C^\infty(M)$, we have
  \[
  \begin{aligned}
    \frac{1}{L} \int_0^{L} |\mathcal{R}_{x, N, L, r, s, a}(f \circ
    h_t)| dt &\leq C_{r,s, a, \epsilon} D_\Gamma(x,L,N)\\ &\times
    \frac{1+ L^{2+\epsilon}}{L^2} \int_0^{L } \Vert f \circ
    h_t\Vert_{r, s, a} dt \,.
  \end{aligned}
  \]
  Also notice that for any $f \in H_\mu^\infty$,
  \begin{align}\label{equa:commutation}
    U (f \circ h_t) & = (U f) \circ h_t \,, \notag \\
    X ( f\circ h_t) & = [(X + t U) f] \circ h_t\,, \notag \\
    V (f\circ h_t) & = [(V - 2 t X - t^2 U) f] \circ h_t\,.
  \end{align}
  Hence, for all $r, s, a \in \N$, there is a constant $C_{s} > 0$
  such that
  \[
  \Vert f \circ h_t\Vert_{r, s, a} \leq C_{s} (1 + t^{2s}) \Vert
  f\Vert_{r, s, s+a}\,.
  \]
  By interpolation, this estimate holds for all $r, s, a \geq 0$,
  hence the lemma follows.
\end{proof}

Next we estimate the Sobolev norms of the distribution $ \mathcal
D^{\text{twist}}_{x, N, L, r, s, a}$.  For all $(x,L,N)\in
M\times\R^+\times N$, let $C_\Gamma(x,L,N)$ denote the positive
constant defined by the formula
\begin{equation}
  \label{eq:C_Gamma2}
  C_\Gamma(x,L,N):= C_\Gamma(x,NL)+C_\Gamma(h_{NL}(x),NL) \,.
\end{equation}
\begin{lemma}\label{lemma:D-twist}
  Let $s > 2$, $a>2$ and $r> 5s-3$. For all $\epsilon > 0$, there
  exists a constant $C_{r,s,a, \epsilon}^{(2)} >0$ and for $(x,N)\in
  M\times \N$ and $L>0$ there exists a decomposition
  \[
  \mathcal D^{\text{twist}}_{x, N, L, r, s, a} = \widetilde{\mathcal
    D}^{\text{twist}}_{x, N, L, r, s, a, \epsilon} +
  \widetilde{\mathcal R}^{\text{twist}}_{x, N, L, r, s, a, \epsilon}
  \]
  such that the following estimates hold:
  \[
  \begin{aligned}
    \Vert \widetilde{\mathcal D}^{\text{twist}}_{x, N, L, r, s, a,
      \epsilon} \Vert_{-r,-s,-(1+\epsilon)}
    &\leq C_{r,s,a, \epsilon}^{(2)} C_\Gamma(x,L,N) \\
    &\times   (1+ L^{8s+\epsilon}) (NL)^{5/6}  \log^{1/2} (e+ NL)]\,; \\
    \Vert \widetilde{\mathcal R}^{\text{twist}}_{x, N, L, r, s, a,
      \epsilon} \Vert_{-r,-s,-(s+a+1+\epsilon)} &\leq C_{r,s,a,
      \epsilon}^{(2)} D_\Gamma(x,L,N) (1+ L^{2s+2+\epsilon}) \,.
  \end{aligned}
  \]
\end{lemma}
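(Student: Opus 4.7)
The plan is to start from the series representation~\eqref{equa:D^twist-D_{L, k}}, namely
$\mathcal D^{\text{twist}}_{x,N,L,r,s,a} = \sum_{k \in \Z\setminus\{0\}} \mathcal D_{L,k}$, and to expand each term by means of Lemma~\ref{lemma:isolate-dist}. This writes $\mathcal D_{L,k}$ as the difference of two pieces: (i) a twisted horocycle integral of frequency $2\pi k/L$ over $[0,NL]$, divided by $L$; and (ii) an integrated-remainder term of the form $\frac{1}{L}\int_0^L e^{2\pi i tk/L}\, h_{-t}\mathcal R_{x,N,L,r,s,a}\,dt$. Piece (ii) is already controlled pointwise by Lemma~\ref{lemma:remainder-map} and in integrated form by Lemma~\ref{lemm:integral-remainder}, so the core task is to analyze (i).

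For the twisted integral in (i), apply Theorem~\ref{theo:equidistribution} with $\lambda=2\pi/L$, $T=NL$, to test functions of the form $F = f\otimes e_k$ on $M\times\T$. Since $\lambda\cdot NL = 2\pi k N \in 2\pi\Z$, this is precisely the regime where the effective equidistribution decomposes the integral as $(NL)^{5/6}\mathcal D^{r,s}_{\bar x,\lambda,NL}(F) + \mathcal R^{r,s}_{\bar x,\lambda,NL}(F)$, the bounds on both pieces being those of~\eqref{equa:Remainder-inv-dist-twist_flow}. Restricting these distributions to the $k$-th Fourier mode yields a splitting $\mathcal D_{L,k} = \frac{(NL)^{5/6}}{L}\mathcal D^{r,s}_k - \frac{1}{L}(\text{remainders})$. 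I would then set
\[
\widetilde{\mathcal D}^{\text{twist}}_{x,N,L,r,s,a,\epsilon} := \frac{(NL)^{5/6}}{L}\sum_{k\neq 0}\mathcal D^{r,s}_k,
\qquad
\widetilde{\mathcal R}^{\text{twist}}_{x,N,L,r,s,a,\epsilon} := \mathcal D^{\text{twist}} - \widetilde{\mathcal D}^{\text{twist}},
\]
so that the remainder absorbs both $\mathcal R^{r,s}_k/L$ and the integrated remainder from Lemma~\ref{lemma:isolate-dist}.

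Next I would estimate the two pieces in the respective anisotropic Sobolev dual norms. In every irreducible line model $H_\mu$, the distribution $\mathcal D_{\mu,L,k}$ is, up to a constant, evaluation of the Fourier transform at the frequency $\xi = 2\pi k/L$; hence the weight $(I-U^2)^{-a/2}$ used in $W^{-r,-s,-a}(M)$ translates to the factor $(1+(2\pi k/L)^2)^{-a/2}$. Combined with orthogonality across $k\in\Z$, this supplies exactly the decay needed to sum the series in $k$. For the main term, plugging the bound $|\mathcal D^{r,s}_k|_{-r,-s}^2 \lesssim \bigl(1 + (L/|k|)^{8s}\bigr) C_\Gamma(x,L,N)^2\log(NL)$ from Theorem~\ref{theo:equidistribution} against the Sobolev weight of order $(1+\epsilon)$ yields, after splitting the sum into $|k|\leq L$ and $|k|>L$, the factor $1+L^{8s+\epsilon}$ claimed. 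For the remainder piece, the bound $|\mathcal R^{r,s}_k|_{-r,-s}^2 \lesssim (L/|k|)^{2s}\,C_\Gamma(x,L,N)^2$ together with the stronger weight $(1-U^2)^{-(s+a+1+\epsilon)/2}$ produces the factor $1+L^{2s+2+\epsilon}$, and the integrated-remainder contribution is absorbed directly by Lemma~\ref{lemm:integral-remainder}, whose bound is already of the required form $L^{-1}(1+L^{2+\epsilon})D_\Gamma(x,L,N)\|f\|_{r,s,s+a}$.

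The principal obstacle is the non-uniformity in $k$: Theorem~\ref{theo:equidistribution} is optimal for the scaling $T^{5/6}$ but degenerates at small values of $|\lambda m| = 2\pi|k|/L$, producing the $(1+|\lambda m|^{-8s})$ factor. When $|k|\ll L$ this blows up, so the summation over $k$ must be balanced carefully against the $(1+\xi^2)^{-a/2}$ weight evaluated at $\xi = 2\pi k/L$; the range $|k|\leq L$ contributes essentially $L^{8s}$ after using the constant part of the Sobolev weight, while $|k|>L$ contributes a convergent tail through the genuine $k^{-(1+\epsilon)}$ decay. Keeping track of this bookkeeping across all irreducible components $H_{k,\mu}$, while retaining the correct growth in $L$ and the correct logarithmic factor in $NL$, is the delicate point of the proof.
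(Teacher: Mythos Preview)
Your decomposition differs from the paper's in a way that creates a genuine gap. You split each twisted integral via Theorem~\ref{theo:equidistribution} into $\mathcal D^{r,s}_k + \mathcal R^{r,s}_k$ and place $\mathcal R^{r,s}_k$ into $\widetilde{\mathcal R}^{\text{twist}}$. The problem is that your mechanism for summing over $k$ --- the anisotropic weight $(I-U^2)^{-a/2}$ translating into $(1+(2\pi k/L)^2)^{-a/2}$ --- only works for $U$-eigenvectors. The invariant distributions $\mathcal D_{\mu,L,k}$ (and hence $\mathcal D^{r,s}_k$) \emph{are} $U$-eigenvectors with eigenvalue $-2\pi i k/L$, so the trick applies there. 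But the pieces $\mathcal R^{r,s}_k$ are, by construction, orthogonal complements in $\widehat W^{-r,-s}$ to the one-dimensional invariant subspace: they are \emph{not} $U$-eigenvectors, and the weight gives no extra decay for them. Without that decay the sum $\sum_k \tfrac{1}{L}\Vert \mathcal R^{r,s}_k\Vert$ diverges (for large $|k|$ the Theorem~\ref{theo:equidistribution} bound on $\mathcal R^{r,s}_k$ is only $O(L/|k|)$). A secondary issue is that $\mathcal R^{r,s}_k$ is controlled by $C_\Gamma$, not $D_\Gamma$, so even if the sum converged your bound on $\widetilde{\mathcal R}^{\text{twist}}$ would have the wrong form.

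The paper sidesteps both difficulties. It keeps the \emph{entire} twisted integral $\tfrac{1}{L}\int_0^{NL}e^{2\pi i k t/L}(h_t(x))^*\,dt$ inside $\widetilde{\mathcal D}^{\text{twist}}$, so that $\widetilde{\mathcal R}^{\text{twist}}$ contains only the integrated remainder from Lemma~\ref{lemma:isolate-dist}, which is already bounded by $D_\Gamma$ via Lemma~\ref{lemm:integral-remainder}. The needed $k$-decay is then manufactured \emph{before} invoking Theorem~\ref{theo:equidistribution}: first rewrite $\mathcal D_{\mu,L,k} = (1+4\pi^2k^2/L^2)^{-\epsilon/2}(I-U^2)^{\epsilon/2}\mathcal D_{\mu,L,k}$ (this is where being a $U$-eigenvector is used), then perform one integration by parts in $t$ on the twisted integral, which gains an explicit factor $1/|k|$ at the cost of replacing $f$ by $Uf$. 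The combination $(1+4\pi^2k^2/L^2)^{-\epsilon/2}\cdot |k|^{-1}$ is summable, and the remaining integral is bounded uniformly in $k$ by Theorem~\ref{theo:equidistribution} with $\lambda=2\pi/L$, yielding the factor $(1+L^{8s})$. The integration-by-parts step is the missing ingredient in your outline; it replaces your attempt to extract $k$-decay from the anisotropic norm on the non-eigenvector piece.
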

\begin{proof}

  Notice that for any $\mu\in \text{spec}(\Box)$, for any $L>0$, $k
  \in \Z / \{0\}$ and for any $\epsilon \in \R^+$, by the definition
  of the invariant distributions $\mathcal D_{\mu, L, k}$ we have
  \begin{equation}\label{D_{mu, k}-U^sigma}
    \mathcal D_{\mu, L, k}
    = ( 1 + \frac{4\pi^2 k^2}{L^2})^{-\epsilon/2} (I- U^2)^{\epsilon/2} \mathcal D_{\mu, L, k} \,.
  \end{equation}
  Then by Lemma~\ref{lemma:isolate-dist} we derive the formula
  \begin{align}\label{equa:D_{L,k}-twist-remainder}
    \mathcal D_{L, k} &=(1 + \frac{4\pi^2 k^2}{L^2})^{-\epsilon/2} (I- U^2)^{\epsilon/2} \left(\frac{1}{L} \int_0^{N L}e^{2\pi i  t k/L} (h_t(x))^* dt\right) \notag \\
    & -(1 + \frac{4\pi^2 k^2}{L^2})^{-\epsilon/2}(I- U^2)^{\epsilon/2}
    \left( \frac{1}{L} \int_0^L e^{2\pi i t k/L} h_{-t}
      \mathcal{R}_{x, N, L, r, s, a} dt \right) \,.
  \end{align}
  Then integration by parts shows
  \[
  \begin{aligned}
    \frac{1}{L} \int_0^{NL } &e^{2\pi i t k/L} (I- U^2)^{\epsilon/2}
    f\circ h_t(x) dt \\ &= \frac{1}{2\pi i k} [(I- U^2)^{\epsilon/2} f
    \circ h_{N L}(x) - (I- U^2)^{\epsilon/2} f(x)] \\ &- \frac{1}{2\pi
      i k} \int_0^{N L} e^{2\pi i t k/L} U (I- U^2)^{\epsilon/2}f\circ
    h_t(x) dt \,.
  \end{aligned}
  \]
  By Theorem~\ref{theo:equidistribution}, for all $s > 2$, for $r>
  5s-3$, there is a constant $C_{r,s} > 0$ such that, for all
  $\epsilon\in \R^+$ we have
  \begin{align}\label{equa:twist_integral-Dtwist}
    \Vert (I-U^2)^{\epsilon/2} & \left( \frac{1}{ L } \int_0^{N L}
      e^{2\pi i  t k/L}  (h_t(x))^* dt \right) \Vert_{-r,-s, -(1+\epsilon)} \notag \\
    & \leq \frac{C_{r,s}}{|k|} (1+ L^{8s}) C_\Gamma(x,L,N) (NL)^{5/6}
    \log^{1/2}(e+\vert NL \vert ) \,.
  \end{align}
  Now we estimate the integral of the remainder distribution from
  \eqref{equa:D_{L,k}-twist-remainder}.  Integration by parts gives
  \[
  \begin{aligned}
    |\frac{1}{L}&\int_0^L e^{2\pi i t k/L} \mathcal{R}_{x, N, L, r, s, a}  ( (I-U^2)^{\epsilon/2} f \circ h_t) dt|  \\ & \leq \frac{1}{2\pi i k}|\mathcal{R}_{x, N, L, r, s, a} ((I-U^2)^{\epsilon/2} f \circ h_{L}) -\mathcal{R}_{x, N, L, r, s, a} (((I-U^2)^{\epsilon/2} f)| \\
    & + \frac{1}{2\pi i k}|\int_0^L e^{2\pi i t k/L} \mathcal{R}_{x,
      N, L, r, s, a} (U (I-U^2)^{\epsilon/2} f) \circ h_t) dt |\,.
  \end{aligned}
  \]
  Then Lemma~\ref{lemm:integral-remainder} shows that for all $r>3$,
  $s>2$, $a > 2$ and $\epsilon'\in (0,1)$, there is a constant
  $C_{r,s, a, \epsilon'} > 0$ such that
  \begin{align}\label{equa:remainder_Dtwist1}
    \Vert (I-U^2)^{\epsilon/2} &\left( \frac{1}{L}\int_0^L e^{2\pi i t
        k/L} h_{-t} \mathcal{R}_{x, N, L, r, s, a}
      dt  \right)\Vert_{-r, -s, -(s+a  + 1+\epsilon)} \notag \\
    &\,\,\,\,\,\,\,\,\,\,\,\,\,\,\,\, \leq \frac{C_{r, s, a,
        \epsilon'} } {|k|} D_\Gamma(x,L,N) (1+ L^{2s + 2 + \epsilon'})
    \,.
  \end{align}
  A similar estimate holds for the finite factor.

  \smallskip Now define
  \begin{align}
    \widetilde{\mathcal D}^{\text{twist}}_{x, N, L, r, s, a, \epsilon}
    &:=
    \sum_{k \in \Z / \{0\}}  (1 + \frac{4\pi^2 k^2}{L^2})^{-\epsilon/2} \notag \\
    &\times (I- U^2)^{\epsilon/2}   \left(\frac{1}{L} \int_0^{N L} e^{2\pi i k t / L} (h_t(x))^* dt \right)\; \label{equa:def-dtwist_tilde}\\
    \widetilde{\mathcal R}^{\text{twist}}_{x, N, L, r, s, a, \epsilon}
    & :=
    \sum_{k \in \Z / \{0\}} (1 + \frac{4\pi^2 k^2}{L^2})^{-\epsilon/2} \notag \\
    & \times (I- U^2)^{\epsilon/2} \left( \frac{1}{L} \int_0^L e^{2\pi
        i t k/L} h_{-t} \mathcal{R}_{x, N, L, r, s, a} dt \right)
    \,.\notag
  \end{align}
  By construction and by formula~\ref{equa:D_{L,k}-twist-remainder},
  we have that
  \[
  \mathcal D^{\text{twist}}_{x, N, L, r, s, a} = \widetilde{\mathcal
    D}^{\text{twist}}_{x, N, L, r, s, a, \epsilon} +
  \widetilde{\mathcal R}^{\text{twist}}_{x, N, L, r, s, a,
    \epsilon}\,.
  \]
  In addition, by the estimate in
  formula~\eqref{equa:twist_integral-Dtwist}, for any $\epsilon>0$,
  there exists a constant $C_{r,s,\epsilon}>0$ such that
  \[
  \begin{aligned}
    \Vert \widetilde{\mathcal D}^{\text{twist}}_{x, N, L, r, s, a,
      \epsilon}& \Vert_{-r,-s, -(1+\epsilon)} \leq C_{r,s,\epsilon}
    (1+ L^{8s+\epsilon}) \\ &\times C_\Gamma(x,L,N) (NL)^{5/6}
    \log^{1/2} (e+ NL)\,.
  \end{aligned}
  \]
  By the estimate in formula~\eqref{equa:remainder_Dtwist1}, there
  exists a constant $C_{r,s,a,\epsilon}>0$ such that
  \[
  \Vert \widetilde{\mathcal R}^{\text{twist}}_{x, N, L, r, s, a,
    \epsilon} \Vert_{-r,-s, -(s+a+1+ \epsilon)} \leq
  C_{r,s,a,\epsilon} D_\Gamma(x,L,N) (1+ L^{2s + 2 + \epsilon}) \,.
  \]
\end{proof}

\begin{proof}[Proof of Theorem~\ref{theo:maps}]
  By formula \eqref{equa:asymptotic-formula} the distribution given by
  the ergodic sum of the time-$L$ horocycle map for a point $x\in M$
  and up to time $N\in \N$ can be decomposed into a distribution
  $\mathcal D^0_{x, N, L, r, s, a}$ invariant under the horocycle
  flow, a distribution $\mathcal D^{\text{twist}}_{x, N, L, r, s, a}$
  invariant under the horocycle map (but not under the horocycle flow)
  and a remainder distribution $\mathcal{R}_{x, N, L, r, s, a}$.

  The estimate for the distribution $\mathcal D^0_{x, N, L, r, s, a}$
  follows from
  Lemmas~\ref{lemma:isolate-dist}~and~\ref{lemm:integral-remainder},
  and the estimate for the distribution $\mathcal D^{\text{twist}}_{x,
    N, L, r, s, a} $ follows from Lemma~\ref{lemma:D-twist}.  Finally,
  the estimate for the remainder term $\mathcal{R}_{x, N, L, r, s, a}$
  is given by Lemma~\ref{lemma:remainder-map}.  This concludes the
  proof of Theorem~\ref{theo:maps}.
\end{proof}

\section{A result on Shah's question}

In this this section we prove Theorem~\ref{thm:Main_Shah}. We follow
the approach from Theorem 3.1 of the paper \cite{V} by A.~Venkatesh
where, for $f \in C^\infty(M)$ of zero average, the sum
\[
\frac{1}{N} \sum_{n= 1}^N f\circ h_{n^{1 + \delta}}(x)
\]
is controlled by sampling $f$ along suitable arithmetic progressions.

\begin{proof}[Proof of Theorem~\ref{thm:Main_Shah}]
  Let $f \in C^\infty(M)$ such that $\int_M f d\vol = 0$.
  
By Taylor formula for every fixed $N\in \N\setminus\{0\}$ and for every $t\geq 0$ we have
\begin{equation}
\label{eq:linearized}
  (N +t)^{1+\delta} = N^{1+\delta} +  (1+\delta) N^\delta t + O \left( N^{\delta-1} t^2 \right) \,.
 \end{equation}
  Thus the function $(N +t)^{1+\delta}$ is well approximated by its linear Taylor polynomial
  as long as $N^{\delta-1} t^2$ is small for $N$ large. 
  
 Motivated by the above remark we fix $\epsilon > 0$, we set $N_1 := [N^{1-\epsilon}]+1$, and
 for all $j \in \N \setminus \{0\}$ we define
  \[
  N_{j+1} := N_j + [N_j^{(1-\delta)/2-\epsilon}]\,.
  \]
  Let $J \in \N$ be such that $N_J \leq N \leq N_{J+1}$.  This
  implies in particular that $N-N_J \leq N_{J+1} -N_J =
  [N_J^{(1-\delta)/2-\epsilon}] \leq N^{(1-\delta)/2-\epsilon}$.  Hence, there
  is a constant $C_{f} > 0$ such that
  \[
  \begin{aligned}
   &\frac{1}{N} |\sum_{n = 0}^{N_1 - 1} f\circ h_{n^{1 + \delta}}(x)|
  \leq C_{f} N^{ - \epsilon} \,, \\ 
  &\frac{1}{N} |\sum_{n = N_{J}}^{N - 1} f\circ h_{n^{1 + \delta}}(x)|
  \leq C_{f} N^{-(1+\delta)/2 - \epsilon} \,,
  \end{aligned}
  \]
  and both terms converge to zero as $N \to +\infty$.  Then we need to estimate
  \[
  \frac{1}{N} |\sum_{n= N_1}^{N_J - 1} f\circ h_{n^{1 + \delta}}(x)|\,.
  \]

We  then let, for all $j\in \{1, \dots, J-1\}$,
  \begin{equation}
    \label{eq:Lj}
    L_j :=  (1+\delta) N_j^\delta \,.
  \end{equation}
  By the triangular inequality we have
  \begin{align}
    \frac{1}{N}|\sum_{n= N_1}^{N_J - 1} & f\circ h_{n^{1 + \delta}}(x)|
    \leq \frac{1}{N} |\sum_{j =
      1}^{J-1}\sum_{k=0}^{[N_j^{1-\delta-\epsilon}]-1}
    f\circ h_{N_j^{1 + \delta} + k L_j}(x)| \notag \\
    & + \frac{1}{N}|\sum_{n =N_1 }^{N_J - 1} f\circ h_{n^{1 +
        \delta}}(x) - \sum_{j =
      1}^{J-1}\sum_{k=0}^{[N_j^{1-\delta-\epsilon}]-1} f\circ
    h_{N_j^{1 + \delta} + k L_j}(x)|\,.
    \label{equa:Shah-to-arithmetic}
  \end{align}

  We begin by estimating the first term on the RHS in the above inequality which
  is composed of several sums along arithmetic progressions.

  By Theorem~\ref{thm:Main_Maps} and by the effective equidistribution
  of the horocycle flow (see Theorem 1.5 in \cite{FF1}), for every
  $\epsilon >0$ there exists a constant $C_{f,\epsilon}>0$ such that,
  for all $j \in \{1, \dots, J-1\}$ and for all $x \in M$, we have
  \begin{equation}
    \label{eq:Shah_est}
    \begin{aligned}
      |\sum_{k=0}^{[N_j^{(1-\delta)/2-\epsilon}]-1}& f\circ h_{N_j^{1 +
          \delta} + k L_j}(x)| \leq C_{f,\epsilon}
      L_j^{-1} (L_j N_j^{(1-\delta)/2-\epsilon})^{1 -\mathcal S^-_{\mu_0}} \\
      &+ C_{f,\epsilon} \left( L_j^{1/6+\epsilon} (L_j
        N_j^{(1-\delta)/2-\epsilon})^{5/6} (\log N_j)^{1/2} +
        L_j^{5+\epsilon}\right)\,.
    \end{aligned}
  \end{equation}

  By these inequalities, since $1-\mathcal S^-_{\mu_0}<1$, under the
  hypothesis that $\delta<1/13$, it follows from the estimate in
  formula~\eqref{eq:Shah_est} that there exists $\epsilon >0$
  (sufficiently small) such that for all $x\in M$ we have
  \[
  |\sum_{k=0}^{[N_j^{(1-\delta)/2-\epsilon}]-1} f\circ h_{N_j^{1 + \delta}
    + k L_j}(x)| \leq C_{f,\epsilon} [N_j^{(1-\delta)/2-\epsilon}]
  N_j^{-\epsilon} \,.
  \]
  \begin{sloppypar}
      Since by construction we have that
  $[N_j^{(1-\delta)/2-\epsilon}]=N_{j+1}-N_j$ and $N_j \geq N_1=
  [N^{1-\epsilon}]+1$ for all $j\in \{1,\dots, J-1\}$, and also $N_J \leq
  N$, by telescopic summation it then follows that for all $x\in M$ we
  have
  \[
  \frac{1}{N} |\sum_{j =
    1}^{J-1}\sum_{k=0}^{[N_j^{(1-\delta)/2-\epsilon}]-1} f\circ h_{N_j^{1
      + \delta} + k L_j}(x)| \leq C_{f,\epsilon} \frac{N_J}{N}
  N^{-\epsilon (1-\epsilon)} \leq C_{f,\epsilon} N^{-\epsilon
    (1-\epsilon)}\,.
  \]
  In particular we have proved that, uniformly over $x\in M$,
  \begin{equation}
    \label{eq:Shah_conv}
    \lim_{N\to +\infty} \frac{1}{N} \sum_{j = 1}^{J-1}\sum_{k=0}^{[N_j^{(1-\delta)/2-\epsilon}]-1} f\circ h_{N_j^{1 + \delta} + k L_j}(x)  \,=\, 0\,.
  \end{equation}
  \end{sloppypar}

  Now we estimate the second term on the RHS  of formula \eqref{equa:Shah-to-arithmetic}, that is
  \[
  \frac{1}{N}|\sum_{n = N_1}^{N_J - 1} f\circ h_{n^{1 + \delta}}(x) -
  \sum_{j = 1}^{J-1}\sum_{k=0}^{N_{j+1} -N_j -1} f\circ h_{N_j^{1 + \delta} + k
    L_j}(x)| \,.
  \]
   By Taylor formula~\eqref{eq:linearized}, for $0\leq k \leq
  [N_j^{(1-\delta)/2-\epsilon}] -1$, we have
  $$
   (N_j +k)^{1+\delta} - (N_j^{1+\delta} + k L_j) = O ( N_j^{-2\epsilon}) \,,
  $$
 hence there is a constant $C_f > 0$ such that, for all $0\leq k<N_{j+1}-N_j$,
 we have 
  \[
  |f\circ h_{(N_j+k)^{1+\delta}}(x) - f\circ h_{N_j^{1 + \delta} + k
    L_j}(x)| \leq C_f N_j^{-2\epsilon}\,.
  \]
It follows that there is a constant $C_f > 0$ such that
  \[
  \begin{aligned}
    |\sum_{n = N_1}^{N_J - 1} & f\circ h_{n^{1 + \delta}}(x)
    - \sum_{j = 1}^{J-1}\sum_{k=0}^{N_{j+1} -N_j-1} f\circ h_{N_j^{1 + \delta} + k L_j}(x)| \\
    & \leq \sum_{j = 1}^{J-1} \sum_{k=0}^{N_{j+1} -N_j-1}
    |f\circ h_{(N_j+k)^{1+\delta}}(x) -
    f\circ h_{N_j^{1 + \delta} + k L_j}(x) |  \\
    & \leq  C_f \sum_{j=1}^{J-1}  (N_{j+1}-N_j) N_j^{-2\epsilon} \leq C_f N N_1^{-2\epsilon}  \leq C_f 
   N  N^{-2\epsilon(1 - \epsilon)} \,.
  \end{aligned}
  \]
  Thus, we have that, uniformly over $x\in M$,
  \[
  \lim_{N \to +\infty} \frac{1}{N} \left(\sum_{n = N_1}^{N_J - 1} f\circ
    h_{n^{1 + \delta}}(x) - \sum_{j = 1}^{J-1}\sum_{k=0}^{N_j-1} f\circ
    h_{N_j^{1 + \delta} + k L_j}(x) \right) = 0\,.
  \]
  Theorem~\ref{thm:Main_Shah} follows from this, formulas
  \eqref{equa:Shah-to-arithmetic} and \eqref{eq:Shah_conv}.
\end{proof}

\appendix
\section{}\label{appe:A}
\subsection{Line and upper half-plane models of $\SL(2, \R)$}

The irreducible representation spaces for $\SL(2, \R)\times \T$ can be
studied in concrete, unitarily equivalent models.  We presently
describe the line and upper half-plane models for $\SL(2, \R)$.

Let $A = \left(\begin{array}{rr}
    a & b\\
    c & d
  \end{array}\right) \in SL(2, \mathbb{R}).$
Let $\mu \in \text{spec}(\Box)$ be a Casimir parameter, and let
$H_{\mu}$ be an irreducible, unitary representation space in the
kernel of $(\mu - \Box)$.  Let $\nu = \sqrt{1 - \mu}$ be a
representation parameter.  We denote by $H_\mu$ the following models
for the principal and complementary series representation spaces.  In
the first model (the line model) the Hilbert space is a space of
functions on $\R$ with the following norms.  If $\mu \geq 1$, then
$\nu \in i \mathbb{R}$ and $\| f \|_0 = \| f \|_{L^2(\mathbb{R})}.$ If
$0 < \mu < 1$, then $0 < \nu < 1$ and
\[
\| f \|_{H_{\mu}} = \left(\int_{\mathbb{R}^2} \frac{f(x)
    \overline{f(y)}}{|x - y|^{1 - \nu}} dx dy\right)^{1/2}.
\]
The group action is defined by
\[
\pi_\nu : SL(2, \mathbb{R}) \rightarrow \mathcal{U}(H)
\]
\[
\pi_{\nu} (A) f(x) = |-c x + a|^{-(\nu + 1)} f(\frac{d x - b}{-c x +
  a}),
\]
where $x\in \mathbb{R}$.

The vector fields for the model $H_\mu$ on $\mathbb{R}$ are
\[
\begin{array}{lll}
  X = -(1 + \nu) - 2 x \frac{\partial}{\partial x}; \\
  U = -\frac{\partial}{\partial x}; \\
  V = (1 + \nu) x + x^{2}\frac{\partial}{\partial x}.
\end{array}
\]

For $\mu \leq 0$, we let $\mathbb H$ be the upper half-plane.  The
upper half-plane model is also denoted by $H_\mu$, where now $\mu
\in\{ -n^2 + 2n :n \in \mathbb Z^+\}$, and its norm is
\[
\|f\|_{H_\mu} =
\begin{cases}
  \left(\int_{\mathbb H} |f(x+iy)|^2 \,y^{n-2}\,dx\,dy\right)^{1/2},  & n\ge 2\,;\\
  \left(\sup_{y>0} \int_{\R} |f(x+iy)|^2 \,dx\right)^{1/2} , & n=1\,.
\end{cases}
\]
This model has the group action $\pi_n: SL(2, \mathbb{R})\rightarrow
\mathcal{U}\left(H_\mu\right)$ defined by
% \begin{equation}\label{551}
\[
\pi_{n} (A): f(z) \rightarrow (-c z + a)^{-n} f(\frac{d z - b}{-c z +
  a})\,.
\]
%\end{equation}  
The anti-holomorphic discrete series is similar, but we only consider
the holomorphic case because there is a complex anti-linear
isomorphism between two series of the same Casimir parameter.

Then the vector fields in the model $H$ are:
\[
\begin{array}{lll}
  X = -(1 + \nu) - 2 z \frac{\partial}{\partial z}\; \\
  U = -\frac{\partial}{\partial z}\; \\
  V = (1 + \nu) z + z^{2}\frac{\partial}{\partial z}\,.
\end{array}
\]

\section{}\label{appe:B}

\begin{proof}[Proof of Lemma \ref{lemma:Fourier:comp}]
  If $\mu \geq 1$, then Lemma \ref{lemma:Fourier:comp} is immediate.
  So say $0 < \mu < 1$.  Then
  \begin{equation}\label{equa:norm}
    \|f\|_0^2 = \int_\R \int_\R \frac{f(x) f(y)}{|x - y|^{1 - \nu}} dx dy = \langle f * K, f\rangle\,,
  \end{equation}
  where $K(x) = |x|^{1-\nu}.$ A computation shows that the Fourier
  transform $\hat{K}(1)$ is defined, and moreover, for any $\xi \in
  \R$,
  \[
  \hat{K}(\xi) = |\xi|^{-\nu} \hat{K}(1)\,.
  \]
  Because $\hat K$ is not identically zero, we have $\hat{K}(1) \neq
  0$.

  Thus, Plancherel's equality gives
  \[
  \eqref{equa:norm} = \hat{K}(1) \int_\R |\hat f(\xi)|^2 |\xi|^{-\nu}
  d\xi\,.
  \]
\end{proof}

\begin{proof}[Proof of Lemma \ref{lemma:inversion}]
  Let $z = x + i y$.  Using Lemma~\ref{lemma:Fourier_Cauchy}, we have
  \begin{align}\label{equa:Fourier_Cauchy2}
    \int_\R f(x + i y) e^{-i \xi x} dx &= e^{-\xi y } \int f(z) e^{- i \xi z} dx\notag \\
    & = e^{-\xi y } \hat{f}^y(\xi) \notag  \\
    & = e^{-\xi y } \hat f(\xi)\,.
  \end{align}
  Because $\nu \geq 1$, Sobolev embedding shows $f(\cdot + i y)\in
  L^1(\R)$ for any $y \in \R^+$.  Moreover, as $f$ is smooth, $\int_\R
  f(x + iy) e^{- i \cdot x } dx$ is also in $L^1(\R)$.  , So the
  Fourier inversion formula followed by
  Lemma~\ref{lemma:Fourier_Cauchy} gives
  \begin{align}\label{equa:Fourier_inversion-Discrete}
    f(z) & = \frac{1}{2\pi} \int_\R \left(\int_\R f(t + iy) e^{- i \xi
        t } dt\right) e^{ i \xi x } d\xi \notag
    \\
    & = \frac{1}{2\pi} \int_\R \left(e^{\xi y} \int_\R f(t + iy) e^{-
        i \xi t } dt\right) e^{ i \xi z } d\xi \notag
    \\
    & = \frac{1}{2\pi} \int_{\R^+} \hat{f}^y(\xi) e^{ i \xi z } d\xi
    \notag
    \\
    & = \frac{1}{2\pi} \int_{\R^+} \hat f(\xi) e^{i \xi z } d\xi\,.
    \notag
  \end{align}

  We now consider the $L^2$ norm for $\nu = 0$.  The Plancherel
  theorem and formula~\eqref{equa:Fourier_Cauchy2} give
  \begin{align}
    \|f\|_0^2 & = \sup_{y > 0} \int_\R |f(x + i y)|^2 dx \notag \\
    & = \frac{1}{2 \pi} \sup_{y > 0} \int_{\R^+} |\int_\R f(x + i y) e^{-i \xi x} dx|^2 d\xi \notag \\
    & = \frac{1}{2\pi} \sup_{y > 0} \int_{\R^+} e^{-2 \xi y} |\hat f(\xi)|^2 d\xi \notag \\
    & = \frac{1}{2\pi} \int_{\R^+} |\hat f(\xi)|^2 d\xi\,.  \notag
  \end{align}

  For $\nu \geq 1$, we have
  % \begin{align}\label{equa:discrete_norm}
  \begin{align}%\label{equa:discrete-norm}
    \| f \|_0^2 & = \int_0^\infty \int_\R |f(x + i y)|^2 y^{\nu - 1}
    dx dy
    \notag \\
    & = \frac{1}{2\pi} \int_0^\infty \langle e^{-2 \xi y} \hat f, \hat
    f \rangle_{L^2(\R)} y^{\nu - 1} dy
    \notag \\
    & = \frac{1}{2\pi} \int_{\R} |\hat f(\xi)|^2 \int_0^\infty y^{\nu
      - 1} e^{-2 \xi y } dy d\xi\,.
  \end{align}
  % \end{align}
  Using integration by parts $\nu - 1$ times, we conclude
  \[
  \| f \|_0^2 = \frac{(\nu - 1)!}{2\pi} \int_{\R} |\hat f(\xi)|^2
  \frac{d\xi}{(2 \xi)^{\nu}}\,.
  \]
\end{proof}

\begin{proof}[Proof of Lemma \ref{lemma:comp_UT_est}:]

  When $H$ is a principal series representation for $G$, the operator
  $U_\mathcal T$ is unitary, so $H$ is in the complementary series.
  Recall that the norm for the line models is
  \[
  \| f \|_0 = \left(\int_{\mathbb{R}^2} \frac{f(x) \overline{f(y)}}{|x
      - y|^{1 - \nu}} dx dy\right)^{1/2}\,.
  \]
  From Lemma~3.1 of \cite{T}, the Fourier transform of $f$ is defined
  and continuous everywhere.

  Without loss of generality, assume $\lambda > 0$.  We have
  \[
  \|U_{\mathcal T} f\|_0^2 = \mathcal T^{1/3} \int_\R | \hat f (
  \lambda + {\mathcal T}^{1/3} (\xi-\lambda) )|^2 |\xi|^{-\nu} d\xi\,.
  \]

  Let $y - \lambda = \mathcal T^{1/3} (\xi - \lambda)$.  Then
  % \begin{equation}\label{equa:U_T control2}
  \[
  \xi^{-\nu} = \frac{\mathcal T^{\nu/3}}{(y + (\mathcal T^{1/3} -
    1)\lambda)^\nu}\,.
  \]
  % \end{equation}
  Because $y \in (\frac{\lambda}{2}, \frac{3\lambda}{2})$, we get the
  upper and lower bounds
  \[
  \lambda^{-\nu} \left(\frac{\mathcal T^{1/3}}{\mathcal T^{1/3} +
      1/2}\right)^{\nu} \leq \xi^{-\nu} \leq \left(\frac{\mathcal
      T^{1/3}}{\mathcal T^{1/3} - 1/2}\right)^{\nu} \lambda^{-\nu}\,.
  \]
  These bounds are made largest and smallest by setting $\nu = 1$ and
  $\mathcal T = 1$.  We get,
  \[
  \sqrt \frac{2}{3} \lambda^{-\nu/2} \| \hat f\|_{L^2(\R)} \leq
  \|U_{\mathcal T} f\|_0^2 \leq \sqrt 2 \lambda^{-\nu/2} \| \hat f
  \|_{L^2(\R)}\,.
  \]
  We have the same upper and lower bounds for $\| f \|_0$, so
  \[
  \frac{1}{\sqrt 3} \leq \frac{\| U_{\mathcal T} f\|_0}{\| f \|_0}
  \leq \sqrt 3\,.
  \]
  This completes the proof of Lemma~\ref{lemma:comp_UT_est}.
\end{proof}

\begin{proof}[Proof of Lemma \ref{lemma:U_T-comparable}]
  In this case, $I_\lambda = [\lambda - 1/2, \lambda + 1/2].$ Setting
  $(-1)! := 1$, Lemma \ref{lemma:inversion} gives
  \[
  \|U_{\mathcal T} f\|_0^2 = \frac{(\nu - 1)!}{(2 \pi)^{\nu}} \mathcal
  T^{1/3} \int_\R | \hat f ( \lambda + {\mathcal T}^{1/3}
  (\xi-\lambda) )|^2 |\xi|^{-\nu} d\xi\,.
  \]
  So let $y - \lambda = \mathcal T^{1/3} (\xi - \lambda)$, which means
  \[%\begin{equation}\label{equa:U_T control}
  \xi^{-\nu} = \frac{\mathcal T^{\nu/3}}{(y + (\mathcal T^{1/3} - 1)\lambda)^{\nu}}\,. 
  \]%\end{equation} 
  Observe $\xi^{-\nu}$ satisfies
  \[
  \left(\frac{\mathcal T^{1/3}}{\lambda \mathcal T^{1/3} +
      1/2}\right)^{\nu} \leq \xi^{-\nu} \leq \left(\frac{\mathcal
      T^{1/3}}{\lambda \mathcal T^{1/3} - 1/2}\right)^{\nu}\,.
  \]
  These bounds are made worse by setting $\mathcal T = 1$, so that we
  get
  \[
  \frac{(\nu - 1)!}{(2\pi (\lambda + 1/ 2))^{\nu}} \| \hat f
  \|_{L^2(\R)} \leq \|U_{\mathcal T} f\|_{\mathcal H_{\mu}} \leq
  \frac{(\nu - 1)!}{(2\pi (\lambda - 1/ 2))^{\nu}} \| \hat f
  \|_{L^2(\R)}\,.
  \]

  We get the same upper and lower bounds for $\| f\|_0$, so there is a
  constant $C > 0$ such that
  \[
  \left(\frac{1 - 1/(2\lambda)}{1 + 1/(2\lambda)}\right)^{\nu} \leq
  \frac{\|U_{\mathcal T} f \|_0}{\| f \|_0} \leq \left(\frac{1 +
      1/(2\lambda)}{1 - 1/(2\lambda)}\right)^{\nu}\,.
  \]
  Now because $\lambda \geq \nu + 1$, we get a constant $C > 0$ such
  that
  \[
  \frac{1}{C} \leq \frac{\|U_{\mathcal T} f \|_0}{\| f \|_0} \leq C\,.
  \]
\end{proof}

\section{}\label{appe:C}

\begin{proof}[Proof of Lemma~\ref{lemma:fund_box}]

  Let $\mathbb H := \{w = x + i y \in \C : \im(w) > 0\}$ be the
  Poincar\'e upper-half plane endowed with the Riemannian hyperbolic
  metric
  \[
  ds = \frac{\sqrt{dx^2 + dy^2}}{y}\,,
  \]
  and let $M$ be the unit tangent bundle of a surface $S=\Gamma
  \backslash \mathbb H$.

  When $M$ is compact, the horocycle flow has no periodic orbits and
  all orbits are transverse to the $X$-$V$ leaves.  By compactness,
  there is a constant $C_\Gamma' > 0$ such that for any $x \in M$
  $\alpha_x$ is injective on the domain
  \[
  [-C_\Gamma' , C_\Gamma'] \times [-1, 1] \times [-C_\Gamma' ,
  C_\Gamma' ]\,.
  \]
  Because $e^{-d_M(x)} < 1$, the lemma is proven if $M$ is compact.

  Now assume $M$ non-compact.  Let $\{C_i\}$ be the collection of
  disjoint cusps of the surface $S$ bounded by a cuspidal horocycles
  of length~$\ell_\Gamma<1$. By a cusp of $M$ we mean the tangent unit
  bundle $\tilde C_i\subset M$ of a cusp $C_i$.

  By compactness, there exists a constant $K_\Gamma > 0$ such that if
  $d_M(x) > K_\Gamma$ then the ball of center $x$ and radius $4$ is
  contained in some cusp $\tilde C_i$.

  For $d_M(x) \leq K_\Gamma$, the above argument gives a constant
  $C_\Gamma^{(2)} > 0$ such that $\alpha_x$ is injective on the domain
  \[ [-C_\Gamma^{(2)} e^{-d_M(x)}, C_\Gamma^{(2)}e^{-d_M(x)}] \times
  [-1, 1] \times [-C_\Gamma^{(2)}e^{-d_M(x)} , C_\Gamma^{(2)}
  e^{-d_M(x)}]\,.
  \]

  Let $C$ be any cusp. By conjugating the lattice if necessary, we may
  assume that the cusp $C$ has a fundamental domain $D=\{ z\in \mathbb
  H : |\Re z|\le 1/2, \im z > \ell_\Gamma^{-1}\}$ and that a parabolic
  subgroup $\Gamma$ stabilizing the cusp $C$ is the subgroup
  \[
  \left\{ \gamma_n:=\left(
      \begin{array}{cc}
        1 & n \\ 
        0 & 1 
      \end{array}
    \right): n\in \Z\right\} \subset \Gamma \,.
  \]
  The tangent unit bundle $\tilde D$ of $D$ is a fundamental domain of
  the cusp $\tilde C\subset M$.

  We consider the usual identification of $\SL(2, \R)/{\pm} I$ with
  the tangent unit bundle $T^1\mathbb H$ the mapping $ g \in \SL(2,
  \R)/\pm I \mapsto (g \cdot i , dg_i (i)) $. Under this
  identification, the domain $\tilde D$ is identified to
  \[
  \tilde D= \left \{ \pm \left(\begin{smallmatrix} a&b\\c&d
      \end{smallmatrix}\right): c^2+d^2 < \ell_{\Gamma} , \left|
      \tfrac{bd + a c}{c^2 + d^2} \right| < \tfrac 1 2 \right\}
  \]
  In fact
  \[
  \pm \left(
    \begin{array}{cc}
      a & b \\ 
      c & d 
    \end{array}
  \right) \cdot i = \frac{bd + a c}{c^2 + d^2} + i \frac{1}{c^2 +
    d^2}\,.
  \]

  For simplicity, for all $g = \pm \left(\begin{smallmatrix} a&b\\c&d
    \end{smallmatrix}\right) \in T^1\mathbb H\approx \SL_2(\R)/{\pm}
  I$, we define $\im(g) := \im(g \cdot i)$, i.e.\ $\im(g) =
  (c^2+d^2)^{-1}$. We also remark that, by the triangle inequality,
  there exists a constant~$c_{\Gamma}>0$ such that, if $\bar x \in
  \tilde D$ is a representative of $x\in \tilde C$, then
  \begin{equation}
    \label{eq:Twist-horo-6-13:3}
    \log \im (\bar x) -c_{\Gamma}\le d_M(x)\le \log \im (\bar x) +c_{\Gamma} \,.
  \end{equation}

  Our choice of the constant $K_\Gamma$ was motivated by the following
  observation: if $ \bar x\in \tilde D$ and $ \bar x_1 \in T^1\mathbb
  H\approx \SL_2(\R)$ are two representatives of a point $x\in \tilde
  C$ which are at a distance less than~$4$ from each other, then if
  $d_M(x)> K_\Gamma$ there exists $n\in \Z$ such that $\bar x_1=
  \gamma_n \bar x $ (in fact $|n|\le 4\, \im \bar x $).

  \smallskip Set, for conciseness,
  \[
  A(t)=\exp (tX/2),\qquad H(t) = \exp (tU),\qquad \bar H(t) =x\exp
  (tV)\,.
  \]
  Let $I_{\Delta}= [-\Delta,\Delta]\times [-1,1]\times
  [-\Delta,\Delta]$. By continuity, there exist $\Delta_0$ such that
  for all $\Delta< \Delta_0$ the map $ (t,y,z) \in I_{\Delta} \mapsto
  H(t)A(y)\bar{H}(z)\in \SL_2(\R)$ is a diffeomorphism onto its image
  satisfying, for all $(t,y,z) \in I_{\Delta}$,
  \[\operatorname{dist}( \operatorname{Id}_{\SL_2(\R)},
  H(t)A(y)\bar{H}(z)) < 2. \] Furthermore, by taking partial
  derivatives of the function
  \[
  F\colon ((t,y,z), (t',y',z'))\in I_{\Delta}\times I_{\Delta} \mapsto
  H(t)A(y)\bar{H}(z)\bar H(z')^{-1}A(y')^{-1}{H}(t')^{-1}
  \]
  at the point $((0,y,0), (0,y',0))$, we find that
  \begin{equation}
    \label{eq:Twist-horo-6-13:2}
    F((t,y,z), (t',y',z')) =
    \begin{pmatrix}
      e^{\frac{y-y'}{2}} &te^{-\frac{y-y'}{2}} -t'   e^{\frac{y-y'}{2}}   \\
      e^{-\frac{y+y'}{2}} (z-z')&e^{-\frac{y-y'}{2}}
    \end{pmatrix} + O(\Delta^2)\,.
  \end{equation}
  By choosing a smaller $\Delta_0$, if necessary, we may assume that
  all the terms $O(\Delta^2)$ appearing in the above identity are
  bounded by $\Delta/2$.

  Let $x\in \tilde C$ satisfy $d_M(x)> K_\Gamma$ and let
  \[
  \bar x=\left(
    \begin{array}{cc}
      a & b \\ 
      c & d 
    \end{array}
  \right) \in \SL(2, \R)
  \]
  be a representative for $x$ belonging to the domain $\tilde D$. We
  shall show that if $\Delta= \min (\Delta_0, \im(\bar x)^{-1}/14)$
  the function
  \[
  \alpha_x \colon (t,y,z) \in I_{\Delta} \mapsto x H(t)A(y)\bar{H}(z)
  \]
  is injective.

  Suppose, by contradiction that this is not the case. Then there
  exists distinct triplets $(t,y,z)\in I_{\Delta}$ and $(t',y',z')\in
  I_{\Delta}$ such that $\alpha_x (t,y,z) = \alpha_x (t',y',z')$.  It
  follows that the elements $\bar x_1= \bar x H(t)A(y)\bar{H}(z)$ and
  $\bar x_1'=\bar x H(t')A(y')\bar{H}(z')$ are distinct
  representatives of the same point $x_1\in M$. Since $\operatorname
  {dist}(\bar x, \bar x_1) <2$ and $\operatorname {dist}(\bar x, \bar
  x_1') <2$ and $\bar x_1\neq \bar x_1'$, by a previous observation,
  there exists $n\in \Z\setminus \{0\}$ such that 
  %\begin{equation}
  %\label{equa:gamma_nx2}
  $\bar x_1=
  \gamma_n\bar x_1',$
  %\end{equation} 
  that is such that
  \[\bar x H(t)A(y)\bar{H}(z)= \gamma_n
  \bar x H(t')A(y')\bar{H}(z')\,.\] Since
  \begin{equation}
  \label{equa:conjugate-matrix3}
  \bar x^{-1}\gamma_n \bar x=
  \begin{pmatrix}
    1 +n cd & n d^2\\ -n c^2 & 1 -ncd
  \end{pmatrix}
  \end{equation}
  the previous identity may be rewritten as
  \begin{equation*}
  \begin{pmatrix}
    1 +n cd & n d^2\\ -n c^2 & 1 -ncd
  \end{pmatrix} = F((t,y,z), (t',y',z'))\,.
  \end{equation*}
  From this identity and the identity \eqref{eq:Twist-horo-6-13:2} we
  obtain
  \begin{align*}
    \label{eq:Twist-horo-6-13:1}
    n d^2 &=te^{-\frac{y-y'}{2}} -t'   e^{\frac{y-y'}{2}}  +O(\Delta^2)\\
    -n c^2 &=e^{-\frac{y+y'}{2}} (z-z')+O(\Delta^2)
  \end{align*}
  and conclude that
  \[
  |n|\, \im(\bar x)^{-1} = |n ( c^2 + d^2 )| \le (4 e + 2)\Delta< 14
  \Delta \le \im(\bar x)^{-1}\,.
  \]
  We proved that $n=0$, reaching a contradiction. The proof of the
  Lemma is concluded by observing that by the
  inequalities~\eqref{eq:Twist-horo-6-13:3} the term $\im(\bar
  x)^{-1}$ is equivalent, up to an absolute constant depending only by
  the lattice, to the term $\exp (-d_M(x))$.
\end{proof}

\begin{proof}[Proof of Lemma~\ref{lemma:returns_loc}]
We use the same notation as in the proof of 
Lemma~\ref{lemma:fund_box}.  
Let $x \in \tilde C$, where $C$ is any cusp of the surface $S$. 
%By assumption, 
%\[
%x H(t_1) = x H(t_0) \bar H(z)\,,
%\]
%where $|z| \in \frac{\mathcal T^{-2/3}}{c_\Gamma(x, \mathcal T T)} (e^{-(\beta + 1)}, e^{-\beta}]$.  
Let $\tilde D$ be the fundamental domain of $\tilde C$ described above, 
and let $\bar x \in \tilde D$ be a representative of $x$.  

Then by definition of a $(\beta, \mathcal T, T)$-return, 
the points $\bar x H(t_1)$ and $\bar x H(t_0) \bar H(z)$ 
are two representatives for the same point $x H(t_1)$.  
Then the observation mentioned below \eqref{eq:Twist-horo-6-13:3} 
gives an integer $n$ such that 
\[
\bar x H(t_1) = \gamma_n \bar x H(t_0) \bar H(z)\,.
\] 
Notice that $n \in \Z \setminus \{0\}$, because $z \not= 0$.  

Now by \eqref{equa:conjugate-matrix3} and by multiplying on the 
right by ($H(t_0) H(z))^{-1}$, we get 
\[
  \begin{pmatrix}
    1 +n cd & n d^2\\ -n c^2 & 1 -ncd
  \end{pmatrix}
  = 
    \begin{pmatrix}
   1 - t_1 z& t_1(1+t_0z)-t_0\\ -z & 1+t_0z
  \end{pmatrix}\,.
\]
The diagonal terms give that $t_1 z =- n c d = t_0 z$.  
Because $z \neq 0$, it follows that  
\[
t_0 = t_1\,.
\]
The bottom-left entry now implies
that the point $x \exp(t_0 U)$ is a periodic 
point, with period $z = n c^2$, 
for the unstable horocycle flow $\{\bar h_t\}$.  
Lemma~\ref{lemma:returns_loc} follows from this.  
\end{proof}


\begin{thebibliography}{20}

\bibitem{Bou} J.~Bourgain, \emph{ On the maximal ergodic theorem for
    certain subsets of the integers}, Israel J. Math., \textbf{61} (1),
  1988, 39--72.

\bibitem{BuFo} A.~Bufetov \& G.~Forni, \emph{Limit Theorems for
    Horocycle Flows}, Ann. Sci. ENS, {\bf 47} (5), 2014, 851--903.
  (arXiv:1104.4502v1).
 
\bibitem{Bur} M.~Burger, \emph{Horocycle flow on geometrically finite
    surfaces}, Duke Math. J. \textbf{61}, 1990, 779--803.

\bibitem{Cartan} H.~Cartan, \emph{Th\'eorie \'el\'ementaire des
    fonctions analytiques d'une ou plusieurs variables complexes},
  Hermann, Paris, 1964.

\bibitem{Da} S.~G.~Dani.  \emph{Invariant measures and minimal sets of
    horospherical flows}.  Invent. Math. \textbf{64} (2), 1981,
  357--385.

\bibitem{FF1} L. Flaminio \& G. Forni, \emph{Invariant Distributions
    and Time Averages for Horocycle Flows}, Duke Math. J.,
  \textbf {119} (3), 2003,  465--526.

\bibitem{FF2} L. Flaminio \& G. Forni, \emph{Equidistribution of
    nilflows and applications to theta sums}.  Erg. Th. 
  Dynam. Sys., \textbf{26}, 2006, 409--433.

\bibitem{FF3} L. Flaminio \& G. Forni, \emph{On effective
    equidistribution for higher step nilflows}. Preprint:
  arXiv:1407.3640.

\bibitem{Fu} H.~Furstenberg, \emph{The unique ergodicity of the
    horocycle flow}, in Recent Advances in Topological Dynamics (New
  Haven, Conn., 1972), LNM \textbf {318}, Springer,
  Berlin, 1973, 95--115.

\bibitem{GF} I.~M.~Gelfand \& S.~V.~Fomin, \emph{Unitary
    representations of Lie groups and geodesic flows on surfaces of
    constant negative curvature} (in Russian), Dokl. Akad. Nauk SSSR
  \textbf{76}, 1951, 771--774.

\bibitem{GN} I.~M.~Gelfand \& M.~Neumark, \emph{Unitary
    representations of the Lorentz group},
  Acad. Sci. USSR. J. Phys. \textbf{10}, 1946, 93--94.

\bibitem{Go} A.~Good, \emph{Cusp forms and eigenfunctions of the
    Laplacian}, Math. Ann. \textbf{255}, 1981, 523--438.

\bibitem{Hj} D.~A.~Hejhal, \emph{On the uniform equidistribution of
    long closed horocycles}.  Loo-Keng Hua: a great mathematician of
  the twentieth century. Asian J. Math.  \textbf {4} (4), 2000,
  839--853.

\bibitem{Ma} G.~A.~Margulis. \emph{Problems and conjectures in
    rigidity theory}, Mathematics: frontiers and perspectives, 2000,
  161-174, Amer. Math. Soc., Providence, RI.

\bibitem{Pu} {L. Puk$\acute{\text{a}}$nszky}, \emph{The Plancherel
    formula for the universal covering group of SL(R, 2)}.  Math Ann.,
  \textbf{156}, 1964, 96--143.

\bibitem{Ra} M.~Ratner, \emph{The rate of mixing for geodesic and
    horocycle flows}, Erg. Th. Dynam. Sys. \textbf{7}, 1987,
  267--288.

\bibitem{Sa} P.~Sarnak, \emph{Asymptotic behavior of periodic orbits
    of the horocycle flow and Eisenstein series}, Comm. Pure
  Appl. Math. \textbf{34}, 1981, 719--739.

\bibitem{SU} P.~Sarnak, A.~Ubis, \emph{The horocycle flow at prime
    times}, Journal de math\'ematiques pures et appliqu\'ees \textbf{103} (2),
    2015,  575--618 (arXiv:1110.0777v4).

\bibitem{Sh} N.~A.~Shah, \emph{Limit distributions of polynomial
    trajectories on homogeneous spaces}, Duke Math. J. \textbf{75} (3),
  1994, 711--732.

\bibitem{St1} A.~Str\"ombergsson, \emph{On the uniform
    equidistribution of long closed horocycles}. Duke Math. J. \text{123} (3),  
    2004, 507--547.
 
\bibitem{St2} A.~Str\"ombergsson, \emph{On the deviation of ergodic
    averages for horocycle flows}, J. Modern Dynamics \textbf{7},
  2013, 291--328.

\bibitem{Su} D.~Sullivan, \emph{Discrete conformal groups and
    measurable dynamics}, Bull. Amer. Math. Soc. (N.S.) \textbf{6} (1),
  1982, 57--73.

\bibitem{T} J.~Tanis, \emph{The Cohomological Equation and Invariant
    Distributions for Horocycle Maps}.  Ergodic Theory and Dynamical
  systems, \textbf{12}, 2012, 1-42, 10.1017/etds.2012.125

\bibitem{TV} J.~Tanis \& P.~Vishe, \emph{Uniform bounds for period
    integrals and sparse equidistribution}. Preprint: arXiv:1501.05228

\bibitem{V} A.~Venkatesh.  \emph{Sparse equidistribution problems,
    period bounds and subconvexity}.  Ann. of Math. \textbf{172}, 2010,  989--1094.

\bibitem{Za} D.~Zagier, \emph{Eisenstein series and the Riemann zeta
    function}, in Automorphic Forms, Representation Theory and
  Arithmetic (Bombay, 1979), Tata Inst. Fund. Res. Studies in Math.
  \textbf{10}, Tata Inst. Fund. Res., Bombay, 1981, 275--301.
\end{thebibliography}
\end{document}